\newtheorem{proposition}{Proposition}[section]
\newtheorem{theorem}{Theorem}[section]
\newtheorem{definition}{Definition}[section]
\newtheorem{corollary}{Corollary}[section]
\newtheorem{lemma}{Lemma}[section]
\newtheorem{remark}{Remark}[section]
\DeclareMathOperator*{\esssup}{ess\,sup} 
\DeclareMathOperator*{\argmax}{arg\,max} 
\numberwithin{equation}{section}
\begin{document}
\markboth{}{}

\title[Stochastic Near-Optimal Controls for Path-Dependent Systems] {Stochastic Near-Optimal Controls for Path-Dependent Systems}

\author{Dorival Le\~ao}

\address{Departamento de Matem\'atica Aplicada e Estat\'istica. Universidade de S\~ao
Paulo, 13560-970, S\~ao Carlos - SP, Brazil} \email{leao@estatcamp.com.br}

\author{Alberto Ohashi}

\address{Departamento de Matem\'atica, Universidade Federal da Para\'iba, 13560-970, Jo\~ao Pessoa - Para\'iba, Brazil}\email{amfohashi@gmail.com}

\author{Francys Souza}

\address{Departamento de Matem\'atica Aplicada e Estat\'istica. Universidade de S\~ao
Paulo, 13560-970, S\~ao Carlos - SP, Brazil}\email{francysouz@gmail.com}

\thanks{}
\date{\today}

\keywords{Stochastic Optimal Control } \subjclass{Primary: 93E20; Secondary: 60H30}

\begin{center}
\end{center}

\begin{abstract}
In this article, we present a general methodology for control problems driven by the Brownian motion filtration including non-Markovian and non-semimartingale state processes controlled by mutually singular measures. The main result of this paper
is the development of a concrete pathwise method for characterizing and computing near-optimal controls for abstract controlled Wiener functionals. The theory does not require ad hoc functional differentiability assumptions on the value process and elipticity conditions on the diffusion components. The analysis is pathwise over suitable finite dimensional spaces and it is based on the weak differential structure introduced by Le\~ao, Ohashi and Simas \cite{LEAO_OHASHI2017.1} jointly with measurable selection arguments. The theory is applied to stochastic control problems based on path-dependent SDEs where both drift and possibly degenerated diffusion components are controlled. Optimal control of drifts for path-dependent SDEs driven by fractional Brownian motion is also discussed. We finally provide an application in the context of financial mathematics. Namely, we construct near-optimal controls in a non-Markovian portfolio optimization problem.

\end{abstract}
\maketitle
\section{Introduction}
Let $\mathbf{C}^n_T$ be the set of continuous functions from $[0,T]$ to $\mathbb{R}^n$, let $\xi:\mathbf{C}^n_T\rightarrow\mathbb{R}$ be a Borel functional, let $\mathbb{F} = (\mathcal{F}_t)_{t\ge 0}$ be a fixed filtration and let $U^T_t; 0\le t \le T$ be a suitable family of admissible $\mathbb{F}$-adapted controls defined over $(t,T]$. The goal of this paper is to develop a systematic approach to solve a generic stochastic optimal control problem of the form

\begin{equation}\label{INTROpr1}
\sup_{\phi\in U^T_0}\mathbb{E}\Big[\xi\big(X^\phi\big)\Big],
\end{equation}
where $\{X^\phi; \phi\in U^T_0\}$ is a given family of $\mathbb{F}$-adapted controlled continuous processes. A common approach to such generic control problem (see e.g \cite{Davis_79, elkaroui, elliot}) is to consider for each control $u\in U_0^T$, the value process given by

\begin{equation}\label{valueINTR}
V(t,u) = \esssup_{\phi;\phi=u~\text{on}~[0,t]}\mathbb{E}\Big[\xi\big(X^\phi\big)|\mathcal{F}_t\Big]; 0\le t \le T.
\end{equation}

Two fundamental questions in stochastic control theory rely on sound characterizations of value processes and the development of concrete methods to produce optimal controls $u^*\in U^T_0$ (when exists)

\begin{equation}\label{INTROpr2}
\mathbb{E}\big[\xi(X^{u^*})\big] = \sup_{\phi\in U^T_0}\mathbb{E}\Big[\xi\big(X^\phi\big)\Big].
\end{equation}
Optimal controls realizing (\ref{INTROpr2}) are called exacts. Besides the fact exact optimal controls may fail to exist due to e.g lack of convexity, they are very sensitive to perturbations and numerical rounding. An alternative to the exact optimal control is the so-called near-optimal controls (see e.g \cite{zhou}) which realize
\begin{equation}\label{INTROpr3}
\mathbb{E}\big[\xi(X^{u^*})\big] > \sup_{\phi\in U^T_0}\mathbb{E}\Big[\xi\big(X^\phi\big)\Big]-\epsilon,
\end{equation}
for an arbitrary $\epsilon>0$. The original problem (\ref{INTROpr1}) (dynamically described by (\ref{valueINTR})) can be greatly simplified in analysis and implementation by considering near-optimal controls which exist under minimal hypotheses and are sufficient in most practical cases.


In the Markovian case, a classical approach in solving stochastic control problems is given by the dynamic programming principle based on Hamilton-Jacobi-Bellman (HJB) equations. One popular approach is to employ verification arguments to check if a given solution of the HJB equation coincides with the value function at hand, and obtain as a byproduct the optimal control. Discretization methods also play an important role towards the resolution of the control problem. In this direction, several techniques based on Markov chain discretization schemes \cite{kushner2}, Krylov's  regularization and shaking coefficient techniques (see e.g \cite{krylov1,krylov2}) and Barles-Souganidis-type monotone schemes \cite{barles} have been successfully implemented. We also refer the more recent probabilistic techniques on fully non-linear PDEs given by Fahim, Touzi and Warin \cite{fahim} and the randomization approach of Kharroubi, Langren\'e and Pham \cite{pham, pham1,pham2}.

Beyond the Markovian context, the value process (\ref{valueINTR}) can not be reduced to a deterministic PDE and the control problem (\ref{INTROpr1}) is much more delicate. Nutz \cite{nutz2} employs techniques from quasi-sure analysis to characterize one version of the value process as the solution of a second order backward SDE (2BSDE) (see \cite{soner}) under a non-degeneracy condition on the diffusion component of a path-dependent controlled SDE $X^\phi$.  Nutz and Van Handel \cite{nutz1} derive a dynamic programming principle in the context of model uncertainty and nonlinear expectations. Inspired by the work \cite{pham}, under the weak formulation of the control problem, Fuhrman and Pham \cite{fuhrman} shows a value process can be reformulated under a family of dominated measures on an enlarged filtered probability space where the controlled SDE might be degenerated. It is worth to mention that under a nondegeneracy condition on diffusion components of controlled SDEs, (\ref{valueINTR}) can also be viewed as a fully nonlinear path-dependent PDE (PPDE) in the sense of \cite{touzi2} via its relation with 2BSDEs (see section 4.3 in \cite{touzi2}). In this direction, Possama\"i, Tan and Zhou \cite{possamai} derived a dynamic programming principle for a stochastic control problem with respect to a class of nonlinear kernels. Based on this dynamic programming principle, they obtained a well-posedness result for general 2BSDEs and established a link with PPDE in possibly degenerated cases.

Discrete-type schemes which lead to approximation of the optimal value (\ref{INTROpr1}) for controlled non-Markovian SDEs driven by Brownian motion was studied by Zhang and Zhuo \cite{zhang2}, Ren and Tan \cite{ren} and Tan \cite{tan}. In \cite{zhang2,ren}, the authors provide monotone schemes in the spirit of Barles-Souganidis for fully nonlinear PPDEs in the sense of \cite{touzi2} and hence one may apply their results for the study of (\ref{INTROpr1}). Under elipticity conditions, by employing weak convergence methods in the spirit of Kushner and Depuis, \cite{tan} provides a feasible discretization method for the optimal value (\ref{INTROpr1}).

\subsection{Main setup and contributions}
The main goal of this paper is to deepen the analysis of non-Markovian stochastic control problems. Rather than developing new representation results, we aim to provide a systematic pathwise approach to extract near-optimal controls based on a given family of non-anticipative state functionals $\{X^u; u\in U^T_0\}$ adapted to the Brownian motion filtration and parameterized by possibly mutually singular measures. The theory developed in this article applies to virtually any control problem of the form (\ref{INTROpr1}) (see also Remark \ref{integralCOST}) under rather weak integrability conditions where none elipticity condition is required from the controlled state. For instance, controlled path-dependent degenerated SDEs driven by possibly non-smooth transformations of the Brownian motion (such as fractional Brownian motion) is a typical non-trivial application of the theory.

Our methodology is based on a weak version of functional It\^o calculus developed by Le\~ao, Ohashi and Simas \cite{LEAO_OHASHI2017.1}. A given Brownian motion structure is discretized which gives rise to differential operators acting on piecewise constant processes adapted to a jumping filtration in the sense of \cite{jacod} and generated by what we call a discrete-type skeleton $\mathscr{D} = \{\mathcal{T},A^{k,j}; j=1, \ldots, d,; k\ge 1\}$ (see Definition \ref{discreteskeleton}). For a given controlled state process $\{X^\phi; \phi\in U_0^T\}$, we construct a \textit{controlled imbedded discrete structure} $\big((V^k)_{k\ge 0},\mathscr{D}\big)$ (for precise definitions, see Sections \ref{CIDSsection} and \ref{CIDSVALUEsection}) for the value process (\ref{valueINTR}). This is a non-linear version of the imbedded discrete structures introduced by \cite{LEAO_OHASHI2017.1} and it can be interpreted as a discrete version of (\ref{valueINTR}). In Proposition \ref{AGREGATION}, by using measurable selection arguments, we aggregate the controlled imbedded discrete structure $\big((V^k)_{k\ge 0},\mathscr{D}\big)$ into a single finite sequence of upper semianalytic value functions $\mathbb{V}^k_n:\mathbb{H}^{k,n}\rightarrow\mathbb{R}; n=0,\ldots, e(k,T)-1$. Here, $\mathbb{H}^{k,n}$ is the $n$-fold cartesian product of $\mathbb{A}\times \mathbb{S}_k$, where $\mathbb{A}$ is the action space, $\mathbb{S}_k$ is suitable finite-dimensional space which accommodates the dynamics of the structure $\mathscr{D}$ and $e(k,T)$ is a suitable number of periods to recover (\ref{INTROpr1}) over the entire period $[0,T]$ as the discretization level $k$ goes to infinity. In Corollary \ref{PDECHARACTERIZATION}, we then show this procedure allows us to derive a pathwise dynamic programming equation. More importantly, we provide a rather general pathwise method to select candidates to near-optimal controls for (\ref{INTROpr3}) by means of a feasible maximization procedure based on integral functionals

\begin{equation}\label{INTROpr4}
\argmax_{a^k_n\in \mathbb{A}} \int_{\mathbb{S}_k}\mathbb{V}^k_{n+1}(\mathbf{o}^k_n, a^k_n, s^k_{n+1},\tilde{i}^k_{n+1})\nu^k_{n+1}(ds^k_{n+1}d\tilde{i}^k_{n+1}|\mathbf{b}^k_n); \quad n=e(k,T)-1, \ldots, 0,
\end{equation}
subject to a terminal condition $\mathbb{V}^k_{e(k,T)}(\mathbf{o}^k_{e(k,T)})$, where $\nu^k_{n+1}$ is the transition probability kernel of $\mathscr{D}$ acting on $\mathbb{S}_k$ (see Proposition \ref{disinteresult}), $\mathbf{o}^k_n \in \mathbb{H}^{k,n}$ is the history of the imbedded discrete system and $\mathbf{b}^k_n \in \mathbb{S}_k^n$ is the noise information at the step $n$.


If the controlled state and the associated value process (\ref{valueINTR}) are continuous controlled Wiener functionals (see Definition \ref{continuouscontrolled}), then Theorem \ref{VALUEconv} shows that (\ref{valueINTR}) admits a rather weak continuity property w.r.t the controlled imbedded discrete structure $\big((V^k)_{k\ge 0},\mathscr{D}\big)$. More importantly, Theorem \ref{VALUEconvcontrol} reveals that near-optimal controls associated with the controlled structure $\big((V^k)_{k\ge 0},\mathscr{D}\big)$ and computed via (\ref{INTROpr4}) are near-optimal in the sense of (\ref{INTROpr3}). As a by-product, we are able to provide a purely pathwise description of near-optimal controls for a generic optimal control problem (\ref{INTROpr1}) based on a given $\{X^\phi; \phi\in U^T_0\}$. This gives in particular an original method to solve stochastic control problems for abstract controlled Wiener functionals, without requiring ad hoc assumptions on the value process in the sense of functional It\^o calculus \cite{cont2} and elipticity conditions on the system. The regularity conditions of the theory boils down to mild integrability hypotheses, path continuity on the controlled process jointly with its associated value process and a H\"{o}lder modulus of continuity on the payoff functional $\xi:\mathbf{C}^n_T\rightarrow\mathbb{R}$.

We remark that our approach does not rely on a given representation of the value process (\ref{valueINTR}) in terms of PPDE or 2BSDE, but rather on its inherent $U^T_0$-supermartingale property (for precise definition, see Remark \ref{remark_consist}). In particular, it is required the existence of versions of (\ref{valueINTR}) with continuous paths (see Lemma \ref{continuousV}) for each control and none pathwise or quasi-sure representation of (\ref{valueINTR}) is needed in our framework. Rather than exploring 2BSDEs or PPDEs, we develop a fully pathwise structure $\mathbb{V}^k_j; j=0,\ldots, e(k,T)-1$ which allows us to make use the classical theory of analytic sets to construct path wisely the near-optimal controls for (\ref{INTROpr3}) by means of a list of analytically measurable functions $C_{k,j}:\mathbb{H}^{k,j}\rightarrow\mathbb{A}; j=0, \ldots, e(k,T)-1$. By composing those functions with the skeleton $\mathscr{D}$, we are able to construct pure jump $\mathscr{D}$-predictable near optimal controls

$$\phi^{\star,k} = \big(\phi^k_0, \ldots, \phi^{k}_{e(k,T)-1}\big)$$
realizing (\ref{INTROpr3}) for $k$ sufficiently large, where the near-optimal control at the $j$-th step depends on previous near-optimal controls $n=0, \ldots,j-1$ by concatenating $\phi^{k}_0\otimes\ldots\otimes \phi^k_{j-1}$ for $j=1, \ldots, e(k,T)$. This allows us to treat very concretely the intrinsic path-dependence of the stochastic control problem under rather weak regularity conditions. We also emphasize that there is no conceptual obstruction in our approach in getting explicit rates of convergence. Indeed, it will depend on more refined estimates associated with the convergence of the filtrations and the derivative operator given by \cite{LEAO_OHASHI2017.1}. We postpone this analysis to a further investigation.


As a test of the relevance of the theory, we then show that it can be applied to controlled SDEs with rather distinct types of path-dependence:

$$
\textbf{Case (A)}\quad dX^u(t) = \alpha(t,X^u,u(t))dt + \sigma(t,X^u,u(t))dB(t),
$$

$$\textbf{Case (B)}\quad dX^u(t) = \alpha(t,X^u,u(t))dt + \sigma dB_H(t),
$$
where $B_H$ is the fractional Brownian motion (FBM) with exponent $0 < H < 1$ and $B$ is the Brownian motion. In case (A), the lack of Markov property is due to the coefficients $\alpha$ and $\sigma$ which may depend on the whole path of $X^u$. In this case, the controlled state $X^u$ satisfies a pseudo-Markov property in the sense of \cite{claisse}. The theory developed in this article applies to case (A) without requiring elipticity conditions on the diffusion component $\sigma$. Case (B) illustrates a fully non-Markovian case: The controlled state $X^u$ is driven by a path-dependent drift and by a very singular transformation of the Brownian motion into a non-Markovian and non-semimartingale noise. In particular, there is no probability measure on the path space such that the controlled state in (B) is a semimartingale.

To the best of our knowledge, despite the recent efforts on representation theorems for value processes (\cite{nutz1,nutz2,touzi2,fuhrman}) driven by
path-dependent SDEs in (A) and numerical schemes for PPDEs (\cite{tan,zhang2,ren}), obtaining optimal controls (either exact or near) is novel. In particular, we do not assume a priori regularity assumptions on the value process in the sense of functional It\^o calculus (see section 8.3 in Cont \cite{cont2}) and none nondegeneracy condition on the controlled system is required. As far as (B), the current literature on the control theory for FBM driving force (see \cite{biagini, han, buckdahn_shuai}) relies on the characterization of optimal controls via Pontryagin-type maximum principles based on BSDEs (with implicit or explicit FBM) at the expense of Malliavin differentiability of controls with exception of \cite{buckdahn_shuai}. We mention that for non path-dependent quadratic costs, exact optimal controls for linear state controlled processes driven by FBM are obtained by Hu and Zhou \cite{Hu1} via solutions of BSDEs driven by FBM and Brownian motion. We stress the theory of this article provides a systematic way to extract near-optimal controls for control problems of the form (\ref{INTROpr2}) with possibly path-dependent payoff functionals composed with non-linear controlled SDEs driven by FBM and, more generally, singular transformations of Brownian motions. In order to illustrate the use of theory, we present a concrete example in financial mathematics. Namely, we construct near-optimal controls in a non-Markovian portfolio optimization problem (see section \ref{portfolioOPT}).

The remainder of this article is organized as follows. The next section summarizes some useful properties of the value process (\ref{valueINTR}). Section \ref{CIDSsection} presents the concept of controlled imbedded discrete structure which is a fundamental object in our methodology. Section \ref{CIDSVALUEsection} presents the pathwise dynamic programming equation and the obtention of near-optimal controls for a given approximation level. Section \ref{CONVERGENCEsection} presents the abstract convergence results. Section \ref{APPLICATIONsection} presents applications to cases (A-B) and section \ref{portfolioOPT} presents an application to a non-Markovian portfolio optimization problem.

\

\textbf{Notation.} The paper is quite heavy with notation, so here is a partial list for ease of reference:
\noindent $U^N_M$ ($N,M$ stopping times), $U^{k,n}_m$: Set of admissible controls; equations (\ref{controlset}) and (\ref{uknm}).

\noindent $U^m_\ell$ ($m,\ell$ positive integers): Equation (\ref{abbreviatedU}).

\noindent $\mathcal{A}^k_n$: Equation (\ref{Acaligrafico}).

\noindent $u\otimes_N v$ ($N$ stopping time), $u^k\otimes_n v^k$ ($n$ positive integer): Concatenations; equations (\ref{concatenation}) and (\ref{Kconcatenation}).

\noindent $\xi_X(u), \xi_{X^k}(u^k):$ The payoff functional $\xi$ applied to controlled processes $X$ and $X^k$, respectively; equations (\ref{actionmap}) and (\ref{Kactionmap}).

\noindent $\nu^k_{n+1}$: The transition probability of the discrete-type skeleton $\mathscr{D}$; equation (\ref{form1dis}).

\noindent $e(k,T)$: Equation (\ref{ektdef}).

\noindent $\Xi^{k,g^k}_j$: Equation (\ref{Xioperator}).

\noindent $\mathbf{b}^k_n,\mathbf{o}^k_n$: Equations (\ref{elementsSkn}) and (\ref{elementsSknA}).

\noindent $V(t,u), V^k(T^k_n,u^k)$: Equations (\ref{valuepdef}) and (\ref{discretevalueprocess}).

\noindent $\mathbb{V}^k_j$: Equation (\ref{Vkfunction}).

\section{Controlled stochastic processes}

Throughout this article, we are going to fix a filtered probability space $(\Omega, \mathbb{F},\mathbb{P})$ equipped with a $d$-dimensional Brownian motion $B = \{B^{1},\ldots,B^{d}\}$ where $\mathbb{F}:=(\mathcal{F}_t)_{t\ge 0}$ is the
usual $\mathbb{P}$-augmentation of the filtration generated by $B$ under a fixed probability measure $\mathbb{P}$. For a pair of finite $\mathbb{F}$-stopping times $(M,N)$, we denote

\begin{equation}\label{stocintDEF}
]]M,N ]]: = \{(\omega,t); M(\omega) < t \le N(\omega)\},
\end{equation}
and $]] M, +\infty[[:=\{(\omega,t); M(\omega)< t < +\infty\}.$ The action space is a compact set

$$\mathbb{A}:=\{(x_1, \ldots, x_m)\in \mathbb{R}^m; \max_{1\le i\le m}|x_i|\le \bar{a}\}$$
for some $0 < \bar{a}< +\infty$.
In order to set up the basic structure of our control problem, we first need to define the class of admissible control processes: For each pair $(M,N)$ of a.s finite $\mathbb{F}$-stopping times such that $M < N~a.s$, we denote~\footnote{Whenever necessary, we can always extend a given $u\in U^N_M$ by setting $u = 0$ on the complement of a stochastic set $]]M,N]]$.}

\begin{equation}\label{controlset}
U^{N}_{M}:=\{\text{the set of all}~\mathbb{F}-\text{predictable processes}~u:~]]M,N]]\rightarrow \mathbb{A}; u(M+)~\text{exists}\}.
\end{equation}
For such family of processes, we observe they satisfy the following properties:

\

\begin{itemize}\label{3-proper}
  \item \textbf{Restriction}: $u\in U_M^N\Rightarrow u\mid_{]]M,P]]}\in U_{M}^P$ for $M < P \le N$~a.s.
  \item \textbf{Concatenation}: If $u\in U_{M}^N$ and $v\in U_N^P$ for $M<N< P~a.s$, then $(u\otimes_N v)(\cdot)\in U_M^P$, where
  \begin{equation}\label{concatenation}
  (u\otimes_N v)(r):=\left\{
\begin{array}{rl}
u(r); & \hbox{if} \ M < r \le N \\
v(r);& \hbox{if} \ N < r \le P.
\end{array}
\right.
\end{equation}
  \item \textbf{Finite Mixing}: For every $u,v\in U_M^N$ and $G\in \mathcal{F}_M$, we have
$$u\mathds{1}_G + v\mathds{1}_{G^c}\in U^N_M.$$

\item \textbf{Countable mixing on deterministic times}: Given a sequence of controls $u_1, u_2, \ldots$ in $U_{s}^t$ for $s< t$ and a sequence of disjoint sets $D_1, D_2, \ldots$ in $\mathcal{F}_s$, we have

\[
\sum_{i=1}^\infty u_i 1\!\!1_{ D_i}\in U^t_s.
\]
\end{itemize}
To keep notation simple, we denote $U_M: = \{u:~]]M,+\infty[[\rightarrow \mathbb{A}~\text{is}~\mathbb{F}-\text{predictable and}~u(M+)~\text{exists}\}$ for each finite $\mathbb{F}$-stopping time $M$. Let $\textbf{B}^p(\mathbb{F})$ be the Banach space of all $\mathbb{F}$-adapted real-valued c\`adl\`ag processes $Y$ such that

\begin{equation}\label{bpspace}
\mathbb{E}\sup_{0\le t\le T}|Y(t)|^p < \infty,
\end{equation}
for $1\le p< \infty$ and $ 0 < T < +\infty$ is a fixed terminal time.


\begin{definition}\label{continuouscontrolled}
A continuous \textbf{controlled Wiener functional} is a map $X:U_0\rightarrow\mathbf{B}^p(\mathbb{F})$ for some $p\ge 1$, such that for each $t\ge 0$ and $u\in U_0$, $\{X(s,u); 0\le s\le t\}$ depends on the control $u$ only on $(0,t]$ and $X(\cdot,u)$ has continuous paths for each $u\in U_0$.
\end{definition}


From now on, we are going to fix a controlled Wiener functional $X:U_0\rightarrow\mathbf{B}^2(\mathbb{F})$. In the sequel, $\mathbf{D}^n_T:= \{h:[0,T]\rightarrow \mathbb{R}^n~\text{with c\`adl\`ag paths}\}$ and we equip this linear space with the uniform convergence on $[0,T]$. Throughout this paper, we assume the following regularity properties on the payoff functional:


\

\noindent $\textbf{\text{(A1)}}$: The payoff $\xi:\mathbf{D}^n_T\rightarrow\mathbb{R}$ is bounded and there exists $\gamma \in (0,1]$ and a constant $C>0$ such that

\begin{equation}\label{descA1}
|\xi(f) - \xi(g)|\le C (\sup_{0\le t\le T}\|f(t)-g(t)\|_{\mathbb{R}^n})^\gamma
\end{equation}
for every $f,g\in \mathbf{D}^n_T$.


\begin{remark}
Even though we are only interested in controlled Wiener functionals with continuous paths, we are forced to assume the payoff functional is defined on the space of c\`adl\`ag paths due to a discretization procedure. However, this is not a strong assumption since most of the functionals of interest admits extensions from $\mathbf{C}^n_T$ to $\mathbf{D}^n_T$ preserving property \textbf{(A1)}. The boundedness assumption is not essential but for simplicity of exposition we keep this assumption throughout this work.
\end{remark}

The action of the payoff on a given controlled Wiener functional will be denoted by

\begin{equation}\label{actionmap}
\xi_X(u): = \xi\big(X(\cdot,u)\big); u\in U_0.
\end{equation}

For a given controlled Wiener functional $u\mapsto X(\cdot,u)$, we define
\begin{equation}\label{valuepdef}
V(t,u):=\text{ess}~\sup_{v\in U^T_t}\mathbb{E}\Big[\xi_X(u\otimes_t v)|\mathcal{F}_t\Big];0\le t< T,u \in U_0,
\end{equation}
where $V(T,u) := \xi_X(u)$~a.s and the process $V(\cdot, u)$ has to be viewed backwards. Throughout this paper, in order to keep notation simple, we omit the dependence of the value process in (\ref{valuepdef}) on the controlled Wiener functional $X$ and we write $V$ meaning as a map $V:U_0\rightarrow\mathbf{B}^1(\mathbb{F})$.

Since we are not assuming that $\mathbb{F}$ is the raw filtration generated by the Brownian motion, we can not say that $V(0)$ is deterministic. However, the finite-mixing property on the class of admissible controls implies that $\{\mathbb{E}\big[\xi_X(u\otimes_t \theta)|\mathcal{F}_t\big]; \theta\in U_t^T\}$ has the lattice property (see e.g Def 1.1.2 \cite{lamberton}) for every $t\in [0,T)$ and $u\in U^T_t$. In this case,

\begin{equation}\label{lattice0}
\mathbb{E}\big[V(0)\big] = \sup_{v\in U_0}\mathbb{E}\big[\xi(X(\cdot,v)\big].
\end{equation}
More generally,
\begin{equation}\label{lattice1}
\mathbb{E} \Big[ \esssup_{\theta \in U_t} \mathbb{E}\big[\xi_X (u \otimes_t \theta)|\mathcal{F}_t\big] \big|\mathcal{F}_s\Big] = \esssup_{\theta \in U_t}
\mathbb{E} \big[ \xi_X(u \otimes_t \theta)|\mathcal{F}_s\big]~a.s
\end{equation}
for each $u \in U_0$ and $0\le s\le t\le T$.


\begin{remark} \label{remark_consist}
For any $u\in U_0$, $\{V(s, u); 0\le s\le t\}$ depends only on the control $u$ restricted to the interval $[0,t]$. Hence, $u\mapsto V(\cdot,u)$ is a controlled Wiener functional. Moreover, $V$ is an $U_0$-supermartingale, in the sense that $V(\cdot, u)$ is an $\mathbb{F}$-supermartingale for each $u\in U_0$.
\end{remark}

It is natural to ask when $V(\cdot, u)$ admits adapted modifications with c\`adl\`ag paths for each $u\in U_0$. In order to investigate such property, we shall consider the following assumption:

\

\noindent \textbf{(B1)} There exists a constant $C$ such that

\begin{equation}\label{LipL2}
\|X(\cdot,u)-X(\cdot,\eta)\|^2_{\mathbf{B}^2(\mathbb{F})}\le C \mathbb{E}\int_0^T\|u(s)-\eta(s)\|^2_{\mathbb{R}^m}ds
\end{equation}
for every $u,\eta\in U_0$.

\begin{lemma}\label{continuousV}
If $\xi$ is a bounded pointwise continuous functional and $X$ satisfies \textbf{(B1)}, then for each $u\in U_0$, the supermartingale $V(\cdot, u)$ admits an adapted modification with c\`adl\`ag paths. If \textbf{(A1-B1)} hold true, then for each $u\in U_0$, the supermartingale $V(\cdot, u)$ admits an adapted modification with continuous paths.
\end{lemma}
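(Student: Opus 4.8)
The plan is to establish the c\`adl\`ag modification first and then upgrade to continuity under the stronger hypothesis \textbf{(A1)}. For the c\`adl\`ag part, I would invoke the classical regularization theorem for supermartingales: a real-valued $\mathbb{F}$-supermartingale $Y$ with respect to a filtration satisfying the usual conditions admits a c\`adl\`ag modification if and only if the function $t\mapsto \mathbb{E}[Y(t)]$ is right-continuous (see e.g. Dellacherie--Meyer or Karatzas--Shreve). Since $\mathbb{F}$ here is the $\mathbb{P}$-augmentation of the Brownian filtration, the usual conditions hold. By Remark~\ref{remark_consist}, $V(\cdot,u)$ is an $\mathbb{F}$-supermartingale for each fixed $u\in U_0$, and since $\xi$ is bounded, so is $V$, which takes care of integrability. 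Thus the crux is to show $t\mapsto \mathbb{E}[V(t,u)]$ is right-continuous; I would in fact aim for full continuity of this map, so that the same estimate feeds into the second part.

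To control $t\mapsto \mathbb{E}[V(t,u)]$, fix $0\le s\le t<T$ and use the lattice/consistency identity (\ref{lattice1}) together with the tower property to write $\mathbb{E}[V(t,u)] = \sup_{\theta\in U^T_t}\mathbb{E}[\xi_X(u\otimes_t\theta)]$ and similarly $\mathbb{E}[V(s,u)] = \sup_{\theta\in U^T_s}\mathbb{E}[\xi_X(u\otimes_s\theta)]$. The supermartingale property already gives $\mathbb{E}[V(s,u)]\ge\mathbb{E}[V(t,u)]$, so it remains to bound the gap from above. Given $\epsilon>0$, pick a near-optimal $\theta^\ast\in U^T_s$ for the level-$s$ problem; its restriction to $]]t,T]]$ lies in $U^T_t$ by the restriction property, and $u\otimes_s\theta^\ast$ and $u\otimes_t(\theta^\ast|_{]]t,T]]})$ differ only on $]]s,t]]$, where both controls take values in the bounded set $\mathbb{A}$. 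By \textbf{(B1)}, $\|X(\cdot,u\otimes_s\theta^\ast)-X(\cdot,u\otimes_t\theta^\ast|_{]]t,T]]})\|^2_{\mathbf{B}^2(\mathbb{F})}\le C\,\mathbb{E}\int_s^t\|\cdots\|^2_{\mathbb{R}^m}\,dr \le C\,(2\bar a)^2 m\,(t-s)$, which goes to $0$ as $t\downarrow s$. When $\xi$ is merely bounded and pointwise continuous, I would combine this $L^2$ (hence in-probability, along a subsequence a.s.) convergence of the controlled paths in the uniform topology with the bounded convergence theorem to conclude $\mathbb{E}[\xi_X(u\otimes_s\theta^\ast)] - \mathbb{E}[\xi_X(u\otimes_t\theta^\ast|_{]]t,T]]})]\to 0$; since $\epsilon$ is arbitrary this yields right-continuity (indeed continuity) of $t\mapsto\mathbb{E}[V(t,u)]$, and the c\`adl\`ag modification follows.

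For the continuity upgrade under \textbf{(A1)}, I would argue that a bounded c\`adl\`ag supermartingale whose expectation map is continuous and which admits no "downward-only" jumps is in fact continuous; more concretely, it suffices to control the jumps of $V(\cdot,u)$. Here I would use the Doob--Meyer decomposition $V(\cdot,u) = M - A$ with $M$ a c\`adl\`ag martingale and $A$ predictable nondecreasing; on the Brownian filtration $M$ is continuous by the martingale representation theorem, so the only possible jumps of $V$ come from $A$, and $\mathbb{E}[A(t)] = \mathbb{E}[V(0,u)]-\mathbb{E}[V(t,u)]$ is continuous in $t$, forcing $\mathbb{E}[\Delta A(t)]=0$ at every $t$ and hence, since $A$ is nondecreasing and predictable, $A$ (thus $V$) has continuous paths a.s. Alternatively, one can quantify directly: using (\ref{lattice1}) and \textbf{(A1)} one gets the H\"older-type estimate $0\le \mathbb{E}[V(s,u)]-\mathbb{E}[V(t,u)] \le C\big(C(2\bar a)^2 m\,(t-s)\big)^{\gamma/2}$, which combined with the Doob--Meyer structure pins down path continuity. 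The main obstacle is the first step: ensuring that the near-optimal control chosen at time $s$ can be legitimately restricted to $]]t,T]]$ and reattached as $u\otimes_t(\cdot)$ while keeping everything inside the admissible class and while the $L^2$-perturbation bound (\ref{LipL2}) applies only to the difference on $]]s,t]]$ — this is precisely where the restriction/concatenation properties of $U^N_M$ and the boundedness of $\mathbb{A}$ are doing the work, and where one must be careful that the esssup and the expectation interchange via (\ref{lattice1}).
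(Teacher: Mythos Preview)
Your overall plan---reduce to regularity of $t\mapsto\mathbb{E}[V(t,u)]$ via the supermartingale modification theorem---is precisely the paper's approach; the paper simply cites Dellacherie--Meyer, declares the verification a routine consequence of \textbf{(A1)--(B1)}, and stops. Your argument for right-continuity of the expectation using \textbf{(B1)}, restriction/concatenation, and boundedness of $\mathbb{A}$ is a legitimate filling-in of that omitted routine, and the c\`adl\`ag conclusion is correctly drawn.

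Where you diverge from the paper is in supplying an explicit mechanism for the continuity upgrade, and here there is a genuine gap. Your Doob--Meyer argument asserts that continuity of $t\mapsto\mathbb{E}[A(t)]$, together with $A$ predictable nondecreasing and $M$ continuous, forces $A$ (hence $V$) to be pathwise continuous. This implication is false in general, even on the Brownian filtration: take $\tau$ to be the first hitting time of level $1$ by $B$ (a predictable stopping time, since every stopping time in the Brownian filtration is predictable) and set $A_t=\mathds{1}_{\{t\ge\tau\}}$. Then $A$ is predictable nondecreasing, $\mathbb{E}[A_t]=\mathbb{P}(\tau\le t)$ is continuous in $t$ (as $\tau$ has a density), the Doob--Meyer martingale part of $-A$ is identically zero, and yet $A$ jumps at $\tau$. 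What continuity of $\mathbb{E}[A(t)]$ actually gives you is only $\mathbb{E}[\Delta A(t)]=0$ at each \emph{deterministic} $t$, hence $\Delta A(t)=0$ a.s.\ for each fixed $t$ (stochastic continuity), not the absence of jumps at random predictable times. Your alternative H\"older estimate $0\le\mathbb{E}[V(s,u)]-\mathbb{E}[V(t,u)]\le C(t-s)^{\gamma/2}$ suffers from the same defect: it bounds the increment of the expectation, not the pathwise increment, and feeding it back into Doob--Meyer runs into the same counterexample. The paper's one-line proof does not display any device that circumvents this; if you want a self-contained argument you will need either a direct pathwise (or $L^1$) estimate on $V(s,u)-V(t,u)$, obtained by working inside the conditional expectation with $(t,\epsilon,\pi)$-optimal controls and \textbf{(A1)}, or else to show $\mathbb{E}[\Delta A_\tau]=0$ at every predictable stopping time $\tau$, which does not follow from continuity of $t\mapsto\mathbb{E}[A_t]$ alone.
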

\begin{proof}
Let us fix $u\in U_0$. In order to prove that $V(\cdot,u)$ admits a c\`adl\`ag (continuous) modification, from the $\mathbb{F}$-supermartingale property and the fact that the augmented Brownian filtration is continuous, it is sufficient to prove that $t\mapsto \mathbb{E}[V(t,u)]$ is right-continuous (continuous) (see Th 2 - page 67 in \cite{dellacherie}) and this verification is a routine exercise by using \textbf{(A1-B1)}, so we omit the details.
\end{proof}

\begin{definition}\label{epsilonDEF}
We say that $u\in U^T_0$ is an $\epsilon$-optimal control if
\begin{equation}\label{optimaldef1}
\mathbb{E}\big[\xi_X(u)\big]\ge \sup_{\eta\in U^T_0}\mathbb{E}\big[\xi_X(\eta)\big]-\epsilon.
\end{equation}
In case, $\epsilon=0$, we say that $u$ realizing (\ref{optimaldef1}) is an optimal control.
\end{definition}

\begin{remark}\label{oprem1}
A classical result (see e.g \cite{Davis_79,striebel}) states that $u^*$ is optimal if, and only if, $V(\cdot, u^*)$ is an $\mathbb{F}$-martingale.
\end{remark}

In the sequel, we introduce the concept of conditional optimality similar to  El Karoui \cite{elkaroui}.

\begin{definition} For a given $\epsilon \geq 0$ and $\pi \in U_{0}^T$, a control $h^\epsilon \in U_{t}^T$ is $(t, \epsilon , \pi)$-optimal if

$$
V(t, \pi) \leq \mathbb{E} \left[\xi_X((\pi \otimes_t h^\epsilon)) \mid \mathcal{F}_t \right] + \epsilon~a.s.
$$
\end{definition}
Of course, an $(0, \epsilon , \pi)$-optimal control is also $\epsilon$-optimal.

\begin{lemma}\label{epsilonrandomop}
For every $t\in[0,T]$, $\pi \in U_{0}^T$ and $\epsilon > 0$, there exist $(t, \epsilon , \pi)$-optimal controls.
\end{lemma}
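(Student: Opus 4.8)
The plan is to construct a $(t,\epsilon,\pi)$-optimal control directly from the definition of the essential supremum together with the lattice (upward-directed) property of the family $\{\mathbb{E}[\xi_X(\pi\otimes_t\theta)\mid\mathcal{F}_t];\ \theta\in U^T_t\}$ recorded in \eqref{lattice1}. First I would fix $t\in[0,T]$, $\pi\in U^T_0$ and $\epsilon>0$, and recall that since the above family is upward directed, there exists a sequence $(\theta_j)_{j\ge 1}$ in $U^T_t$ such that $\mathbb{E}[\xi_X(\pi\otimes_t\theta_j)\mid\mathcal{F}_t]$ increases a.s.\ to $V(t,\pi)=\esssup_{\theta\in U^T_t}\mathbb{E}[\xi_X(\pi\otimes_t\theta)\mid\mathcal{F}_t]$. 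By Egorov/monotone convergence this increasing limit forces the measurable set
$$
G_j:=\Big\{\omega:\ V(t,\pi)(\omega)\le \mathbb{E}[\xi_X(\pi\otimes_t\theta_j)\mid\mathcal{F}_t](\omega)+\epsilon\Big\}\in\mathcal{F}_t
$$
to satisfy $\mathbb{P}(G_j)\uparrow 1$ as $j\to\infty$; actually, by upward directedness one can even arrange the partial maxima so that a single $j$ already gives $\mathbb{P}(G_j)=1$ up to a further refinement — but to be safe one works with a disjointification $D_1:=G_1$, $D_j:=G_j\setminus G_{j-1}\in\mathcal{F}_t$.

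Next I would patch the $\theta_j$ together on these $\mathcal{F}_t$-measurable pieces. Define
$$
h^\epsilon:=\sum_{j=1}^\infty \theta_j\,\mathds{1}_{D_j}.
$$
Since the $D_j$ are disjoint sets in $\mathcal{F}_t$ and each $\theta_j\in U^T_t$, the \textbf{countable mixing on deterministic times} property of the admissible control family (stated in the itemized list following \eqref{3-proper}) guarantees $h^\epsilon\in U^T_t$. On the event $D_j$ one has $\xi_X(\pi\otimes_t h^\epsilon)=\xi_X(\pi\otimes_t\theta_j)$ because the controlled Wiener functional $X(\cdot,\pi\otimes_t h^\epsilon)$ depends on the control only through its path, and $\pi\otimes_t h^\epsilon$ agrees with $\pi\otimes_t\theta_j$ on $D_j$ (here one uses that $D_j\in\mathcal{F}_t$ so the concatenated controls coincide pathwise on $D_j$, and the $\mathcal{F}_t$-measurability lets this pass through the conditional expectation). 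Hence a.s.,
$$
\mathbb{E}[\xi_X(\pi\otimes_t h^\epsilon)\mid\mathcal{F}_t]=\sum_{j=1}^\infty \mathds{1}_{D_j}\,\mathbb{E}[\xi_X(\pi\otimes_t\theta_j)\mid\mathcal{F}_t]\ \ge\ V(t,\pi)-\epsilon,
$$
which is exactly the defining inequality for $(t,\epsilon,\pi)$-optimality.

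The only genuinely delicate point is the measurability bookkeeping in the middle step: verifying that $\xi_X(\pi\otimes_t h^\epsilon)\mathds{1}_{D_j}=\xi_X(\pi\otimes_t\theta_j)\mathds{1}_{D_j}$ and, more importantly, that $\mathbb{E}[\,\cdot\mid\mathcal{F}_t]$ decomposes along the partition $(D_j)$. This is where the hypothesis $D_j\in\mathcal{F}_t$ is essential — it is precisely what allows pulling the indicators out of the conditional expectation and what makes the concatenation $\pi\otimes_t h^\epsilon$ well defined as an element of $U^T_t$ via finite/countable mixing. The rest is a routine application of the lattice property of \eqref{lattice1} and the definition of the essential supremum, so I would not dwell on it. One should also note the case $t=T$ is trivial since $V(T,\pi)=\xi_X(\pi)$ and any $h^\epsilon$ works (the set $U^T_T$ being understood appropriately), so the argument above is stated for $t<T$.
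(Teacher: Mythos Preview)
Your proposal is correct and follows essentially the same route as the paper: extract a countable family $(\theta_j)$ from $U^T_t$ whose conditional expectations achieve the essential supremum (you obtain it via the lattice property as an increasing sequence, the paper simply invokes the standard fact that an esssup is attained over a countable subfamily), then patch the $\theta_j$ along the $\mathcal{F}_t$-measurable level sets using the \textbf{countable mixing on deterministic times} property to produce $h^\epsilon$. The paper's proof is a two-line sketch that explicitly omits these details, so your write-up is in fact a fleshed-out version of exactly the argument the authors had in mind.
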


\begin{proof}
It is well known that there exists a countable subset $J_{t}^T=(u_1, u_2, \cdots)$ of $U_{t}^T$ such that

\[
V(t, \pi) = \text{ess}~\sup_{\theta\in U^T_t} \mathbb{E} \left[\xi_X((\pi \otimes_t \theta)) \mid \mathcal{F}_t \right] = \sup_{i\ge 1} \mathbb{E} \left[\xi_X((\pi \otimes_t u_i)) \mid \mathcal{F}_t \right]~a.s.
\]
It is not difficult to see that the countable set $J_t^T$ allows us to employ the countable mixing property to conclude the $(t, \epsilon , \pi)$-optimality. We omit the details.
\end{proof}
\begin{remark}\label{integralCOST}
One can similarly treat the complete ``standard'' cost function

$$\esssup_{\phi\in U^T_t}\mathbb{E}\Bigg[\int_t^T c\big(s,u\otimes_t\phi,X(u\otimes_t\phi)\big)ds + \xi_X(u\otimes_t\phi)\big|\mathcal{F}_t\Bigg]; 0\le t \le T,
$$
for a non-anticipative function $c:[0,T]\times U^T_0\times \mathbf{C}^n_T\rightarrow \mathbb{R}$. In order to simplify the presentation, we set $c=0$ for the rest of this article.
\end{remark}
In the remainder of this paper, we are going to present an explicit construction of $\epsilon$-optimal controls for

$$\sup_{\phi\in U^T_0}\mathbb{E}\Big[\xi_X(\phi)\Big]$$
and a suitable discrete-type pathwise dynamic programming equation which fully describes a family of approximations for the value process $u\mapsto V(u)$.

\section{Differential skeleton on controlled imbedded discrete structures}\label{CIDSsection}

In this section, we set up the basic differential operators associated with what we will call a \textit{controlled imbedded discrete structure}. It is a natural extension of the differential structure presented in Section 3 in Le\~ao, Ohashi and Simas \cite{LEAO_OHASHI2017.1}. Our philosophy is to view a controlled Wiener functional $u\mapsto Y(\cdot,u)$ as a family of simplified models one has to build in order to extract some information. The extraction of information is made by means of suitable derivative operators which mimic the infinitesimal evolution of $Y$ w.r.t Brownian state. This piece of information is precisely what we need to obtain a concrete description of value processes and the construction of their associated $\epsilon$-optimal controls.

\subsection{The underlying discrete skeleton}
The discretization procedure will be based on a class of pure jump processes driven by suitable waiting times which describe the local behavior of the Brownian motion. We briefly recall the basic properties of this skeleton. For more details, we refer to the work \cite{LEAO_OHASHI2017.1}. We set $T^{k,j}_0:=0$ and

\begin{equation}\label{stopping_times}
T^{k,j}_n := \inf\{T^{k,j}_{n-1}< t <\infty;  |B^{j}(t) - B^{j}(T^{k,j}_{n-1})| = \epsilon_k\}, \quad n \ge 1,
\end{equation}
where $\sum_{k\ge 1}\epsilon_k^2 < \infty$. Then, we define $A^{k,j}; j=1,\ldots, d, k\ge 1$

\begin{equation}\label{rw}
A^{k,j} (t) := \sum_{n=1}^{\infty} \epsilon_k \sigma^{k,j}_n1\!\!1_{\{T^{k,j}_n\leq t \}};~t\ge0.
\end{equation}
The jumps $\{\sigma^{k,j}_n; n\ge 1\}$ are given by

\begin{equation}\label{sigmaknABS}
\sigma^{k,j}_n:=\left\{
\begin{array}{rl}
1; & \hbox{if} \ \Delta A^{k,j}(T^{k,j}_n) > 0 \\
-1;& \hbox{if} \ \Delta A^{k,j}(T^{k,j}_n)< 0, \\
\end{array}
\right.
\end{equation}
By construction

\begin{equation}\label{akb}
\sup_{t\ge 0}|A^{k,j}(t) - B^j(t)|\le \epsilon_k~a.s
\end{equation}
for every $k\ge 1$. Let $\widetilde{\mathbb{F}}^{k,j} := \{ \widetilde{\mathcal{F}}^{k,j}_t; t\ge 0 \} $ be the natural filtration generated by $\{A^{k,j}(t);  t \ge 0\}$. One should notice that $\widetilde{\mathbb{F}}^{k,j}$ satisfies $\widetilde{\mathcal{F}}^{k,j}_0 = \{\Omega, \emptyset \}$ and $\widetilde{\mathcal{F}}^{k,j}_{T^{k,j}_m}=\sigma(T^{k,j}_1, \ldots, T^{k,j}_m, \Delta A^{k,j}(T^{k,j}_1), \ldots, \Delta A^{k,j}(T^{k,j}_m))$ for $m\ge 1$ and $j=1,\ldots, d$. Moreover,

$$\widetilde{\mathcal{F}}^{k,j}_{T^{k,j}_m}\cap \{T^{k,j}_m \le t < T^{k,j}_{m+1}\} = \widetilde{\mathcal{F}}^{k,j}_t\cap \{T^{k,j}_m \le t < T^{k,j}_{m+1}\},$$
for each $m\ge 0$, $j=1,\ldots, d$ and $t\ge 0$. The multi-dimensional filtration generated by $A^{k,j}; j=1,\ldots,d$ is naturally characterized as follows. Let $\widetilde{\mathbb{F}}^k := \{\widetilde{\mathcal{F}}^k_t ; 0 \leq t <\infty\}$ be the product filtration given by $\widetilde{\mathcal{F}}^k_t := \widetilde{\mathcal{F}}^{k,1}_t \otimes\widetilde{\mathcal{F}}^{k,2}_t\otimes\cdots\otimes\widetilde{\mathcal{F}}^{k,d}_t$ for $t\ge 0$. Let $\mathcal{T}:=\{T^k_m; m\ge 0\}$ be the order statistics obtained from the family of random variables $\{T^{k,j}_\ell; \ell\ge 0 ;j=1,\ldots,d\}$. That is, we set $T^k_0:=0$,

$$
T^k_1:= \inf_{1\le j\le d}\Big\{T^{k,j}_1 \Big\},\quad T^k_n:= \inf_{\substack {1\le j\le d\\ m\ge 1} } \Big\{T^{k,j}_m ; T^{k,j}_m \ge  T^k_{n-1}\Big\}
$$
for $n\ge 1$. The filtration $\widetilde{\mathbb{F}}^k$ satisfies

$$
\widetilde{\mathcal{F}}^k_t \cap\{T^k_n \le t < T^k_{n+1}\} = \widetilde{\mathcal{F}}^k_{T^k_n}\cap \{T^k_n \le  t < T^k_{n+1}\}; t\ge 0
$$
where $\widetilde{\mathcal{F}}^k_{T^k_n} = \sigma(A^{k,j}(s\wedge T^k_n); s\ge 0, 1\le j\le d)$ for each $n\ge 0$. Let $\mathcal{F}^{k}_\infty$ be the completion of $\sigma(A^{k,j}(s); s\ge 0; j=1,\ldots,d)$ and let $\mathcal{N}_{k}$ be the $\sigma$-algebra generated by all $\mathbb{P}$-null sets in $\mathcal{F}^{k}_\infty$. We denote $\mathbb{F}^{k} = (\mathcal{F}^{k}_t)_{t\ge 0}$, where $\mathcal{F}^{k}_t$ is the usual $\mathbb{P}$-augmentation (based on $\mathcal{N}_k$) satisfying the usual conditions.

Finally, from (\ref{akb}) and Lemma 2.1 in \cite{LEAO_OHASHI2013}, we do have

\begin{equation}\label{weakfiltration}
\lim_{k\rightarrow \infty}\mathbb{F}^k = \mathbb{F},
\end{equation}
weakly (in the sense of \cite{coquet1}) over $[0,T]$. Moreover, since $\sum_{k\ge 1}\epsilon^2_k < \infty$, then Lemma 2.2 in \cite{koshnevisan} yields

\begin{equation}\label{uniforTk}
\lim_{k\rightarrow+\infty}\sup_{0\le t\le T}|T^{k,j}_{\lceil\epsilon^{-2}_kt\rceil} - t|=0
\end{equation}
almost surely and in $L^2(\mathbb{P})$ for each $j=1,\ldots,d$.

\begin{definition}\label{discreteskeleton}
The structure $\mathscr{D} = \{\mathcal{T}, A^{k,j}; k\ge 1, 1\le j\le d\}$ is called a \textbf{discrete-type skeleton} for the Brownian motion.
\end{definition}
\subsection{Pathwise dynamics of the skeleton}
For a given choice of discrete-type skeleton $\mathscr{D}$, we will construct controlled functionals written on this structure. Before we proceed, it is important to point out that there exists a pathwise description of the dynamics generated by $\mathscr{D}$. Let us define

$$\mathbb{I}_k:=\Big\{ (i^k_1, \ldots, i^k_d); i^k_\ell\in \{-1,0,1\}~\forall \ell \in \{1,\ldots, d\}~\text{and}~\sum_{j=1}^d|i^k_j|=1   \Big\}$$
and $\mathbb{S}_k:=(0,+\infty)\times \mathbb{I}_k$. Let us define $\aleph: \mathbb{I}_k\rightarrow \{1,2,\dots,d\}\times\{-1,1\}$ by

\begin{equation}\label{alephamap}
\aleph(\tilde{i}^{k}):=\big(\aleph_1(\tilde{i}^{k}),\aleph_2(\tilde{i}^{k})\big):=(j,r),
\end{equation}
where $j\in\{1,\dots,d\}$ is the coordinate of $\tilde{i}^k\in \mathbb{I}_k$ which is different from zero and $r\in\{-1,1\}$ is the sign of $\tilde{i}^k$ at the coordinate $j$. The $n$-fold Cartesian product of $\mathbb{S}_k$ is denoted by $\mathbb{S}_k^n$ and a generic element of $\mathbb{S}^n_k$ will be denoted by

\begin{equation}\label{elementsSkn}
\mathbf{b}^k_n := (s^k_1,\tilde{i}^k_1, \ldots, s^k_n, \tilde{i}^k_n)\in \mathbb{S}^n_k
\end{equation}
where $(s^k_r,\tilde{i}^k_r)\in (0,+\infty)\times \mathbb{I}_k$ for $1\le r\le n$. Let us define $\eta^k_n:=(\eta^{k,1}_n, \ldots, \eta^{k,d}_n)$, where

$$
\eta^{k,j}_n:=\left\{
\begin{array}{rl}
1; & \hbox{if} \  \Delta A^{k,j} (T^k_n)>0 \\
-1;& \hbox{if} \  \Delta A^{k,j} (T^k_n)< 0 \\
0;& \hbox{if} \ \Delta A^{k,j} (T^k_n)=0.
\end{array}
\right.
$$
Let us define
\begin{equation}\label{Acaligrafico}
\mathcal{A}^k_n:= \Big(\Delta T^k_1, \eta^k_1, \ldots, \Delta T^k_n, \eta^k_n\Big)\in \mathbb{S}^n_k~a.s.
\end{equation}
One should notice that $$\widetilde{\mathcal{F}}^k_{T^k_n} = (\mathcal{A}^k_n)^{-1}(\mathcal{B}(\mathbb{S}^n_k)),$$
where $\mathcal{B}(\mathbb{S}^k_n)$ is the Borel $\sigma$-algebra generated by $\mathbb{S}^n_k; n\ge 1$.

\

\noindent \textbf{Transition probabilities}. The law of the system will evolve according to the following probability measure defined by

$$\mathbb{P}^k_r(E):=\mathbb{P}\{\mathcal{A}^k_r\in E\}; E\in \mathcal{B}(\mathbb{S}^r_k),$$
for $k,r\ge 1$. By the very definition,

$$\mathbb{P}^k_{n}(\cdot) = \mathbb{P}^k_{r}(\cdot\times \mathbb{S}^{r-n}_k)$$
for any $r> n\ge 1$.
By construction, $\mathbb{P}^k_{r}(\mathbb{S}^{n}_k\times \cdot)$ is a regular measure and $\mathcal{B}(\mathbb{S}_k)$ is countably generated, then it is known (see e.g III. 70-73 in~\cite{dellacherie2}) there exists ($\mathbb{P}^k_{n}$-a.s unique) a disintegration $\nu^k_{n,r}: \mathcal{B}(\mathbb{S}^{r-n}_k)\times\mathbb{S}^{n}_k\rightarrow[0,1]$ which realizes

$$\mathbb{P}^k_{r}(D) = \int_{\mathbb{S}^{n}_k}\int_{\mathbb{S}^{r-n}_k} 1\!\!1_{D}(\textbf{b}^k_{n},q^k_{n,r})\nu^k_{n,r} (dq^k_{n,r}|\textbf{b}^k_{n})\mathbb{P}^k_{n}(d\textbf{b}^k_{n})$$
for every $D\in \mathcal{B}(\mathbb{S}^{r}_k)$, where $q^k_{n,r}$ is the projection of $\textbf{b}^k_r$ onto the last $(r-n)$ components, i.e., $q^k_{n,r} = (s^k_{n+1},\tilde{i}^k_{n+1}, \ldots,s^k_{r},\tilde{i}^k_{r} )$ for a list $\textbf{b}^k_r = (s^k_1,\tilde{i}^k_1, \ldots, s^k_r,\tilde{i}^k_r)\in \mathbb{S}^r_k$. If $r=n+1$, we denote $\nu^k_{n+1}:=\nu^k_{n,n+1}$. By the very definition, for each $E\in \mathcal{B}(\mathbb{S}_k)$ and $\mathbf{b}^k_{n}\in \mathbb{S}_k^{n}$, we have

\begin{equation}\label{form1dis}
\nu^k_{n+1}(E|\mathbf{b}^k_{n})= \mathbb{P}\Big\{(\Delta T^k_{n+1}, \eta^k_{n+1})\in E|\mathcal{A}^k_{n} = \mathbf{b}^k_{n}\Big\}; n\ge 1.
\end{equation}
The explicit expression of the transition kernel (\ref{form1dis}) is derived as follows. For a given $\mathbf{b}^k_n = (s^k_1,\tilde{i}^k_1,\ldots, s^k_n, \tilde{i}^k_n)$, we define

\begin{equation}\label{pfunction}
\wp_\lambda(\textbf{b}^k_{n}):=\max\{1\le j\le n; \aleph_1(\tilde{i}^k_j)=\lambda\},
\end{equation}
where in (\ref{pfunction}), we make the convention that $\max\{\emptyset\}=0$. For each $\textbf{b}^k_n\in \mathbb{S}^n_k$, we set

\begin{equation}\label{tknfunction}
t^k_n(\textbf{b}^k_n) : = \sum_{\beta=1}^n s^k_\beta.
\end{equation}
We then define

\begin{equation}\label{tkmodfunction}
t^{k,\lambda}_{\mathbb{j}_{\lambda}}(\textbf{b}^k_{n}): = \sum_{\beta=1}^{\wp_\lambda(\textbf{b}^k_{n})}s^k_\beta
\end{equation}
for $\lambda\in \{1,\ldots, d\}$ and $\textbf{b}^k_n\in \mathbb{S}^n_k$. We set $t^{k,\lambda}_0 = t^k_0 = 0$ and

$$\Delta^{k,\lambda}_n(\mathbf{b}^k_n):=t^k_n(\mathbf{b}^k_n)  - t^{k,\lambda}_{\mathbb{j}_{\lambda}}(\mathbf{b}^k_n).$$

When no confusion arises, we omit the dependence on the variable $\textbf{b}^k_n$ in $t^k_n$, $t^{k,\lambda}_{\mathbb{j}_{\lambda}}$ and $\Delta^{k,\lambda}_n$. Let $f_k$ be the density of the hitting time $T^{k,1}_1$ (see e.g Section 5.3 in \cite{milstein}). We make use of the information set described in (\ref{pfunction}), (\ref{tknfunction}) and (\ref{tkmodfunction}). We define

$$f^k_{min}(\textbf{b}^k_n,j,t): = \prod_{\lambda\neq j}^{d}f_k\big(t+\Delta^{k,\lambda}_n(\textbf{b}^k_n) \big)$$
for $(\textbf{b}^k_n,j,t)\in \mathbb{S}^n_k\times \{1,\ldots, d\}\times \mathbb{R}_+.$

\begin{proposition}\label{disinteresult}
For each $\mathbf{b}^k_{n}\in\mathbb{S}^{n}_k$, $(j,\ell)\in \{1,\ldots, d\}\times \{-1,1\}$ and $-\infty< a < b < +\infty$, we have

\begin{equation}\label{disintegrationformula}
\begin{split}
& \mathbb{P}\left\{\Delta T^k_{n+1}\in (a,b); \aleph(\eta^k_{n+1})= (j,\ell)\big|\mathcal{A}^k_{n}=\mathbf{b}^k_{n}\right\}\\
&=\displaystyle \frac{1}{2}\left\{\frac{\displaystyle \int_{a+\Delta^{k,j}_n}^{b+\Delta^{k,j}_n}f_{k}\left(x\right)dx}{\displaystyle \int_{\Delta^{k,j}_n}^{+\infty}f_{k}\left(x\right)dx }\right\} \left\{\frac{\displaystyle\int_{-\infty}^{0}\int_{-s}^\infty f_{k}\big(s+t+\Delta^{k,j}_n\big)f^k_{min}(\mathbf{b}^k_n,j,t)dtds}{\displaystyle\prod_{\lambda=1}^d \int_{\Delta^{k,\lambda}_n}^{+\infty} f_{k}(t)dt}\right\};~\text{if}~d>1\\
&= \frac{1}{2}\int_a^b f_k(s)ds;~\text{if}~d=1.
\end{split}
\end{equation}
\end{proposition}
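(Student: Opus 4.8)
The plan is to compute the conditional law of the pair $(\Delta T^k_{n+1}, \eta^k_{n+1})$ given $\{\mathcal{A}^k_n = \mathbf{b}^k_n\}$ by exploiting the fact that, conditionally on $\widetilde{\mathcal{F}}^k_{T^k_n}$, the clock of each coordinate $j$ restarts: by the strong Markov property of Brownian motion applied at $T^k_n$, the residual hitting times $T^{k,j}_{\wp_j+1} - T^k_n$ (where $\wp_j = \wp_j(\mathbf{b}^k_n)$ records the last index at which coordinate $j$ jumped) are, for $j = 1, \ldots, d$, conditionally independent, each distributed as a hitting time $T^{k,j}_1$ conditioned to exceed the elapsed time $\Delta^{k,j}_n = t^k_n - t^{k,j}_{\mathbb{j}_j}$ since that coordinate last jumped. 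The density of $T^{k,1}_1$ is $f_k$ (all $d$ coordinates share the same law since $\epsilon_k$ is common), so the $j$-th residual time $R_j$ has conditional density $x \mapsto f_k(x + \Delta^{k,j}_n)/\int_{\Delta^{k,j}_n}^\infty f_k(u)\,du$ on $(0,\infty)$. Note also that $\Delta T^k_{n+1} = \min_{1\le \lambda\le d} R_\lambda$, that $\eta^k_{n+1}$ has the coordinate achieving that minimum equal to $\pm 1$ (sign determined by an independent fair coin, since the Brownian increment sign at a hitting time is symmetric and independent of the time), and all other coordinates of $\eta^k_{n+1}$ equal to $0$ almost surely, because two distinct Brownian coordinates hit their levels at the same instant with probability zero.

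The second step is to assemble these ingredients into the formula. For $d = 1$ there is a single residual time $R_1$ with conditional density $f_k(x + \Delta^{k,1}_n)/\int_{\Delta^{k,1}_n}^\infty f_k(u)\,du$; but in this one-dimensional case $\Delta^{k,1}_n \equiv 0$ (coordinate $1$ jumps at every step, so $\wp_1(\mathbf{b}^k_n) = n$, giving $t^{k,1}_{\mathbb{j}_1} = t^k_n$ and hence $\Delta^{k,1}_n = 0$), so the density reduces to $f_k$ itself, and folding in the independent fair sign gives the stated $\tfrac12 \int_a^b f_k(s)\,ds$. For $d > 1$, I would write the event $\{\Delta T^k_{n+1} \in (a,b),\ \aleph(\eta^k_{n+1}) = (j,\ell)\}$ as $\{\ell\text{-coin}\} \cap \{R_j \in (a+?, \ldots)\} \cap \bigcap_{\lambda\ne j}\{R_\lambda > R_j\}$, extract the factor $\tfrac12$ for the sign, and integrate the joint conditional density $\prod_{\lambda=1}^d \big[f_k(\cdot + \Delta^{k,\lambda}_n)/\int_{\Delta^{k,\lambda}_n}^\infty f_k\big]$ over $\{R_j \in (a,b),\ R_\lambda > R_j\ \forall \lambda \ne j\}$. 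Carrying out the $\lambda \ne j$ integrations first, then the substitution $x = s + t + \Delta^{k,j}_n$ with $t = R_j - $ (something), and matching the normalizing product $\prod_{\lambda=1}^d \int_{\Delta^{k,\lambda}_n}^\infty f_k$ against the denominators, should reproduce the displayed double-integral expression with $f^k_{min}(\mathbf{b}^k_n, j, t) = \prod_{\lambda\ne j} f_k(t + \Delta^{k,\lambda}_n)$ in the integrand.

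I would carry this out in the following order: (i) fix $\mathbf{b}^k_n$ and invoke the strong Markov property at $T^k_n$ together with the structure $\widetilde{\mathcal{F}}^k_{T^k_n} = \sigma(\mathcal{A}^k_n)$ to reduce everything to the conditional joint law of the residual hitting times $(R_1, \ldots, R_d)$; (ii) identify each $R_\lambda$ as a fresh $T^{k,1}_1$-type hitting time conditioned to exceed $\Delta^{k,\lambda}_n$, using that $t^k_n - t^{k,\lambda}_{\mathbb{j}_\lambda}$ is exactly the real time elapsed in coordinate $\lambda$ since its last jump; (iii) record that the residuals are conditionally independent across coordinates and that the jump sign of the minimizing coordinate is an independent fair $\pm 1$ while non-minimizing coordinates contribute $0$ almost surely; (iv) compute $\mathbb{P}\{\min_\lambda R_\lambda \in (a,b),\ \arg\min = j\}$ by an explicit $d$-fold integral against the product density; (v) simplify via the change of variables to land on the stated expression, and separately check the degenerate $d=1$ case.

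The main obstacle I anticipate is step (ii): making fully rigorous the claim that the conditional law of the residual hitting time in coordinate $\lambda$, given $\mathcal{A}^k_n = \mathbf{b}^k_n$, is the unconditional law of $T^{k,1}_1$ tilted to exceed $\Delta^{k,\lambda}_n$. This requires care because $T^k_n$ is itself a complicated stopping time built from the order statistics of all coordinates, so one must argue that at time $T^k_n$ the coordinate-$\lambda$ Brownian path has run for exactly $t^k_n(\mathbf{b}^k_n) - t^{k,\lambda}_{\mathbb{j}_\lambda}(\mathbf{b}^k_n)$ units since last touching a level, that it has not yet exceeded $\epsilon_k$ in that span (hence the conditioning-to-exceed), and that by the strong Markov property and independence of the $d$ Brownian coordinates the post-$T^k_n$ evolution is a fresh independent $d$-dimensional Brownian motion — so the residual times are mutually conditionally independent with the asserted tilted marginals. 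Once this measure-theoretic bookkeeping is in place, the remaining computation in steps (iv)–(v) is a routine, if slightly tedious, multivariable integral.
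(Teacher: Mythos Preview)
The paper does not actually prove this proposition: immediately after the statement it writes ``The proof of this formula is presented in \cite{LEAO_OHASHI2017.3}'', deferring entirely to a companion paper listed as \emph{in preparation}. So there is no in-paper argument to compare against, and your plan is effectively a from-scratch derivation.

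Your steps (i)--(iv) are the correct conceptual route. Conditioning on $\mathcal{A}^k_n=\mathbf{b}^k_n$ does amount to: for each coordinate $\lambda$, the increment $T^{k,\lambda}_{\wp_\lambda+1}-T^{k,\lambda}_{\wp_\lambda}$ is an independent copy of $T^{k,1}_1$ conditioned to exceed $\Delta^{k,\lambda}_n$, and the sign at the next jump is an independent fair coin. Carrying out (iv) directly gives
\[
\mathbb{P}\big\{\Delta T^k_{n+1}\in(a,b),\ \aleph(\eta^k_{n+1})=(j,\ell)\,\big|\,\mathcal{A}^k_n=\mathbf{b}^k_n\big\}
=\frac{1}{2}\,\frac{\displaystyle\int_a^b f_k\big(r+\Delta^{k,j}_n\big)\prod_{\lambda\ne j}\int_{r+\Delta^{k,\lambda}_n}^\infty f_k(u)\,du\,\,dr}{\displaystyle\prod_{\lambda=1}^d\int_{\Delta^{k,\lambda}_n}^\infty f_k(u)\,du}.
\]

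The genuine gap is your step (v). The expression above does \emph{not} factor as (an integral over $(a,b)$) times (a constant independent of $a,b$), whereas the displayed formula in the proposition is precisely such a product. To see the mismatch concretely, take $d=2$, $n=0$ (so $\Delta^{k,1}_0=\Delta^{k,2}_0=0$): your (iv) gives conditional density $\tfrac12 f_k(r)\big(1-F_k(r)\big)$ in $r$, while the paper's product reads $\tfrac12 f_k(r)\cdot\int_0^\infty f_k(t)F_k(t)\,dt=\tfrac14 f_k(r)$. Both integrate to $\tfrac14$ over $(0,\infty)$ but are not equal as functions of $r$. So the ``routine substitution'' you anticipate in (v) will not land on the printed formula; you will instead arrive at the single-integral expression above. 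Be prepared either to present that as the correct transition kernel, or to check the cited companion paper for the intended statement, rather than trying to massage your integral into the displayed product form.
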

The proof of this formula is presented in \cite{LEAO_OHASHI2017.3}.

\subsection{Controlled imbedded discrete structures}
In this section, we present the differential operators acting on functionals of $\mathscr{D}$ which will constitute the basic pillars for analysing fully non-Markovian control problems. For this purpose, it will be important to enlarge $\widetilde{\mathcal{F}}^k_{T^k_n}$ and $\widetilde{\mathcal{F}}^k_{T^k_{n+1}-}$ by means of universally measurable sets. For readers who are not familiar with this class of sets, we refer to e.g \cite{bertsekas}. If $R$ is a Borel space, let $P(R)$ be the space of all probability measures defined on the Borel $\sigma$-algebra $\mathcal{B}(R)$ generated by $R$. We denote

$$\mathcal{E}(R):=\bigcap_{p\in P(R)}\mathcal{B}(R,p)$$
where $\mathcal{B}(R,p)$ is the $p$-completion of $\mathcal{B}(R)$ w.r.t $p\in P(R)$.

Let $\mathcal{G}^k_0$ be the trivial $\sigma$-algebra and for $n\ge 1$, we set

$$\mathcal{G}^k_n:= \{\mathcal{A}^k_n\in D; D~\in\mathcal{E}(\mathbb{S}^n_k) \},$$
$$\mathcal{G}^k_{n+1-} := \{(\mathcal{A}^k_n,\Delta T^k_{n+1})\in D; D\in\mathcal{E}(\mathbb{S}^n_k\times\mathbb{R}_+)\}.$$
One should notice that

$$\widetilde{\mathcal{F}}^k_{T^k_n}\subset \mathcal{G}^k_n \subset \mathcal{F}^k_{T^k_n}; n\ge 0.$$
Moreover, the following remark holds.
\begin{remark}\label{equalitycond}
If $Y\in L^1(\mathbb{P})$, then $\mathbb{E}\Big[Y|\mathcal{G}^k_n\Big] =\mathbb{E}\Big[Y|\mathcal{F}^k_{T^k_n}\Big]$~a.s and $\mathbb{E}\Big[Y|\mathcal{G}^k_{n+1-}\Big] =\mathbb{E}\Big[Y|\mathcal{F}^k_{T^k_{n+1}-}\Big]$ a.s for every $n\ge 0$.
\end{remark}

Let us start to introduce a subclass $U^{k,T^k_n}_{T^k_m}\subset U_{T^k_m}^{T^k_n}; 0\le m < n < \infty$. For $m < n$, let $U^{k,T^k_n}_{T^k_m}$ be the set of $\mathbb{F}^k$-predictable processes of the form

\begin{equation}\label{controlform}
v^k(t) = \sum_{j=m+1}^{n}v^{k}_{j-1}1\!\!1_{\{T^k_{j-1}< t\le T^k_j\}}; \quad T^k_m < t \le T^k_n,
\end{equation}
where for each $j=m+1, \ldots, n$, $v^k_{j-1}$ is an $\mathbb{A}$-valued $\mathcal{G}^k_{{j-1}}$-measurable random variable. To keep notation simple, we use the shorthand notations

\begin{equation}\label{uknm}
U^{k,n}_m: = U^{k,T^k_n}_{T^k_m}; 0\le m < n
\end{equation}
and $U^k_m$ as the set of all controls $v^k:~]]T^k_m, +\infty[[\rightarrow\mathbb{A}$ of the form

$$
v^k(t) = \sum_{j\ge m+1}v^{k}_{j-1}1\!\!1_{\{T^k_{j-1}< t\le T^k_j\}}; \quad T^k_m < t,
$$
where $v^k_{j-1}$ is an $\mathbb{A}$-valued $\mathcal{G}^k_{{j-1}}$-measurable random variable for every $j\ge m+1$ for an integer $m\ge 0$. We also use a shorthand notation for $u^k\otimes_{T^k_n} v^k$: With a slight abuse of notation, for $u^k\in U^{k,m}_0$ and $v^k\in U^{k,m}_{n}$ with $n < m$, we write

\begin{equation}\label{Kconcatenation}
(u^k\otimes_n v^k): = (u^k_0, \ldots, u^k_{n-1}, v^k_n, \ldots, v^k_{m-1} ).
\end{equation}
This notation is consistent since $u^k\otimes_{T^k_n} v^k$ only depends on the list of variables $(u^k_0, \ldots, u^k_{n-1}, v^k_n, \ldots, v^k_{m-1} )$ whenever $u^k:~]]0,T^k_n]]\rightarrow\mathbb{A}$ and $v^k:~]]T^k_n,T^k_m]]\rightarrow\mathbb{A}$ are controls of the form (\ref{controlform}) for $n < m$. With a slight abuse of notation, in order to alleviate notation we also write

\begin{equation}\label{abbreviatedU}
U_{\ell}:=U_{T^k_{\ell}}, U^m_\ell := U^{T^k_m}_{T^k_\ell}
\end{equation}
and we set

\begin{equation}\label{abuseconcatenation}
u\otimes_{\ell}\phi:=u\otimes_{T^k_{\ell}}\phi\quad \text{if}~\phi\in U_{\ell}, u\in U_0
\end{equation}
for integers $\ell\ge 0$.
\begin{remark}
It is important to observe that any control $u^k\in U^{k,q}_0$ is completely determined by a list of universally measurable functions $g^k_j:\mathbb{S}^j_k\rightarrow \mathbb{A}; 0\le j\le q-1$ in the sense that

$$u^k_j = g^k_j (\mathcal{A}^k_j); j=0, \ldots, q-1,$$
where $g^k_0$ is constant a.s.
\end{remark}


Let us now introduce the analogous concept of controlled Wiener functional but based on the filtration $\mathbb{F}^k$. For this purpose, we need to introduce some further notations. Let us define

\begin{equation}\label{ektdef}
e(k,T):=d \lceil \epsilon^{-2}_kT\rceil,
\end{equation}
where $\lceil x\rceil$ is the smallest integer greater or equal to $x\ge 0$. From (\ref{uniforTk}) and Lemma 3.1 in \cite{LEAO_OHASHI2017.2}, the authors show that

$$T^k_{e(k,t)}\rightarrow t~\text{as}~k\rightarrow+\infty$$
a.s and $L^2(\mathbb{P})$ for each $t\ge 0$. Let $O_T(\mathbb{F}^k)$ be the set of all stepwise constant $\mathbb{F}^k$-optional processes of the form

$$Z^k(t) = \sum_{n=0}^\infty Z^k(T^k_n)\mathds{1}_{\{T^k_n\le t\wedge T^k_{e(k,T)} < T^k_{n+1}\}}; 0\le t\le T,$$
where $Z^k(T^k_n)\in \mathcal{G}^k_{n}; n\ge 0$ and $\mathbb{E}[Z^k,Z^k](T) < \infty$ for every $k\ge 1$.

\begin{definition}\label{GASdef}
A \textbf{weak controlled imbedded discrete structure} $\mathcal{Y} = \big((Y^k)_{k\ge 1},\mathscr{D}\big)$ associated with a controlled Wiener functional $Y$ consists of the following objects: a discrete-type skeleton $\mathscr{D}$ and a map $u^k\mapsto Y^{k}(\cdot,u^k)$ from $U^{k,e(k,T)}_0$ to $O_T(\mathbb{F}^k)$ such that

\begin{equation}\label{antiprop}
Y^{k}(T^k_{n+1},u^k)~\text{depends on the control only at}~(u^k_0, \ldots, u^k_n)
\end{equation}
for each integer $n\in \{0,\ldots,e(k,T)-1\}$, and for each $t\in[0,T]$ and $u\in U_0$,
\begin{equation}\label{ucovprop}
\lim_{k\rightarrow+\infty}\mathbb{E}|Y^{k}(T^k_{e(k,t)},u^k) - Y(t,u)|=0,
\end{equation}
whenever $u^k\in U^{k,e(k,T)}_0$ satisfies $\lim_{k\rightarrow+\infty}u^k=u$ in $L^2_a(\mathbb{P}\times Leb)$.
\end{definition}

\begin{remark}
We will show (see Theorem \ref{density}) that for \textit{every} control $u\in  U^T_0$, one can explicitly construct a sequence $u^k\in U^{k,e(k,T)}_0; k\ge 1$ such that $\lim_{k\rightarrow+\infty}u^k=u$ in $L^2_a(\mathbb{P}\times Leb)$. Therefore, the above definition is not void. In this case, condition (\ref{ucovprop}) can be interpreted as a rather weak property of continuity.
\end{remark}

In the sequel, we are going to fix a weak controlled imbedded discrete structure $\mathcal{Y} = \big((Y^k)_{k\ge 1},\mathscr{D}\big)$ for a controlled Wiener functional $Y$. For a given $u^k\in U^{k,e(k,T)}_0$, we clearly observe that we shall apply the same arguments presented in Section 3 in \cite{LEAO_OHASHI2017.1} to obtain a differential form for $Y^k(\cdot,u^k)$.

\begin{remark}
In contrast to the framework of one fixed probability measure in \cite{LEAO_OHASHI2017.1}, in the present context it is essential to work path wisely on the level of weak controlled imbedded structures. In other words, we need to \textit{aggregrate} the structure into a single deterministic finite sequence of maps due to a possible appearance of mutually singular measures induced by $Y^{k}(\cdot,u^k)$ as $u^k$ varies over the set of controls $U^{k,e(k,T)}_0$.
\end{remark}

Let us now start the pathwise description. The whole dynamics will take place in the history space $\mathbb{H}^{k}:= \mathbb{A}\times \mathbb{S}_k$. We denote $\mathbb{H}^{k,n}$ and $\mathbb{I}^n_k$ as the $n$-fold Cartesian product of $\mathbb{H}^{k}$ and $\mathbb{I}_k$, respectively. The elements of $\mathbb{H}^{k,n}$ will be denoted by

\begin{equation}\label{elementsSknA}
\textbf{o}^{k}_n := \Big( (a^k_0,s^k_1,\tilde{i}^k_1), \ldots, (a^k_{n-1},s^k_{n}, \tilde{i}^k_n) \Big)
\end{equation}
where $(a^k_0, \ldots, a^k_{n-1})\in \mathbb{A}^n$, $(s^k_1, \ldots, s^k_n)\in (0,+\infty)^n$ and $(\tilde{i}^k_1, \ldots, \tilde{i}^k_n)\in \mathbb{I}^n_k$. In the remainder of this article, for any $(r,n)$ such that $1\le r\le n$ and $\textbf{o}^{k}_n = ( (a^k_0,s^k_1,\tilde{i}^k_1), \ldots, (a^k_{n-1},s^k_{n}, \tilde{i}^k_n))$, we denote
$$\pi_{r}(\mathbf{o}^k_n):=\Big( (a^k_0,s^k_1,\tilde{i}^k_1), \ldots, (a^k_{r-1},s^k_{r}, \tilde{i}^k_r)\Big)$$
as the projection of $\mathbf{o}^k_n\in \mathbb{H}^{k,n}$ onto the first $r$ coordinates.

If $F^k_\ell:\mathbb{H}^{k,\ell}\rightarrow\mathbb{R}; \ell=0,\ldots, e(k,T)$ is a list of universally measurable functions ($F^k_0$ is a constant), we then define

\begin{equation}\label{pathwiseVERT}
\nabla_j F^k(\mathbf{o}^k_n):=\frac{F^k_n (\mathbf{o}^k_n) - F^k_{n-1}(\pi_{n-1}(\mathbf{o}^k_{n}))}{\epsilon_k\aleph_2(\tilde{i}^k_n)}\mathds{1}_{\{\aleph_1(\textbf{b}^k_n) = j\}},
\end{equation}
for $\textbf{o}^k_n=( (a^k_0,s^k_1,\tilde{i}^k_1), \ldots, (a^k_{n-1},s^k_{n}, \tilde{i}^k_n)\in \mathbb{H}^n_k, 1\le n\le e(k,T), j=1,\ldots, d$ and  $\mathbf{b}^k_n =( (s^k_1,\tilde{i}^k_1), \ldots, (s^k_{n}, \tilde{i}^k_n))$. We also define

\begin{equation}\label{pathwiseUcond}
\mathscr{U}F^k(\mathbf{o}^k_n,a^k_n):=\int_{\mathbb{S}_k} \frac{F^k_{n+1}(\mathbf{o}^k_n, a^k_n, s^k_{n+1},\tilde{i}^k_{n+1}) - F^k_{n}(\mathbf{o}^k_n)}{\epsilon^2_k}\nu^k_{n+1}(ds^k_{n+1}d\tilde{i}^k_{n+1}|\textbf{b}^k_n),
\end{equation}
for $\textbf{o}^k_n\in \mathbb{H}^n_k,0\le n \le (k,T)-1$, where $\mathbf{b}^k_n$ are the elements of $\mathbf{o}^k_n$ which belong to $\mathbb{S}^n_k$.

The operators $(\nabla_j,\mathscr{U}; j=1,\ldots,d)$ will describe the differential form associated with the controlled structure $\mathcal{Y} = \big((Y^k)_{k\ge 1},\mathscr{D}\big)$. In particular, (\ref{pathwiseUcond}) will play the role of a Hamiltonian in the context of the control problem. Let us now make a connection of (\ref{pathwiseVERT}) and (\ref{pathwiseUcond}) to a differential form composed with the noise $\mathcal{A}^k$. Recall that any control $u^k\in U^{k,e(k,T)}_0$ is completely determined by a list of universally measurable functions $g^k_i:\mathbb{S}^i_k\rightarrow \mathbb{A}; 0\le i\le (e(k,T)-1)$. For a given list of functions $g^k$ representing a control, we define $\Xi^{k,g^k}_j:\mathbb{S}^{j}_k\rightarrow \mathbb{H}^{k,j}$ as follows

\begin{equation}\label{Xioperator}
\Xi^{k,g^k}_j \big(s^k_1, \tilde{i}^k_1, \ldots, s^k_j,\tilde{i}^k_j\big):=\Big((g^k_0,s^k_1,\tilde{i}^k_1), \ldots, (g^k_{j-1}(s^k_1,\tilde{i}^k_1, \ldots, s^k_{j-1},\tilde{i}^k_{j-1}),s^k_j, \tilde{i}^k_j)\Big)
\end{equation}
where $1\le j\le e(k,T)$. We identify $\Xi^{k,g^k}_0$ as a constant (in the action space $\mathbb{A}$) which does not necessarily depend on a list of controls $g^k = (g^k_{n-1})_{n=1}^{e(k,T)}$. The importance of working with those objects relies on the following fact: For a given list of controls $(u^k_j)_{j=0}^{e(k,T)-1}$ based on $(g^k_j)_{j=0}^{e(k,T)-1}$, the Doob-Dynkin's theorem yields the existence of a list of universally measurable functions $F^k_j:\mathbb{H}^j_k\rightarrow\mathbb{R}$ such that

\begin{equation}\label{funcREP}
Y^k(T^k_n,u^k) = F^k_n\big( \Xi^{k,g^k}_n(\mathcal{A}^k_n)  \big)~a.s,~n=0,\ldots, e(k,T).
\end{equation}
Let us know use semimartingale theory to find a differential structure for $\mathcal{Y} = \big((Y^k)_{k\ge 1},\mathscr{D}\big)$. In the sequel, we denote $\mathcal{P}^k$ as the $\mathbb{F}^k$-predictable $\sigma$-algebra over $[0,T]\times \Omega$, $(\cdot)^{p,k}$ is the $\mathbb{F}^k$-dual predictable projection operator and let $\mu_{[A^{k,j}]}$ be the Dol\'eans measure (see e.g Chap.5 in \cite{he}) generated by the point process $[A^{k,j},A^{k,j}]; 1\le j\le d, k\ge 1$. Let us denote

$$
\mathcal{D}^{\mathcal{Y},k,j}Y^k(s,u^k) :=  \sum_{\ell=1}^{\infty} \frac{\Delta Y^{k} (T^{k,j}_\ell,u^k)}{\Delta A^{k,j}(T^{k,j}_\ell)} 1\!\!1_{\{T^{k,j}_\ell=s\}}; 0\le s\le T,\quad \mathcal{Y} = \big((Y^k)_{k\ge 1},\mathscr{D}\big),
$$

\begin{equation}\label{weakinfG}
U^{\mathcal{Y},k,j}Y^k(s,u^k):=\mathbb{E}_{\mu_{[A^{k,j}]}}\Bigg[\frac{\mathcal{D}^{\mathcal{Y},k,j}Y^k(\cdot,u^k)}{\Delta A^{k,j}}\Big|\mathcal{P}^k\Bigg](s);~ 0\le s\le T, k\ge 1, 1\le j\le d,
\end{equation}
where $U^{\mathcal{Y},k,j}Y^k(\cdot,u^k)$ is the unique (up to sets of $\mu_{[A^{k,j}]}$-measure zero) $\mathbb{F}^k$-predictable process such that

$$\Bigg(\int_0^\cdot\frac{\mathcal{D}^{\mathcal{Y},k,j}Y^k(\cdot,u^k)}{\Delta A^{k,j}}d [A^{k,j},A^{k,j}]\Bigg)^{p,k} = \int_0^\cdot \mathbb{E}_{\mu_{[A^{k,j}]}}\Big[\frac{\mathcal{D}^{\mathcal{Y},k,j}Y^k(\cdot,u^k)}{\Delta A^{k,j}}\big|\mathcal{P}^k\Big] d\langle A^{k,j},A^{k,j}\rangle.$$
Here, the stochastic process $\mathcal{D}^{\mathcal{Y},k,j}Y^k(\cdot,u^k)/\Delta A^{k,j}$ is null on the complement of $\cup_{n=1}^\infty \{(\omega,t);T^{k,j}_n(\omega)=t\}.$ Let us denote

$$
\mathbb{D}^{\mathcal{Y},k,j} Y^k(s,u^k) :=  \sum_{\ell=1}^{\infty} \mathcal{D}^{\mathcal{Y},k,j}Y^k(s,u^k) 1\!\!1_{\{T^{k}_\ell\le s< T^{k}_{\ell+1}\}}
$$
and
\begin{equation}\label{uncOPERATORS}
\mathbb{U}^{\mathcal{Y},k,j}Y^k(s,u^k) := U^{\mathcal{Y},k,j}Y^k(s,u^k) \frac{d\langle A^{k,j}, A^{k,j}\rangle}{ds},
\end{equation}
for $0\le s\le T$ and $u^k\in U^k_0$. A direct application of Proposition 3.1 in \cite{LEAO_OHASHI2017.1} to $Y^k(\cdot,u^k)\in O_T(\mathbb{F}^k)$ yields

\begin{equation}\label{nonlinearDFORM}
Y^{k}(t,u^k) = Y^k(0,u^k) + \sum_{j=1}^d \oint_0^t \mathbb{D}^{\mathcal{Y},k,j}Y^k(s,u^k)dA^{k,j}(s) + \sum_{j =1}^d \int_0^t\mathbb{U}^{\mathcal{Y},k,j} Y^k(s,u^k) ds,
\end{equation}
for $0\le t \le T$. For the purpose of this article, the most important aspect of this differential representation is revealed on the time scale $T^k_0, \ldots, T^k_{e(k,T)}$. By applying Lemma 3.2 in \cite{LEAO_OHASHI2017.1}, we have

\begin{equation}\label{ger1}
\sum_{j=1}^d U^{\mathcal{Y},k,j}Y^k(T^k_{n+1},u^k) = \mathbb{E}\Bigg[\frac{\Delta Y^k(T^k_{n+1},u^k)}{\epsilon_k^2}\Big|\mathcal{G}^k_{n+1-}\Bigg]~a.s.
\end{equation}
In other words,

\begin{equation}\label{ger2}
\mathbb{E}\Bigg[\sum_{j=1}^d U^{\mathcal{Y},k,j}Y^k(T^k_{n+1},u^k)\Big|\mathcal{G}^k_n\Bigg] = \mathbb{E}\Bigg[\frac{\Delta Y^k(T^k_{n+1},u^k)}{\epsilon_k^2}\Big|\mathcal{G}^k_{n}\Bigg]~a.s,
\end{equation}
for each $0\le n\le e(k,T)-1$ and $k\ge 1$. By construction, if $Y^k$ is given by the functional representation (\ref{funcREP}), the functions $(\nabla_jF^k,\mathscr{U}F^k; j=1,\ldots, d)$ realize the following identities: For every $u^k\in U^{k,e(k,T)}_0$ associated with $(g^k_j)_{j=0}^{e(k,T)-1}$,

\begin{equation}\label{pathwiseVERTPROB}
\nabla_j F^k(\Xi^{k,g^k}_n(\mathcal{A}^k_n))=\mathcal{D}^{\mathcal{Y},k,j}Y^k(T^k_n,u^k)\mathds{1}_{\{\aleph_1(\mathcal{A}^k_n) = j\}}~a.s,
\end{equation}
for each $j=1,\ldots, d, n=1, \ldots, e(k,T)$ and

\begin{equation}\label{pathwiseUcondPROB}
\mathscr{U}F^k(\Xi^{k,g^k}_n(\mathcal{A}^k_n),g^k_n(\mathcal{A}^k_n))=\mathbb{E}\Bigg[\sum_{j=1}^d U^{\mathcal{Y},k,j}Y^k(T^k_{n+1},u^k)\Big|\mathcal{G}^k_n\Bigg]~a.s\ \
\end{equation}
for $n=0,\ldots, e(k,T)-1$.


The differential structure summarized in this section will play a key role in the obtention of $\epsilon$-optimal controls in a given non-Markovian control problem. It is not obvious that maximizing the function $\mathscr{U}F^k$ (for a suitable $F^k$) path wisely over the action space will provide such optimal objects. Next, we are going to start to explain how to achieve this.

\section{The controlled imbedded discrete structure for the value process}\label{CIDSVALUEsection}
In this section, we are going to describe controlled imbedded structures associated with an arbitrary value process

$$V(t,u) = \esssup_{v\in U_t^T}\mathbb{E}\big[\xi_X(u\otimes_t v)|\mathcal{F}_t\big]; u\in U^T_0, 0\le t\le T,$$
where the payoff $\xi$ is a bounded Borel functional and $X$ is an arbitrary controlled Wiener functional admitting a controlled structure $\big((X^k)_{k\ge 1},\mathscr{D}\big)$. Throughout this section, we are going to fix a structure

\begin{equation}\label{controlledstate}
u^k\mapsto X^{k}(\cdot,u^k)
\end{equation}
associated with $X$ such that (\ref{antiprop}) holds and we define

\begin{equation}\label{Kactionmap}
\xi_{X^k}(u^k):=\xi\big(X^k(\cdot, u^k)\big)
\end{equation}
for $u^k\in U^{e(k,T)}_0$. We then

\begin{equation}\label{discretevalueprocess}
V^{k}(T^k_n, u^k):=\esssup_{\phi^k\in U^{k,e(k,T)}_n}\mathbb{E}\Big[\xi_{X^k}(u^k\otimes_n\phi^k)\big|\mathcal{G}^k_{n}\Big]; n=1,\ldots, e(k,T)-1
\end{equation}
with boundary conditions

$$V^k(0):=V^k(0,u^k):=\sup_{\phi^k\in U^{k,e(k,T)}_0}\mathbb{E}\big[\xi_{X^k}(\phi^k)\big],\quad V^{k}(T^k_{e(k,T)}, u^k): = \xi_{X^k}(u^k).$$

One should notice that $V^k(T^k_n, u^k)$ only depends on $u^{k,n-1}:=(u^k_0, \ldots, u^k_{n-1})$ so it is natural to write

$$V^k(T^k_n, u^{k,n-1}) :=V^k(T^k_n, u^k); u^k\in U^{k,e(k,T)}_0, 0\le n\le e(k,T)$$
with the convention that $u^{k,-1}:=\mathbf{0}$. By construction, $V^k$ satisfies (\ref{antiprop}) in Definition \ref{GASdef}.

Similar to the value process $V$, we can write a dynamic programming principle for $V^k$ where the Brownian filtration is replaced by the discrete-time filtration $\mathcal{G}^k_n; n=e(k,T)-1,\ldots, 0$.

\begin{lemma}\label{lattice2}
Let $0\le n\le e(k,T)-1$. For each $\phi^k$ and $\eta^k$ in $U^{k,e(k,T)}_n$, there exists $\theta^k\in U^{k,e(k,T)}_n$ such that

$$\mathbb{E}\Big[\xi_{X^k}(\pi^k\otimes_n\theta^k)|\mathcal{G}^k_{n}\Big] =\mathbb{E}\Big[\xi_{X^k}(\pi^k\otimes_n\phi^k)|\mathcal{G}^k_{n}\Big]\vee \mathbb{E}\Big[\xi_{X^k}(\pi^k\otimes_n\eta^k)|\mathcal{G}^k_{n}\Big]~a.s$$
for every $\pi^k\in U^{k,n}_0$. Therefore, for each $\pi^k\in U^{k,n}_0$

$$\mathbb{E}\Bigg[\esssup_{\theta^k\in U^{k,e(k,T)}_n}\mathbb{E}\Big[\xi_{X^k}(\pi^k\otimes_n\theta^k)|\mathcal{G}^k_{n}\Big]\Big|\mathcal{G}^k_{j}\Bigg] = \esssup_{\theta^k\in U^{k,e(k,T)}_n}\mathbb{E}\Big[\xi_{X^k}(k,\pi^k\otimes_n\theta^k)|\mathcal{G}^k_{j}\Big]~a.s$$
if $0\le j\le n$ and $0\le n\le e(k,T)-1$.
\end{lemma}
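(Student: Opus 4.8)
The plan is to establish the lattice (directed upward) property first and then deduce the tower-type identity for the essential supremum as a formal consequence, following the classical pattern (see e.g. \cite{elkaroui, striebel}) but adapted to the discrete filtration $\mathcal{G}^k_n$.

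\textbf{Step 1: Lattice property.} Fix $n$ and $\phi^k,\eta^k\in U^{k,e(k,T)}_n$. Define the $\mathcal{G}^k_n$-measurable set
$$G:=\Big\{\mathbb{E}\big[\xi_{X^k}(\pi^k\otimes_n\phi^k)|\mathcal{G}^k_n\big]\ge \mathbb{E}\big[\xi_{X^k}(\pi^k\otimes_n\eta^k)|\mathcal{G}^k_n\big]\Big\},$$
which lies in $\mathcal{G}^k_n$ because both conditional expectations are $\mathcal{G}^k_n$-measurable, and set $\theta^k:=\phi^k\mathds{1}_G+\eta^k\mathds{1}_{G^c}$. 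The first point to check is that $\theta^k\in U^{k,e(k,T)}_n$: writing $\phi^k_{j-1}=g^k_{j-1}(\mathcal{A}^k_j)$ and $\eta^k_{j-1}=h^k_{j-1}(\mathcal{A}^k_j)$ for universally measurable $g^k,h^k$, and noting $G$ is of the form $\{\mathcal{A}^k_n\in D\}$ with $D\in\mathcal{E}(\mathbb{S}^n_k)$ (by Remark \ref{equalitycond} and the structure of $\mathcal{G}^k_n$), the concatenation $\theta^k_{j-1}=g^k_{j-1}\mathds{1}_D(\mathcal{A}^k_n)+h^k_{j-1}\mathds{1}_{D^c}(\mathcal{A}^k_n)$ is again a universally measurable function of $\mathcal{A}^k_j$ taking values in $\mathbb{A}$; this is the discrete counterpart of the finite-mixing property already recorded for $U^N_M$. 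Then, since $\xi_{X^k}(\pi^k\otimes_n\theta^k)=\xi_{X^k}(\pi^k\otimes_n\phi^k)\mathds{1}_G+\xi_{X^k}(\pi^k\otimes_n\eta^k)\mathds{1}_{G^c}$ by the non-anticipativity \eqref{antiprop} of $X^k$ and the definition of concatenation \eqref{Kconcatenation}, and $G\in\mathcal{G}^k_n$, we can pull $\mathds{1}_G$ and $\mathds{1}_{G^c}$ out of the conditional expectation to get
$$\mathbb{E}\big[\xi_{X^k}(\pi^k\otimes_n\theta^k)|\mathcal{G}^k_n\big]=\mathbb{E}\big[\xi_{X^k}(\pi^k\otimes_n\phi^k)|\mathcal{G}^k_n\big]\mathds{1}_G+\mathbb{E}\big[\xi_{X^k}(\pi^k\otimes_n\eta^k)|\mathcal{G}^k_n\big]\mathds{1}_{G^c},$$
which on $G$ equals the larger term and on $G^c$ equals the larger term, i.e. it equals the pointwise maximum, as claimed.

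\textbf{Step 2: Tower identity via the lattice property.} By Step 1 the family $\mathcal{C}:=\big\{\mathbb{E}[\xi_{X^k}(\pi^k\otimes_n\theta^k)|\mathcal{G}^k_n]:\theta^k\in U^{k,e(k,T)}_n\big\}$ is directed upward, so there is a sequence $\theta^{k,m}$ with $\mathbb{E}[\xi_{X^k}(\pi^k\otimes_n\theta^{k,m})|\mathcal{G}^k_n]\uparrow \esssup_{\theta^k}\mathbb{E}[\xi_{X^k}(\pi^k\otimes_n\theta^k)|\mathcal{G}^k_n]$ a.s. (standard fact about essential suprema of directed families). Applying $\mathbb{E}[\,\cdot\,|\mathcal{G}^k_j]$ for $j\le n$ and using monotone convergence for conditional expectations (valid since $\xi$ is bounded, so all quantities are uniformly bounded), the left-hand side of the asserted identity equals $\lim_m\mathbb{E}[\xi_{X^k}(\pi^k\otimes_n\theta^{k,m})|\mathcal{G}^k_j]$, which is $\le \esssup_{\theta^k}\mathbb{E}[\xi_{X^k}(\pi^k\otimes_n\theta^k)|\mathcal{G}^k_j]$. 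For the reverse inequality, note that for any fixed $\theta^k$, by the tower property $\mathbb{E}[\xi_{X^k}(\pi^k\otimes_n\theta^k)|\mathcal{G}^k_j]=\mathbb{E}\big[\mathbb{E}[\xi_{X^k}(\pi^k\otimes_n\theta^k)|\mathcal{G}^k_n]\,\big|\,\mathcal{G}^k_j\big]\le \mathbb{E}\big[\esssup_{\theta^k}\mathbb{E}[\xi_{X^k}(\pi^k\otimes_n\theta^k)|\mathcal{G}^k_n]\,\big|\,\mathcal{G}^k_j\big]$, and taking the essential supremum over $\theta^k$ on the left gives the claim. (The typo $\xi_{X^k}(k,\pi^k\otimes_n\theta^k)$ in the statement should read $\xi_{X^k}(\pi^k\otimes_n\theta^k)$.)

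\textbf{Expected main obstacle.} The routine probabilistic part is entirely standard; the one genuine point requiring care is the measurability bookkeeping in Step 1 — verifying that the mixed control $\theta^k$ genuinely lies in $U^{k,e(k,T)}_n$, i.e. that each $\theta^k_{j-1}$ is $\mathcal{G}^k_j$-measurable with values in $\mathbb{A}$. This hinges on the fact that $G$, being determined by $\mathcal{G}^k_n$-measurable conditional expectations, can be represented as $\{\mathcal{A}^k_n\in D\}$ with $D$ universally measurable, together with stability of the class of universally measurable functions under the operations $\mathds{1}_D g^k_{j-1}+\mathds{1}_{D^c}h^k_{j-1}$; once this is in place (using Remark \ref{equalitycond} and the definition of $\mathcal{G}^k_n$), the rest follows mechanically. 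I would therefore spend most of the write-up making this identification precise and treat the convergence arguments in Step 2 as standard, citing \cite{elkaroui} or \cite{striebel} for the directed-family manipulation.
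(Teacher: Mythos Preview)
Your proposal is correct and follows the same approach as the paper: define the $\mathcal{G}^k_n$-set $G$ where one conditional expectation dominates the other, mix $\theta^k=\phi^k\mathds{1}_G+\eta^k\mathds{1}_{G^c}$, and then invoke the standard directed-family argument (the paper simply cites Prop.~1.1.4 in \cite{lamberton}) to swap the esssup with $\mathbb{E}[\,\cdot\,|\mathcal{G}^k_j]$. Your Step 2 spells out explicitly what the paper leaves to the reference, and your emphasis on the measurability bookkeeping for $\theta^k$ is well placed --- the paper glosses over this entirely. One small index slip: $\phi^k_{j-1}$ is $\mathcal{G}^k_{j-1}$-measurable, so write $\phi^k_{j-1}=g^k_{j-1}(\mathcal{A}^k_{j-1})$ rather than $g^k_{j-1}(\mathcal{A}^k_j)$; since $j-1\ge n$ for controls in $U^{k,e(k,T)}_n$, the set $G=\{\mathcal{A}^k_n\in D\}$ is indeed $\mathcal{G}^k_{j-1}$-measurable and your argument goes through unchanged.
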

\begin{proof}
Let $G=\Big\{\mathbb{E}\Big[\xi_{X^k}(k,\pi^k\otimes_n\phi^k)|\mathcal{G}^k_{n}\Big] > \mathbb{E}\Big[\xi_{X^k}(\pi^k\otimes_n\eta^k)|\mathcal{G}^k_{n}\Big] \Big\}$. Choose $\theta^k = \phi^k1\!\!1_{G} + \eta^k1\!\!1_{G^c}$ and apply the finite mixing property to exchange the esssup into the conditional expectation (see e.g Prop 1.1.4 in \cite{lamberton}) to conclude the proof.
\end{proof}

\begin{proposition}\label{DPprop}
For each $u^k\in U^{k,e(k,T)}_0$, the discrete-time value process $V^k(\cdot, u^k)$ satisfies

\begin{equation} \label{DPE}
\begin{split}
&V^{k} (T^k_n , u^k) = \esssup_{\theta^{k}_n \in U^{k,n+1}_n}
\mathbb{E} \Bigg[ V^{k} \left(T^k_{n+1}, u^{k,n-1} \otimes_n \theta^k_n \right)    \mid \mathcal{G}^k_{n}\Bigg];~0\le n\le e(k,T)-1\\
&V^{k} (T^k_{e(k,T)} , u^k) = \xi_{X^k} (u^k)~a.s.
\end{split}
\end{equation}
On the other hand, if a
class of processes $\{Z^{k} (T^k_n, u^k); u^k \in U^{k,e(k,T)}_0; 0\le n\le e(k,T)\}$ satisfies the dynamic programming equation
(\ref{DPE}) for every $u^k\in U^{k,e(k,T)}_0$, then $Z^{k} (T^k_n, u^k)$ coincides with $V^{k}(T^k_n , u^k)~a.s$ for every $0\le n\le e(k,T)$ and for every $u^k \in U^{k,e(k,T)}_0$.
\end{proposition}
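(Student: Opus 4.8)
The plan is to prove the dynamic programming equation (\ref{DPE}) first, and then the uniqueness statement, both by backward induction on $n$ running from $e(k,T)$ down to $0$.

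\textbf{Step 1: The DPE holds.} For the terminal identity $V^{k}(T^k_{e(k,T)},u^k)=\xi_{X^k}(u^k)$ there is nothing to prove, it is the boundary condition in the definition (\ref{discretevalueprocess}). For the recursion at a fixed $n$, I would fix $\pi^k\in U^{k,n}_0$ and, using the concatenation notation (\ref{Kconcatenation}), split an arbitrary $\phi^k\in U^{k,e(k,T)}_n$ into its first block $\theta^k_n\in U^{k,n+1}_n$ (the single coordinate $\phi^k_n$, which is $\mathcal{G}^k_n$-measurable) and a tail in $U^{k,e(k,T)}_{n+1}$. Writing $\pi^k\otimes_n\phi^k=(\pi^k\otimes_n\theta^k_n)\otimes_{n+1}(\text{tail})$ and using the tower property over $\mathcal{G}^k_n\subset\mathcal{G}^k_{n+1}$ together with Remark \ref{equalitycond},
\begin{equation}
\mathbb{E}\big[\xi_{X^k}(\pi^k\otimes_n\phi^k)\mid\mathcal{G}^k_n\big]=\mathbb{E}\Big[\mathbb{E}\big[\xi_{X^k}((\pi^k\otimes_n\theta^k_n)\otimes_{n+1}(\text{tail}))\mid\mathcal{G}^k_{n+1}\big]\mid\mathcal{G}^k_n\Big].
\end{equation}
Taking esssup over the tail inside, using the lattice/interchange property of Lemma \ref{lattice2} to pull the esssup out of the inner conditional expectation, and recognizing the inner esssup as $V^{k}(T^k_{n+1},\pi^k\otimes_n\theta^k_n)$ by definition (\ref{discretevalueprocess}), gives
\begin{equation}
\esssup_{\text{tail}}\mathbb{E}\big[\xi_{X^k}(\pi^k\otimes_n\phi^k)\mid\mathcal{G}^k_n\big]=\mathbb{E}\big[V^{k}(T^k_{n+1},\pi^k\otimes_n\theta^k_n)\mid\mathcal{G}^k_n\big]~a.s.
\end{equation}
Finally taking esssup over $\theta^k_n\in U^{k,n+1}_n$ on both sides and again invoking Lemma \ref{lattice2} to justify that the two-stage esssup equals the single esssup over all $\phi^k\in U^{k,e(k,T)}_n$ (the countable-subset argument of Lemma \ref{epsilonrandomop} adapted to $\mathcal{G}^k_n$ guarantees the suprema are attained along sequences so the splitting is lossless), yields $V^{k}(T^k_n,\pi^k)=\esssup_{\theta^k_n\in U^{k,n+1}_n}\mathbb{E}[V^{k}(T^k_{n+1},\pi^k\otimes_n\theta^k_n)\mid\mathcal{G}^k_n]$, which is (\ref{DPE}) once we note $\pi^k$ and $u^{k,n-1}$ carry the same information for $V^k$.

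\textbf{Step 2: Uniqueness.} Suppose $\{Z^k(T^k_n,u^k)\}$ satisfies (\ref{DPE}) for every admissible $u^k$. At level $n=e(k,T)$ we have $Z^k=\xi_{X^k}=V^k$ by the boundary condition. Assume $Z^k(T^k_{n+1},\cdot)=V^k(T^k_{n+1},\cdot)$ a.s. for all controls; plugging this into the recursion for $Z^k$ and for $V^k$ and comparing the two right-hand sides — which are now literally the same expression — gives $Z^k(T^k_n,\cdot)=V^k(T^k_n,\cdot)$ a.s. This closes the induction. Since $U^{k,n+1}_n$ is a set of controls determined by a single $\mathcal{G}^k_n$-measurable $\mathbb{A}$-valued random variable, the esssup is well defined and the comparison is immediate; no extra regularity is needed.

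\textbf{Main obstacle.} The only genuinely delicate point is Step 1's interchange of esssup with conditional expectation and the lossless splitting of a tail control into its first coordinate plus remainder. This is exactly where the lattice property (Lemma \ref{lattice2}) and the countable-exhaustion argument (as in Lemma \ref{epsilonrandomop}, now relative to the discrete filtration $\mathcal{G}^k_n$) do the work; one must check that the finite/countable mixing properties of $U^{k,e(k,T)}_n$ — inherited from the corresponding properties of $U^N_M$ listed after (\ref{controlset}), together with Remark \ref{equalitycond} identifying conditioning on $\mathcal{G}^k_n$ with conditioning on $\mathcal{F}^k_{T^k_n}$ — are preserved under this decomposition. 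Everything else is the routine tower-property bookkeeping over the finite time grid $T^k_0,\dots,T^k_{e(k,T)}$.
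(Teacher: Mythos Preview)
Your proposal is correct and follows essentially the same approach as the paper: split each control $\phi^k\in U^{k,e(k,T)}_n$ into its first block $\theta^k_n\in U^{k,n+1}_n$ and a tail in $U^{k,e(k,T)}_{n+1}$, use Lemma~\ref{lattice2} to interchange the esssup with the conditional expectation (via the tower property), and close with backward induction for uniqueness. The paper's own proof merely records the two-stage esssup identity and cites Lemma~\ref{lattice2}, omitting the remaining details; your write-up fills those in faithfully, and your ``main obstacle'' paragraph correctly isolates the only nontrivial point.
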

\begin{proof}
Fix $u^k\in U^{k,e(k,T)}_0$. By using Lemma~\ref{lattice2} and the identity

\begin{small}
$$\esssup_{\phi^k\in U^{k,e(k,T)}_n}\mathbb{E}\Big[\xi_{X^k}(u^k\otimes_n\phi^k)|\mathcal{G}^k_{n}\Big] = \esssup_{\theta^k_n\in U^{k,n+1}_n}\esssup_{\phi^k\in U^{k,e(k,T)}_{n+1}}\mathbb{E}\Big[\xi_{X^k}(u^k\otimes_n(\theta^k_n\otimes_{n+1}\phi^k))|\mathcal{G}^k_{n}\Big]$$
\end{small}
a.s for each $0\le n\le e(k,T)-1$, the proof is straightforward, so we omit the details.
\end{proof}

\subsection{Measurable selection and $\epsilon$-controls}\label{constructionVALUE}
Let us now present a selection measurable theorem which will allow us to \textit{aggregate} the map $u^k\mapsto V^k(\cdot,u^k)$ into a single list of upper semi-analytic functions $F^k_m:\mathbb{H}^{k,m}\rightarrow \mathbb{R}; m=0, \ldots, e(k,T)$. As a by product, we also construct $\epsilon$-optimal controls at the level of the optimization problem

$$\sup_{\phi^k\in U^{k,e(k,T)}_0}\mathbb{E}\big[\xi_{X^k}(\phi^k)\big]; k\ge 1.$$
At first, we observe that for a given control $u^k\in U^{k,e(k,T)}_0$ associated with $\{g^k_{\ell-1}\}_{\ell=1}^{e(k,T)}$ and a given $x\in \mathbb{R}^n$, we can easily construct a Borel function $\gamma^k_{e(k,T)}:\mathbb{H}^{k,e(k,T)}\rightarrow \mathbf{D}_T^n$ such that $\gamma^k_{e(k,T)}(\textbf{o}^{k,e(k,T)})(0)=x = X^{k}(0,u^k)$ and

\begin{equation}\label{compoID}
\gamma^k_{e(k,T)}\Big(\Xi^{k,g^k}_{e(k,T)}(\mathcal{A}^k_{e(k,T)}(\omega))\Big)(t) = X^{k}\big(t,\omega,u^k(\omega)\big)
\end{equation}
for a.a $\omega$ and for every $t\in [0,T]$. For concrete examples of these constructions, we refer to Section \ref{APPLICATIONsection}.

Let us now present the selection measurable theorem which will play a key role in our methodology. For this purpose, we will make a backward argument. To keep notation simple, in the sequel we set $m=e(k,T)$. Recall that a structure of the form (\ref{controlledstate}) is fixed and it is equipped with a Borel function $\gamma^k_m:\mathbb{H}^{k,m}\rightarrow \mathbf{D}_T^n$ realizing (\ref{compoID}) with a given initial condition $x\in \mathbb{R}^n$. For such structure, we write $V^k$ as the associated value process given by (\ref{discretevalueprocess}).

We start with the map $\mathbb{V}^{k}_m:\mathbb{H}^{k,m}\rightarrow\mathbb{R}$ defined by

$$\mathbb{V}^{k}_m(\textbf{o}^{k}_m):=\xi(\gamma^k_m(\textbf{o}^{k}_m)); \textbf{o}^{k}_m\in \mathbb{H}^{k,m}.$$
By construction, $\mathbb{V}^{k}_m$ is a Borel function.

\begin{lemma}\label{UMdisint}
The probability measure $\mathbb{P}^k_{n+1}$ on $\mathcal{E}(\mathbb{S}^{n+1}_k)$ can be disintegrated as

$$\mathbb{P}^k_{n+1}(D) = \int_{\mathbb{S}^{n}_k}\int_{\mathbb{S}_k} 1\!\!1_{D}(\mathbf{b}^k_{n},s^k_{n+1},\tilde{i}^k_{n+1})\nu^k_{n+1} (ds^k_{n+1}d\tilde{i}^k_{n+1}|\mathbf{b}^k_{n})\mathbb{P}^k_{n}(d\mathbf{b}^k_{n})$$
for every $D\in \mathcal{E}(\mathbb{S}^{n+1}_k)$ and $n\ge 0$, where $E\mapsto \nu^k_{n+1}(E|\mathbf{b}^k_n)$ is the canonical extension from $\mathcal{B}(\mathbb{S}_k)$ to $\mathcal{E}(\mathbb{S}_k)$. Moreover this extension can be chosen to be a Borel function $\mathbf{b}^k_n\mapsto \nu^k_{n+1}(E|\mathbf{b}^k_n)$ from $\mathbb{S}^n_k$ to $[0,1]$ for each $E\in \mathcal{E}(\mathbb{S}_k)$.
\end{lemma}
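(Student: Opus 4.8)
The plan is to establish Lemma \ref{UMdisint} in two stages: first extend the disintegration formula from the Borel $\sigma$-algebras to their universal completions, and then verify that the extended kernel can be realized as a genuine Borel-measurable stochastic kernel in the conditioning variable. For the first stage, I would start from the Borel-level disintegration already recorded just before Proposition \ref{disinteresult}, namely
\begin{equation*}
\mathbb{P}^k_{n+1}(D) = \int_{\mathbb{S}^{n}_k}\int_{\mathbb{S}_k} \mathds{1}_{D}(\mathbf{b}^k_{n},s^k_{n+1},\tilde{i}^k_{n+1})\,\nu^k_{n+1}(ds^k_{n+1}d\tilde{i}^k_{n+1}|\mathbf{b}^k_{n})\,\mathbb{P}^k_{n}(d\mathbf{b}^k_{n}),
\end{equation*}
valid for $D\in\mathcal{B}(\mathbb{S}^{n+1}_k)$. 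The key structural input is that $\mathbb{S}_k = (0,+\infty)\times\mathbb{I}_k$ is a (standard) Borel space, so $\mathbb{S}^n_k$ and $\mathbb{S}^{n+1}_k$ are Borel spaces, and a regular conditional distribution exists; this is exactly the setting of the measurable selection/disintegration machinery in Bertsekas--Shreve \cite{bertsekas} (and in \cite{dellacherie2}). The extension of a Borel stochastic kernel to universally measurable sets is a standard fact: for a Borel stochastic kernel $\nu^k_{n+1}(\cdot|\cdot)$ on $\mathbb{S}_k$ given $\mathbb{S}^n_k$, and for each fixed $E\in\mathcal{E}(\mathbb{S}_k)$, the map $\mathbf{b}^k_n\mapsto\nu^k_{n+1}(E|\mathbf{b}^k_n)$ — defined via outer/inner measure approximation by Borel sets — is universally measurable, hence can be modified on a $\mathbb{P}^k_n$-null set to a Borel function without affecting the integral identity (see Prop.~7.44 and Lemma~7.27 in \cite{bertsekas}).

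The concrete steps I would carry out are: (i) invoke that $(\mathbb{S}_k,\mathcal{B}(\mathbb{S}_k))$ is a Borel space and that $\mathcal{B}(\mathbb{S}_k)$ is countably generated, so the Borel disintegration $\nu^k_{n+1}$ from the text exists and is $\mathbb{P}^k_n$-a.s.\ unique; (ii) for fixed $E\in\mathcal{E}(\mathbb{S}_k)$, write $E = E_1\triangle N$ with $E_1\in\mathcal{B}(\mathbb{S}_k)$ and $N$ contained in a Borel $p$-null set, for each $p\in P(\mathbb{S}_k)$, and use the fact that $\nu^k_{n+1}(\cdot|\mathbf{b}^k_n)$ is itself an element of $P(\mathbb{S}_k)$ for $\mathbb{P}^k_n$-a.e.\ $\mathbf{b}^k_n$ to define the canonical extension $\nu^k_{n+1}(E|\mathbf{b}^k_n) := \nu^k_{n+1}(E_1|\mathbf{b}^k_n)$ consistently; (iii) check the extension is well defined (independent of the representative $E_1$) and satisfies the disintegration identity for $D\in\mathcal{E}(\mathbb{S}^{n+1}_k)$ by monotone-class / approximation from rectangles $D = D_1\times D_2$ with $D_i$ universally measurable, using Fubini for the Borel parts and the null-set control for the rest; (iv) invoke the universal-measurability-to-Borel modification result to replace $\mathbf{b}^k_n\mapsto\nu^k_{n+1}(E|\mathbf{b}^k_n)$ by a Borel version, noting the modification only changes things on a $\mathbb{P}^k_n$-null set and therefore preserves the integral formula.

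The main obstacle is stage (iv): ensuring that the Borel modification of $\mathbf{b}^k_n\mapsto\nu^k_{n+1}(E|\mathbf{b}^k_n)$ can be done \emph{simultaneously consistently} so that what results is still a bona fide stochastic kernel (i.e.\ $\sigma$-additive in $E$ for each fixed $\mathbf{b}^k_n$ off a single null set), rather than merely Borel in $\mathbf{b}^k_n$ for each separate $E$. The standard resolution is that since $\mathcal{B}(\mathbb{S}_k)$ is countably generated, one fixes a countable generating algebra, performs the Borel modification on each generator (a countable operation, hence only a countable union of $\mathbb{P}^k_n$-null sets is discarded), and then extends by Carathéodory/monotone convergence; the universal-measurability of each $E\in\mathcal{E}(\mathbb{S}_k)$ guarantees the resulting set function still agrees with the canonical extension. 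Everything else — the disintegration identity, the a.s.\ uniqueness, the consistency $\mathbb{P}^k_n(\cdot)=\mathbb{P}^k_{n+1}(\cdot\times\mathbb{S}_k)$ — is then a routine consequence of the Borel-level statements already in the text combined with Remark \ref{equalitycond}.
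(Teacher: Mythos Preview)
Your approach is quite different from the paper's, and while the overall strategy is sensible, the resolution of what you correctly identify as the ``main obstacle'' does not close the gap.

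The paper's proof is essentially one line: it invokes the explicit formula \eqref{disintegrationformula} for $\nu^k_{n+1}$ from Proposition~\ref{disinteresult}. That formula exhibits $\nu^k_{n+1}(\cdot|\mathbf{b}^k_n)$ as having a density (jointly Borel in $(\mathbf{b}^k_n,s^k_{n+1},\tilde{i}^k_{n+1})$) with respect to the \emph{fixed} reference measure $\text{Leb}\times\text{counting}$ on $\mathbb{S}_k=(0,\infty)\times\mathbb{I}_k$. Since every universally measurable subset of $(0,\infty)$ is Lebesgue measurable, the canonical extension of $\nu^k_{n+1}(\cdot|\mathbf{b}^k_n)$ to $\mathcal{E}(\mathbb{S}_k)$ is simply the Lebesgue integral of that density over $E$. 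Moreover, for any $E\in\mathcal{E}(\mathbb{S}_k)$ one writes $E=E'\cup N$ with $E'$ Borel and $N$ Lebesgue-null; the integral over $N$ vanishes for \emph{every} $\mathbf{b}^k_n$ (the reference measure does not depend on $\mathbf{b}^k_n$), so $\nu^k_{n+1}(E|\mathbf{b}^k_n)=\nu^k_{n+1}(E'|\mathbf{b}^k_n)$, which is Borel in $\mathbf{b}^k_n$ by Fubini. No modification, no null-set juggling, no simultaneous-consistency issue arises.

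Your abstract route, by contrast, never uses \eqref{disintegrationformula} and therefore loses exactly this uniformity. Your step (iv) proposes to modify $\mathbf{b}^k_n\mapsto\nu^k_{n+1}(E|\mathbf{b}^k_n)$ on a $\mathbb{P}^k_n$-null set to make it Borel, and then to achieve consistency by working on a countable generating algebra. But that trick only produces a Borel kernel on $\mathcal{B}(\mathbb{S}_k)$; when you re-extend that kernel canonically to $\mathcal{E}(\mathbb{S}_k)$, the map $\mathbf{b}^k_n\mapsto\tilde{\nu}(E|\mathbf{b}^k_n)$ for a genuinely non-Borel $E\in\mathcal{E}(\mathbb{S}_k)$ is, a priori, only universally measurable again --- you have not gained anything. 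The Carath\'eodory extension from a countable algebra controls the Borel $\sigma$-algebra it generates, not the strictly larger $\mathcal{E}(\mathbb{S}_k)$. In short, without an additional structural input (such as a common dominating measure, which is precisely what Proposition~\ref{disinteresult} supplies), the abstract machinery of \cite{bertsekas} yields universal measurability in the conditioning variable, not Borel measurability, and your argument does not bridge that gap.
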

\begin{proof}
By using the fact the Lebesgue $\sigma$-algebra contains universally measurable sets, this fact easily follows from the disintegration formula (\ref{disintegrationformula}) for $\nu ^k_{n+1}$ given by Proposition \ref{disinteresult}.
\end{proof}
An immediate consequence of Lemma \ref{UMdisint} is the following elementary result.
\begin{lemma}\label{conditionalexprep}
Let $u^k \in U^{k,m}_0$ be a control associated with universally measurable functions $(g^k_{n-1})^m_{n=1}$, where $m=e(k,T)$. Then,

\begin{equation}\label{1oiter}
\mathbb{E}\big[\xi_{X^k}(u^k)|\mathcal{G}^k_{{m-1}}\big] = \int_{\mathbb{S}_k}\mathbb{V}^{k}_m\big(\Xi^{k,g^k}_{m}(\mathcal{A}^k_{m-1}, s^k_m, \tilde{i}^k_m) \big)\nu^k_m(ds^k_m, d\tilde{i}^k_m|\mathcal{A}^k_{m-1})~a.s
\end{equation}
and

\begin{equation}\label{firstiter}
\mathbb{E} \big[\xi_{X^k}(u^k)\big] = \int_{\mathbb{S}^m_k}\mathbb{V}^{k}_m (\Xi^{k,g^k}_m(\mathbf{b}^k_m))\mathbb{P}^{k}_m(d\mathbf{b}^k_m).
\end{equation}
\end{lemma}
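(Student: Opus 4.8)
The plan is to derive both identities directly from the definition of the conditional expectation together with the disintegration provided by Lemma \ref{UMdisint}, using the functional representation (\ref{funcREP})--(\ref{compoID}) of $X^k(\cdot,u^k)$ to rewrite $\xi_{X^k}(u^k)$ as a Borel function of $\mathcal{A}^k_m$. First I would record that, since $u^k\in U^{k,m}_0$ is associated with the universally measurable functions $(g^k_{n-1})_{n=1}^m$, the composition $\mathbf{o}^k_m\mapsto\mathbb{V}^k_m(\mathbf{o}^k_m)=\xi(\gamma^k_m(\mathbf{o}^k_m))$ together with (\ref{compoID}) gives
\begin{equation*}
\xi_{X^k}(u^k)=\xi\big(X^k(\cdot,u^k)\big)=\mathbb{V}^k_m\big(\Xi^{k,g^k}_m(\mathcal{A}^k_m)\big)\quad a.s.,
\end{equation*}
and that $\mathbb{V}^k_m$ is Borel while $\Xi^{k,g^k}_m$ is universally measurable, so the right-hand side is a $\mathcal{G}^k_m$-measurable (universally measurable function of $\mathcal{A}^k_m$) bounded random variable; boundedness of $\xi$ (hypothesis \textbf{(A1)}, or merely the standing assumption that $\xi$ is a bounded Borel functional) guarantees integrability.

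Next I would prove (\ref{1oiter}). Because $\mathcal{G}^k_{m-1}=\{\mathcal{A}^k_{m-1}\in D;\ D\in\mathcal{E}(\mathbb{S}^{m-1}_k)\}$, it suffices to check that the proposed right-hand side, call it $\Psi(\mathcal{A}^k_{m-1})$ with
\begin{equation*}
\Psi(\mathbf{b}^k_{m-1}):=\int_{\mathbb{S}_k}\mathbb{V}^k_m\big(\Xi^{k,g^k}_m(\mathbf{b}^k_{m-1},s^k_m,\tilde i^k_m)\big)\,\nu^k_m(ds^k_m\,d\tilde i^k_m\mid\mathbf{b}^k_{m-1}),
\end{equation*}
is $\mathcal{G}^k_{m-1}$-measurable and satisfies $\mathbb{E}[\mathds{1}_{\{\mathcal{A}^k_{m-1}\in D\}}\xi_{X^k}(u^k)]=\mathbb{E}[\mathds{1}_{\{\mathcal{A}^k_{m-1}\in D\}}\Psi(\mathcal{A}^k_{m-1})]$ for all $D\in\mathcal{E}(\mathbb{S}^{m-1}_k)$. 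Measurability of $\Psi$ follows from the last sentence of Lemma \ref{UMdisint} (the map $\mathbf{b}^k_{m-1}\mapsto\nu^k_m(E\mid\mathbf{b}^k_{m-1})$ is Borel for each $E\in\mathcal{E}(\mathbb{S}_k)$) by the usual monotone-class/bounded-convergence argument applied to the bounded universally measurable integrand $\mathbb{V}^k_m\circ\Xi^{k,g^k}_m$. The defining identity is then exactly the disintegration of Lemma \ref{UMdisint}: writing $D'=D\times\mathbb{S}_k\in\mathcal{E}(\mathbb{S}^m_k)$,
\begin{equation*}
\mathbb{E}\big[\mathds{1}_{\{\mathcal{A}^k_{m-1}\in D\}}\,\xi_{X^k}(u^k)\big]=\int_{\mathbb{S}^m_k}\mathds{1}_{D'}(\mathbf{b}^k_m)\,\mathbb{V}^k_m\big(\Xi^{k,g^k}_m(\mathbf{b}^k_m)\big)\,\mathbb{P}^k_m(d\mathbf{b}^k_m)=\mathbb{E}\big[\mathds{1}_{\{\mathcal{A}^k_{m-1}\in D\}}\,\Psi(\mathcal{A}^k_{m-1})\big],
\end{equation*}
where the first equality is the transfer of expectation under the law $\mathbb{P}^k_m$ of $\mathcal{A}^k_m$ and the second is Lemma \ref{UMdisint} with $n=m-1$ together with Fubini (legitimate since everything is bounded). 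One should note that here $\mathcal{G}^k_{m-1}$, not merely $\widetilde{\mathcal{F}}^k_{T^k_{m-1}}$, is used, which is why the universally measurable extension of $\nu^k_m$ in Lemma \ref{UMdisint} is needed; by Remark \ref{equalitycond} one could equally phrase this with $\mathcal{F}^k_{T^k_{m-1}}$. Then (\ref{firstiter}) is obtained simply by taking expectations in (\ref{1oiter}) and invoking Lemma \ref{UMdisint} once more (or, directly, by transferring $\mathbb{E}[\xi_{X^k}(u^k)]=\mathbb{E}[\mathbb{V}^k_m(\Xi^{k,g^k}_m(\mathcal{A}^k_m))]$ to an integral against $\mathbb{P}^k_m$).

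This lemma is essentially bookkeeping, so there is no serious obstacle; the only point requiring care is the measurability layer. One must make sure that composing the Borel function $\mathbb{V}^k_m$ with the universally measurable map $\Xi^{k,g^k}_m$ (built from the universally measurable $g^k_j$'s) yields an integrand for which the Borel family $\{\nu^k_m(\cdot\mid\mathbf{b}^k_{m-1})\}$ still produces a $\mathcal{G}^k_{m-1}$-measurable integral and for which Fubini applies — this is exactly what the canonical extension statement in Lemma \ref{UMdisint} is designed to supply, so the argument goes through by a routine bounded/monotone-class reduction to indicator integrands.
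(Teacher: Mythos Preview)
Your proposal is correct and follows exactly the route the paper indicates: the paper simply states this lemma as ``an immediate consequence of Lemma~\ref{UMdisint}'' without further proof, and your argument supplies precisely the routine disintegration/monotone-class verification that this phrase is meant to encode.
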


\begin{lemma}\label{measurabilityissue1}
The map
$$
(\mathbf{o}^k_{m-1},a^k_{m-1})\mapsto \int_{\mathbb{S}_k}\mathbb{V}^{k}_m\big(\mathbf{o}^{k}_{m-1},a^k_{m-1},s^k_m,\tilde{i}^k_{m}  \big)\nu^k_m(ds^k_m,d\tilde{i}^k_m|\mathbf{b}^k_{m-1})
$$
is a Borel function from $\mathbb{H}^{k,m-1}\times\mathbb{A}$ to $\mathbb{R}$, where $\mathbf{b}^k_{m-1}$ are the elements of $\mathbf{o}^k_{m-1}$ which belong to $\mathbb{S}^{m-1}_k$.
\end{lemma}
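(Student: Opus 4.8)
The plan is to show measurability by tracking how the integrand depends on its arguments and then invoking a standard parameterized-integral argument. First I would recall that $\mathbb{V}^k_m(\mathbf{o}^k_m) = \xi(\gamma^k_m(\mathbf{o}^k_m))$ is a Borel function on $\mathbb{H}^{k,m}$, since $\gamma^k_m:\mathbb{H}^{k,m}\to\mathbf{D}^n_T$ is Borel by construction and $\xi:\mathbf{D}^n_T\to\mathbb{R}$ is Borel (indeed bounded and, under \textbf{(A1)}, H\"older continuous, hence certainly Borel). Writing $\mathbf{o}^k_m = (\mathbf{o}^k_{m-1}, a^k_{m-1}, s^k_m, \tilde i^k_m)$, the map
$$
(\mathbf{o}^k_{m-1}, a^k_{m-1}, s^k_m, \tilde i^k_m)\mapsto \mathbb{V}^k_m(\mathbf{o}^k_{m-1}, a^k_{m-1}, s^k_m, \tilde i^k_m)
$$
is therefore Borel on $\mathbb{H}^{k,m-1}\times\mathbb{A}\times\mathbb{S}_k$.

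Next I would use Lemma \ref{UMdisint}, which guarantees that the transition kernel $\mathbf{b}^k_{m-1}\mapsto\nu^k_m(E|\mathbf{b}^k_{m-1})$ is a Borel function from $\mathbb{S}^{m-1}_k$ to $[0,1]$ for each fixed $E\in\mathcal{B}(\mathbb{S}_k)$ (a fortiori for each $E\in\mathcal{E}(\mathbb{S}_k)$). Since $\mathbf{b}^k_{m-1}$ is obtained from $\mathbf{o}^k_{m-1}$ by the continuous (coordinate-projection) map that discards the $\mathbb{A}$-components, the map $(\mathbf{o}^k_{m-1}, a^k_{m-1})\mapsto\nu^k_m(\cdot|\mathbf{b}^k_{m-1})$ is a Borel-measurable family of probability measures on $\mathbb{S}_k$ indexed by $\mathbb{H}^{k,m-1}\times\mathbb{A}$. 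Now the claimed map is precisely
$$
(\mathbf{o}^k_{m-1}, a^k_{m-1})\mapsto \int_{\mathbb{S}_k} h(\mathbf{o}^k_{m-1}, a^k_{m-1}, s^k_m, \tilde i^k_m)\,\nu^k_m(ds^k_m\, d\tilde i^k_m\,|\,\mathbf{b}^k_{m-1}),
$$
where $h := \mathbb{V}^k_m$ is bounded Borel (bounded because $\xi$ is bounded by \textbf{(A1)}). Measurability of such a parameterized integral is a classical fact: it holds first for $h$ of the form $\mathds{1}_{C\times D}$ with $C$ Borel in $\mathbb{H}^{k,m-1}\times\mathbb{A}$ and $D$ Borel in $\mathbb{S}_k$, extends to finite linear combinations, and then to all bounded Borel $h$ by a monotone-class / functional monotone-class argument; the dominated convergence theorem handles the passage to the limit while boundedness of $\xi$ supplies the uniform integrable bound.

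The only mild subtlety, and the step I would be most careful about, is bookkeeping the distinction between Borel and universally measurable $\sigma$-algebras: $\nu^k_m(E|\cdot)$ is a priori only defined as a disintegration on $\mathcal{E}(\mathbb{S}^{m-1}_k)$, but Lemma \ref{UMdisint} explicitly states that the canonical extension can be \emph{chosen Borel} in $\mathbf{b}^k_{m-1}$, which is exactly what keeps the final map Borel rather than merely universally measurable. With that in hand there is no real obstacle; I would simply assemble the pieces: $\mathbb{V}^k_m$ Borel $\Rightarrow$ the integrand Borel in all variables jointly; $\nu^k_m$ a Borel kernel $\Rightarrow$ the integral Borel in the parameters; boundedness of $\xi$ $\Rightarrow$ the integral is well-defined and finite. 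Hence the map is a Borel function from $\mathbb{H}^{k,m-1}\times\mathbb{A}$ to $\mathbb{R}$, as asserted.
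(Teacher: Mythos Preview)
Your proof is correct and is essentially the same as the paper's: the paper simply cites Proposition~7.29 in Bertsekas--Shreve together with Lemma~\ref{UMdisint}, and your monotone-class argument is precisely the content of that proposition unpacked. The key ingredient you correctly isolate---that Lemma~\ref{UMdisint} lets one choose the kernel $\mathbf{b}^k_{m-1}\mapsto\nu^k_m(E|\mathbf{b}^k_{m-1})$ to be Borel rather than merely universally measurable---is exactly what the paper invokes as well.
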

\begin{proof}
We shall imitate the proof of Prop. 7.29 in (\cite{bertsekas}) due to Lemma \ref{UMdisint} which says that $\mathbf{b}^k_{m-1}\mapsto\nu^k_{m}(E|\mathbf{b}^k_{m-1})$ is Borel measurable for each $E\in \mathcal{E}(\mathbb{S}_k)$.
\end{proof}

The boundedness assumption on $\xi$ yields

\begin{equation}\label{cotafund}
\Bigg|\int_{\mathbb{S}_k}\mathbb{V}^{k}_m\big(\mathbf{o}^{k}_{m-1},a^k_{m-1},s^k_m,\tilde{i}^k_{m}  \big)\nu^k_m(ds^k_m,d\tilde{i}^k_m|\mathbf{b}^k_{m-1})\Bigg|\le \sup_{\eta\in \mathbf{D}^n_T}|\xi(\eta)|< \infty
\end{equation}
for every $\mathbf{o}^{k}_{m-1} = ( (a^k_0,s^k_1,\tilde{i}^k_1), \ldots, (a^k_{m-2},s^k_{m-1}, \tilde{i}^k_{m-1})) \in \mathbb{H}^{k,m-1}, \mathbf{b}^k_{m-1} = (s^k_1,\tilde{i}^k_1), \ldots, (s^k_{m-1}, \tilde{i}^k_{m-1}))\in \mathbb{S}^{m-1}_k$ and $a^k_{m-1}\in \mathbb{A}$.

\begin{lemma}\label{iterUM}
Let $\mathbb{V}^{k}_{m-1}:\mathbb{H}^{k,m-1}\rightarrow\mathbb{R}$ be the function defined by

$$\mathbb{V}^{k}_{m-1}(\mathbf{o}^{k}_{m-1}):=\sup_{a^k_{m-1}\in \mathbb{A}}\int_{\mathbb{S}_k}\mathbb{V}^{k}_m\big(\mathbf{o}^{k}_{m-1},a^k_{m-1},s^k_m,\tilde{i}^k_{m}  \big)\nu^k_m(ds^k_m,d\tilde{i}^k_m|\mathbf{b}^k_{m-1})$$
for $\mathbf{o}^{k}_{m-1}\in \mathbb{H}^{k,m-1}$ where $\mathbf{b}^k_{m-1}$ are the elements of $\mathbf{o}^k_{m-1}$ which belong to $\mathbb{S}^{m-1}_k$. Then, $\mathbb{V}^{k}_{m-1}$ is upper semianalytic and for every $\epsilon> 0$, there exists an analytically measurable function $C^\epsilon_{k,m-1}:\mathbb{H}^{k,m-1}\rightarrow\mathbb{A}$ which realizes

\begin{equation}\label{1oiter1}
\mathbb{V}^{k}_{m-1}(\mathbf{o}^{k}_{m-1})\le \int_{\mathbb{S}_k}\mathbb{V}^{k}_m\big(\mathbf{o}^{k}_{m-1},C^\epsilon_{k,m-1}(\mathbf{o}^{k}_{m-1}),s^k_m,\tilde{i}^k_{m}  \big)\nu^k_m(ds^k_m,d\tilde{i}^k_m|\mathbf{b}^k_{m-1}) +\epsilon
\end{equation}
for every $\mathbf{o}^{k}_{m-1}\in \mathbb{H}^{k,m-1}$.
\end{lemma}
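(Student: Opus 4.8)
The plan is to invoke the classical measurable selection machinery for upper semianalytic functions, exactly as developed in Chapter 7 of Bertsekas and Shreve \cite{bertsekas}. First I would record the two inputs already available: by Lemma \ref{measurabilityissue1}, the map
$$
q^k_m(\mathbf{o}^{k}_{m-1},a^k_{m-1}):=\int_{\mathbb{S}_k}\mathbb{V}^{k}_m\big(\mathbf{o}^{k}_{m-1},a^k_{m-1},s^k_m,\tilde{i}^k_{m}\big)\nu^k_m(ds^k_m,d\tilde{i}^k_m|\mathbf{b}^k_{m-1})
$$
is a Borel function on $\mathbb{H}^{k,m-1}\times\mathbb{A}$, hence a fortiori upper semianalytic; and by (\ref{cotafund}) it is uniformly bounded. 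Since $\mathbb{A}$ is a compact (in particular Borel) subset of $\mathbb{R}^m$, the pair $(\mathbb{H}^{k,m-1},\mathbb{A})$ fits the hypotheses of the selection theorem.

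Next I would apply Proposition 7.47 (together with 7.48 and 7.50) in \cite{bertsekas}: given an upper semianalytic function $q^k_m$ on the product of a Borel space with a Borel space, the partial supremum
$$
\mathbb{V}^{k}_{m-1}(\mathbf{o}^{k}_{m-1})=\sup_{a^k_{m-1}\in\mathbb{A}}q^k_m(\mathbf{o}^{k}_{m-1},a^k_{m-1})
$$
is again upper semianalytic on $\mathbb{H}^{k,m-1}$, and for every $\epsilon>0$ there exists an analytically measurable selector $C^{\epsilon}_{k,m-1}:\mathbb{H}^{k,m-1}\rightarrow\mathbb{A}$ with
$$
q^k_m\big(\mathbf{o}^{k}_{m-1},C^{\epsilon}_{k,m-1}(\mathbf{o}^{k}_{m-1})\big)\ge \mathbb{V}^{k}_{m-1}(\mathbf{o}^{k}_{m-1})-\epsilon
$$
for all $\mathbf{o}^{k}_{m-1}\in\mathbb{H}^{k,m-1}$; this is precisely (\ref{1oiter1}). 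I would remark that the $\epsilon$-selector (rather than an exact argmax selector) is what the cited results guarantee in full generality, which is exactly why the statement is phrased with an $\epsilon$, and that compactness of $\mathbb{A}$ is not even needed for the $\epsilon$-version though it is convenient.

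The only point requiring a little care — and the part I expect to be the main (mild) obstacle — is the bookkeeping about which $\sigma$-algebras are in play: $q^k_m$ is genuinely Borel by Lemma \ref{measurabilityissue1}, so the upper semianalyticity of $\mathbb{V}^{k}_{m-1}$ follows from the stability of upper semianalytic functions under partial suprema over a Borel parameter space, and the selector is only analytically measurable because the level sets $\{\mathbb{V}^{k}_{m-1}>c\}$ are analytic, not necessarily Borel. One has to confirm that $\mathbb{H}^{k,m-1}=(\mathbb{A}\times\mathbb{S}_k)^{m-1}$ is a Borel space (it is, being a finite product of Borel subsets of Euclidean space) so that the Bertsekas--Shreve framework applies verbatim. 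With these identifications the proof is a direct citation, and I would simply write: ``This follows from Propositions 7.47, 7.48 and 7.50 in \cite{bertsekas} applied to the bounded Borel function $q^k_m$ of Lemma \ref{measurabilityissue1}.''
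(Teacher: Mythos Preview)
Your proposal is correct and follows essentially the same approach as the paper: both arguments invoke Lemma \ref{measurabilityissue1} to obtain Borel measurability of the integrated map, note that $\mathbb{H}^{k,m-1}\times\mathbb{A}$ is a Borel space, and then apply Propositions 7.47 and 7.50 of \cite{bertsekas} (together with the bound (\ref{cotafund}) ensuring finiteness) to deduce upper semianalyticity of $\mathbb{V}^{k}_{m-1}$ and the existence of the analytically measurable $\epsilon$-selector.
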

\begin{proof}
The fact that $\mathbb{V}^{k}_{m-1}$ is upper semianalytic follows from Prop 7.47 in \cite{bertsekas} and Lemma \ref{measurabilityissue1} which says the map given by
$$f(\mathbf{o}^{k}_{m-1},a^k_{m-1})= \int_{\mathbb{S}_k}\mathbb{V}^{k}_m\big(\mathbf{o}^{k}_{m-1},a^k_{m-1},s^k_m,\tilde{i}^k_{m}  \big)\nu^k_m(ds^k_m,d\tilde{i}^k_m|\mathbf{b}^k_{m-1}) $$
($\mathbf{b}^k_{m-1}$ being the $\mathbb{S}^k_{m-1}$-elements of $\mathbf{o}^k_{m-1}$) is a Borel function (hence upper semianalytic). Moreover, by construction $\mathbb{H}^{k,m-1}\times\mathbb{A}$ is a Borel set. Let

$$\mathbb{V}^{k}_{m-1}(\mathbf{o}^{k}_{m-1})= \sup_{a^k_{m-1}\in \mathbb{A}} f(\mathbf{o}^{k}_{m-1},a^k_{m-1}); \mathbf{o}^{k}_{m-1}\in \mathbb{H}^{k,m-1}.$$
The bound (\ref{cotafund}) and Prop 7.50 in \cite{bertsekas} yield the existence of an analytically measurable function $C^\epsilon_{k,m-1}:\mathbb{H}^{k,m-1}\rightarrow\mathbb{A}$ such that

$$f\big(\mathbf{o}^{k}_{m-1},C^\epsilon_{k,m-1}(\mathbf{o}^{k}_{m-1})\big)\ge \mathbb{V}^{k}_{m-1}(\mathbf{o}^{k}_{m-1})-\epsilon$$
for every $\mathbf{o}^{k}_{m-1}\in \{\mathbb{V}^{k}_{m-1} < +\infty\} = \mathbb{H}^{k,m-1}$.
\end{proof}

\begin{lemma}\label{iterDOIS}
For every $\epsilon>0$ and $u^k\in U^{k,m}_0$, there exists a control $\phi^{k,\epsilon}_{m-1}\in U^{k,m}_{m-1}$ such that

\begin{equation}\label{iterdois3}
V^k(T^k_{m-1}, u^k) \le \mathbb{E}\big[V^k(T^k_m, u^k\otimes_{m-1}\phi^{k,\epsilon}_{m-1}) |\mathcal{G}^k_{{m-1}}\big] + \epsilon~a.s.
\end{equation}
\end{lemma}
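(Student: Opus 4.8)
The plan is to reduce the conditional optimization that defines $V^k(T^k_{m-1},u^k)$ to the pathwise function $\mathbb{V}^{k}_{m-1}$ constructed in Lemma \ref{iterUM}, and then transport the analytically measurable $\epsilon$-maximizer $C^\epsilon_{k,m-1}$ back to the level of admissible controls. Recall that $m=e(k,T)$, so $V^k(T^k_m,\cdot)=\xi_{X^k}(\cdot)$ and $U^{k,e(k,T)}_{m-1}=U^{k,m}_{m-1}$.

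First I would fix $\epsilon>0$ and $u^k\in U^{k,m}_0$ associated with universally measurable functions $(g^k_{n-1})_{n=1}^{m}$. For an arbitrary $\phi^k\in U^{k,m}_{m-1}$ determined by a universally measurable $h:\mathbb{S}^{m-1}_k\rightarrow\mathbb{A}$, the concatenation $u^k\otimes_{m-1}\phi^k$ lies in $U^{k,m}_0$ and is associated with $(g^k_0,\ldots,g^k_{m-2},h)$; moreover, straight from the definition (\ref{Xioperator}),
$$\Xi^{k,(g^k_0,\ldots,g^k_{m-2},h)}_{m}(\mathcal{A}^k_{m-1},s^k_m,\tilde{i}^k_m)=\big(\Xi^{k,g^k}_{m-1}(\mathcal{A}^k_{m-1}),\,h(\mathcal{A}^k_{m-1}),\,s^k_m,\tilde{i}^k_m\big).$$
Hence Lemma \ref{conditionalexprep} applied to $u^k\otimes_{m-1}\phi^k$ gives
$$\mathbb{E}\big[V^k(T^k_m,u^k\otimes_{m-1}\phi^k)\,\big|\,\mathcal{G}^k_{m-1}\big]=f\big(\Xi^{k,g^k}_{m-1}(\mathcal{A}^k_{m-1}),\,h(\mathcal{A}^k_{m-1})\big)~a.s.,$$
where $f(\mathbf{o}^k_{m-1},a^k_{m-1}):=\int_{\mathbb{S}_k}\mathbb{V}^{k}_m(\mathbf{o}^k_{m-1},a^k_{m-1},s^k_m,\tilde{i}^k_m)\,\nu^k_m(ds^k_md\tilde{i}^k_m|\mathbf{b}^k_{m-1})$ is the Borel function of Lemma \ref{measurabilityissue1}.

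Next, taking the essential supremum over $\phi^k\in U^{k,m}_{m-1}$ in (\ref{discretevalueprocess}) with $n=m-1$ and using the pointwise bound $f(\mathbf{o},h(\mathbf{o}))\le\sup_{a\in\mathbb{A}}f(\mathbf{o},a)=\mathbb{V}^{k}_{m-1}(\mathbf{o})$, valid for every such $h$, I get
$$V^k(T^k_{m-1},u^k)\le\mathbb{V}^{k}_{m-1}\big(\Xi^{k,g^k}_{m-1}(\mathcal{A}^k_{m-1})\big)~a.s.$$
Then I would invoke Lemma \ref{iterUM} to obtain an analytically measurable $C^\epsilon_{k,m-1}:\mathbb{H}^{k,m-1}\rightarrow\mathbb{A}$ with $\mathbb{V}^{k}_{m-1}(\mathbf{o})\le f(\mathbf{o},C^\epsilon_{k,m-1}(\mathbf{o}))+\epsilon$ for all $\mathbf{o}\in\mathbb{H}^{k,m-1}$, and set $\phi^{k,\epsilon}_{m-1}\in U^{k,m}_{m-1}$ to be the control with value $C^\epsilon_{k,m-1}\big(\Xi^{k,g^k}_{m-1}(\mathcal{A}^k_{m-1})\big)$, i.e. the one determined by $h^\epsilon:=C^\epsilon_{k,m-1}\circ\Xi^{k,g^k}_{m-1}$. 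Combining the last two displays with the identity of the previous paragraph applied to $h=h^\epsilon$ yields
$$V^k(T^k_{m-1},u^k)\le f\big(\Xi^{k,g^k}_{m-1}(\mathcal{A}^k_{m-1}),h^\epsilon(\mathcal{A}^k_{m-1})\big)+\epsilon=\mathbb{E}\big[V^k(T^k_m,u^k\otimes_{m-1}\phi^{k,\epsilon}_{m-1})\,\big|\,\mathcal{G}^k_{m-1}\big]+\epsilon~a.s.,$$
which is the assertion.

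The main obstacle is the measurability bookkeeping needed to ensure that $\phi^{k,\epsilon}_{m-1}$ is a bona fide element of $U^{k,m}_{m-1}$, i.e. that $h^\epsilon=C^\epsilon_{k,m-1}\circ\Xi^{k,g^k}_{m-1}$ is universally measurable (so that $h^\epsilon(\mathcal{A}^k_{m-1})$ is $\mathcal{G}^k_{m-1}$-measurable). This holds because $\Xi^{k,g^k}_{m-1}$ is universally measurable --- its action coordinates are the universally measurable $g^k_j$ and its $\mathbb{S}_k$-coordinates are projections --- $C^\epsilon_{k,m-1}$ is analytically measurable, and the composition of an analytically measurable map after a universally measurable one is again universally measurable; the latter reduces to the standard fact that preimages of analytic sets under universally measurable maps are universally measurable (see Ch.~7 in \cite{bertsekas}). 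Everything else is a routine assembly of Lemmas \ref{conditionalexprep}, \ref{measurabilityissue1} and \ref{iterUM}.
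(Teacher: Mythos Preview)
Your proof is correct and follows essentially the same route as the paper: you reduce the conditional essential supremum to the pathwise function $\mathbb{V}^{k}_{m-1}$ via Lemma~\ref{conditionalexprep}, pull the $\epsilon$-maximizer $C^\epsilon_{k,m-1}$ from Lemma~\ref{iterUM}, and define $\phi^{k,\epsilon}_{m-1}$ by composition, handling the universal measurability exactly as the paper does (Prop.~7.44 in \cite{bertsekas}).

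The one substantive difference is that the paper proves more than the stated inequality: inside its proof it establishes the \emph{equality} $V^k(T^k_{m-1},u^k)=\mathbb{V}^{k}_{m-1}\big(\Xi^{k,g^k}_{m-1}(\mathcal{A}^k_{m-1})\big)$ a.s.\ (their (\ref{iterdois1})), by also checking the reverse inequality via constant controls $a^k_{m-1}\in\mathbb{A}$. You only prove the direction $V^k(T^k_{m-1},u^k)\le\mathbb{V}^{k}_{m-1}(\cdots)$, which is all that is needed for (\ref{iterdois3}) itself. The paper's extra identity, however, is what seeds the backward induction in Proposition~\ref{AGREGATION} (equation (\ref{vkrep}) at step $m-1$); with your argument alone that representation would still have to be proved separately.
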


\begin{proof}
Let $(g^k_{n-1})_{n=1}^m$ be a list of universally measurable functions associated with the control $(u^k_{n-1})_{n=1}^m$. For $\epsilon>0$, let $C^\epsilon_{k,m-1}:\mathbb{H}^{k,m-1}\rightarrow\mathbb{A}$ be the analytically measurable function which realizes (\ref{1oiter1}).
We claim that

\begin{eqnarray}\label{iterdois1}
\mathbb{V}^{k}_{m-1}(\Xi^{k,g^k}_{m-1}(\mathcal{A}^k_{m-1})) &=& \esssup_{\phi^k\in U^{k,m}_{m-1}}\mathbb{E}\big[\xi_{X^k}(u^k\otimes_{m-1}\phi^k)|\mathcal{G}^k_{{m-1}}\big]~a.s\\
\nonumber& &\\
\nonumber&=&V^k(T^k_{m-1}, u^k)~a.s
\end{eqnarray}
and $\mathbb{E}\big[\xi_{X^k}(u^k\otimes_{m-1}\phi^{k,\epsilon}_{m-1})|\mathcal{G}^k_{{m-1}}\big]$ equals (a.s) to

\begin{equation}\label{iterdois2}
\int_{\mathbb{S}_k}\mathbb{V}^{k}_{m}\big(\Xi^{k,g^k,C^\epsilon_{k,m-1}}_{m}(\mathcal{A}^k_{m-1}, s^k_{m},\tilde{i}^k_{m})\big)\nu^k_{m}(ds^k_{m},d\tilde{i}^{k}_m|\mathcal{A}^k_{m-1})
\end{equation}
where $\phi^{k,\epsilon}_{m-1}:=C^{\epsilon}_{k,m-1}(\Xi^{k,g^k}_{m-1}(\mathcal{A}^k_{m-1}))$ is the composition of an analytically measurable function with an universally measurable one. In this case, it is known that $C^{\epsilon}_{k,m-1}\circ \Xi^{k,g^k}_{m-1}$ is universally measurable (see e.g Prop 7.44 in \cite{bertsekas}). This shows that $\phi^{k,\epsilon}_{m-1}$ is a control.

At this point, it is convenient to introduce the mapping $\Xi_{j}^{k,g^k,z^k}(\textbf{b}^k_j)$ given by

\begin{equation}\label{Xifuncdouble}
\Big( (g^k_0,s^k_1,\tilde{i}^k_1),\ldots, (g^k_{j-2}(\pi_{j-2}(\textbf{b}^k_{j})),s^k_{j-1},\tilde{i}^k_{j-1}), (z^k_{j-1}(\pi_{j-1}(\textbf{b}^k_{j})),s^k_{j},\tilde{i}^k_{j})\Big)
\end{equation}
where $\pi_{r}(\mathbf{b}^k_j)$ is the projection map onto the first $r$ coordinates of $\mathbf{b}^k_j$ for a given $\textbf{b}^k_j = (s^k_1,\tilde{i}^k_1, \ldots, s^k_j,\tilde{i}^k_j)$ and $j=m, \ldots, 1$. The assertion (\ref{iterdois2}) is a direct application of (\ref{1oiter}) which also yields

\begin{equation}\label{iterdois4}
V^k(T^k_{m-1}, u^k) = \esssup_{z^k\in U^{k,m}_{m-1}}\int_{\mathbb{S}_k}\mathbb{V}^{k}_{m}\big(\Xi^{k,g^k,z^k}_{m}(\mathcal{A}^k_{m-1}, s^k_{m},\tilde{i}^k_{m})\big)\nu^k_{m}(ds^k_{m},d\tilde{i}^{k}_m|\mathcal{A}^k_{m-1})
\end{equation}
almost surely. Clearly,

\begin{equation}\label{iterdois5}
\mathbb{V}^{k}_{m-1}(\Xi^{k,g^k}_{m-1}(\mathcal{A}^k_{m-1}))\ge \int_{\mathbb{S}_k}\mathbb{V}^{k}_{m}\big(\Xi^{k,g^k,z^k}_{m}(\mathcal{A}^k_{m-1}, s^k_{m},\tilde{i}^k_{m})\big)\nu^k_{m}(ds^k_{m},d\tilde{i}^{k}_m|\mathcal{A}^k_{m-1})
\end{equation}
a.s for every $z^k:\mathbb{S}_k^{m-1}\rightarrow \mathbb{A}$ universally measurable function. By considering single elements $a^k_{m-1}$ as constant controls of the form $z^k(\textbf{b}^k_{m-1}) = a^k_{m-1}; \textbf{b}^k_{m-1}\in \mathbb{S}_k^{m-1}$, the right-hand side of (\ref{iterdois4}) is greater than or equals to
\begin{equation}\label{iterdois6}
\int_{\mathbb{S}_k}\mathbb{V}^{k}_{m}\big(\Xi^{k,g^k}_{m-1}(\mathcal{A}^k_{m-1}),a^k_{m-1}, s^k_{m},\tilde{i}^k_{m}\big)
\nu^k_{m}(ds^k_{m},d\tilde{i}^{k}_m|\mathcal{A}^k_{m-1})~a.s,
\end{equation}
for every $a^k_{m-1}\in \mathbb{A}$. Summing up (\ref{iterdois4}), (\ref{iterdois5})and (\ref{iterdois6}), we conclude that (\ref{iterdois1}) holds true. By composing $\mathbb{V}^{k}_{m-1}$ with $\Xi^{k,g^k}_{m-1}(\mathcal{A}^k_{m-1})$ in (\ref{1oiter1}) and using (\ref{iterdois1}) and (\ref{iterdois2}), we conclude that (\ref{iterdois3}) holds true.
\end{proof}

We are now able to iterate the argument as follows. From (\ref{cotafund}) and a backward argument, we are able to define the sequence of functions $\mathbb{V}^{k}_\ell:\mathbb{H}^{k,\ell}\rightarrow\mathbb{R}$

\begin{equation}\label{Vkfunction}
\mathbb{V}^{k}_{\ell}(\mathbf{o}^{k}_{\ell}):=\sup_{a^k_\ell\in \mathbb{A}}\int_{\mathbb{S}_k}\mathbb{V}^{k}_{\ell+1}(\mathbf{o}^{k}_{\ell},a^k_\ell,s^k_{\ell+1},\tilde{i}^k_{\ell+1})
\nu^k_{\ell+1}(ds^k_{\ell+1},d\tilde{i}^k_{\ell+1}|\mathbf{b}^k_\ell)
\end{equation}
for $\mathbf{o}^k_{\ell}\in \mathbb{H}^{k,\ell}$ ($\mathbf{b}^k_{\ell}$ being the $\mathbb{S}^\ell_k$-elements of $\mathbf{o}^k_\ell$) and $\ell=m-1,\ldots, 1$.

\begin{lemma}\label{measurabilityissue2}
For each $j=m-1, \ldots, 1$, the map

$$(\mathbf{o}^k_{j},a^k_{j})\mapsto\int_{\mathbb{S}_k}\mathbb{V}^{k}_{j+1}(\mathbf{o}^k_{j},a^k_j,s^k_{j+1},\tilde{i}^k_{j+1})\nu^k_{j+1}(ds^k_{j+1}, d\tilde{i}^k_{j+1}|\mathbf{b}^k_{j})$$
is upper semianalytic from $\mathbb{H}^{k,j}\times \mathbb{A}$ to $\mathbb{R}$, where $\mathbf{b}^k_{j}$ are the elements of $\mathbf{o}^k_{j}$ which belong to $\mathbb{S}^k_{j}$.
\end{lemma}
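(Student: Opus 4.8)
\textbf{Proof plan for Lemma \ref{measurabilityissue2}.}

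The plan is to run a backward induction on $j$ from $m-1$ down to $1$, establishing simultaneously two statements at each level: (i) the function $\mathbb{V}^k_{j+1}:\mathbb{H}^{k,j+1}\rightarrow\mathbb{R}$ is upper semianalytic; and (ii) the partially integrated map displayed in the statement is upper semianalytic on $\mathbb{H}^{k,j}\times\mathbb{A}$. The base case $j=m-1$ is already in hand: $\mathbb{V}^k_m$ is Borel by construction, so by Lemma \ref{measurabilityissue1} the integrated map $(\mathbf{o}^k_{m-1},a^k_{m-1})\mapsto\int_{\mathbb{S}_k}\mathbb{V}^k_m(\cdot)\nu^k_m(\cdot|\mathbf{b}^k_{m-1})$ is actually Borel, hence upper semianalytic. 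This gives (ii) at level $m-1$.

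For the inductive step, suppose (ii) holds at level $j+1$, i.e.\ $(\mathbf{o}^k_{j+1},a^k_{j+1})\mapsto\int_{\mathbb{S}_k}\mathbb{V}^k_{j+2}(\cdot)\nu^k_{j+2}(\cdot|\mathbf{b}^k_{j+1})$ is upper semianalytic on the Borel space $\mathbb{H}^{k,j+1}\times\mathbb{A}$. By definition (\ref{Vkfunction}), $\mathbb{V}^k_{j+1}(\mathbf{o}^k_{j+1})$ is the supremum of this map over $a^k_{j+1}\in\mathbb{A}$; since $\mathbb{A}$ is compact metrizable (in particular a Borel space) and the graph $\mathbb{H}^{k,j+1}\times\mathbb{A}$ is Borel, Proposition 7.47 in \cite{bertsekas} (partial maximization of an upper semianalytic function over a Borel-space fiber) yields that $\mathbb{V}^k_{j+1}$ is upper semianalytic, giving (i) at level $j+1$. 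Now I would invoke the extension of Lemma \ref{measurabilityissue1} to upper semianalytic integrands: using the fact, from Lemma \ref{UMdisint}, that $\mathbf{b}^k_j\mapsto\nu^k_{j+1}(E|\mathbf{b}^k_j)$ is Borel for each $E\in\mathcal{E}(\mathbb{S}_k)$, and imitating the proof of Proposition 7.29 in \cite{bertsekas}, integration of the upper semianalytic function $\mathbb{V}^k_{j+1}$ against the stochastic kernel $\nu^k_{j+1}$ preserves upper semianalyticity, with the mild additional point that the integral is well defined because $\mathbb{V}^k_{j+1}$ is bounded (the analogue of the bound (\ref{cotafund}) propagates through the backward recursion since $\xi$ is bounded). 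This delivers (ii) at level $j$, closing the induction.

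The main obstacle is the preservation of upper semianalyticity under integration against $\nu^k_{j+1}$ when the integrand is only upper semianalytic rather than Borel: Proposition 7.29 in \cite{bertsekas} is stated for Borel stochastic kernels acting on Borel (or, in its semianalytic refinement, upper semianalytic) functions, so one must check that $\nu^k_{j+1}$, viewed as a kernel $\mathbf{b}^k_j\mapsto\nu^k_{j+1}(\cdot|\mathbf{b}^k_j)$ from $\mathbb{S}^j_k$ to $P(\mathbb{S}_k)$, is Borel measurable in the appropriate sense — and this is exactly what Lemma \ref{UMdisint} provides together with the explicit formula (\ref{disintegrationformula}), whose integrands depend continuously (hence Borel-measurably) on $\mathbf{b}^k_j$ through the quantities $\Delta^{k,\lambda}_j(\mathbf{b}^k_j)$. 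The remaining steps — compactness of $\mathbb{A}$, Borel-ness of the history spaces $\mathbb{H}^{k,j}$, and boundedness of the iterates $\mathbb{V}^k_\ell$ — are routine and already implicit in the constructions preceding the lemma, so I would state them briefly and refer back to (\ref{cotafund}) and Lemmas \ref{UMdisint}--\ref{measurabilityissue1} rather than reproving them.
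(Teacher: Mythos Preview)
Your proposal is correct and takes essentially the same approach as the paper. The paper's own proof is the one-line remark ``The same argument used in the proof of Lemma \ref{measurabilityissue1} applies here. We omit the details,'' and what you have written is precisely the omitted backward induction: at each step one uses Proposition 7.47 in \cite{bertsekas} to pass from the upper semianalytic integrated map to upper semianalyticity of $\mathbb{V}^k_{j+1}$, and then the Borel-kernel integration result (the upper semianalytic refinement of Proposition 7.29, i.e.\ Proposition 7.48 in \cite{bertsekas}) together with Lemma \ref{UMdisint} and the bound (\ref{cotafund}) to descend one level. The only cosmetic point is that you might cite Proposition 7.48 rather than 7.29 for the integration step when the integrand is merely upper semianalytic.
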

\begin{proof}
The same argument used in the proof of Lemma \ref{measurabilityissue1} applies here. We omit the details.
\end{proof}

\begin{proposition}\label{detarg}
The function $\mathbb{V}^{k}_j:\mathbb{H}^{k,j}\rightarrow\mathbb{R}$ is upper semianalytic for each $j=m-1,\ldots, 1$. Moreover, for every $\epsilon>0$, there exists an analytically measurable function $C^\epsilon_{k,j}:\mathbb{H}^{k,j}\rightarrow\mathbb{A}$ such that

\begin{equation}\label{keyrecursion}
\mathbb{V}^{k}_j(\mathbf{o}^k_{j})\le \int_{\mathbb{S}_k}\mathbb{V}^{k}_{j+1}\big(\mathbf{o}^k_{j},C^\epsilon_{k,j}(\mathbf{o}^k_{j}),s^k_{j+1},\tilde{i}^k_{j+1}\big)
\nu^k_{j+1}(ds^k_{j+1},d\tilde{i}^k_{j+1}|\mathbf{b}^k_j) + \epsilon
\end{equation}
for every $\mathbf{o}^k_{j}\in \mathbb{H}^{k,j}$ ($\mathbf{b}^k_{j}$ being the $\mathbb{S}^j_k$-elements of $\mathbf{o}^k_{j}$), where $j=m-1,\ldots, 1$.
\end{proposition}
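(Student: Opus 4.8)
The statement is the natural inductive extension of Lemma~\ref{iterUM} and Lemma~\ref{iterDOIS} from the single step $\ell = m-1$ to all steps $\ell = m-1, \ldots, 1$, so the plan is to argue by backward induction on $j$. The base case $j = m-1$ is exactly Lemma~\ref{iterUM}, which already gives that $\mathbb{V}^{k}_{m-1}$ is upper semianalytic and produces the analytically measurable selector $C^\epsilon_{k,m-1}$ realizing (\ref{1oiter1}). For the inductive step, I would assume $\mathbb{V}^{k}_{j+1}:\mathbb{H}^{k,j+1}\rightarrow\mathbb{R}$ is upper semianalytic and bounded (the latter being immediate from (\ref{cotafund}) and the definition (\ref{Vkfunction}), since each $\mathbb{V}^k_\ell$ inherits the bound $\sup_{\eta\in\mathbf{D}^n_T}|\xi(\eta)|$), and then run the same three-ingredient argument used in Lemma~\ref{iterUM}.

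The three ingredients are: first, the integrated map $f(\mathbf{o}^k_j,a^k_j) = \int_{\mathbb{S}_k}\mathbb{V}^{k}_{j+1}(\mathbf{o}^k_j,a^k_j,s^k_{j+1},\tilde{i}^k_{j+1})\,\nu^k_{j+1}(ds^k_{j+1}d\tilde{i}^k_{j+1}|\mathbf{b}^k_j)$ is upper semianalytic on $\mathbb{H}^{k,j}\times\mathbb{A}$; this is precisely Lemma~\ref{measurabilityissue2}. Second, since $\mathbb{H}^{k,j}\times\mathbb{A}$ is a Borel (hence analytic) set and $\mathbb{V}^{k}_j(\mathbf{o}^k_j) = \sup_{a^k_j\in\mathbb{A}} f(\mathbf{o}^k_j,a^k_j)$ is the partial supremum of an upper semianalytic function over a compact section, Prop.~7.47 in \cite{bertsekas} yields that $\mathbb{V}^{k}_j$ is upper semianalytic. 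Third, the uniform bound on $f$ coming from (\ref{cotafund}) lets me invoke the Jankov--von Neumann--type selection result Prop.~7.50 in \cite{bertsekas}, which for every $\epsilon>0$ produces an analytically measurable $C^\epsilon_{k,j}:\mathbb{H}^{k,j}\rightarrow\mathbb{A}$ with $f(\mathbf{o}^k_j,C^\epsilon_{k,j}(\mathbf{o}^k_j))\ge \mathbb{V}^{k}_j(\mathbf{o}^k_j)-\epsilon$ on all of $\{\mathbb{V}^k_j<+\infty\}=\mathbb{H}^{k,j}$; this is exactly (\ref{keyrecursion}). The induction then closes, giving the claim for all $j=m-1,\ldots,1$.

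The only point requiring care — and the place I expect is the main obstacle — is propagating the \emph{boundedness and measurability hypotheses} correctly through the induction so that Prop.~7.47 and Prop.~7.50 of \cite{bertsekas} actually apply at each step. Specifically, one must check that $\mathbb{V}^k_{j+1}$ being upper semianalytic and bounded is preserved under the operation $\mathbb{V}^k_{j+1}\mapsto\mathbb{V}^k_j$ defined in (\ref{Vkfunction}): boundedness is inherited from (\ref{cotafund}) applied recursively (the constant never grows, since $\nu^k_{j+1}(\cdot|\mathbf{b}^k_j)$ is a probability measure and the supremum over $\mathbb{A}$ of a bounded function is bounded by the same constant), and upper semianalyticity is exactly the content of Lemma~\ref{measurabilityissue2} combined with Prop.~7.47. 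Since these are all invoked verbatim as in Lemma~\ref{iterUM}, the inductive step is a routine repetition; I would therefore present it compactly, noting the base case is Lemma~\ref{iterUM}, and for $j<m-1$ simply remark that the identical reasoning — using Lemma~\ref{measurabilityissue2} in place of Lemma~\ref{measurabilityissue1}, the bound (\ref{cotafund}), and Prop.~7.47 and 7.50 of \cite{bertsekas} — applies after assuming the result for $j+1$.
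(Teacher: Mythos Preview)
Your proposal is correct and follows essentially the same approach as the paper: the paper's proof simply notes the uniform bound (\ref{cotafund1}) inherited from (\ref{cotafund}) and then says to ``repeat the argument of the proof of Lemma~\ref{iterUM} jointly with Lemma~\ref{measurabilityissue2}'', which is exactly the backward-induction scheme you spell out using Prop.~7.47 and Prop.~7.50 of \cite{bertsekas}. Your write-up is more explicit about the induction structure, but the logical content is identical.
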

\begin{proof}
From (\ref{cotafund}), the following estimate holds true

\begin{equation}\label{cotafund1}
\sup_{\mathbf{o}^k_{j}\in \mathbb{H}^{k,j}}|\mathbb{V}^{k}_{j}(\mathbf{o}^k_{j})|\le \sup_{\eta\in \mathbf{D}^n_T}|\xi(\eta)| < + \infty,
\end{equation}
for every $j=m-1, \ldots, 1$. Now, we just repeat the argument of the proof of Lemma \ref{iterUM} jointly with Lemma \ref{measurabilityissue2}.



\end{proof}

We are now able to define the value function at step $j=0$ as follows

\begin{equation}\label{detarg1}
\mathbb{V}^{k}_0:=\sup_{a^k_0\in \mathbb{A}}\int_{\mathbb{S}_k}\mathbb{V}^{k}_{1}(a^k_0,s^k_{1},\tilde{i}^k_{1})\mathbb{P}^{k}_1(ds^k_{1},d\tilde{i}^k_{1}).
\end{equation}
Therefore, by definition of the supremum, for $\epsilon>0$, there exists $C^\epsilon_{k,0}\in \mathbb{A}$ which realizes

$$
\mathbb{V}^{k}_0 < \int_{\mathbb{S}_k}\mathbb{V}^{k}_{1}\big(C^\epsilon_{k,0},s^k_{1},\tilde{i}^k_{1}\big)
\mathbb{P}^k_{1}(ds^k_{1},d\tilde{i}^k_{1}) + \epsilon.
$$

\begin{proposition}\label{AGREGATION}
For each $j=m-1, \ldots, 0$ and a control $u^k\in U^{k,m}_0$ associated with a list of universally measurable functions $(g^k_j)_{j=0}^{m-1}$, we have

\begin{equation}\label{vkrep}
V^k(T^k_j,u^k) = \mathbb{V}^{k}_j(\Xi^{k,g^k}_j(\mathcal{A}^k_j))~a.s.
\end{equation}
Moreover, for every $\epsilon>0$, there exists a control $u^{k,\epsilon}_j$ defined by

\begin{equation}\label{explicitcontrol}
u^{k,\epsilon}_j := C^\epsilon_{k,j}(\Xi^{k,g^k}_j(\mathcal{A}^k_j)); j=m-1, \ldots, 0
\end{equation}
which realizes

\begin{equation}\label{vkineq}
V^k(T^k_j,u^k)\le \mathbb{E}\big[V^k(T^k_{j+1}, u^k\otimes_{j}u^{k,\epsilon}_j)|\mathcal{G}^k_{j}  \big] + \epsilon~a.s
\end{equation}
for every $j=m-1, \ldots, 0$.
\end{proposition}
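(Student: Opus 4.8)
The plan is to establish the aggregation identity \eqref{vkrep} first, by a backward induction on $j$ running from $m:=e(k,T)$ down to $0$, and then to read off \eqref{vkineq} as an essentially immediate consequence of the recursive estimate \eqref{keyrecursion} in Proposition \ref{detarg} (with \eqref{detarg1} handling the terminal step $j=0$). The key structural fact used throughout is that both $V^k(T^k_{j+1},\cdot)$ and $\mathbb{V}^k_{j+1}\circ\Xi^{k,\cdot}_{j+1}$ depend on a control only through its first $j+1$ coordinates, so that the induction hypothesis at level $j+1$, although stated for controls in $U^{k,m}_0$, applies unambiguously to any concatenation $u^{k,j-1}\otimes_j\theta^k_j$ with $\theta^k_j\in U^{k,j+1}_j$ after an arbitrary admissible extension.

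For the base case $j=m$, the boundary condition in \eqref{discretevalueprocess}, the compatibility relation \eqref{compoID} and the definition $\mathbb{V}^k_m(\mathbf{o}^k_m)=\xi(\gamma^k_m(\mathbf{o}^k_m))$ give $V^k(T^k_m,u^k)=\xi\big(X^k(\cdot,u^k)\big)=\xi\big(\gamma^k_m(\Xi^{k,g^k}_m(\mathcal{A}^k_m))\big)=\mathbb{V}^k_m(\Xi^{k,g^k}_m(\mathcal{A}^k_m))$ a.s. For the inductive step I would apply the dynamic programming equation \eqref{DPE} at $n=j$: for $\theta^k_j\in U^{k,j+1}_j$ represented by a universally measurable $z^k_j:\mathbb{S}^j_k\to\mathbb{A}$, the induction hypothesis and the identity $\Xi^{k,(g^k_0,\ldots,g^k_{j-1},z^k_j)}_{j+1}(\mathcal{A}^k_{j+1})=\big(\Xi^{k,g^k}_j(\mathcal{A}^k_j),z^k_j(\mathcal{A}^k_j),\Delta T^k_{j+1},\eta^k_{j+1}\big)$ yield $V^k(T^k_{j+1},u^{k,j-1}\otimes_j\theta^k_j)=\mathbb{V}^k_{j+1}\big(\Xi^{k,g^k}_j(\mathcal{A}^k_j),z^k_j(\mathcal{A}^k_j),\Delta T^k_{j+1},\eta^k_{j+1}\big)$ a.s. Lemma \ref{UMdisint} — which identifies the conditional law of $(\Delta T^k_{j+1},\eta^k_{j+1})$ given $\mathcal{G}^k_j$ with the kernel $\nu^k_{j+1}(\cdot\mid\mathcal{A}^k_j)$, exactly as in \eqref{1oiter} — then gives $\mathbb{E}\big[V^k(T^k_{j+1},u^{k,j-1}\otimes_j\theta^k_j)\mid\mathcal{G}^k_j\big]=\int_{\mathbb{S}_k}\mathbb{V}^k_{j+1}\big(\Xi^{k,g^k}_j(\mathcal{A}^k_j),z^k_j(\mathcal{A}^k_j),s^k_{j+1},\tilde i^k_{j+1}\big)\nu^k_{j+1}(ds^k_{j+1}d\tilde i^k_{j+1}\mid\mathcal{A}^k_j)$ a.s. Taking the essential supremum over $\theta^k_j$ and comparing with the definition \eqref{Vkfunction} of $\mathbb{V}^k_j$ closes the induction; the case $j=0$ is the special situation in which $\mathcal{G}^k_0$ is trivial and $U^{k,1}_0$ reduces to the constant controls $a^k_0\in\mathbb{A}$, so that \eqref{detarg1} reads $V^k(0)=\sup_{a^k_0\in\mathbb{A}}\int_{\mathbb{S}_k}\mathbb{V}^k_1(a^k_0,s^k_1,\tilde i^k_1)\mathbb{P}^k_1(ds^k_1 d\tilde i^k_1)=\mathbb{V}^k_0$.

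Granting \eqref{vkrep}, the second assertion is short. The map $g^{k,\epsilon}_j:=C^\epsilon_{k,j}\circ\Xi^{k,g^k}_j$ is universally measurable (composition of an analytically measurable map with a universally measurable one, Prop. 7.44 in \cite{bertsekas}), hence $u^{k,\epsilon}_j=g^{k,\epsilon}_j(\mathcal{A}^k_j)$ is an admissible control; feeding \eqref{keyrecursion} (or, for $j=0$, the defining property of the supremum in \eqref{detarg1}) into \eqref{vkrep} at level $j$ and rewriting the integral on the right, via Lemma \ref{UMdisint}, as a $\mathcal{G}^k_j$-conditional expectation of $\mathbb{V}^k_{j+1}\big(\Xi^{k,(g^k_0,\ldots,g^k_{j-1},g^{k,\epsilon}_j)}_{j+1}(\mathcal{A}^k_{j+1})\big)=V^k(T^k_{j+1},u^k\otimes_j u^{k,\epsilon}_j)$ (using \eqref{vkrep} at level $j+1$) gives \eqref{vkineq}. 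For $j=m-1$ both \eqref{vkrep} and \eqref{vkineq} are already contained in identity \eqref{iterdois1} and Lemma \ref{iterDOIS}, which therefore serve as the first rung of the induction.

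I expect the main obstacle to be the inductive identity $\esssup_{z^k_j}\int_{\mathbb{S}_k}\mathbb{V}^k_{j+1}\big(\Xi^{k,g^k}_j(\mathcal{A}^k_j),z^k_j(\mathcal{A}^k_j),\cdot\big)\nu^k_{j+1}(\cdot\mid\mathcal{A}^k_j)=\mathbb{V}^k_j(\Xi^{k,g^k}_j(\mathcal{A}^k_j))$ a.s.\ hidden in the inductive step. The inequality ``$\le$'' is immediate from the pointwise supremum in \eqref{Vkfunction}, but the reverse inequality cannot be extracted by merely testing constant controls $z^k_j\equiv a^k_j$, since an uncountable pointwise supremum need not be dominated by an essential supremum. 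One must instead use a measurable selection — the same Prop. 7.50 of \cite{bertsekas} invoked in Lemma \ref{iterUM} and Proposition \ref{detarg} — to produce, for each $\epsilon>0$, a universally measurable $\epsilon$-maximizer $z^{k,\epsilon}_j:\mathbb{S}^j_k\to\mathbb{A}$ of $a\mapsto\int_{\mathbb{S}_k}\mathbb{V}^k_{j+1}(\Xi^{k,g^k}_j(\cdot),a,\cdot)\nu^k_{j+1}(\cdot\mid\cdot)$, whose associated control then witnesses ``$\ge$'' up to $\epsilon$; letting $\epsilon\downarrow 0$ removes the slack. All remaining steps — the disintegration/conditional-expectation bookkeeping, the upper semianalyticity of the integrands from Lemma \ref{measurabilityissue2}, and the verification that the concatenated controls have the expected representing functions — are routine given Lemmas \ref{UMdisint}, \ref{conditionalexprep} and \ref{measurabilityissue2} together with Proposition \ref{detarg}.
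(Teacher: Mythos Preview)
Your proposal is correct and follows the same backward-induction strategy the paper uses (the paper's proof simply cites Lemma \ref{iterDOIS} for the base case $j=m-1$ and then invokes Proposition \ref{detarg} ``and a backward induction argument'' for the remaining steps). In fact you are more explicit than the paper on the one genuinely delicate point: in the proof of Lemma \ref{iterDOIS} the paper obtains the inequality $V^k\ge\mathbb{V}^k_{m-1}(\Xi^{k,g^k}_{m-1}(\mathcal{A}^k_{m-1}))$ by testing \emph{constant} controls $z^k\equiv a^k_{m-1}$ in \eqref{iterdois6} and then ``summing up''; as you correctly observe, passing from a family of a.s.\ inequalities indexed by an uncountable set $\mathbb{A}$ to the pointwise supremum is not automatic. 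Your remedy---plugging the analytically measurable $\epsilon$-selector $C^\epsilon_{k,j}\circ\Xi^{k,g^k}_j$ (which is itself an admissible control) into the essential supremum and letting $\epsilon\downarrow 0$---is the clean way to close this gap, and it is entirely in the spirit of the paper since the same selectors from Proposition \ref{detarg} are already on hand.
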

\begin{proof}
The statements for $j=m-1$ hold true due to (\ref{iterdois3}) and (\ref{iterdois1}) in Lemma \ref{iterDOIS}. Now, by using Proposition \ref{detarg} and a backward induction argument, we shall conclude the proof.
\end{proof}
We are now able to construct an $\epsilon$-optimal control in this discrete level.

\begin{proposition}\label{epsiloncTH}
For every $\epsilon> 0$ and $k\ge 1$, there exists a control $\phi^{*,k,\epsilon}\in U^{k,m}_0$ such that

\begin{equation}\label{epsiloncTHzero}
\sup_{u^k\in U^{k,m}_0}\mathbb{E}\big[\xi_{X^k}(u^k)\big]\le \mathbb{E}\big[\xi_{X^k}(\phi^{*,k,\epsilon})\big]+ \epsilon.
\end{equation}
\end{proposition}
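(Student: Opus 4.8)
The plan is to build the control $\phi^{*,k,\epsilon}$ by concatenating the analytically measurable selectors $C^{\epsilon/m}_{k,j}$ produced in Proposition \ref{AGREGATION} (here $m=e(k,T)$), and then to telescope the one-step near-optimality inequality (\ref{vkineq}) along the grid $T^k_0,\ldots,T^k_m$. Set $\delta:=\epsilon/m$.

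First I would construct $\phi^{*,k,\epsilon}$ recursively through its representing functions $g^k=(g^k_j)_{j=0}^{m-1}$: put $g^k_0:=C^\delta_{k,0}\in\mathbb{A}$ and, once $g^k_0,\ldots,g^k_{j-1}$ have been defined, set $g^k_j:=C^\delta_{k,j}\circ\Xi^{k,g^k}_j$. As in the proof of Lemma \ref{iterDOIS}, each $g^k_j$ is a composition of an analytically measurable map with a universally measurable one, hence universally measurable (Prop.~7.44 in \cite{bertsekas}); therefore $\phi^{*,k,\epsilon}_j:=g^k_j(\mathcal{A}^k_j)$ for $j=0,\ldots,m-1$ defines an admissible control $\phi^{*,k,\epsilon}\in U^{k,m}_0$.

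Next I would record the self-consistency of this construction. With $g^k$ as above, the control $u^{k,\epsilon/m}_j:=C^\delta_{k,j}(\Xi^{k,g^k}_j(\mathcal{A}^k_j))$ from (\ref{explicitcontrol}) satisfies $u^{k,\epsilon/m}_j=g^k_j(\mathcal{A}^k_j)=\phi^{*,k,\epsilon}_j$ by the very definition of $g^k_j$; since $V^k(T^k_{j+1},\cdot)$ depends on a control only through its first $j+1$ coordinates, this forces $V^k(T^k_{j+1},\phi^{*,k,\epsilon}\otimes_j u^{k,\epsilon/m}_j)=V^k(T^k_{j+1},\phi^{*,k,\epsilon})$ a.s. Feeding this into (\ref{vkineq}) gives, for every $j=0,\ldots,m-1$,
\[
V^k(T^k_j,\phi^{*,k,\epsilon})\le \mathbb{E}\big[V^k(T^k_{j+1},\phi^{*,k,\epsilon})\,\big|\,\mathcal{G}^k_j\big]+\delta\quad a.s.
\]

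Finally, I would take expectations, apply the tower property along $\mathcal{G}^k_0\subset\cdots\subset\mathcal{G}^k_{m-1}$, and telescope to obtain $\mathbb{E}[V^k(T^k_0,\phi^{*,k,\epsilon})]\le \mathbb{E}[V^k(T^k_m,\phi^{*,k,\epsilon})]+m\delta$. The boundary conditions in (\ref{discretevalueprocess}) identify $V^k(T^k_m,\phi^{*,k,\epsilon})=\xi_{X^k}(\phi^{*,k,\epsilon})$ and the deterministic quantity $V^k(T^k_0,\cdot)=V^k(0)=\sup_{u^k\in U^{k,m}_0}\mathbb{E}[\xi_{X^k}(u^k)]$, and $m\delta=\epsilon$ then yields (\ref{epsiloncTHzero}). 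The only genuinely delicate point is the self-referential definition of $g^k$ and the resulting identity $\phi^{*,k,\epsilon}\otimes_j u^{k,\epsilon/m}_j=\phi^{*,k,\epsilon}$ that makes the recursion close up; everything else is the measurable-selection bookkeeping already carried out in Proposition \ref{AGREGATION} together with a routine telescoping argument.
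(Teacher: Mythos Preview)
Your proposal is correct and follows essentially the same route as the paper: you set $\delta=\epsilon/m$, construct $\phi^{*,k,\epsilon}$ by iterating the selectors $C^\delta_{k,j}$ (this is exactly the recursive construction the paper spells out immediately after the proposition), and then telescope the one-step inequality (\ref{vkineq}) along $j=0,\ldots,m-1$. The only cosmetic difference is that you package the iteration as a single inequality $V^k(T^k_j,\phi^{*,k,\epsilon})\le \mathbb{E}[V^k(T^k_{j+1},\phi^{*,k,\epsilon})\mid\mathcal{G}^k_j]+\delta$ via the self-consistency observation, whereas the paper writes out the successive concatenations $\phi^{k,\eta_k(\epsilon)}_0\otimes_1\cdots\otimes_{j-1}\phi^{k,\eta_k(\epsilon)}_{j-1}$ explicitly; the two formulations are equivalent.
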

\begin{proof}
Fix $\epsilon>0$ and let $\eta_k(\epsilon) = \frac{\epsilon}{m}$, where we recall $m=e(k,T)$. The candidate for an $\epsilon$-optimal control is

$$\phi^{*,k,\epsilon} = (\phi^{k,\eta_k(\epsilon)}_0, \phi^{k,\eta_k(\epsilon)}_1, \ldots, \phi^{k,\eta_k(\epsilon)}_{m-1}),$$
where $\phi^{k,\eta_k(\epsilon)}_i; i=m-1, \ldots, 0$ are constructed via (\ref{explicitcontrol}). Let us check it is indeed $\epsilon$-optimal. From (\ref{vkineq}), we know that

\begin{equation}\label{epsiloncTH1}
\sup_{u^k\in U^{k,e(k,T)}_0}\mathbb{E}\big[ \xi_{X^k}(u^k)\big]\le \mathbb{E}\big[V^k(T^k_1, \phi^{k,\eta_k(\epsilon)}_0 )\big] + \eta_k(\epsilon)
\end{equation}
and

\begin{equation}\label{epsiloncTH2}
V^k(T^k_1, \phi^{k,\eta_k(\epsilon)}_0 )\le \mathbb{E}\big[V^k(T^k_2,\phi^{k,\eta_k(\epsilon)}_0\otimes_1 \phi^{k,\eta_k(\epsilon)}_1)|\mathcal{G}^k_{1}\big] + \eta_k(\epsilon)~a.s.
\end{equation}
Inequalities (\ref{epsiloncTH1}) and (\ref{epsiloncTH2}) yield

\begin{equation}\label{epsiloncTH3}
\sup_{u^k\in U^{k,e(k,T)}_0}\mathbb{E}\big[ \xi_{X^k}(u^k)\big]\le \mathbb{E}\big[V^k(T^k_2,\phi^{k,\eta_k(\epsilon)}_0\otimes_1 \phi^{k,\eta_k(\epsilon)}_1)\big] + 2\eta_k(\epsilon),
\end{equation}
where (\ref{vkineq}) implies $V^k(T^k_j,\phi^{k,\eta_k(\epsilon)}_0\otimes_1 \phi^{k,\eta_k(\epsilon)}_1\otimes_2 \ldots, \otimes_{j-1} \phi^{k,\eta_k(\epsilon)}_{j-1})$ less than or equals to

\begin{equation}\label{epsiloncTH4}
\mathbb{E}\big[ V^k(T^k_{j+1},\phi^{k,\eta_k(\epsilon)}_0\otimes_1\phi^{k,\eta_k(\epsilon)}_1\otimes \ldots, \otimes_{j} \phi^{k,\eta_k(\epsilon)}_{j})|\mathcal{G}^k_{j}\big]+\eta_k(\epsilon)
\end{equation}
a.s for $j=1, \ldots, m-1$. By iterating the argument starting from (\ref{epsiloncTH3}) and using (\ref{epsiloncTH4}), we conclude (\ref{epsiloncTHzero}).
\end{proof}

\noindent $\bullet$ \textbf{Construction of $\epsilon$-optimal control for the controlled imbedded discrete structure}: From the proof of Proposition \ref{epsiloncTH}, we can actually construct an $\epsilon$-optimal control for $\sup_{u^k\in U^{k,e(k,T)}_0}\mathbb{E}[\xi_{X^k}(u^k)]$. For $m=e(k,T)$, let us assume one has constructed the functions $C^{\frac{\epsilon}{m}}_{k,j}; j=m-1, \ldots, 0$ backwards as described in Proposition \ref{detarg} and (\ref{detarg1}). Then, we set $\phi^{k,\frac{\epsilon}{m}}_0:=C^{\frac{\epsilon}{m}}_{k,0}$ and

$$\phi^{k,\frac{\epsilon}{m}}_1:=C^{\frac{\epsilon}{m}}_{k,1}\big(\Xi^{k,g^k_0}_1(\mathcal{A}^k_1)\big)$$
where $g^k_0:=C^{\frac{\epsilon}{m}}_{k,0}$. The next step is

$$\phi^{k,\frac{\epsilon}{m}}_2:=C^{\frac{\epsilon}{m}}_{k,2}\big(\Xi^{k,g^k_0,g^k_1}_2(\mathcal{A}^k_2)\big)$$
where $g^k_1(\mathbf{b}^k_1):=C^{\frac{\epsilon}{m}}_{k,1}\big(\Xi^{k,g^k_0}_1(\mathbf{b}^k_1)\big)$ and

$$\Xi^{k,g^k_0,g^k_1}_2(\mathbf{b}^k_2):=\Big(g^k_0, \mathbf{s}^k_1,\tilde{i}^k_1, g^k_1(\mathbf{s}^k_1,\tilde{i}^k_1),\mathbf{s}^k_2,\tilde{i}^k_2 \Big)$$
for $\mathbf{b}^k_2 = (\mathbf{s}^k_1,\tilde{i}^k_1,\mathbf{s}^k_2,\tilde{i}^k_2)$. We then proceed as follows
$$\phi^{k,\frac{\epsilon}{m}}_j:=C^{\frac{\epsilon}{m}}_{k,j}\big(\Xi^{k,g^k_0,\ldots,g^k_{j-1}}_j(\mathcal{A}^k_j)\big)$$
where

$$g^k_{j-1}(\mathbf{b}^k_{j-1}):=C^{\frac{\epsilon}{m}}_{k,j-1}\big(\Xi^{k,g^k_0,\ldots,g^k_{j-2}}_{j-1}(\mathbf{b}^k_{j-1})\big),$$
and
$$\Xi^{k,g^k_0,\ldots,g^k_{j-1}}_j(\mathbf{b}^k_j):=\Big((g^k_0, \mathbf{s}^k_1,\tilde{i}^k_1),\ldots, (g^k_{j-1}(\mathbf{s}^k_1,\tilde{i}^k_1,\ldots,\mathbf{s}^k_{j-1},\tilde{i}^k_{j-1}),\mathbf{s}^k_j,\tilde{i}^k_j) \Big)$$
for $\mathbf{b}^k_j = (\mathbf{s}^k_1,\tilde{i}^k_1,\ldots,\mathbf{s}^k_j,\tilde{i}^k_j); j=1,\ldots, m-1.$ The $\epsilon$-optimal control for (\ref{epsiloncTHzero}) is then

\begin{equation}\label{constructionoptimalcontrol}
\phi^{*,k,\epsilon} = (\phi^{k,\frac{\epsilon}{m}}_0, \ldots, \phi^{k,\frac{\epsilon}{m}}_{m-1}).
\end{equation}

\subsection{Pathwise dynamic programming equation in the embedded structure}
Let us now illustrate the fact that the dynamic programming principle given by Proposition \ref{DPprop}, can be formulated in terms of an optimization procedure based on the operator (\ref{pathwiseUcondPROB}) which plays the role of a Hamiltonian. In the dynamic programming equation below, there are several concatenations of different controls at each time step. Then, if $\big((Z^k)_{k\ge1},\mathscr{D}\big)$ is a weak controlled imbedded discrete structure for $V$, we set

$$Z^{k,(n)}(\cdot, u^{k,n-1}\otimes_n\theta^k_n):= Z^k\big(\cdot, (u^k_0,\ldots,u^k_{n-1},\theta^k_n, \ldots, \theta^k_n)\big)$$
for $0\le n\le e(k,T)-1$. By construction,

$$Z^{k,(n)}(t, u^{k,n-1}\otimes_n\theta^k_n)=Z^{k,(n)}(T^k_{n+1}, u^{k,n-1}\otimes_n\theta^k_n)$$
for every $t\ge T^k_{n+1}$, i.e., it is stopped after the stopping time $T^k_{n+1}$.

For such $\big((Z^k)_{k\ge 1},\mathscr{D}\big)$, we write $\mathcal{X}_n = \big( (Z^{k,(n)})_{k\ge 1},\mathscr{D}\big)$ for $0\le n \le e(k,T)-1$. For each $n=0,\ldots, e(k,T)-1$, $Z^{k,(n)}(\cdot, u^{k,n-1}\otimes_n\theta^k_n)$ is an element of $O_T(\mathbb{F}^k)$ and hence we are able to apply decomposition (\ref{nonlinearDFORM}) to get

$$Z^{k,(n)}(t, u^{k,n-1}\otimes_n\theta^k_n) = Z^{k,(n)}(0,u^{k,n-1}\otimes_n\theta^k_n ) +$$
$$ \sum_{j=1}^d\oint_0^t \mathbb{D}^{\mathcal{X}_n,k,j}Z^{k,(n)}(s,u^{k,n-1}\otimes_n\theta^k_n)dA^{k,j}(s)+  \sum_{j=1}^d \int_0^t\mathbb{U}^{\mathcal{X}_n,k,j}Z^{k,(n)}(s,u^{k,n-1}\otimes_n\theta^k_n)ds$$
for $0\le t\le T$. Here, following the arguments which describes (\ref{uncOPERATORS}), we have

$$\mathbb{D}^{\mathcal{X}_n,k,j}Z^{k,(n)}(s, u^{k,n-1}\otimes_n\theta^k_n)(s) = \sum_{\ell=1}^{\infty} \mathcal{D}^{\mathcal{X}_n,k,j}Z^{k,(n)}(s, u^{k,n-1}\otimes_n\theta^k_n ) \mathds{1}_{\{T^k_\ell\le s < T^k_{\ell+1}\}}$$
where

$$\mathcal{D}^{\mathcal{X}_n,k,j}Z^{k,(n)}(s, u^{k,n-1}\otimes_n\theta^k_n ) = \sum_{r=1}^{\infty}\frac{\Delta Z^{k,(n)}(T^{k,j}_r,u^{k,n-1}\otimes_n\theta^k_n )}{\Delta A^{k,j}(T^{k,j}_r)}\mathds{1}_{\{T^{k,j}_r=s\}},$$
and

$$\mathbb{U}^{\mathcal{X}_n,k,j}Z^{k,(n)}(s,u^{k,n-1}\otimes_n\theta^k_n) = U^{\mathcal{X}_n,k,j}Z^{k,(n)}(s,u^{k,n-1}\otimes_n\theta^k_n)\frac{d\langle A^{k,j}\rangle }{ds},$$
where

\begin{equation}\label{openU}
\sum_{j=1}^dU^{\mathcal{X}_n,k,j}Z^{k,(n)}(T^k_{n+1}, u^{k,n-1}\otimes_n\theta^k_n) = \mathbb{E}\Bigg[\frac{\Delta Z^k(T^k_{n+1}, u^{k,n-1}\otimes_n\theta^k_n)}{\epsilon^2_k}\Big|\mathcal{G}^k_{n+1-}\Bigg]~a.s.
\end{equation}

\begin{proposition}\label{HJBprob}
For a given controlled structure $\big((X^k)_{k\ge1},\mathscr{D}\big)$ satisfying (\ref{antiprop}), the associated value process $u^k\mapsto V^k(\cdot, u^k)$ is the unique map from $U^{k,e(k,T)}_0$ to $O_T(\mathbb{F}^k)$ which satisfies

\begin{equation}\label{HJB1}
\esssup_{\theta^k_n\in U^{k,n+1}_n}\mathbb{E}\Bigg[\sum_{j=1}^dU^{\mathcal{X}_n,k,j}V^k(T^k_{n+1}, u^{k,n-1}\otimes_n\theta^k_n)\Big|\mathcal{G}^k_n\Bigg] =0~a.s;\quad 0\le n\le e(k,T)-1
\end{equation}
for every $u^k\in U^{k,e(k,T)}_0$ with boundary condition $V^k(T^k_{e(k,T)},u^k) = \xi_{X^k}(u^k)$~a.s.
\end{proposition}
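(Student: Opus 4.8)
The plan is to reformulate the dynamic programming equation of Proposition~\ref{DPprop} in terms of the operator $U^{\mathcal{X}_n,k,j}$ and then invoke the uniqueness part of Proposition~\ref{DPprop}. The key link is identity~(\ref{openU}): applied to the map $Z^k = V^k$, it gives, for $0\le n\le e(k,T)-1$ and every $\theta^k_n\in U^{k,n+1}_n$,
\begin{equation*}
\sum_{j=1}^d U^{\mathcal{X}_n,k,j}V^k(T^k_{n+1}, u^{k,n-1}\otimes_n\theta^k_n) = \mathbb{E}\Bigg[\frac{\Delta V^k(T^k_{n+1}, u^{k,n-1}\otimes_n\theta^k_n)}{\epsilon^2_k}\Big|\mathcal{G}^k_{n+1-}\Bigg]~a.s.
\end{equation*}
Taking $\mathbb{E}[\,\cdot\,|\mathcal{G}^k_n]$ on both sides, using $\mathcal{G}^k_n\subset\mathcal{G}^k_{n+1-}$ and the tower property, and recalling that $V^{k,(n)}(T^k_{n+1}, u^{k,n-1}\otimes_n\theta^k_n)=V^k(T^k_{n+1}, u^{k,n-1}\otimes_n\theta^k_n)$ depends on $\theta^k_n$ only through $\theta^k_n|_{]]T^k_n,T^k_{n+1}]]}$ (so the concatenation is the relevant one in $U^{k,n+1}_n$), we obtain
\begin{equation*}
\mathbb{E}\Bigg[\sum_{j=1}^d U^{\mathcal{X}_n,k,j}V^k(T^k_{n+1}, u^{k,n-1}\otimes_n\theta^k_n)\Big|\mathcal{G}^k_n\Bigg] = \frac{1}{\epsilon_k^2}\Big(\mathbb{E}\big[V^k(T^k_{n+1}, u^{k,n-1}\otimes_n\theta^k_n)|\mathcal{G}^k_n\big] - V^k(T^k_n, u^k)\Big)~a.s,
\end{equation*}
since $V^k(T^k_n,u^k)$ is $\mathcal{G}^k_n$-measurable and depends only on $u^{k,n-1}$.

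Next I would take $\esssup_{\theta^k_n\in U^{k,n+1}_n}$ on the right-hand side. Because $\epsilon_k^{-2}>0$ is a deterministic constant and $V^k(T^k_n,u^k)$ does not depend on $\theta^k_n$, the essential supremum passes through, giving
\begin{equation*}
\esssup_{\theta^k_n\in U^{k,n+1}_n}\mathbb{E}\Bigg[\sum_{j=1}^d U^{\mathcal{X}_n,k,j}V^k(T^k_{n+1}, u^{k,n-1}\otimes_n\theta^k_n)\Big|\mathcal{G}^k_n\Bigg] = \frac{1}{\epsilon_k^2}\Big(\esssup_{\theta^k_n\in U^{k,n+1}_n}\mathbb{E}\big[V^k(T^k_{n+1}, u^{k,n-1}\otimes_n\theta^k_n)|\mathcal{G}^k_n\big] - V^k(T^k_n, u^k)\Big).
\end{equation*}
By the dynamic programming equation~(\ref{DPE}) of Proposition~\ref{DPprop}, the bracketed difference vanishes a.s., which establishes~(\ref{HJB1}); the boundary condition is the one already carried by $V^k$. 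For the converse, I would argue that if a map $u^k\mapsto Z^k(\cdot,u^k)$ from $U^{k,e(k,T)}_0$ to $O_T(\mathbb{F}^k)$ satisfies~(\ref{HJB1}) together with the terminal condition, then running the same computation backwards — multiply by $\epsilon_k^2$, add $Z^k(T^k_n,u^k)$, and use that $Z^k(T^k_n,u^k)$ is $\mathcal{G}^k_n$-measurable — shows that $Z^k$ satisfies~(\ref{DPE}); the uniqueness clause of Proposition~\ref{DPprop} then forces $Z^k(T^k_n,u^k)=V^k(T^k_n,u^k)$ a.s.\ for all $n$ and all $u^k$.

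The main obstacle is bookkeeping rather than conceptual: one must check carefully that the $\esssup$ over $U^{k,n+1}_n$ interacts correctly with the concatenation structure and with~(\ref{openU}), in particular that $\mathcal{X}_n=\big((Z^{k,(n)})_{k\ge1},\mathscr{D}\big)$ is indeed a legitimate weak controlled imbedded discrete structure so that the decomposition~(\ref{nonlinearDFORM}) and hence~(\ref{openU}) apply to $Z^{k,(n)}(\cdot,u^{k,n-1}\otimes_n\theta^k_n)$ for each fixed $\theta^k_n$ — this uses that $Z^{k,(n)}(\cdot,u^{k,n-1}\otimes_n\theta^k_n)\in O_T(\mathbb{F}^k)$, which is stopped at $T^k_{n+1}$. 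A minor point is that passing the $\esssup$ through a conditional expectation in the converse direction requires the lattice property; this is available by Lemma~\ref{lattice2}. Modulo these verifications the argument is a direct translation of~(\ref{DPE}) through~(\ref{openU}), so I would present it concisely and refer to Proposition~\ref{DPprop} and equation~(\ref{openU}) for the substance.
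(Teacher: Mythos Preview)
Your proof is correct and follows the same approach as the paper: the authors simply note that equation~(\ref{DPE}) is equivalent to~(\ref{HJB1}) due to~(\ref{openU}) and invoke Proposition~\ref{DPprop}, whereas you spell out the translation explicitly. The bookkeeping concerns you flag are legitimate but routine, and your treatment of both the forward direction and the uniqueness converse matches what the paper leaves implicit.
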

\begin{proof}
It is an immediate application of Proposition \ref{DPprop}. Equation (\ref{DPE}) is equivalent to (\ref{HJB1}) due to (\ref{openU}).
\end{proof}
The results presented in Section \ref{constructionVALUE} combined with Proposition \ref{HJBprob} allow us to state the following result which summarizes the pathwise aspect of our methodology based on the operator (\ref{pathwiseUcondPROB}). It is a pathwise version of Proposition \ref{HJBprob} and it reveals that the operator $\mathscr{U}$ plays the role of a \textit{Hamiltonian}-type operator.

\begin{corollary}\label{PDECHARACTERIZATION}
For a given function $\gamma^k_{m}:\mathbb{H}^{k,m}\rightarrow\mathbb{R}$ satisfying (\ref{compoID}) where $m=e(k,T)$, the value function $(\mathbb{V}^{k}_n)_{n=0}^{m}$ associated with $V^k$ is the unique solution of

\begin{eqnarray}\label{PDEvalue}
\sup_{a^k_n\in \mathbb{A}}\mathscr{U}\mathbb{V}^k_n(\pi_n(\mathbf{o}^{k}_m),a^k_n) & = & 0;\quad n=m-1, \ldots, 0, \\
\nonumber\mathbb{V}^k_{m} (\mathbf{o}^k_{m}) &=&\xi\big(\gamma^k_{m}(\mathbf{o}^k_m)\big);~\mathbf{o}^k_{m}\in\mathbb{H}^{k,m}.
\end{eqnarray}
By composing with the state driving noise, Proposition \ref{HJBprob} can be rewritten as

\begin{eqnarray*}
\sup_{a^k_n\in \mathbb{A}}\mathscr{U}\mathbb{V}^k_n(\Xi^{k,g^k}_n(\mathcal{A}^k_n),a^k_n) & = & 0;\quad n=m-1, \ldots, 0, \\
\mathbb{V}^k_{m} (\Xi^{k,g^k}_{m}(\mathcal{A}^k_{m})) &=&\xi\big(\gamma^k_{m}(\Xi^{k,g^k}_{m}(\mathcal{A}^k_{m}))\big)~a.s,
\end{eqnarray*}
for every control $(u^k_n)_{n=0}^{m-1}$ associated with a list of universally measurable functions $(g^k_\ell)_{\ell=0}^{m-1}$.
\end{corollary}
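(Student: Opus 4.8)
The plan is to reduce Corollary \ref{PDECHARACTERIZATION} directly to the results already assembled in Section \ref{constructionVALUE} and to Proposition \ref{HJBprob}. First I would unwind the definition of the operator $\mathscr{U}$ in (\ref{pathwiseUcond}): by the very definition, $\mathscr{U}\mathbb{V}^k_n(\mathbf{o}^k_n,a^k_n) = \epsilon_k^{-2}\big(\int_{\mathbb{S}_k}\mathbb{V}^k_{n+1}(\mathbf{o}^k_n,a^k_n,s^k_{n+1},\tilde{i}^k_{n+1})\nu^k_{n+1}(ds^k_{n+1}d\tilde{i}^k_{n+1}|\mathbf{b}^k_n) - \mathbb{V}^k_n(\mathbf{o}^k_n)\big)$. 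Hence the equation $\sup_{a^k_n\in\mathbb{A}}\mathscr{U}\mathbb{V}^k_n(\pi_n(\mathbf{o}^k_m),a^k_n)=0$ is, after multiplying by $\epsilon_k^2$ and adding $\mathbb{V}^k_n(\mathbf{o}^k_n)$ to both sides, literally the statement $\mathbb{V}^k_n(\mathbf{o}^k_n)=\sup_{a^k_n\in\mathbb{A}}\int_{\mathbb{S}_k}\mathbb{V}^k_{n+1}(\mathbf{o}^k_n,a^k_n,s^k_{n+1},\tilde{i}^k_{n+1})\nu^k_{n+1}(ds^k_{n+1}d\tilde{i}^k_{n+1}|\mathbf{b}^k_n)$, which is precisely the recursive definition (\ref{Vkfunction}) of $\mathbb{V}^k_\ell$ for $\ell=m-1,\dots,1$, and (\ref{detarg1}) for $\ell=0$ (note that $\mathbb{P}^k_1(\cdot)=\nu^k_1(\cdot|\mathbf{b}^k_0)$ with the trivial conditioning, so the $n=0$ case is uniform with the rest). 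The terminal identity $\mathbb{V}^k_m(\mathbf{o}^k_m)=\xi(\gamma^k_m(\mathbf{o}^k_m))$ is the very definition of $\mathbb{V}^k_m$. So existence of a solution of (\ref{PDEvalue}) is immediate, and Proposition \ref{AGREGATION} identifies it with the aggregation of $V^k$.

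For uniqueness, I would run the standard backward-induction argument: suppose $(W^k_n)_{n=0}^m$ is another family of (upper semianalytic, or merely bounded Borel — whatever regularity is implicitly required for the integrals in (\ref{PDEvalue}) to make sense) functions on $\mathbb{H}^{k,n}$ solving (\ref{PDEvalue}). At level $m$ both $W^k_m$ and $\mathbb{V}^k_m$ equal $\xi\circ\gamma^k_m$, so they agree. Inductively, if $W^k_{n+1}=\mathbb{V}^k_{n+1}$ pointwise, then the two integrands appearing in the $n$-th equation for $W^k$ and for $\mathbb{V}^k$ coincide, and taking $\sup_{a^k_n\in\mathbb{A}}$ of the same function forces $W^k_n=\mathbb{V}^k_n$. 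This gives $W^k_n=\mathbb{V}^k_n$ for all $n$, hence uniqueness in (\ref{PDEvalue}). The measurability bookkeeping — that each $\mathbb{V}^k_n$ is upper semianalytic and each inner integral is a well-defined upper semianalytic function of $(\mathbf{o}^k_n,a^k_n)$ — is exactly Lemma \ref{measurabilityissue2} and Proposition \ref{detarg}, so nothing new is needed there.

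For the second, ``composed'' form of the statement, I would simply evaluate the first display at the point $\pi_n(\mathbf{o}^k_m)=\Xi^{k,g^k}_n(\mathcal{A}^k_n(\omega))$ for $\mathbb{P}$-a.a.\ $\omega$, using that $\mathbf{b}^k_n$ in (\ref{pathwiseUcond}) is then $\mathcal{A}^k_n(\omega)$ and that $\nu^k_{n+1}(\cdot|\mathcal{A}^k_n)$ is a regular version of the conditional law of $(\Delta T^k_{n+1},\eta^k_{n+1})$ given $\mathcal{G}^k_n$ by (\ref{form1dis}) together with Lemma \ref{UMdisint} and Remark \ref{equalitycond}. Under this substitution, $\sup_{a^k_n}\mathscr{U}\mathbb{V}^k_n(\Xi^{k,g^k}_n(\mathcal{A}^k_n),a^k_n)=0$ a.s.\ is, via (\ref{pathwiseUcondPROB}) and the esssup-vs-sup passage (the finite/countable-mixing argument already used in Lemma \ref{lattice2} and Proposition \ref{DPprop} to exchange $\sup_{a^k_n\in\mathbb{A}}$ with the pointwise essential supremum over $\theta^k_n\in U^{k,n+1}_n$), exactly equation (\ref{HJB1}) of Proposition \ref{HJBprob}, and (\ref{vkrep}) turns the terminal condition into $\xi_{X^k}(u^k)$.

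The only genuinely delicate point is the passage between the pointwise supremum over the action set $\mathbb{A}$ in (\ref{PDEvalue}) and the essential supremum over the control processes $U^{k,n+1}_n$ in (\ref{HJB1}): one direction is trivial (a constant action is a particular control), while the other requires that the pointwise-in-$\mathbf{o}^k_n$ supremum be attained in an $(\omega)$-measurable way after composition with $\Xi^{k,g^k}_n(\mathcal{A}^k_n)$ — this is where the measurable-selection machinery (Prop.\ 7.50 in \cite{bertsekas}, already invoked in Lemma \ref{iterUM} to produce the analytically measurable $C^\epsilon_{k,n}$) does the work, and it is why $\epsilon$-optimal rather than exact selectors appear; the identity (\ref{vkineq}) in Proposition \ref{AGREGATION}, combined with the arbitrariness of $\epsilon$, closes this gap. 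I expect this selection/essential-supremum reconciliation to be the main obstacle, with everything else being a transcription of definitions.
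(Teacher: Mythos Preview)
Your proposal is correct and follows precisely the approach the paper intends: the paper does not give an explicit proof of this corollary at all, presenting it instead as an immediate transcription of the recursive definition (\ref{Vkfunction}) and (\ref{detarg1}) through the operator $\mathscr{U}$, combined with Proposition \ref{AGREGATION} and Proposition \ref{HJBprob}. Your detailed unpacking---including the backward-induction uniqueness argument and the identification of the sup-versus-esssup passage via measurable selection as the only nontrivial point---is exactly what the paper leaves implicit when it writes ``The results presented in Section \ref{constructionVALUE} combined with Proposition \ref{HJBprob} allow us to state the following result.''
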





\section{Convergence of value processes and $\epsilon$-optimal controls}\label{CONVERGENCEsection}
Throughout this section, we assume that $u\mapsto X(\cdot,u)$ is a controlled Wiener functional. The goal of this section is twofold: (i) We aim to prove that $\big((V^k)_{k\ge 1},\mathscr{D}\big)$ is a weak controlled imbedded discrete structure (Definition \ref{GASdef}) associated with the value process $V$. (ii) We want to show it is possible to construct $\epsilon$-optimal controls (see Definition \ref{epsilonDEF}) for (\ref{optimaldef1}) by analyzing the backward optimization problem

$$\argmax_{a^k_n\in \mathbb{A}} \int_{\mathbb{S}_k}\mathbb{V}^k_{n+1}(\mathbf{o}^k_n, a^k_n, s^k_{n+1},\tilde{i}^k_{n+1})\nu^k_{n+1}(ds^k_{n+1}d\tilde{i}^k_{n+1}|\mathbf{b}^k_n); \quad n=e(k,T)-1, \ldots, 0,$$
where $\mathbf{b}^k_n$ above are the elements of $\mathbf{o}^k_n$ which belong to $\mathbb{S}^k_n$.
\subsection{Approximation of controls}
In this section, we present a density result which will play a key role in this article: We want to approximate any control $u\in U^T_0$ by means of controls in the sets $U^{k,e(k,T)}_0$. For this purpose, we make use of the stochastic derivative introduced in the works \cite{LEAO_OHASHI2013, LEAO_OHASHI2017.1}. A given control $u = (u_1, \ldots, u_m)\in U^T_0$ has $m$-components and it is adapted w.r.t the filtration $\mathbb{F}$ generated by the $d$-dimensional Brownian motion $B^1, \ldots, B^d$. The key point is the identification of any control $u\in U^T_0$ with $\mathbb{F}$-martingales. In order to shorten notation, without any loss of generality, we will assume that $m=d$. The key point is the identification of any control $u\in U^T_0$ with its associated martingale

\begin{equation}\label{l2}
W(t)=\sum_{j=1}^d\int_0^t u_j(s)dB^j(s);0\le t\le T,
\end{equation}
where the control $u=(u_1, \ldots, u_d)$ is identified as the stochastic derivative operator $\mathcal{D}W=(\mathcal{D}_1W, \ldots, \mathcal{D}_dW)$ as described in Def. 4.4 in \cite{LEAO_OHASHI2017.1}. In this section, we make use of this operator computed on the subset of $\mathbb{F}$-martingales such that

\begin{equation}\label{l1}
\sup_{0\le t\le T}\|\mathcal{D}W(t)\|_{\mathbb{R}^d}\le \bar{a}~a.s,
\end{equation}
where $\bar{a}$ is the constant which describes the compact action space $\mathbb{A}$. In the sequel, we denote $\mathcal{X}_{\bar{a}} = \{W; \mathcal{D}W~\text{satisfies}~(\ref{l1})\}$ and observe that $\mathcal{X}_{\bar{a}}$ is isomorphic to $U^T_0$. For each $W\in \mathcal{X}_{\bar{a}}$, we set
$$M^{\mathcal{Y},k}(t):=\mathbb{E}[W(T)|\mathcal{F}^k_t];\quad J^k(t):=\sum_{n=1}^\infty \Delta M^{\mathcal{Y},k}(T^k_n) 1\!\!1_{ \{T^k_n\le t\}};0\le t\le T,$$
and we write $\mathcal{Y} = \big((J^k)_{k\ge 1},\mathscr{D}\big)$. The special $\mathbb{F}^k$-semimartingale decomposition is

\begin{equation}\label{stW}
J^k =M^{\mathcal{Y},k} + N^{\mathcal{Y},k}
\end{equation}
where we shall write $J^k(t)=\sum_{n=1}^\infty J^k(T^k_n)1\!\!1_{\{T^k_n \le t < T^k_{n+1}\}}; t\ge 0$, $J^k(T^k_n):=\sum_{\ell=1}^n\Delta M^{\mathcal{Y},k}(T^k_\ell); n\ge 1$. Moreover, $N^{\mathcal{Y},k}$ is the $\mathbb{F}^k$-dual predictable projection of $J^k$ which has continuous paths.

\begin{lemma}\label{severalsteps}
For every $W\in \mathcal{X}_{\bar{a}}$, the following limits hold true:

\begin{equation}\label{as1}
\lim_{k\rightarrow \infty}\mathbb{E}\sup_{0\le t\le T}|M^{\mathcal{Y},k}(t)- W(t)|^p=0
\end{equation}
for every $p>1$,

\begin{equation}\label{as2}
\lim_{k\rightarrow \infty}[M^{\mathcal{Y},k},M^{\mathcal{Y},k}](T) = [W,W](T)
\end{equation}
strongly in $L^1(\mathbb{P})$
and
\begin{equation}\label{as3}
\lim_{k\rightarrow \infty}[M^{\mathcal{Y},k},A^{k,j}](t) = [W,B^j](t)
\end{equation}
weakly in $L^1(\mathbb{P})$ for every $t\in [0,T]$ and $1\le j \le d$.
\end{lemma}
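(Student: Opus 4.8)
The plan is to exploit the fact that $M^{\mathcal{Y},k}(t)=\mathbb{E}[W(T)\mid\mathcal{F}^k_t]$ is precisely the martingale associated with the conditional expectation of the terminal value $W(T)$ with respect to the approximating filtration $\mathbb{F}^k$, and then invoke the weak convergence $\mathbb{F}^k\to\mathbb{F}$ in (\ref{weakfiltration}) together with the convergence of the time scales in (\ref{uniforTk})--(\ref{ektdef}). For (\ref{as1}), I would first note that $W\in\mathcal{X}_{\bar a}$ has $\mathcal{D}W$ bounded, so $W(T)=\sum_j\int_0^T u_j(s)dB^j(s)$ is bounded in every $L^p$ and, in fact, $W\in\mathbf{B}^p(\mathbb{F})$ for all $p>1$ by the Burkholder--Davis--Gundy inequality. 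Since $\mathcal{F}^k_\infty\uparrow\mathcal{F}_\infty$ (the $A^{k,j}$ generate finer and finer information as $\epsilon_k\downarrow 0$, and by (\ref{akb}) they recover $B$ in the limit), the martingale $M^{\mathcal{Y},k}(T)=\mathbb{E}[W(T)\mid\mathcal{F}^k_T]\to W(T)$ in $L^p(\mathbb{P})$ by the martingale convergence theorem. The uniform-in-time statement then follows from Doob's maximal inequality applied to the difference, after reducing to the convergence of the terminal values: one writes $\sup_{0\le t\le T}|M^{\mathcal{Y},k}(t)-\mathbb{E}[W(T)\mid\mathcal{F}_t]|$ and controls it by $\sup_t|\mathbb{E}[W(T)-W(T)\mid\mathcal{F}^k_t]|$-type bounds plus the fact that $\mathbb{E}[W(T)\mid\mathcal{F}_t]=W(t)$. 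Here the key technical input is a known estimate from \cite{LEAO_OHASHI2013} or \cite{LEAO_OHASHI2017.1} on the $\mathbf{B}^p$-convergence of such filtration-conditional martingales; I would cite that result rather than reprove it.

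For (\ref{as2}), the strategy is to use that $[M^{\mathcal{Y},k},M^{\mathcal{Y},k}](T)=\sum_{n\ge1}|\Delta M^{\mathcal{Y},k}(T^k_n)|^2\wedge\{T^k_n\le T\}$ is a pure-jump quadratic variation, and that $M^{\mathcal{Y},k}\to W$ in $\mathbf{B}^2$ with $W$ continuous. The convergence of quadratic variations for a sequence of martingales converging in $\mathbf{B}^2$ to a continuous martingale is again a known consequence of the weak convergence of filtrations framework of Coquet--M\'emin--Słomi\'nski \cite{coquet1}; the point is that $[M^{\mathcal{Y},k},M^{\mathcal{Y},k}]$ and $\langle M^{\mathcal{Y},k},M^{\mathcal{Y},k}\rangle$ become asymptotically close (jumps are $O(\epsilon_k)$), and $\langle M^{\mathcal{Y},k}\rangle\to\langle W\rangle=[W,W]$. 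I would spell this out via the identity $[M^{\mathcal{Y},k}](T)=|M^{\mathcal{Y},k}(T)|^2-2\int_0^T M^{\mathcal{Y},k}_{-}\,dM^{\mathcal{Y},k}$ and pass to the limit term by term, using (\ref{as1}) for the first term and an $L^1$ stochastic-integral convergence (again citing the convergence-of-filtrations machinery) for the integral term.

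For (\ref{as3}), the plan is identical in spirit: write $[M^{\mathcal{Y},k},A^{k,j}](t)=M^{\mathcal{Y},k}(t)A^{k,j}(t)-\int_0^t M^{\mathcal{Y},k}_{-}\,dA^{k,j}-\int_0^t A^{k,j}_{-}\,dM^{\mathcal{Y},k}$, then use (\ref{as1}), (\ref{akb}) (which gives $A^{k,j}\to B^j$ uniformly a.s.), and the weak $L^1$ convergence of the two stochastic integrals to their continuous-limit counterparts $\int_0^t W\,dB^j$ and $\int_0^t B^j\,dW$, so that the right-hand side converges weakly in $L^1$ to $W(t)B^j(t)-\int_0^t W\,dB^j-\int_0^t B^j\,dW=[W,B^j](t)$. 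The reason only \emph{weak} $L^1$ convergence is claimed (rather than strong) is that the cross-variation mixes a converging-filtration martingale against an exogenous process, and one does not have strong convergence of the relevant stochastic integrals without extra uniform integrability — weak convergence is exactly what the \cite{coquet1} theory delivers.

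The main obstacle I anticipate is \emph{not} (\ref{as1}), which is essentially martingale convergence plus a maximal inequality, but rather making the quadratic-variation and cross-variation limits (\ref{as2})--(\ref{as3}) precise: one must carefully justify passing to the limit inside the stochastic integrals $\int_0^\cdot M^{\mathcal{Y},k}_{-}\,dM^{\mathcal{Y},k}$ and $\int_0^\cdot M^{\mathcal{Y},k}_{-}\,dA^{k,j}$, and this is where the full strength of the weak convergence of filtrations (the fact that $\mathbb{F}^k\to\mathbb{F}$ weakly in the sense of \cite{coquet1}, which guarantees convergence of martingales together with their brackets under an $\mathbf{B}^2$-boundedness/UT-type condition) must be invoked. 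I would therefore structure the proof so that (\ref{as1}) is the only part proved from scratch (via BDG, Doob, and $\mathcal{F}^k_\infty\uparrow\mathcal{F}_\infty$), and (\ref{as2})--(\ref{as3}) are derived as corollaries of (\ref{as1}) combined with the cited convergence-of-filtrations results from \cite{LEAO_OHASHI2013,LEAO_OHASHI2017.1,coquet1}, checking only the uniform-tightness hypothesis, which holds because the jumps of $M^{\mathcal{Y},k}$ are $O(\epsilon_k)$ and $\sum_k\epsilon_k^2<\infty$.
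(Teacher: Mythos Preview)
Your overall strategy is in the right spirit, but two concrete claims you rely on are false, and the paper's route avoids both pitfalls. First, the filtrations $\mathbb{F}^k$ are \emph{not} nested in $k$: the step processes $A^{k,j}$ jump at random times $T^{k,j}_n$ that depend on $\epsilon_k$, so there is no reason for $\mathcal{F}^k_\infty\uparrow\mathcal{F}_\infty$ to hold, and the martingale-convergence argument you sketch for the terminal values does not apply as stated. The paper instead uses the weak convergence $\mathbb{F}^k\to\mathbb{F}$ (from \cite{LEAO_OHASHI2013}) to get $M^{\mathcal{Y},k}\to W$ uniformly in probability, and then upgrades to $\mathbf{B}^p$ via the uniform moment bound $\sup_k\mathbb{E}[M^{\mathcal{Y},k},M^{\mathcal{Y},k}]^{p/2}(T)\le C\,\mathbb{E}|W(T)|^p$ (BDG plus Jensen). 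Second, the jumps of $M^{\mathcal{Y},k}$ are \emph{not} $O(\epsilon_k)$: $\Delta M^{\mathcal{Y},k}(T^k_n)$ is a conditional-expectation increment of $W(T)$ and has no deterministic pointwise bound in terms of $\epsilon_k$. What the paper actually uses for the UT condition is the $L^2$ control $\sup_k\mathbb{E}\sup_{0\le t\le T}|\Delta M^{\mathcal{Y},k}(t)|^2\le C\,\mathbb{E}|W(T)|^2$, which follows from the quadratic-variation bound.

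Methodologically the paper is also more direct than your integration-by-parts plan for (\ref{as2})--(\ref{as3}). Rather than passing to the limit in $\int M^{\mathcal{Y},k}_{-}\,dM^{\mathcal{Y},k}$ and $\int M^{\mathcal{Y},k}_{-}\,dA^{k,j}$ separately, the paper invokes M\'emin's stability results \cite{memin} (Corollary~12 and Remark~6 for the quadratic variation; Prop.~UT2 and Th.~UT3 for the covariation) to obtain $[M^{\mathcal{Y},k},M^{\mathcal{Y},k}]\to[W,W]$ and $[M^{\mathcal{Y},k},A^{k,j}]\to[W,B^j]$ uniformly in probability in one stroke, and then upgrades to strong/weak $L^1$ via uniform integrability (the quadratic-variation bound with $p>2$ for (\ref{as2}); Kunita--Watanabe plus BDG for (\ref{as3})). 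Your route could be made to work, but you would end up re-deriving the content of those M\'emin results; citing them directly, as the paper does, is cleaner.
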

\begin{proof}
Throughout this proof, $C$ is a generic constant which may differ from line to line. At first, we observe that $\mathbb{E}|W(T)|^p<\infty$ for every $p>1$ and $W\in \mathcal{X}_{\bar{a}}$. From Lemma 2.2 in \cite{LEAO_OHASHI2013}, we know that $\lim_{k\rightarrow \infty}\mathbb{F}^k=\mathbb{F}$ weakly so that $\lim_{k\rightarrow \infty}M^{\mathcal{Y},k}= W$ uniformly in probability. Burkholder-Davis-Gundy and Jensen inequality yield

$$
\mathbb{E}[M^{\mathcal{Y},k},M^{\mathcal{Y},k} ]^{\frac{p}{2}}(T)\le C\mathbb{E}\sup_{0\le t\le T}|M^{\mathcal{Y},k}(t)|^p\le C\mathbb{E}|W(T)|^p
$$
so that
\begin{equation}\label{bound1}
\sup_{k\ge 1}\mathbb{E}[M^{\mathcal{Y},k},M^{\mathcal{Y},k} ]^{\frac{p}{2}}(T)\le C\mathbb{E}|W(T)|^p < \infty, p> 1.
\end{equation}
The bound (\ref{bound1}) implies that (\ref{as1}) holds true. Corollary 12 and Remark 6 in \cite{memin} yield
$$\lim_{k\rightarrow \infty}[M^{\mathcal{Y},k}, M^{\mathcal{Y},k}](\cdot)= [W,W](\cdot)$$
uniformly in probability so by taking $p>2$ in (\ref{bound1}), we then conclude that (\ref{as2}) holds true. We claim that

\begin{equation}\label{as4}
\lim_{k\rightarrow \infty}[M^{\mathcal{Y},k},A^{k,j}](\cdot) = [W,B^j](\cdot)
\end{equation}
uniformly in probability. From Th. 6. 22 in \cite{he}, we know that

\begin{equation}\label{bound2}
\mathbb{E}\sup_{0\le t\le T}|\Delta M^{\mathcal{Y},k}(s)|^2\le \mathbb{E}\sum_{n=1}^\infty|\Delta M^{\mathcal{Y},k}(T^k_n)|^21\!\!1_{\{T^k_n\le T\}}\le \mathbb{E}|M^{\mathcal{Y},k}(T)|^2\le C \mathbb{E}|W(T)|^2
\end{equation}
so that $\sup_{k\ge 1}\mathbb{E}\sup_{0\le t\le T}|\Delta M^{\mathcal{Y},k}(s)| < \infty$. Hence, we shall apply Prop. UT2 and Th UT3 in \cite{memin} to conclude that (\ref{as4}) holds true. Let us fix $t\in [0,T]$. Kunita-Watanabe and Burkholder-Davis-Gundy inequalities yield

\begin{equation}\label{bound3}
\mathbb{E}|[M^{\mathcal{Y},k},A^{k,j}](t)|^2\le C\big(\mathbb{E}|B^j(T)|^2\big)^{1/2} \times \big(\mathbb{E}|W(T)|^2\big)^{1/2}
\end{equation}
so that $\sup_{k\ge 1}\mathbb{E}|[M^{\mathcal{Y},k},A^{k,j}](t)|^2< \infty$ which implies that $\{[M^{\mathcal{Y},k},A^k](t); k\ge 1\}$ is uniformly integrable for every $t\in [0,T]$. From (\ref{as4}), we then conclude (\ref{as3}) holds true.
\end{proof}

\begin{lemma}\label{taulemma}
Let $\tilde{\eta}^{k,j}(t+):=\min \{T^{k,j}_n; t < T^{k,j}_n\}$ and $\tilde{\eta}^{k.j}(t):=\max\{T^{k,j}_n; T^{k,j}_n \le t \}$ for $t\in[0,T]; 1\le j\le d, k\ge 1$. Let $\tau=\inf\{t>0; |Y(t)|=1\}$ for a standard real-valued Brownian motion $Y$. Then, for any $q\ge 1$, we have
$$\mathbb{E}|\tilde{\eta}^{k,j}(t+) - \tilde{\eta}^{k,j}(t)|^q =\epsilon_k^{2q}\mathbb{E}\tau^q;~1\le j\le d, 0\le t\le T, k\ge 1.$$
\end{lemma}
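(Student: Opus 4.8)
The plan is to reduce everything to a single application of the strong Markov property of one-dimensional Brownian motion together with scaling. First I would fix $j$ and $t$ and recall the definition of the hitting times: $T^{k,j}_0 = 0$ and $T^{k,j}_n = \inf\{T^{k,j}_{n-1} < s < \infty; |B^j(s) - B^j(T^{k,j}_{n-1})| = \epsilon_k\}$. Thus $\tilde{\eta}^{k,j}(t)$ is the last such hitting time before $t$ and $\tilde{\eta}^{k,j}(t+)$ is the first one strictly after $t$, so that the difference $\tilde{\eta}^{k,j}(t+) - \tilde{\eta}^{k,j}(t)$ is exactly the length of the $\epsilon_k$-oscillation interval of $B^j$ straddling time $t$; equivalently $\tilde{\eta}^{k,j}(t+) = T^{k,j}_{N+1}$ and $\tilde{\eta}^{k,j}(t) = T^{k,j}_{N}$ where $N = N^{k,j}(t) := \max\{n; T^{k,j}_n \le t\}$ is the (random) index of the last hitting time before $t$.

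Next I would condition on $\mathcal{F}_{T^{k,j}_N}$ — or rather, handle the subtlety that $N$ is not a stopping time. The cleanest route is to observe that $\{\tilde{\eta}^{k,j}(t) = T^{k,j}_n\} = \{T^{k,j}_n \le t < T^{k,j}_{n+1}\}$ and decompose
$$
\mathbb{E}\big|\tilde{\eta}^{k,j}(t+) - \tilde{\eta}^{k,j}(t)\big|^q = \sum_{n=0}^\infty \mathbb{E}\Big[\big(T^{k,j}_{n+1} - T^{k,j}_n\big)^q \mathds{1}_{\{T^{k,j}_n \le t < T^{k,j}_{n+1}\}}\Big].
$$
On the event $\{T^{k,j}_n \le t\} \in \mathcal{F}_{T^{k,j}_n}$, the strong Markov property applied at the stopping time $T^{k,j}_n$ tells us that $\{B^j(T^{k,j}_n + s) - B^j(T^{k,j}_n); s \ge 0\}$ is a Brownian motion independent of $\mathcal{F}_{T^{k,j}_n}$, and $T^{k,j}_{n+1} - T^{k,j}_n$ is its first exit time from $(-\epsilon_k, \epsilon_k)$. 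By Brownian scaling, if $\tau = \inf\{s > 0; |Y(s)| = 1\}$ for a standard Brownian motion $Y$, then $T^{k,j}_{n+1} - T^{k,j}_n \stackrel{d}{=} \epsilon_k^2 \tau$ and is independent of $\mathcal{F}_{T^{k,j}_n}$, hence independent of $\{t < T^{k,j}_{n+1}\}$ once... no — here is the one genuinely delicate point: the event $\{t < T^{k,j}_{n+1}\}$ is \emph{not} independent of $T^{k,j}_{n+1} - T^{k,j}_n$. I would instead use the identity $\{T^{k,j}_n \le t < T^{k,j}_{n+1}\} = \{T^{k,j}_n \le t\} \setminus \{T^{k,j}_{n+1} \le t\}$ and compute, for each $n$, $\mathbb{E}[(T^{k,j}_{n+1}-T^{k,j}_n)^q \mathds{1}_{\{T^{k,j}_n \le t\}}] - \mathbb{E}[(T^{k,j}_{n+1}-T^{k,j}_n)^q \mathds{1}_{\{T^{k,j}_{n+1} \le t\}}]$; but this still couples the increment with its own endpoint.

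The correct and simplest fix, which I would actually implement, is a time-reversal / last-exit argument: condition instead on $\mathcal{F}_{T^{k,j}_{n+1}-}$ is awkward, so the right move is to note that $\tilde\eta^{k,j}(t+)-\tilde\eta^{k,j}(t)$ equals the exit time, started afresh from the Brownian path reversed at $t$, of $(-\epsilon_k,\epsilon_k)$ — more precisely, one shows $\tilde\eta^{k,j}(t+)-t$ and $t-\tilde\eta^{k,j}(t)$ together with $t$ reconstruct an excursion-straddling interval whose total length has the same law as $\epsilon_k^2\tau$. Concretely: by the strong Markov property at $\tilde\eta^{k,j}(t)$ and the fact that, conditionally on $\tilde\eta^{k,j}(t) = T^{k,j}_n$ (i.e. on $\{T^{k,j}_n\le t\}$, intersected with not having exited by $t$), the post-$T^{k,j}_n$ Brownian motion run up to its exit time is still an exit problem but now conditioned not to have exited by time $t - T^{k,j}_n$; using the strong Markov property once more at time $t$ (a deterministic time, hence trivially a stopping time) on the shifted, not-yet-exited path, the residual time to exit has the law of the exit time of a Brownian motion started inside $(-\epsilon_k,\epsilon_k)$, and assembling the pre-$t$ and post-$t$ pieces gives that the full straddling length is distributed as $\epsilon_k^2\tau$. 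I expect this measure-theoretic bookkeeping — making the informal "straddling interval has the exit-time law" statement rigorous while $q$-th moments are finite (note $\mathbb{E}\tau^q < \infty$ for all $q$ since $\tau$ has exponential tails) — to be the main obstacle; the scaling and the final identity $\mathbb{E}|\tilde\eta^{k,j}(t+)-\tilde\eta^{k,j}(t)|^q = \epsilon_k^{2q}\mathbb{E}\tau^q$ are then immediate and, crucially, independent of $t$, $j$ and $k$ beyond the explicit $\epsilon_k^{2q}$ factor.
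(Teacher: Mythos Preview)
Your initial decomposition --- writing $\tilde\eta^{k,j}(t+)-\tilde\eta^{k,j}(t)=T^{k,j}_{N+1}-T^{k,j}_N$ with $N=N^{k,j}(t)$ and conditioning on $\{N=m\}$ --- is \emph{exactly} the route the paper takes. The paper then asserts that $\Delta T^{k,j}_{m+1}$ and $\mathds{1}_{\{N^{k,j}(t)=m\}}$ are independent, from which the equality would follow in one line. You correctly flag that this independence is suspicious: the event $\{N^{k,j}(t)=m\}=\{T^{k,j}_m\le t\}\cap\{\Delta T^{k,j}_{m+1}>t-T^{k,j}_m\}$ manifestly involves $\Delta T^{k,j}_{m+1}$.

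The real issue is that your proposed repair cannot succeed either, because the stated \emph{equality} is false for $t>0$. This is the inspection paradox from renewal theory: the interval $(T^{k,j}_N,T^{k,j}_{N+1}]$ straddling a fixed time $t$ is size-biased, not distributed like a generic increment $\epsilon_k^2\tau$. Concretely, with i.i.d.\ increments $X_i\stackrel{d}{=}\epsilon_k^2\tau$ one has, by the key renewal theorem, $\mathbb{E}\big[(X_{N(t)+1})^q\big]\to \epsilon_k^{2q}\,\mathbb{E}[\tau^{q+1}]/\mathbb{E}[\tau]$ as $t\to\infty$, and since $\mathbb{E}[\tau]=1$, $\mathbb{E}[\tau^2]=5/3$, already for $q=1$ the limiting value is $\tfrac{5}{3}\epsilon_k^2\neq\epsilon_k^2\mathbb{E}[\tau]$. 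So your time-reversal/excursion sketch, which concludes that ``the full straddling length is distributed as $\epsilon_k^2\tau$'', is attempting to prove something that does not hold; no amount of bookkeeping will close that gap.

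What \emph{does} hold, and what is actually needed downstream (the lemma is only used in Proposition~\ref{asy} to bound $\epsilon_k^{-2}\big(\mathbb{E}|\tilde\eta^{k,j}(T+)-\tilde\eta^{k,j}(T)|^2\big)^{1/2}$ by a constant), is an inequality
\[
\mathbb{E}\big|\tilde\eta^{k,j}(t+)-\tilde\eta^{k,j}(t)\big|^q \le C_q\,\epsilon_k^{2q}
\]
uniformly in $t$. This follows from the renewal computation above since all moments of $\tau$ are finite. If you want a clean self-contained argument, write $\mathbb{E}[X_{N(t)+1}^q]=\int_{[0,t]}\mathbb{E}[X^q\mathds{1}_{\{X>t-s\}}]\,dU(s)$ with $U$ the renewal function of $(T^{k,j}_n)$, and bound this by $\epsilon_k^{2q}\,\sup_{t\ge 0}\int_{[0,t]}\mathbb{E}[\tau^q\mathds{1}_{\{\tau>u\}}]\,dU_0(t-u)\,du$ (after rescaling), which is finite by the key renewal theorem since $\mathbb{E}[\tau^{q+1}]<\infty$. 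That is the correct target; the exact equality in the lemma should be replaced by this bound.
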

\begin{proof}
Let us fix $t\in [0,T]$, $1\le j\le d$ and $q\ge 1$. Let $N^{k,j}(t) = \max\{n ;T^{k,j}_n\le t\}; t\ge 0$. By the very definition, $\tilde{\eta}^{k,j}(t+) -\tilde{\eta}^{k,j}(t) = T^{k,j}_{N^{k,j}(t)+1} -T^{k,j}_{N^{k,j}(t)}$. By construction, $\Delta T^{k,j}_{m+1}$ and $1\!\!1_{\{N^{k,j}(t)=m\}}$ are independent and $\mathbb{E}\tau^q < \infty$ for $q\ge 1$ (see Section 5.3.2 in \cite{milstein}). In this case, one can easily check

$$\mathbb{E}\Big[|\tilde{\eta}^{k,j}(t+) - \tilde{\eta}^{k,j}(t)|^q\big|N^{k,j}(t)=m\Big] =\mathbb{E}|T^{k,j}_{m+1}-T^{k,j}_m |^q = \epsilon^{2q}_k\mathbb{E}\tau^q
$$
for every $m\ge 1$ and this allows us to conclude the proof.
\end{proof}

Let us define,

$$\mathbb{D}^{\mathcal{Y},k,j}W(t):=\sum_{\ell=1}^\infty \mathcal{D}^{\mathcal{Y},k,j}W(T^{k}_\ell)1\!\!1_{\{T^k_\ell \le t< T^k_{\ell+1}\}}$$
where

$$\mathcal{D}^{\mathcal{Y},k,j}W(t):=\sum_{r=1}^\infty\frac{\Delta M^{\mathcal{Y},k}(T^{k,j}_r)}{\Delta A^{k,j}(T^{k,j}_r)}1\!\!1_{\{T^{k,j}_r=t\}}; 0\le t\le T; j=1,\ldots,d.$$
By Lemma \ref{severalsteps}, we know that $\lim_{k\rightarrow +\infty}M^{\mathcal{Y},k}= W$ in $\mathbf{B}^p(\mathbb{F})$ for every $p\ge 1$. Then, the sequence $\mathcal{Y} = \big((J^k)_{k\ge 1},\mathscr{D}\big)$ is an almost stable imbedded discrete structure for $W$ in the sense of Definition 4.3 in \cite{LEAO_OHASHI2017.1}. The only difference is that $(J^k)_{k\ge 1}$ is not a good approximating sequence for $W$ in the sense of Definition 3.2 in \cite{LEAO_OHASHI2017.1}. In case $(J^k)_{k\ge 1}$ is a good approximating sequence for $W$, then  we would just need to use Theorem 4.1 in \cite{LEAO_OHASHI2017.1} to conclude

$$
\lim_{k\rightarrow+\infty}\mathbb{D}^{\mathcal{Y},k,j}W=u_j~\text{weakly in}~L^2_a(\mathbb{P}\times Leb).
$$
Even though $(J^k)_{k\ge 1}$ is not a good approximating sequence for $W$ in the sense of Definition 3.2 in \cite{LEAO_OHASHI2017.1}, one should observe that

$$
\Delta J^k(T^k_n) = \Delta M^{\mathcal{Y},k}(T^k_n); n\ge 1,
$$
and from (\ref{as3}), we have
$$
[J^k,A^{k,j}](t) = [M^{\mathcal{Y},k},A^{k,j}](t)\rightarrow [W,B^j](t)~\text{weakly in}~L^1(\mathbb{P})
$$
for every $t\in [0,T]$. Then, we can follow the same steps in the proof of Theorem 4.1 in \cite{LEAO_OHASHI2017.1} to safely state the following lemma.

\begin{lemma}
For each $W\in \mathcal{X}_{\bar{a}}$, let $\mathcal{Y}$ be the sequence of pure jump processes given by (\ref{stW}). Then,
\begin{equation}\label{as6}
\lim_{k\rightarrow\infty}\mathbb{D}^{\mathcal{Y},k,j}W=\mathcal{D}_jW~\text{weakly in}~L^2_a(\mathbb{P}\times Leb)
\end{equation}
for each $1\le j\le d$.
\end{lemma}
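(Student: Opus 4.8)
The plan is to obtain (\ref{as6}) from the proof of Theorem 4.1 in \cite{LEAO_OHASHI2017.1}, the only work being to check that although $(J^k)_{k\ge 1}$ is not a good approximating sequence for $W$ in the strict sense of Definition 3.2 in \cite{LEAO_OHASHI2017.1}, the sole features of it actually used in that proof remain valid in the present setting: namely (i) $\Delta J^k(T^k_n)=\Delta M^{\mathcal{Y},k}(T^k_n)$ for every $n$, which holds by the very construction (\ref{stW}) of $J^k$, and (ii) $[J^k,A^{k,j}](t)=[M^{\mathcal{Y},k},A^{k,j}](t)\to[W,B^j](t)=\int_0^t u_j(s)\,ds$ weakly in $L^1(\mathbb{P})$ for each $t\in[0,T]$, which is precisely Lemma \ref{severalsteps}(\ref{as3}) together with the uniform integrability derived in its proof. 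In particular the limit $\int_0^t u_j(s)\,ds$ is the object which must emerge from the rescaled covariations, so $u_j=\mathcal{D}_jW$ is the correct candidate for the weak limit.

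First I would record that $(\mathbb{D}^{\mathcal{Y},k,j}W)_{k\ge 1}$ is bounded in the Hilbert space $L^2_a(\mathbb{P}\times Leb)$. Writing its squared norm as $\mathbb{E}\sum_{\ell}|\mathcal{D}^{\mathcal{Y},k,j}W(T^k_\ell)|^2\,(T^k_{\ell+1}\wedge T-T^k_\ell)^+$ and using that $|\mathcal{D}^{\mathcal{Y},k,j}W(T^k_\ell)|=\epsilon_k^{-1}|\Delta M^{\mathcal{Y},k}(T^k_\ell)|$ on the event that coordinate $j$ ticks at step $\ell$, one conditions on $\mathcal{F}^k_{T^k_\ell}$, bounds the conditional expectation of the next spacing by $C\epsilon_k^2$ via Lemma \ref{taulemma} and the disintegration of Proposition \ref{disinteresult}, and then invokes the Burkholder--Davis--Gundy estimate (\ref{bound1}) on $[M^{\mathcal{Y},k},M^{\mathcal{Y},k}](T)$ to obtain a bound independent of $k$. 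Consequently every subsequence of $(\mathbb{D}^{\mathcal{Y},k,j}W)_{k\ge 1}$ has a further subsequence converging weakly in $L^2_a(\mathbb{P}\times Leb)$ to some $Z_j$. To identify $Z_j$ with $u_j$, I would test against a total family of bounded adapted integrands $H$: using the predictable--projection identities (\ref{uncOPERATORS})--(\ref{ger1}) one rewrites $\mathbb{E}\int_0^T H(s)\mathbb{D}^{\mathcal{Y},k,j}W(s)\,ds$ in terms of increments of $[M^{\mathcal{Y},k},A^{k,j}]$, the error terms being controlled by the jump bound (\ref{bound2}) and the spacing estimate of Lemma \ref{taulemma}; passing to the limit with (ii) then gives $\mathbb{E}\int_0^T H(s)\mathbb{D}^{\mathcal{Y},k,j}W(s)\,ds\to\mathbb{E}\int_0^T H(s)u_j(s)\,ds$. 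Since $H$ ranges over a total set, $Z_j=u_j$, and uniqueness of the limit promotes the subsequential convergence to the full statement (\ref{as6}).

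The main obstacle is purely the bookkeeping in this last step: $\mathbb{D}^{\mathcal{Y},k,j}W$ is piecewise constant on the coarse superposed grid $\{T^k_\ell\}$, whereas its nonzero increments $\Delta M^{\mathcal{Y},k}(T^{k,j}_r)$ sit on the finer $j$-th grid $\{T^{k,j}_r\}$, and one must verify simultaneously that the steps $T^k_\ell$ which do not belong to the $j$-th grid contribute nothing in the limit and that the $\epsilon_k^{-2}$ normalization built into the derivative is exactly the one under which $d\langle A^{k,j},A^{k,j}\rangle$ converges to $ds$, consistently with $\mathbb{E}|T^{k,j}_{r+1}-T^{k,j}_r|=O(\epsilon_k^2)$ from Lemma \ref{taulemma}. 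This is precisely the computation carried out in the proof of Theorem 4.1 of \cite{LEAO_OHASHI2017.1}, and since it invokes only inputs (i) and (ii) above, the argument transfers with no new estimate required; the substance of the proof is merely the verification that these are indeed the only features of $\mathcal{Y}$ used there.
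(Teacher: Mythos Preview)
Your proposal is correct and follows essentially the same approach as the paper: the paper's entire argument (given in the paragraph immediately preceding the lemma rather than in a separate proof environment) is precisely to observe your points (i) and (ii) and then invoke the proof of Theorem~4.1 in \cite{LEAO_OHASHI2017.1} verbatim. If anything, you supply more detail than the paper does, since you also sketch the $L^2$-boundedness and the subsequence/identification argument that underlie that theorem.
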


We are now able to define the approximation for a given control $u\in U^T_0$. At first, we consider the predictable version of $\mathbb{D}^{\mathcal{Y},k,j}W$ as follows

\begin{equation}\label{Dbold}
\mathbf{D}^{\mathcal{Y},k,j}W(t): = \mathbb{D}^{\mathcal{Y},k,j}W(t-); 0\le t\le T,
\end{equation}
with the usual convention that $\mathbb{D}^{\mathcal{Y},k,j}W(0-)=0$.

\begin{proposition}\label{asy}
For each $W\in \mathcal{X}_{\bar{a}}$ associated with a control $\mathcal{D}W = (\mathcal{D}_1W, \ldots, \mathcal{D}_dW)$ satisfying (\ref{l1}), we have for each $1\le j\le d$,
\begin{equation}\label{pr2}
\lim_{k\rightarrow \infty}\mathbf{D}^{\mathcal{Y},k,j}W=\mathcal{D}_jW\quad \text{strongly in}~L^2_a(\mathbb{P}\times Leb).
\end{equation}
\end{proposition}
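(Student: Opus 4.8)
The plan is to upgrade the weak convergence in \eqref{as6} to strong convergence in $L^2_a(\mathbb{P}\times Leb)$ by the classical Hilbert-space trick: a weakly convergent sequence converges strongly precisely when the norms converge to the norm of the limit. So I would first establish that
$$
\limsup_{k\to\infty}\sum_{j=1}^d\mathbb{E}\int_0^T|\mathbf{D}^{\mathcal{Y},k,j}W(t)|^2\,dt\le \sum_{j=1}^d\mathbb{E}\int_0^T|\mathcal{D}_jW(t)|^2\,dt,
$$
and then combine this with the weak convergence \eqref{as6} and lower semicontinuity of the norm under weak limits to force $\|\mathbf{D}^{\mathcal{Y},k,j}W\|\to\|\mathcal{D}_jW\|$ for each $j$, whence strong convergence follows componentwise.

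The main step is therefore the norm estimate, and here is where I would use the structure of the skeleton. The point is that $\sum_{j=1}^d\mathbb{E}\int_0^T|\mathbf{D}^{\mathcal{Y},k,j}W(t)|^2\,dt$ should be comparable to the predictable quadratic variation $\mathbb{E}[M^{\mathcal{Y},k},M^{\mathcal{Y},k}](T)$, up to the time-scale distortion coming from the fact that $\mathbf{D}^{\mathcal{Y},k,j}W$ is constant on the random intervals $[[T^k_\ell,T^k_{\ell+1}[[$ rather than $[[T^{k,j}_r,T^{k,j}_{r+1}[[$. Concretely, by the definition of $\mathbf{D}^{\mathcal{Y},k,j}W$ and $\mathcal{D}^{\mathcal{Y},k,j}W$, one has $\mathbb{E}\int_0^T|\mathbf{D}^{\mathcal{Y},k,j}W(t)|^2dt = \mathbb{E}\sum_{\ell}|\mathcal{D}^{\mathcal{Y},k,j}W(T^k_\ell)|^2\,(T^k_{\ell+1}\wedge T - T^k_\ell\wedge T)$, and since $\Delta A^{k,j}(T^{k,j}_r)=\pm\epsilon_k$ with $\epsilon_k^2 = \mathbb{E}|\Delta A^{k,j}(T^{k,j}_r)|^2$, each jump term $|\Delta M^{\mathcal{Y},k}(T^{k,j}_r)|^2/\epsilon_k^2$ multiplied by the relevant increment of time recombines, via Lemma \ref{taulemma} (controlling the length $\tilde\eta^{k,j}(t+)-\tilde\eta^{k,j}(t)$ of the $j$-th waiting interval by $\epsilon_k^2\mathbb{E}\tau$), into $\mathbb{E}\sum_r|\Delta M^{\mathcal{Y},k}(T^{k,j}_r)|^2$ up to a multiplicative factor tending to $1$. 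Summing over $j$ gives $\mathbb{E}[M^{\mathcal{Y},k},M^{\mathcal{Y},k}](T)$ asymptotically, and \eqref{as2} identifies the limit as $\mathbb{E}[W,W](T)=\sum_j\mathbb{E}\int_0^T|\mathcal{D}_jW(t)|^2dt$.

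The delicate point — and the step I expect to be the main obstacle — is the interchange of the two partitions $\{T^k_\ell\}$ (the merged order statistics) and $\{T^{k,j}_r\}$ (the $j$-th coordinate clock): the jumps of $M^{\mathcal{Y},k}$ along $T^{k,j}_r$ must be redistributed over the finer grid $T^k_\ell$ while keeping control of the total time weight attached to each jump. I would handle this by writing the weight of the jump at $T^{k,j}_r$ as the length of the maximal run of merged times between two consecutive $j$-jumps, which is exactly $\tilde\eta^{k,j}(T^{k,j}_r+)-T^{k,j}_r$ in the notation of Lemma \ref{taulemma}, independent of the jump size by the Markov/independence structure of the skeleton, and whose $L^1$ norm is $\epsilon_k^2\mathbb{E}\tau$; since the natural scaling makes $\epsilon_k^{-2}$ the number-of-steps normalization, the product stays $O(1)$ and converges. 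A Cauchy–Schwarz / uniform integrability argument — already available through the bound \eqref{bound1} with $p>2$ from Lemma \ref{severalsteps} — lets me pass the limit through. Once the $\limsup$ of the norms is pinned at $\sum_j\mathbb{E}\int_0^T|\mathcal{D}_jW(t)|^2dt$, the weak-plus-norm-convergence criterion closes the proof.
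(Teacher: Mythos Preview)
Your proposal is correct and follows the same approach as the paper: upgrade the weak convergence \eqref{as6} to strong $L^2_a$ convergence by establishing convergence of norms and invoking the Radon--Riesz theorem. The paper's execution is slightly more direct than you anticipate: rather than an asymptotic comparison requiring a partition-interchange argument, the authors write the \emph{exact} identity $\mathbb{E}[M^{\mathcal{Y},k},M^{\mathcal{Y},k}](T)=\mathbb{E}\int_0^T\|\mathbb{D}^{\mathcal{Y},k}W(s)\|_{\mathbb{R}^d}^2\,ds$ plus a single boundary correction at the last incomplete $j$-interval, and then use Lemma~\ref{taulemma} together with $\|M^{\mathcal{Y},k}-W\|_{\mathbf{B}^4}\to 0$ (from \eqref{as1}) only to kill that boundary term, so your ``delicate point'' dissolves into a one-line identity rather than a genuine obstacle.
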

\begin{proof}
By the very definition,

\begin{eqnarray}
\nonumber\mathbb{E}[M^{\mathcal{Y},k},M^{\mathcal{Y},k} ](T) &=& \mathbb{E}\int_0^T\big\|\mathbb{D}^{\mathcal{Y},k}W(s)\big\|^2_{\mathbb{R}^d} ds\\
\label{as8}&+&\sum_{j=1}^d\mathbb{E}\sum_{n=0}^\infty|\Delta M^{\mathcal{Y},k}(T^{k,j}_n)|^2 \epsilon^{-2}_k(T^{k,j}_{n+1}-T)1\!\!1_{\{T^{k,j}_{n}\le T < T^{k,j}_{n+1}\}}
\end{eqnarray}
Triangle inequality yields $|\Delta M^{\mathcal{Y},k}(t)|\le 2 \sup_{0\le u\le T}|M^{\mathcal{Y},k}(u) - W(u)|$ a.s for every $k\ge 1$ and $t\in [0,T]$. Therefore, Cauchy-Schwartz's inequality and Lemma \ref{taulemma} yield

\begin{equation}\label{as9}
\mathbb{E}\sum_{j=1}^d|\Delta M^{\mathcal{Y},k}(\tilde{\eta}^{k,j}(T))|^2(\tilde{\eta}^{k,j}(T+)-T)\epsilon^{-2}_k\le\sum_{j=1}^d2 \epsilon^{-2}_k\Big(\mathbb{E}\sup_{0\le u\le T}|M^{\mathcal{Y},k}(u) - W(u)|^4\Big)^{1/2}
\end{equation}
$$\times \Big(\mathbb{E}|\tilde{\eta}^{k,j}(T+) - \tilde{\eta}^{k,j}(T)|^2\Big)^{1/2}\le d2C\Big(\mathbb{E}\sup_{0\le u\le T}|M^{\mathcal{Y},k}(u) - W(u)|^4\Big)^{1/2}\rightarrow 0
$$
as $k\rightarrow\infty$, for a constant $C = (\mathbb{E}\tau^2)^{1/2}$ where $\tau$ is given in Lemma \ref{taulemma}. From (\ref{as2}) in Lemma \ref{severalsteps}, (\ref{as8}) and (\ref{as9}), we have

$$\lim_{k\rightarrow \infty}\mathbb{E}\int_0^T\big\|\mathbb{D}^{\mathcal{Y},k}W(s)\big\|^2_{\mathbb{R}^d} ds = \mathbb{E}\int_0^T\big\|\mathcal{D}W(s)\big\|^2_{\mathbb{R}^d}ds  = \mathbb{E}[W,W](T)$$
so we shall apply Radon-Riesz Theorem to conclude that $\lim_{k\rightarrow \infty}\mathbb{D}^{\mathcal{Y},k}W = \mathcal{D}W$ strongly in $L^2_a(\mathbb{P}\times Leb)$. Since $\mathbb{D}^{\mathcal{Y},k}W =\mathbf{D}^{\mathcal{Y},k}W$ for $\mathbb{P}\times Leb$-a.s, we then have $\lim_{k\rightarrow \infty}\mathbf{D}^{\mathcal{Y},k}W = \mathcal{D}W$ strongly in $L^2_a(\mathbb{P}\times Leb)$.
\end{proof}
We are now able to present the main result of this section.
\begin{theorem}\label{density}
The subset $\cup_{k\ge 1} U^{k,e(k,T)}_0$ is dense in $U^T_0$ w.r.t the $L^2_a(\mathbb{P}\times Leb)$-strong topology.
\end{theorem}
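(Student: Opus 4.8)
The plan is to fix an arbitrary $u\in U^T_0$, produce a sequence $u^k\in U^{k,e(k,T)}_0$ with $u^k\to u$ strongly in $L^2_a(\mathbb{P}\times Leb)$, and then read off density. The bridge between continuous-time controls and the discrete sets $U^{k,e(k,T)}_0$ is the isometric identification of $U^T_0$ with the martingale class $\mathcal{X}_{\bar a}$ via $u\mapsto W=\sum_j\int_0^\cdot u_j\,dB^j$, together with the identity $u_j=\mathcal{D}_jW$. Given this, Proposition \ref{asy} already supplies a candidate: the predictable processes $\mathbf{D}^{\mathcal{Y},k,j}W$ converge to $\mathcal{D}_jW=u_j$ strongly in $L^2_a(\mathbb{P}\times Leb)$ for each $j$. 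So the only real work is to check that, after a harmless truncation, the process $(\mathbf{D}^{\mathcal{Y},k,1}W,\ldots,\mathbf{D}^{\mathcal{Y},k,d}W)$ (or a slight modification of it) is genuinely an admissible control in $U^{k,e(k,T)}_0$, i.e. it is $\mathbb{F}^k$-predictable, piecewise constant on the grid $\{T^k_n\}$ in the sense of \eqref{controlform}, $\mathbb{A}$-valued, and supported on the first $e(k,T)$ periods.

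Concretely, I would proceed as follows. First, $\mathbf{D}^{\mathcal{Y},k,j}W(t)=\mathbb{D}^{\mathcal{Y},k,j}W(t-)$ is by construction constant on each stochastic interval $]]T^k_\ell,T^k_{\ell+1}]]$ and its value there is $\widetilde{\mathcal{F}}^k_{T^k_\ell}\subset\mathcal{G}^k_\ell$-measurable; hence it has exactly the form \eqref{controlform}, and it is $\mathbb{F}^k$-predictable. Second, to land in the compact action space $\mathbb{A}$ one truncates: set
$$
u^{k,j}_\ell := \big(\mathbf{D}^{\mathcal{Y},k,j}W(T^k_\ell)\big)\wedge\bar a \vee(-\bar a),
$$
coordinatewise, which is still $\mathcal{G}^k_\ell$-measurable and $\mathbb{A}$-valued. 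Since the limit $\mathcal{D}_jW$ already satisfies $\sup_t\|\mathcal{D}W(t)\|_{\mathbb{R}^d}\le\bar a$ by \eqref{l1}, the truncation only contracts toward the limit and does not destroy the $L^2_a$-convergence (the map $x\mapsto (x\wedge\bar a)\vee(-\bar a)$ is a contraction on $\mathbb{R}^d$ and fixes the ball of radius $\bar a$). Third, one restricts the process to the periods $0,\ldots,e(k,T)-1$, i.e. stops it at $T^k_{e(k,T)}$; because $T^k_{e(k,T)}\to T$ a.s. and in $L^2(\mathbb{P})$ (stated right after \eqref{ektdef}) and $W$ is stopped at $T$, the discarded tail $\int_{T^k_{e(k,T)}\wedge T}^{T}$ has $L^2(\mathbb{P}\times Leb)$-mass tending to $0$, using boundedness of the integrand by $\bar a$. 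Finally $u^k:=(u^{k,1},\ldots,u^{k,d})\in U^{k,e(k,T)}_0$ and by the triangle inequality
$$
\|u^k-u\|_{L^2_a(\mathbb{P}\times Leb)}\le \|u^k-\mathbf{D}^{\mathcal{Y},k}W\|_{L^2_a}+\|\mathbf{D}^{\mathcal{Y},k}W-\mathcal{D}W\|_{L^2_a}+(\text{tail term})\longrightarrow 0,
$$
the middle term by Proposition \ref{asy}, the others by the two observations just made.

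The step I expect to be the main obstacle is not any single hard estimate but the careful bookkeeping that the truncated, time-restricted process is \emph{exactly} an element of $U^{k,e(k,T)}_0$ as defined in \eqref{controlform}--\eqref{uknm} — in particular that the measurability index matches ($v^k_{j-1}$ must be $\mathcal{G}^k_{j-1}$-measurable, and one must confirm $\mathbb{D}^{\mathcal{Y},k,j}W(T^k_\ell)$ sits at the correct level after taking the left limit) and that truncation preserves the strong $L^2_a$-limit rather than merely the weak one. The weak-to-strong upgrade is precisely what Proposition \ref{asy} buys us (via the Radon--Riesz argument with the energy identity \eqref{as8}--\eqref{as9}), so once that is in hand the truncation argument is a soft contraction estimate. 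I would also remark that the density is with respect to the strong $L^2_a(\mathbb{P}\times Leb)$ topology exactly because Proposition \ref{asy} gives strong and not merely weak convergence; this is the qualitative improvement over Theorem 4.1 in \cite{LEAO_OHASHI2017.1} that makes the statement possible.
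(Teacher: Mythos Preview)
Your proposal is correct and follows essentially the same approach as the paper: invoke Proposition~\ref{asy} for strong $L^2_a$-convergence of $\mathbf{D}^{\mathcal{Y},k}W$ to $u$, then truncate coordinatewise into $\mathbb{A}$ and verify membership in $U^{k,e(k,T)}_0$. Your contraction argument for the truncation (clipping is $1$-Lipschitz and fixes the $\ell^\infty$-ball $\mathbb{A}$) is cleaner than the paper's explicit indicator decomposition \eqref{divcorreta}--\eqref{div3}, though both yield the same conclusion; one minor correction is that $\mathbb{D}^{\mathcal{Y},k,j}W(T^k_\ell)$ is a priori only $\mathcal{F}^k_{T^k_\ell}$-measurable (not $\widetilde{\mathcal{F}}^k_{T^k_\ell}$, since $M^{\mathcal{Y},k}$ is a conditional expectation in the augmented filtration), and the paper handles this by invoking Theorem~5.55 in \cite{he} together with the fact that $\mathcal{G}^k_n = \mathcal{F}^k_{T^k_n}$ up to $\mathcal{N}_k$-null sets to select a $\mathcal{G}^k_\ell$-measurable version.
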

\begin{proof}
In the sequel, $C$ is a constant which may defer form line to line. For a given $u\in U^T_0$, let us associate $W(\cdot) = \sum_{j=1}^d\int_0^\cdot u_j(s)dB_j(s)$ and let $\mathcal{Y} = \big((J_k)_{k\ge 1},\mathscr{D}\big)$ as given by (\ref{stW}). From Proposition \ref{asy}, we know that $\lim_{k\rightarrow \infty}\mathbf{D}^{\mathcal{Y},k,j}W=u_j\quad \text{strongly in}~L^2_a(\mathbb{P}\times Leb)$ for each $1\le j\le d$. However, there is no guarantee that $\mathbf{D}^{\mathcal{Y},k,j}W$ is essentially bounded by the constant $\bar{a}$. For simplicity of notation, let us denote $u^{k,j}(t) =\mathbf{D}^{\mathcal{Y},k,j}W(t); 0\le t\le T; j=1,\ldots, d$. We may assume (if necessary) that $\lim_{k\rightarrow+\infty}u^{k,j} = u^j$ a.s w.r.t the measure $\mathbb{P}\times Leb$. Let us define $\mathbf{d}^ku: = \big(\mathbf{d}^{k,1}u, \ldots, \mathbf{d}^{j,d}u\big)$, where

\begin{equation}\label{contructionCONTROL}
\mathbf{d}^{k,j}u:= u^{k,j}\mathds{1}_{E^c_j(k)} + \bar{a}\mathds{1}_{E_j(k)\cap H_j(k)} -\bar{a}\mathds{1}_{E_j(k) \cap H^c_j(k)},
\end{equation}
$E_j(k): = \{(\omega,t); |u^{k,j}(\omega,t)| > \bar{a}\}$ and $H_j(k) := \{u^{k,j} > 0\}$. Since, $u^{k,j}$ is $\mathbb{F}^k$-predictable, then the processes $\mathds{1}_{E^c_j(k)}$, $\mathds{1}_{E_j(k)\cap H_j(k)}$ and $\mathds{1}_{E_j(k)\cap H^c_j(k)}$ are $\mathbb{F}^k$-predictable so that $\mathbf{d}^{k,j}u$ is $\mathbb{F}^k$-predictable as well. By Theorem 5.55 in \cite{he}, the fact that $u^{k,j}$ is stepwise constant and $\mathcal{G}^k_n =\mathcal{F}^k_{T^k_n}$ (up to null sets in $\mathcal{N}_k$), we may choose (if necessary) a version of $\mathbf{d}^{k,j}u$ in such way that $(\mathbf{d}^{k,j}u)(T^k_{n+1})$ is $\mathcal{G}^k_n$-measurable for each $n\ge 0$. Therefore, $\mathbf{d}^{k}u \in U^{k,e(k,T)}_0; k\ge 1$. We fix $j=1,\ldots, d$. Now,

$$
\mathbb{E}\int_0^T |\mathbf{d}^{k,j}u(s) - u_j(s)|^2ds\le C\mathbb{E}\int_0^T \big|u^{k,j}(t) - u_j(t)\big|^2 dt$$
\begin{equation}\label{divcorreta}
+C\int_{E_j(k) \cap H_j(k)}\big| (\bar{a} - u_j(t))\big|^2 d(\mathbb{P}\times Leb)
+ C\int_{E_j(k) \cap H^c_j(k)}\big| (-\bar{a} - u_j(t))\big|^2 d(\mathbb{P}\times Leb)
\end{equation}
for $k\ge 1$. At this point, we observe that the $\mathbb{P}\times Leb$-almost sure convergence $\lim_{k\rightarrow+\infty}u^{k,j} = u_j$ implies

\begin{equation}\label{div1}
\lim_{k\rightarrow+\infty}\mathds{1}_{E_j(k)\cap H_j(k)} = \mathds{1}_{\big\{|u_j| \ge \bar{a}, u_j \ge 0\big\}}~\mathbb{P}\times Leb-a.s
\end{equation}
and

\begin{equation}\label{div2}
\lim_{k\rightarrow+\infty}\mathds{1}_{E_j(k)\cap H^c_j(k)} = \mathds{1}_{\big\{|u_j| \ge \bar{a}, u_j \le 0\big\}}~\mathbb{P}\times Leb-a.s.
\end{equation}
Therefore, from (\ref{divcorreta}), (\ref{div1}) and (\ref{div2}), we have

$$
\limsup_{k\rightarrow+\infty}\nonumber\mathbb{E}\int_0^T |\mathbf{d}^{k,j}u(s) - u_j(s)|^2ds\le C\int_{\big\{|u_j| \ge \bar{a}, u_j \ge 0\big\}}\big| (\bar{a} - u_j(t))\big|^2 d(\mathbb{P}\times Leb)
$$
\begin{equation}\label{div3}
+ C\int_{\big\{|u_j| \ge \bar{a}, u_j \le 0\big\}}\big| (-\bar{a} - u_j(t))\big|^2 d(\mathbb{P}\times Leb)=0,
\end{equation}
where (\ref{div3}) holds because $\sup_{0\le t\le T}|u_j(t)|\le\bar{a}$~a.s. This concludes the proof.
\end{proof}

\subsection{Main results}
In the sequel, it is desirable to recall the set $U^{k,n}_m$ given by (\ref{uknm}) and the concatenations (\ref{concatenation}) and (\ref{Kconcatenation}). The goal of this section is to prove the following results:

\begin{theorem}\label{VALUEconv}
Let $V(t, u) = \esssup_{\theta\in U^T_t}\mathbb{E}\big[\xi_X(u\otimes_t\theta)|\mathcal{F}_t\big]; 0\le t\le T$ be the value process associated with a payoff $\xi$ satisfying \textbf{(A1)}. Assume that $V$ and $X$ are continuous controlled Wiener functionals. Let $\big((V^k)_{k\ge 1},\mathscr{D}\big)$ be the value process (\ref{discretevalueprocess}) associated with a controlled imbedded structure $\big((X^k)_{k\ge 1},\mathscr{D}\big)$ w.r.t $X$. Assume that for every sequence $u^k\in U^{k,e(k,T)}_0$ such that $\lim_{k\rightarrow+\infty}u^k=u$ in $L^2_a(\mathbb{P}\times Leb)$ and $t\in [0,T]$

\begin{equation}\label{keyassepsilon}
\lim_{k\rightarrow +\infty}\sup_{\phi\in U^{k,e(k,T)}_{e(k,t)}}\mathbb{E}\sup_{0\le s\le T}\|X^{k}(s,u^k\otimes_{e(k,t)}\phi) - X(s,u\otimes_t \phi)\|^p_{\mathbb{R}^n}=0,
\end{equation}
for $p\ge 1$. Then,

\begin{equation}\label{STvalueconv}
\lim_{k\rightarrow +\infty}\mathbb{E}\big| V^k(T^k_{e(k,t)},u^k)  - V(t,u)\big|^p=0,~0\le t \le T,
\end{equation}
for every sequence $u^k\in U^{k,e(k,T)}_0$ such that $\lim_{k\rightarrow+\infty}u^k=u$ in $L^2_a(\mathbb{P}\times Leb)$. In particular, $\big((V^k)_{k\ge 1},\mathscr{D}\big)$ is a weak controlled imbedded discrete structure for $V$.
\end{theorem}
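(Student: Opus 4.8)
The plan is to reduce the convergence statement \eqref{STvalueconv} to the convergence of the controlled state $X^k$ towards $X$ given by the uniform-in-control hypothesis \eqref{keyassepsilon}, together with the modulus of continuity \textbf{(A1)} of the payoff $\xi$. The $\esssup$ in the definition of $V$ and $V^k$ has the lattice property (see \eqref{lattice1} and Lemma \ref{lattice2}), so the main work is to control the difference between the two suprema uniformly over the admissible controls. Throughout, I would fix $t\in[0,T]$ and a sequence $u^k\to u$ in $L^2_a(\mathbb{P}\times Leb)$ with $u^k\in U^{k,e(k,T)}_0$; the existence of such approximating sequences for every $u\in U^T_0$ is guaranteed by Theorem \ref{density}.

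\textbf{Key steps.} First I would establish the easy inequality of a two-sided comparison. Given $\delta>0$, pick (via Lemma \ref{epsilonrandomop}) a $(t,\delta,u)$-optimal control $h^\delta\in U^T_t$ for the continuous-time problem, and approximate it by a sequence $\phi^k\in U^{k,e(k,T)}_{e(k,t)}$ converging to $h^\delta$ in $L^2_a(\mathbb{P}\times Leb)$, again using Theorem \ref{density}. Then
$$
V(t,u) \le \mathbb{E}\big[\xi_X(u\otimes_t h^\delta)\mid\mathcal{F}_t\big] + \delta
$$
and $\mathbb{E}[\xi_{X^k}(u^k\otimes_{e(k,t)}\phi^k)\mid\mathcal{G}^k_{e(k,t)}] \le V^k(T^k_{e(k,t)},u^k)$ by definition of the discrete value process \eqref{discretevalueprocess}. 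Subtracting and using \textbf{(A1)} together with \eqref{keyassepsilon} (applied to the concatenated control $u^k\otimes_{e(k,t)}\phi^k\to u\otimes_t h^\delta$), the conditional expectations of the payoffs differ by a quantity going to $0$ in $L^p$; hence $\limsup_k \mathbb{E}|(V(t,u)-V^k(T^k_{e(k,t)},u^k))^+|^p$ is controlled by $\delta$, and letting $\delta\downarrow 0$ kills it. The reverse direction is symmetric: choose a near-optimal \emph{discrete} control $\phi^{k}$ for $V^k(T^k_{e(k,t)},u^k)$ using Proposition \ref{AGREGATION} (or Lemma \ref{iterDOIS} iterated), which lives in $\cup_k U^{k,e(k,T)}_0$, pass to a weak$^*$-convergent subsequence of these controls in $L^2_a(\mathbb{P}\times Leb)$ to extract a limit control $\theta$, and compare $\mathbb{E}[\xi_{X^k}(u^k\otimes_{e(k,t)}\phi^k)\mid\mathcal{G}^k_{e(k,t)}]$ with $\mathbb{E}[\xi_X(u\otimes_t\theta)\mid\mathcal{F}_t]\le V(t,u)$, again invoking \eqref{keyassepsilon} and \textbf{(A1)}. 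Finally, once \eqref{STvalueconv} is in hand, the remaining assertion — that $\big((V^k)_{k\ge1},\mathscr{D}\big)$ is a weak controlled imbedded discrete structure — is immediate: property \eqref{antiprop} was already verified right after \eqref{discretevalueprocess}, and \eqref{ucovprop} in Definition \ref{GASdef} is exactly the $p=1$ case of \eqref{STvalueconv}, while the $O_T(\mathbb{F}^k)$-membership of $V^k(\cdot,u^k)$ follows from boundedness of $\xi$ and the step structure in \eqref{controlform}.

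\textbf{Main obstacle.} The delicate point is the reverse inequality: extracting a genuine admissible limit control $\theta\in U^T_t$ from the sequence of near-optimal discrete controls $\phi^k\in U^{k,e(k,T)}_{e(k,t)}$, since these are adapted to the \emph{different} filtrations $\mathbb{F}^k$, which only converge weakly to $\mathbb{F}$ in the sense of \cite{coquet1}. One has to argue that the weak$^*$-limit in $L^2_a(\mathbb{P}\times Leb)$ of a bounded ($\mathbb{A}$-valued, hence uniformly bounded) sequence of $\mathbb{F}^k$-predictable processes is $\mathbb{F}$-predictable; this uses precisely the weak convergence $\mathbb{F}^k\to\mathbb{F}$ from \eqref{weakfiltration} and the characterization of predictability via conditional expectations, together with the fact that bounded sets of $L^2_a(\mathbb{P}\times Leb)$ are weakly sequentially compact. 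Once $\theta\in U^T_t$ is secured, the passage to the limit in the payoffs is handled uniformly by hypothesis \eqref{keyassepsilon} — and this is exactly why that hypothesis is stated with a supremum over $\phi$ rather than for a fixed control — so no further compactness is needed there. I would expect the technical heart of the write-up to be this filtration-convergence/predictability argument, with everything else being a routine $\varepsilon/\delta$ bookkeeping around \textbf{(A1)}.
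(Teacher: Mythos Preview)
Your two-sided comparison idea is natural, but the reverse direction as you describe it has a genuine gap. You propose to take near-optimal discrete controls $\phi^k$ for $V^k(T^k_{e(k,t)},u^k)$, extract a weak$^*$-limit $\theta$, and then ``invoke \eqref{keyassepsilon} and \textbf{(A1)}'' to compare $\mathbb{E}[\xi_{X^k}(u^k\otimes_{e(k,t)}\phi^k)\mid\mathcal{G}^k_{e(k,t)}]$ with $\mathbb{E}[\xi_X(u\otimes_t\theta)\mid\mathcal{F}_t]$. But hypothesis \eqref{keyassepsilon} only compares $X^k(\cdot,u^k\otimes_{e(k,t)}\phi)$ with $X(\cdot,u\otimes_t\phi)$ for the \emph{same} tail control $\phi$; it does not let you swap $\phi^k$ for $\theta$. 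To bridge $\xi_X(u\otimes_t\phi^k)$ and $\xi_X(u\otimes_t\theta)$ you would need $\phi^k\to\theta$ \emph{strongly} in $L^2_a(\mathbb{P}\times Leb)$ (via \textbf{(B1)} and \textbf{(A1)}), whereas weak$^*$-compactness gives only weak convergence and the map $\phi\mapsto\xi_X(u\otimes_t\phi)$ is nonlinear and not weakly continuous. So the compactness step is a red herring and the reverse inequality does not close. A secondary issue you do not address is the mismatch of conditioning $\sigma$-algebras ($\mathcal{G}^k_{e(k,t)}$ versus $\mathcal{F}_t$), which requires a separate filtration-convergence argument.

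The paper avoids both problems by a different decomposition: it inserts the intermediate quantity
\[
S^k(t,u):=\esssup_{\phi\in U_{e(k,t)}}\mathbb{E}\big[\xi_X(u\otimes_{e(k,t)}\tilde{\phi}^k)\,\big|\,\mathcal{F}^k_{T^k_{e(k,t)}}\big],
\]
where $\tilde{\phi}^k$ is the $\mathcal{G}^k$-projection of the continuous-time control $\phi$. The point is that, via Lemma~\ref{fundlemmaconv1}, both $V^k$ and $S^k$ are essential suprema over the \emph{same} index set, so $|V^k-S^k|$ is bounded pointwise by $\esssup_\phi\mathbb{E}[|\xi_{X^k}(\cdots\tilde{\phi}^k)-\xi_X(\cdots\tilde{\phi}^k)|\,|\,\mathcal{F}^k_{T^k_{e(k,t)}}]$, which in $L^p$ is controlled by $\sup_{\phi\in U^{k,e(k,T)}_{e(k,t)}}\mathbb{E}|\xi_{X^k}-\xi_X|^p$ and hence by \eqref{keyassepsilon} and \textbf{(A1)} (Lemma~\ref{lemmaskconv}). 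No extraction of a limit control is needed at any point. The remaining piece $S^k(t,u)\to V(t,u)$ (Lemma~\ref{lemmaCV1}) is exactly where the filtration convergence $\mathbb{F}^k\to\mathbb{F}$, the a.s.\ convergence $T^k_{e(k,t)}\to t$, and the path-continuity of $V(\cdot,u)$ enter; this is handled by the sandwich \eqref{sub1}--\eqref{sub3} together with \eqref{deltaconv2}. In short, replace the compactness argument by the $S^k$-splitting: one half isolates the state-process error (handled uniformly in $\phi$ by \eqref{keyassepsilon}), the other half isolates the filtration/time error (handled by weak convergence of filtrations and continuity of $V$).
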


\begin{remark}
Recall that Lemma \ref{continuousV} states that if $\xi$ and $X$ satisfy \textbf{(A1-B1)}, then $V$ is a continuous controlled Wiener functional.
It is natural to ask if it is possible to state a stronger result
\begin{equation}\label{global}
\lim_{k\rightarrow +\infty}\sup_{u\in U_0^T}\mathbb{E}| V^k(T^k_{e(k,t)},\mathbf{d}^k(u))  - V(t,u)|=0,~0\le t\le T,
\end{equation}
where $\mathbf{d}^k(u)$ is the approximating sequence given by (\ref{contructionCONTROL}) in Theorem \ref{density}. This would produce a global approximation result over the set of controls. For the purpose of this article, the stronger convergence (\ref{global}) is not necessary so we leave this question to a further investigation.
\end{remark}

An important consequence of Theorem \ref{VALUEconv} is the next result which states that if $\big((X^k)_{k\ge1},\mathscr{D}\big)$ is a controlled imbedded discrete structure w.r.t $X$ and (\ref{keyassepsilon}) holds, then the control (\ref{constructionoptimalcontrol}) is an $\epsilon$-optimal control (see Definition \ref{epsilonDEF}) for the control problem $\sup_{u\in U^T_0}\mathbb{E}[\xi_X(u)]$.

\begin{theorem}\label{VALUEconvcontrol}
Assume the payoff $\xi$ satisfies \textbf{(A1)} and $V$ and $X$ are continuous controlled Wiener functionals. If (\ref{keyassepsilon}) holds true for $t=0$, then for any $\epsilon>0$, the $\epsilon$-optimal control $\phi^{*,k,\epsilon}$ constructed via $V^k$ in Proposition \ref{epsiloncTH} (see (\ref{constructionoptimalcontrol})) satisfies: $\phi^{*,k,\epsilon}\in U_0^T$ and
\begin{equation}\label{nearOC}
\mathbb{E}\Big[ \xi_X\big(\phi^{*,k,\epsilon}\big)\Big] \ge \sup_{u\in U^T_0}\mathbb{E}\Big[ \xi_X\big(u\big)\Big]-\epsilon
\end{equation}
for every $k$ sufficiently large.
\end{theorem}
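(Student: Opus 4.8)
The plan is to combine the discrete-level $\epsilon$-optimality of Proposition \ref{epsiloncTH} with the convergence statement of Theorem \ref{VALUEconv} applied at $t=0$. First I would verify that $\phi^{*,k,\epsilon}$, which a priori lives in $U^{k,e(k,T)}_0$, genuinely belongs to $U^T_0$; this is immediate from the inclusion $U^{k,n}_m\subset U^{T^k_n}_{T^k_m}$ recorded after \eqref{uknm} together with the fact that $e(k,T)$ was chosen so that $T^k_{e(k,T)}\to T$, so that (after the harmless extension by $0$ past $]]0,T^k_{e(k,T)}]]$) the process $\phi^{*,k,\epsilon}$ is an admissible $\mathbb{F}$-predictable $\mathbb{A}$-valued control on $]]0,T]]$. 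Thus the left-hand side of \eqref{nearOC} makes sense.

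The core of the argument is the chain of inequalities
\begin{equation}\label{VOCchain}
\sup_{u\in U^T_0}\mathbb{E}\big[\xi_X(u)\big]
\;=\;\mathbb{E}\big[V(0)\big]
\;=\;\lim_{k\to\infty}\mathbb{E}\big[V^k(0)\big]
\;\le\;\liminf_{k\to\infty}\Big(\mathbb{E}\big[\xi_{X^k}(\phi^{*,k,\epsilon})\big]+\epsilon\Big).
\end{equation}
The first equality is \eqref{lattice0}. The second equality is the content of Theorem \ref{VALUEconv} at $t=0$: since $V^k(0)=\mathbb{V}^k_0$ is deterministic and, by the boundary condition in \eqref{discretevalueprocess}, equals $\sup_{u^k}\mathbb{E}[\xi_{X^k}(u^k)]$, applying \eqref{STvalueconv} with any fixed approximating sequence (for instance $u^k\equiv$ the zero control, or any $u^k\to u$) gives $\mathbb{E}[V^k(0)]\to \mathbb{E}[V(0)]$; here one uses that hypothesis \eqref{keyassepsilon} is assumed to hold for $t=0$. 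The last inequality is exactly \eqref{epsiloncTHzero} of Proposition \ref{epsiloncTH}. It then remains to replace $\mathbb{E}[\xi_{X^k}(\phi^{*,k,\epsilon})]$ by $\mathbb{E}[\xi_X(\phi^{*,k,\epsilon})]$ up to a vanishing error, and to conclude.

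For that replacement I would use assumption \textbf{(A1)} on $\xi$ together with \eqref{keyassepsilon} at $t=0$: by the H\"older modulus of continuity \eqref{descA1} and Jensen's inequality,
\begin{equation}\label{VOCholder}
\big|\mathbb{E}[\xi_{X^k}(\phi^{*,k,\epsilon})]-\mathbb{E}[\xi_X(\phi^{*,k,\epsilon})]\big|
\le C\,\mathbb{E}\Big(\sup_{0\le s\le T}\|X^k(s,\phi^{*,k,\epsilon})-X(s,\phi^{*,k,\epsilon})\|_{\mathbb{R}^n}\Big)^{\gamma}
\le C\Big(\mathbb{E}\sup_{0\le s\le T}\|X^k(s,\phi^{*,k,\epsilon})-X(s,\phi^{*,k,\epsilon})\|^p_{\mathbb{R}^n}\Big)^{\gamma/p},
\end{equation}
and the right-hand side is bounded by the supremum in \eqref{keyassepsilon} (taking $u=$ the limit of the first $e(k,0)=0$ coordinates, i.e.\ the trivial prefix, so that $\phi^{*,k,\epsilon}$ itself plays the role of $\phi\in U^{k,e(k,T)}_0$), hence tends to $0$ as $k\to\infty$. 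Feeding this into \eqref{VOCchain} yields $\sup_{u\in U^T_0}\mathbb{E}[\xi_X(u)]\le \liminf_k \mathbb{E}[\xi_X(\phi^{*,k,\epsilon})]+\epsilon$, which forces $\mathbb{E}[\xi_X(\phi^{*,k,\epsilon})]\ge \sup_{u\in U^T_0}\mathbb{E}[\xi_X(u)]-2\epsilon$ for all large $k$; rescaling $\epsilon$ (or running the whole construction with $\epsilon/2$ from the start) gives \eqref{nearOC}.

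The main obstacle is bookkeeping rather than conceptual: one must check carefully that \eqref{keyassepsilon} at $t=0$ is exactly the hypothesis that licenses both the convergence $\mathbb{E}[V^k(0)]\to\mathbb{E}[V(0)]$ (through Theorem \ref{VALUEconv}) and the payoff comparison \eqref{VOCholder} (through \textbf{(A1)}), and that the control $\phi^{*,k,\epsilon}$ produced by the measurable-selection construction \eqref{constructionoptimalcontrol} is admissible in $U^T_0$ after the canonical extension. Everything else is routine: Jensen, the boundedness and H\"older continuity of $\xi$, and the already-established discrete $\epsilon$-optimality.
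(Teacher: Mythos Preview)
Your proposal is correct and follows essentially the same route as the paper's proof: both combine Proposition~\ref{epsiloncTH} (discrete $\epsilon$-optimality), Theorem~\ref{VALUEconv} at $t=0$ (convergence of $V^k(0)$ to $\mathbb{E}[V(0)]=\sup_u\mathbb{E}[\xi_X(u)]$), and the estimate \textbf{(A1)}$+$\eqref{keyassepsilon} to pass from $\xi_{X^k}$ to $\xi_X$. The only cosmetic difference is that the paper splits $\epsilon$ into three pieces from the outset (so the chain \eqref{epso1}--\eqref{epso4} gives \eqref{nearOC} directly), whereas you phrase the argument via a $\liminf$ and then rescale $\epsilon$ at the end; both encounter the same harmless notational slip that the control labelled $\phi^{*,k,\epsilon}$ must really be built with a smaller parameter.
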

\begin{proof}
Let us fix $\frac{\epsilon}{3} >0$. For each positive integer $k\ge 1$, let $\phi^{*,k,\epsilon}$ be the control constructed in Proposition \ref{epsiloncTH}, i.e.,

\begin{equation}\label{epso1}
\mathbb{E}\big[ \xi_{X^k}(\phi^{*,k,\epsilon})\big]\ge \sup_{\theta\in U^{k,e(k,T)}_0}\mathbb{E}\big[\xi_{X^k}(\theta)\big]  - \frac{\epsilon}{3};~k\ge 1.
\end{equation}
From Theorem \ref{VALUEconv}, we know that

\begin{equation}\label{epso2}
\Big|\sup_{\theta\in U^{k,e(k,T)}_0}\mathbb{E}\big[\xi_{X^k}(\theta)\big] - \sup_{v\in U^{T}_0}\mathbb{E}\big[\xi_{X}(v)\big] \Big| < \frac{\epsilon}{3}
\end{equation}
for every $k$ sufficiently large. By using assumptions (\ref{keyassepsilon}) and \textbf{(A1)}, we also know there exists a positive constant $C$ such that

\begin{equation}\label{epso3}
\Big| \mathbb{E}\big[\xi_{X^k}(\phi^{*,k,\epsilon})\big]   - \mathbb{E}\big[\xi_{X}(\phi^{*,k,\epsilon})\big] \Big|\le C \big(\mathbb{E}\sup_{0\le t\le T}\| X^k(t,\phi^{*,k,\epsilon}) - X(t,\phi^{*,k,\epsilon})\|^p_{\mathbb{R}^n}\big)^{\frac{1}{\alpha}} < \frac{\epsilon}{3}
\end{equation}
for every $k$ sufficiently large and $\alpha=p/\gamma$. Summing up inequalities (\ref{epso1}), (\ref{epso2}) and (\ref{epso3}), we then have

\begin{equation}\label{epso4}
\mathbb{E}\big[\xi_{X}(\phi^{*,k,\epsilon})\big] + \frac{\epsilon}{3}\ge \mathbb{E}\big[\xi_{X^k}(\phi^{*,k,\epsilon})\big]\ge \sup_{\theta\in U^{k,e(k,T)}_0}\mathbb{E}\big[\xi_{X^k}(\theta)\big]-\frac{\epsilon}{3} \ge \sup_{v\in U^{T}_0}\mathbb{E}\big[\xi_{X}(v)\big] -\frac{2\epsilon}{3}
\end{equation}
for every $k$ sufficiently large.
 \end{proof}
In the sequel, it is desirable to recall the sets $U_\ell$ and $U^m_\ell$ (see (\ref{abbreviatedU})) and the concatenation (\ref{abuseconcatenation}). In the remainder of this section, the assumptions of Theorem \ref{VALUEconv} will be in force. In what follows, we are going to fix a controlled imbedded structure $\big((X^k)_{k\ge 1},\mathscr{D}\big)$ satisfying (\ref{keyassepsilon}) and $u\in U_0$. If $\phi\in U_{n}$, we define

$$\tilde{\phi}^k(t):=\sum_{i=n+1}^\infty \mathbb{E}\big[\phi(T^k_{i-1}+)|\mathcal{G}^k_{i-1}\big]1\!\!1_{\{T^k_{i-1}< t\le T^k_i\}}; t\ge 0.
$$
It is immediate that the following lemma holds true.

\begin{lemma}\label{fundlemmaconv1}
For each $t\in [0,T)$ and $u^k = (u^k_0, \ldots, u^k_{e(k,T)-1})\in U^{k,e(k,T)}_0$, we have
$$\esssup_{\phi\in U^{k,e(k,T)}_{e(k,t)}}\mathbb{E}\Big[ \xi_{X^k}(u^k\otimes_{e(k,t)}\phi)|\mathcal{G}^k_{e(k,t)}  \Big] =\esssup_{\phi\in U^{e(k,T)}_{e(k,t)}}\mathbb{E}\Big[ \xi_{X^k}(u^k\otimes_{e(k,t)}\tilde{\phi}^k)|\mathcal{G}^k_{e(k,t)}  \Big]~a.s.
$$
\end{lemma}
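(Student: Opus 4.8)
The plan is to verify that the two families of random variables whose essential suprema appear on the left- and right-hand sides coincide as subsets of $L^1(\mathbb{P})$; the claimed identity then follows at once, since the essential supremum of a collection depends only on the collection itself. Throughout, write $n:=e(k,t)$ and $m:=e(k,T)$.

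First I would check that $\phi\mapsto\tilde\phi^k$ maps $U^{m}_n$ into $U^{k,m}_n$. The process $\tilde\phi^k$ is constant and equal to the random variable $\mathbb{E}\big[\phi(T^k_{i-1}+)\mid\mathcal{G}^k_{i-1}\big]$ on each stochastic interval $]]T^k_{i-1},T^k_i]]$. Since $\mathbb{A}$ is a convex compact set, conditional expectations of $\mathbb{A}$-valued random variables remain $\mathbb{A}$-valued, so $\tilde\phi^k$ is $\mathbb{A}$-valued; and, since $\phi$ is supported on $]]T^k_n,T^k_m]]$ after the convention of extending controls by $0$, the terms with index $i-1\ge m$ vanish. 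As $\mathbb{E}\big[\phi(T^k_{i-1}+)\mid\mathcal{G}^k_{i-1}\big]$ is $\mathcal{G}^k_{i-1}$-measurable and $\widetilde{\mathcal{F}}^k_{T^k_{i-1}}\subset\mathcal{G}^k_{i-1}\subset\mathcal{F}^k_{T^k_{i-1}}$, the process $\tilde\phi^k$ is $\mathbb{F}^k$-predictable of exactly the form (\ref{controlform}), i.e.\ $\tilde\phi^k\in U^{k,m}_n$.

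Next I would observe that $\phi\mapsto\tilde\phi^k$ acts as the identity on $U^{k,m}_n$ (up to $\mathbb{P}\otimes Leb$-null sets). Indeed, if $\phi=\sum_{j=n+1}^{m}\phi_{j-1}\mathds{1}_{\{T^k_{j-1}<t\le T^k_j\}}$ with each $\phi_{j-1}$ an $\mathbb{A}$-valued $\mathcal{G}^k_{j-1}$-measurable random variable, then $\phi(T^k_{i-1}+)=\phi_{i-1}$ is already $\mathcal{G}^k_{i-1}$-measurable, so $\mathbb{E}\big[\phi(T^k_{i-1}+)\mid\mathcal{G}^k_{i-1}\big]=\phi_{i-1}$ and hence $\tilde\phi^k=\phi$ a.s. Combining this with the previous step, the image of $U^m_n$ under $\phi\mapsto\tilde\phi^k$ is precisely $U^{k,m}_n$.

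Putting the two steps together, for every $u^k\in U^{k,m}_0$ one obtains the set identity
$$\big\{\mathbb{E}[\xi_{X^k}(u^k\otimes_n\tilde\phi^k)\mid\mathcal{G}^k_n]:\phi\in U^m_n\big\}=\big\{\mathbb{E}[\xi_{X^k}(u^k\otimes_n\psi)\mid\mathcal{G}^k_n]:\psi\in U^{k,m}_n\big\},$$
and taking essential suprema on both sides gives the lemma. The only points that are not purely formal are the two just used — that the $\mathcal{G}^k_{i-1}$-conditional expectation of an admissible control is again an admissible control of the step form (\ref{controlform}), which relies on the convexity of $\mathbb{A}$ and the inclusions $\widetilde{\mathcal{F}}^k_{T^k_{i-1}}\subset\mathcal{G}^k_{i-1}\subset\mathcal{F}^k_{T^k_{i-1}}$, and that this discretization map fixes controls that are already of the discrete form — and both are routine, which is why the statement is essentially immediate.
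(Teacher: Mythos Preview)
Your argument is correct and is precisely the natural unpacking of what the paper has in mind: the paper states only that ``it is immediate that the following lemma holds true'' and gives no further detail, and your two observations --- that $\phi\mapsto\tilde\phi^k$ lands in $U^{k,m}_n$ (using convexity of $\mathbb{A}$) and fixes elements already in $U^{k,m}_n$ --- are exactly what makes the set identity, and hence the equality of essential suprema, immediate.
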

\begin{lemma}
For each $u\in U_0$,
\begin{equation}\label{deltaconv2}
\esssup_{\phi\in U_{e(k,\cdot)}} \mathbb{E}\Big[\xi_X\big(u\otimes_{e(k,\cdot)}\phi\big)\big|\mathcal{G}^k_{e(k,\cdot)}\Big]\rightarrow V(\cdot,u)
\end{equation}
in $\mathbf{B}^p(\mathbb{F})$ as $k\rightarrow+\infty$ for $p\ge 1$.
\end{lemma}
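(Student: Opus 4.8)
The plan is to first show that the left-hand side of (\ref{deltaconv2}) is, for each $t$, \emph{exactly} a $\mathcal{G}^k_{e(k,t)}$-conditional expectation of the value process $V(\cdot,u)$ sampled along the skeleton, and then to let $k\to\infty$ using the continuity of $V(\cdot,u)$, the uniform convergence $T^k_{e(k,t)}\to t$, and the weak convergence of filtrations. Write $N_k(t):=T^k_{e(k,t)}$, denote by $\Lambda^k(t,u)$ the left-hand side of (\ref{deltaconv2}), and extend $V(\cdot,u)$ past $T$ by its terminal value $\xi_X(u)$ (so it stays bounded and continuous). The first step is the aggregation identity
\[
\Lambda^k(t,u)=\mathbb{E}\big[V(N_k(t),u)\,\big|\,\mathcal{G}^k_{e(k,t)}\big]\quad\text{a.s.},\qquad 0\le t\le T .
\]
For $\le$: since $\mathcal{G}^k_{e(k,t)}\subset\mathcal{F}^k_{T^k_{e(k,t)}}\subset\mathcal{F}_{N_k(t)}$, the tower property reduces this to the bound $\mathbb{E}[\xi_X(u\otimes_N\phi)\,|\,\mathcal{F}_N]\le V(N,u)$, valid for any finite stopping time $N$ and $\phi\in U_N$; this follows from the $\mathbb{F}$-supermartingale property of $V(\cdot,u\otimes_N\phi)$ (Remark \ref{remark_consist}), optional sampling (all processes in sight are bounded and continuous), the identity $V(N,u\otimes_N\phi)=V(N,u)$, and the identification of the value process at a stopping time with its continuous modification (El Karoui \cite{elkaroui}). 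For $\ge$: the family $\{\mathbb{E}[\xi_X(u\otimes_{N_k(t)}\phi)\,|\,\mathcal{G}^k_{e(k,t)}]:\phi\in U_{N_k(t)}\}$ is upward directed by the finite mixing property (note $\mathcal{G}^k_{e(k,t)}\subset\mathcal{F}_{N_k(t)}$), so $\mathbb{E}[\Lambda^k(t,u)]=\sup_{\phi}\mathbb{E}[\xi_X(u\otimes_{N_k(t)}\phi)]=\mathbb{E}[V(N_k(t),u)]$, the last equality by the analogous lattice argument for the family defining $V(N_k(t),u)$; together with the pointwise bound $\le$, this forces the identity.

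Granting the identity, I would split
\[
\Lambda^k(t,u)-V(t,u)=\mathbb{E}\big[V(N_k(t),u)-V(t,u)\,\big|\,\mathcal{G}^k_{e(k,t)}\big]+\big(\mathbb{E}[V(t,u)\,|\,\mathcal{G}^k_{e(k,t)}]-V(t,u)\big).
\]
Because $t\mapsto e(k,t)$ and $n\mapsto T^k_n$ are non-decreasing, $t\mapsto N_k(t)$ is non-decreasing and converges pointwise a.s. to the continuous map $t\mapsto t$, so P\'olya's theorem gives $\delta_k:=\sup_{0\le t\le T}|N_k(t)-t|\to0$ a.s.; writing $\omega_u(\delta)$ for the modulus of continuity of the bounded continuous path $s\mapsto V(s,u)$ on a compact interval containing all $N_k(t)$, we get $\omega_u(\delta_k)\to0$ a.s. and, by dominated convergence, in every $L^q$. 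The first term of the split is bounded in modulus by $\mathbb{E}[\omega_u(\delta_k)\,|\,\mathcal{G}^k_{e(k,t)}]$, hence its supremum over $t$ by $\max_{0\le n\le e(k,T)}\mathbb{E}[\omega_u(\delta_k)\,|\,\mathcal{G}^k_n]$; applying Doob's maximal inequality to this finite $\mathbb{F}^k$-martingale gives $\mathbb{E}\sup_{0\le t\le T}\big|\mathbb{E}[V(N_k(t),u)-V(t,u)\,|\,\mathcal{G}^k_{e(k,t)}]\big|^p\to0$ for every $p\ge1$.

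For the second term $\mathrm{II}_k(t)=\mathbb{E}[V(t,u)\,|\,\mathcal{F}^k_{N_k(t)}]-V(t,u)$ (using $\mathcal{G}^k_{e(k,t)}=\mathcal{F}^k_{T^k_{e(k,t)}}$ under conditional expectations, Remark \ref{equalitycond}) I would insert $\mathbb{E}[V(t,u)\,|\,\mathcal{F}_{N_k(t)}]$. The part $\mathbb{E}[V(t,u)\,|\,\mathcal{F}_{N_k(t)}]-V(t,u)$ equals $M^t(s)-M^t(t)$ with $s:=N_k(t)\wedge t$ and $M^t(\cdot):=\mathbb{E}[V(t,u)\,|\,\mathcal{F}_\cdot]$; since $t-s\le\delta_k$, the supermartingale property yields $V(s,u)-\mathbb{E}[\omega_u(\delta_k)\,|\,\mathcal{F}_s]\le M^t(s)\le V(s,u)$, so this part is at most $\omega_u(\delta_k)+\sup_{0\le s\le T}\mathbb{E}[\omega_u(\delta_k)\,|\,\mathcal{F}_s]$ in modulus, which tends to $0$ in every $L^p$ by Doob. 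The part $\mathbb{E}[V(t,u)\,|\,\mathcal{F}^k_{N_k(t)}]-\mathbb{E}[V(t,u)\,|\,\mathcal{F}_{N_k(t)}]$ I would control by approximating the continuous path $t\mapsto V(t,u)$ uniformly by the step function $t\mapsto V(\kappa_\rho(t),u)$ associated with a deterministic mesh-$\rho$ partition of $[0,T]$, with uniform error $D_\rho\le\omega_u(\rho)$: the step-function piece reduces to finitely many quantities $\sup_{0\le s\le T'}|\mathbb{E}[Z\,|\,\mathcal{F}^k_s]-\mathbb{E}[Z\,|\,\mathcal{F}_s]|$ with $Z\in L^\infty(\mathcal{F}_{T'})$ fixed, each of which tends to $0$ in every $L^q$ by the argument of Lemma \ref{severalsteps} (eq. (\ref{as1}))---equivalently by the weak convergence of filtrations (\ref{weakfiltration}) together with Burkholder--Davis--Gundy---applied to the bounded martingale $\mathbb{E}[Z\,|\,\mathcal{F}_\cdot]$, while the $D_\rho$-errors are absorbed by Doob's maximal inequality; this gives $\limsup_k\|\sup_t(\cdot)\|_q\le C_q\|D_\rho\|_q$, and then $\rho\downarrow0$. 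Collecting the three estimates proves (\ref{deltaconv2}) in $\mathbf{B}^p(\mathbb{F})$.

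The step I expect to be the main obstacle is this last one: the uniform-in-$t$ convergence of the $\mathbb{F}^k$-conditional expectations of the whole process $V(\cdot,u)$. It is the only place where the weak convergence of filtrations is genuinely used, and passing from the ``one fixed random variable'' statement (which is essentially (\ref{as1})) to a bound that is uniform in $t$ requires both the path-approximation device and the supermartingale structure of $V$ (the latter controls the time-shift discrepancy between $V(N_k(t),u)$ and $V(t,u)$ after conditioning). The remaining ingredients---the aggregation identity, the lattice property of admissible controls, and Doob's inequality---are routine.
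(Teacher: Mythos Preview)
Your argument is correct, but it takes a substantially longer route than the paper's. Both proofs rest on the same aggregation identity $\Lambda^k(t,u)=\mathbb{E}\big[V(N_k(t),u)\,\big|\,\mathcal{F}^k_{N_k(t)}\big]$; the paper obtains it in one line by the lattice property (which allows commuting the $\esssup$ with the outer conditional expectation, once one identifies $V(N_k(t),u)$ with $\esssup_\phi\mathbb{E}[\xi_X(u\otimes_{N_k(t)}\phi)\mid\mathcal{F}_{N_k(t)}]$), whereas you derive it via the supermartingale property plus an expectation-matching argument. The bigger difference is the decomposition: the paper splits as
\[
\Lambda^k(t,u)-V(t,u)=\big[\mathbb{E}[V(N_k(t),u)\mid\mathcal{F}^k_{N_k(t)}]-V(N_k(t),u)\big]+\big[V(N_k(t),u)-V(t,u)\big],
\]
bounds the first bracket by $\sup_s|\mathbb{E}[V(s,u)\mid\mathcal{F}^k_s]-V(s,u)|$, and kills it in one stroke by applying Theorem~1 of Coquet--M\'emin--Slominski \cite{coquet1} directly to the continuous $\mathbb{F}$-adapted process $V(\cdot,u)$; the second bracket is handled by path continuity and $\delta_k\to0$. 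Your split is different, and your term $\mathrm{II}_k$ forces a further three-way decomposition---supermartingale control of the time shift, then step-function approximation of $t\mapsto V(t,u)$, then the single-random-variable version of the weak-filtration convergence (essentially (\ref{as1}))---which amounts to re-proving by hand the part of \cite{coquet1} that the paper quotes. The payoff of your approach is self-containment: you only use the ``fixed $Z$'' weak convergence statement and Doob's inequality, never the full optional-projection convergence theorem. The cost is length; the paper's proof is four lines.
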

\begin{proof}
The lattice property of $\big\{\mathbb{E}\big[  \xi_X(u\otimes_{e(k,t)}\phi)|\mathcal{F}_{T^k_{e(k,t)}}\big] ; \phi\in U_{e(k,t)}\big\}$ for every $t\in [0,T]$ yields

\begin{equation}\label{deltaconv1}
\mathbb{E}\Big[V(u,T^k_{e(k,t)})\big|\mathcal{F}^k_{T^k_{e(k,t)}}\Big] = \esssup_{\phi\in U_{e(k,t)}} \mathbb{E}\Big[\xi_X\big(u\otimes_{e(k,t)}\phi\big)\big|\mathcal{F}^k_{T^k_{e(k,t)}}\Big]~a.s
\end{equation}
for each $t\in [0,T]$ and $k\ge 1$. Jensen's inequality, the weak convergence $\lim_{k\rightarrow+\infty}\mathbb{F}^k=\mathbb{F}, \mathbb{F}^k\subset \mathbb{F}$ and the fact that $V(\cdot,u)$ has continuous paths allow us to apply Th.1 in \cite{coquet1} to get


$$\Big\|\mathbb{E}\Big[V(u,T^k_{e(k,\cdot)})\big|\mathcal{F}^k_{T^k_{e(k,\cdot)}}\Big] -V(u,T^k_{e(k,\cdot)}) \Big\|^p_{\mathbf{B}^p}\le \Big\| \mathbb{E}\big[V(u,\cdot)|\mathcal{F}^k_\cdot\big] -V(u,\cdot)\Big\|^p_{\mathbf{B}^p} \rightarrow 0$$
as $k\rightarrow+\infty$. By the pathwise uniform continuity of $t\mapsto V(t,u)$ on $[0,T]$, we have: For any $\epsilon>0$, there exists a $\delta = \delta(\omega,\epsilon)$ such that

$$|T^k_{e(k,t)}(\omega) - t|< \delta \Longrightarrow |V(t,u,\omega) - V(T^k_{e(k,t)},u,\omega) |< \epsilon.$$
By Lemma 3.1 (inequality (3.2)) in \cite{LEAO_OHASHI2017.2} and Lemma 2.2 in \cite{koshnevisan}, we have

$$\sup_{0\le t\le T}|T^k_{e(k,t)} - t|\rightarrow 0$$
as $k\rightarrow+\infty$ a.s. Since $\mathcal{G}^k_{e(k,\cdot)}  =\mathcal{F}^k_{T^k_{e(k,t)}}$ up to null sets in $\mathcal{N}_k$, then, we can safely state that (\ref{deltaconv2}) holds true.
\end{proof}

\begin{lemma}\label{lemmaCV1}
For every $t\in [0,T]$, and $u\in U_0$,
\begin{equation}\label{asconvSk}
S^k(t,u):=\esssup_{\phi\in U_{e(k,t)}} \mathbb{E}\Big[\xi_X\big(u\otimes_{e(k,t)}\tilde{\phi}^k\big)\big|\mathcal{F}^k_{T^k_{e(k,t)}}\Big]\rightarrow V(t,u)
\end{equation}
$a.s$ as $k\rightarrow+\infty$.
\end{lemma}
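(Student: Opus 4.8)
The plan is to deduce the almost-sure convergence $S^k(t,u)\to V(t,u)$ by comparing $S^k$ with the quantity $\esssup_{\phi\in U_{e(k,t)}}\mathbb{E}[\xi_X(u\otimes_{e(k,t)}\phi)\mid \mathcal{F}^k_{T^k_{e(k,t)}}]$, for which convergence in $\mathbf{B}^p(\mathbb{F})$ (hence, along a subsequence, almost surely) has already been established in (\ref{deltaconv2}). The only difference between the two objects is the replacement of a general control $\phi\in U_{e(k,t)}$ by its discretized version $\tilde{\phi}^k\in U^{e(k,T)}_{e(k,t)}$ defined just before Lemma \ref{fundlemmaconv1}. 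So the heart of the matter is to show that this discretization is asymptotically harmless, uniformly over $\phi$.

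First I would fix $t\in[0,T]$ and $u\in U_0$ and write, for each $\phi\in U_{e(k,t)}$,
\begin{equation}\label{CV1split}
\big|\mathbb{E}[\xi_X(u\otimes_{e(k,t)}\tilde{\phi}^k)\mid \mathcal{F}^k_{T^k_{e(k,t)}}] - \mathbb{E}[\xi_X(u\otimes_{e(k,t)}\phi)\mid \mathcal{F}^k_{T^k_{e(k,t)}}]\big| \le \mathbb{E}\big[|\xi_X(u\otimes_{e(k,t)}\tilde{\phi}^k) - \xi_X(u\otimes_{e(k,t)}\phi)|\mid \mathcal{F}^k_{T^k_{e(k,t)}}\big],
\end{equation}
and then invoke \textbf{(A1)} together with the key robustness hypothesis (\ref{keyassepsilon}) (via \textbf{(B1)}-type Lipschitz control of $X$ in the control argument) to bound the right-hand side by $C(\mathbb{E}\int_0^T\|\tilde{\phi}^k(s)-\phi(s)\|^2_{\mathbb{R}^m}ds)^{\gamma/2}$ — but this is not quite uniform in $\phi$, so instead I would argue directly on the level of the two suprema. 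Taking $\esssup_\phi$ on both sides and using Lemma \ref{fundlemmaconv1} (which says the discrete essssup over $U^{e(k,T)}_{e(k,t)}$ of the $\tilde\phi^k$-controlled payoffs coincides with the discrete esssup over $U^{k,e(k,T)}_{e(k,t)}$), one reduces $S^k(t,u)$ exactly to the expression in (\ref{deltaconv2}) up to a $\mathcal{G}^k_{e(k,t)}$-conditioning; then (\ref{deltaconv2}) gives $\mathbf{B}^p$-convergence. To upgrade $\mathbf{B}^p$-convergence to the claimed almost-sure convergence along the full sequence, I would use the monotonicity/consistency structure: $S^k(t,u)$ is sandwiched between $V(T^k_{e(k,t)},u)$-type quantities (using that enlarging from $U^{k,e(k,T)}_{e(k,t)}$-controls to all $U_{e(k,t)}$-controls and projecting can only help/hurt in a controlled way), and the almost-sure uniform convergence $\sup_{0\le t\le T}|T^k_{e(k,t)}-t|\to 0$ together with the pathwise uniform continuity of $s\mapsto V(s,u)$ — exactly as used in the proof of (\ref{deltaconv2}) — pins down the limit to $V(t,u)$ a.s.

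The main obstacle I anticipate is the \emph{uniformity in $\phi$} of the discretization error $\phi\mapsto\tilde\phi^k$: the bound in (\ref{CV1split}) depends on $\|\tilde\phi^k-\phi\|_{L^2}$, which need not go to zero uniformly over the (non-compact, infinite-dimensional) control set $U_{e(k,t)}$. The way around this is precisely Lemma \ref{fundlemmaconv1}: rather than controlling the error for each fixed $\phi$, one shows that the \emph{supremum} over $\tilde\phi^k$ of the $X^k$-payoffs already equals the supremum over the discrete control class, so no genuine approximation of an individual $\phi$ is needed — the discretization is built into the definition of $S^k$ via the conditional-expectation projection $\mathbb{E}[\phi(T^k_{i-1}+)\mid\mathcal{G}^k_{i-1}]$, and the lattice property plus (\ref{keyassepsilon}) do the rest. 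A secondary technical point is the passage from subsequential a.s.\ convergence (automatic from $\mathbf{B}^p$-convergence) to full-sequence a.s.\ convergence; this I would handle by the sandwiching argument above, which shows every subsequential a.s.\ limit must equal $V(t,u)$.
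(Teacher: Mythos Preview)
Your proposal has a genuine gap at the crucial lower bound step. You claim that Lemma \ref{fundlemmaconv1} ``reduces $S^k(t,u)$ exactly to the expression in (\ref{deltaconv2})'', but Lemma \ref{fundlemmaconv1} is a statement about $\xi_{X^k}$, not $\xi_X$. The reason that lemma holds is that $X^k(\cdot,v^k)$ depends on a control $v^k$ only through the $\mathcal{G}^k_{i-1}$-measurable values $v^k(T^k_{i-1}+)$, so projecting $\phi$ onto $\tilde\phi^k$ loses nothing for the \emph{discrete} functional. The true controlled state $X(\cdot,\phi)$, however, depends on the entire path of $\phi$ between the jump times, and in general
\[
\esssup_{\phi\in U_{e(k,t)}}\mathbb{E}\big[\xi_X(u\otimes_{e(k,t)}\tilde\phi^k)\,\big|\,\mathcal{F}^k_{T^k_{e(k,t)}}\big]
\ <\
\esssup_{\phi\in U_{e(k,t)}}\mathbb{E}\big[\xi_X(u\otimes_{e(k,t)}\phi)\,\big|\,\mathcal{F}^k_{T^k_{e(k,t)}}\big]
\]
strictly, for fixed $k$. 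So you only get the upper bound $S^k(t,u)\le$ (\ref{deltaconv2})-quantity (this is (\ref{sub1}) in the paper), and your sandwiching sketch does not supply a matching lower bound: ``enlarging from $U^{k,e(k,T)}_{e(k,t)}$ to $U_{e(k,t)}$'' goes in the wrong direction for bounding $S^k$ from below.

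The paper's proof obtains the lower bound by a different mechanism that you are missing entirely. For $\epsilon>0$ it picks a $(t,\epsilon,u)$-optimal control $\eta\in U^T_t$ via Lemma \ref{epsilonrandomop}, then uses the density result (Theorem \ref{density}) to approximate $\eta$ by a sequence $\eta^k\in U^{k,e(k,T)}_0$; since each $\eta^k$ is already piecewise constant and $\mathcal{G}^k$-adapted, it equals its own discretization, hence $\mathbb{E}[\xi_X(u\otimes_{e(k,t)}\eta^k)\mid\mathcal{F}^k_{T^k_{e(k,t)}}]\le S^k(t,u)$. Assumptions \textbf{(A1)}--\textbf{(B1)} and weak convergence of filtrations give $\mathbb{E}[\xi_X(u\otimes_{e(k,t)}\eta^k)\mid\mathcal{F}^k_{T^k_{e(k,t)}}]\to\mathbb{E}[\xi_X(u\otimes_t\eta)\mid\mathcal{F}_t]$ in $L^2$, so along a subsequence $V(t,u)\le\epsilon+\liminf_k S^{\gamma(k)}(t,u)$. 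Combining this with the upper bound from (\ref{deltaconv2}) along a further subsequence, one gets a.s.\ convergence along that sub-subsequence, and then the sub-subsequence principle finishes the argument. Your proposal should incorporate this $\epsilon$-optimal/density step; without it the lower half of the sandwich is not there.
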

\begin{proof}
We fix $t\in [0,T)$ and $u\in U^T_0$. The following inequalities hold:

\begin{equation}\label{sub1}
S^k(t,u)\le\esssup_{\phi\in U_{e(k,t)}} \mathbb{E}\Big[\xi_X\big(u\otimes_{e(k,t)}\phi\big)\big|\mathcal{F}^k_{T^k_{e(k,t)}}\Big] ~a.s~\forall k\ge 1,
\end{equation}
and by Lemma \ref{epsilonrandomop}, for $\epsilon>0$ we know there exists a control $\eta\in U^T_t$ such that

\begin{equation}\label{sub2}
V(t,u) < \epsilon + \mathbb{E}\big[\xi_X(u\otimes_t \eta)|\mathcal{F}_t\big]~a.s.
\end{equation}
Choose $\eta^k\in U^{k,e(k,T)}_0$ such that $\eta^k\rightarrow \eta$ in $L^2_a(\mathbb{P}\times Leb)$ as $k\rightarrow+\infty$. Assumptions \textbf{(A1-B1)} yields

$$\lim_{k\rightarrow+\infty}\mathbb{E}\big[\xi_X(u\otimes_{e(k,t)} \eta^k)|\mathcal{F}_t\big] =\mathbb{E}\big[\xi_X(u\otimes_t \eta)|\mathcal{F}_t\big]$$
in $L^2(\mathbb{P})$, but since $T^k_{e(k,t)}\rightarrow t$ a.s and $\mathbb{F}^k$ converges weakly to $
\mathbb{F}$, we actually have

\begin{equation}\label{littlelemma1}
\lim_{k\rightarrow+\infty}\mathbb{E}\big[\xi_X(u\otimes_{e(k,t)} \eta^k)|\mathcal{F}^k_{T^k_{e(k,t)}}\big] =\mathbb{E}\big[\xi_X(u\otimes_t \eta)|\mathcal{F}_t\big]\quad \text{in}~L^2(\mathbb{P}).
\end{equation}
From (\ref{sub2}), (\ref{littlelemma1}) and the definition of $\esssup$, we can find a subsequence $\gamma(k)$ such that

\begin{equation}\label{sub3}
V(t,u)\le \epsilon + \liminf_{k\rightarrow+\infty}S^{\gamma(k)}(t,u)~a.s.
\end{equation}

For this subsequence, we make use of (\ref{deltaconv2}) to extract a further subsequence $\{v(k)\} \subset \{\gamma(k)\}$ such that
$$\lim_{k\rightarrow+\infty}\esssup_{\phi\in U_{e(v(k),t)}} \mathbb{E}\Big[\xi_X\big(u\otimes_{e(v(k),t)}\phi\big)\big|\mathcal{F}^{v(k)}_{T^k_{e(v(k),t)}}\Big] = V(t,u)~a.s.$$
From (\ref{sub1}), we have $\limsup_{k}S^{v(k)}(t,u)\le V(t,u)~a.s$ and (\ref{sub3}) allows us to conclude

$$\lim_{k}S^{v(k)}(t,u) = V(t,u)~a.s.$$
The above argument shows that every subsequence of $\{S^k(t,u); k\ge 1\}$ has a further convergent subsequence which converges almost surely to the same limit $V(t,u)$. This shows the entire sequence converges and (\ref{asconvSk}) holds true.

\end{proof}

\begin{lemma}\label{lemmaskconv}

\begin{equation}
\mathbb{E}\Big|V^k(T^k_{e(k,t)},u^k) - S^k(t,u)\Big|^p\le \sup_{\phi\in U^{k,e(k,T)}_{e(k,t)}}\mathbb{E}\Big|\xi_{X^k}\big(k,u^k\otimes_{e(k,t)}\phi\big)  - \xi_X\big(u\otimes_{e(k,t)}\phi\big) \Big|^p
\end{equation}
for every $u\in U_0, u^k\in U^{k,e(k,T)}_0 ; k\ge 1$ and $t\in [0,T]$.
\end{lemma}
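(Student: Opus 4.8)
The plan is to rewrite both $V^k(T^k_{e(k,t)},u^k)$ and $S^k(t,u)$ as essential suprema of conditional expectations over \emph{one} common index set and with \emph{one} common conditioning $\sigma$-algebra, and then exploit that the essential supremum operation is $1$-Lipschitz. By definition (\ref{discretevalueprocess}) we have $V^k(T^k_{e(k,t)},u^k)=\esssup_{\phi\in U^{k,e(k,T)}_{e(k,t)}}\mathbb{E}[\xi_{X^k}(u^k\otimes_{e(k,t)}\phi)|\mathcal{G}^k_{e(k,t)}]$. Since $X^k\in O_T(\mathbb{F}^k)$ is stopped at $T^k_{e(k,T)}$, property (\ref{antiprop}) shows that $\xi_{X^k}(u^k\otimes_{e(k,t)}\cdot)$ depends on the control only through its first $e(k,T)$ coordinates; combining this with Lemma \ref{fundlemmaconv1} lets me rewrite the above as $\esssup_{\phi\in U_{e(k,t)}}\mathbb{E}[\xi_{X^k}(u^k\otimes_{e(k,t)}\tilde{\phi}^k)|\mathcal{G}^k_{e(k,t)}]$. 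As $S^k(t,u)=\esssup_{\phi\in U_{e(k,t)}}\mathbb{E}[\xi_X(u\otimes_{e(k,t)}\tilde{\phi}^k)|\mathcal{F}^k_{T^k_{e(k,t)}}]$ and $\mathcal{G}^k_{e(k,t)}=\mathcal{F}^k_{T^k_{e(k,t)}}$ modulo $\mathcal{N}_k$-null sets, the two quantities are now essential suprema of exactly the same shape.

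Set $A_\phi:=\mathbb{E}[\xi_{X^k}(u^k\otimes_{e(k,t)}\tilde{\phi}^k)|\mathcal{G}^k_{e(k,t)}]$, $B_\phi:=\mathbb{E}[\xi_X(u\otimes_{e(k,t)}\tilde{\phi}^k)|\mathcal{G}^k_{e(k,t)}]$, and $D_\phi:=\xi_{X^k}(u^k\otimes_{e(k,t)}\tilde{\phi}^k)-\xi_X(u\otimes_{e(k,t)}\tilde{\phi}^k)$. For each fixed $\phi$ one has $A_\phi\le\esssup_\psi B_\psi+\esssup_\psi|A_\psi-B_\psi|$ a.s., and since the right-hand side is a fixed random variable it dominates $\esssup_\phi A_\phi$; symmetrizing gives $|V^k(T^k_{e(k,t)},u^k)-S^k(t,u)|\le\esssup_\phi|A_\phi-B_\phi|$ a.s. Conditional Jensen yields $|A_\phi-B_\phi|\le\mathbb{E}[|D_\phi||\mathcal{G}^k_{e(k,t)}]$, hence $|V^k(T^k_{e(k,t)},u^k)-S^k(t,u)|\le\esssup_\phi\mathbb{E}[|D_\phi||\mathcal{G}^k_{e(k,t)}]$ a.s.

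To pass to $L^p$, I would check that $\{\mathbb{E}[|D_\phi||\mathcal{G}^k_{e(k,t)}]:\phi\in U_{e(k,t)}\}$ is upward directed: given $\phi_1,\phi_2$, the set $G:=\{\mathbb{E}[|D_{\phi_1}||\mathcal{G}^k_{e(k,t)}]\ge\mathbb{E}[|D_{\phi_2}||\mathcal{G}^k_{e(k,t)}]\}$ belongs to $\mathcal{G}^k_{e(k,t)}\subset\mathcal{F}_{T^k_{e(k,t)}}$, so the finite-mixing property gives $\phi_3:=\phi_1\mathds{1}_G+\phi_2\mathds{1}_{G^c}\in U_{e(k,t)}$; since $\phi\mapsto\tilde{\phi}^k$, $X^k$, $X$ and $\xi$ are all local in the control with respect to a $\mathcal{G}^k_{e(k,t)}$-measurable partition, $D_{\phi_3}=D_{\phi_1}\mathds{1}_G+D_{\phi_2}\mathds{1}_{G^c}$ and thus $\mathbb{E}[|D_{\phi_3}||\mathcal{G}^k_{e(k,t)}]=\mathbb{E}[|D_{\phi_1}||\mathcal{G}^k_{e(k,t)}]\vee\mathbb{E}[|D_{\phi_2}||\mathcal{G}^k_{e(k,t)}]$. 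Consequently $\esssup_\phi\mathbb{E}[|D_\phi||\mathcal{G}^k_{e(k,t)}]$ is the a.s.\ nondecreasing limit of a sequence $\mathbb{E}[|D_{\phi_n}||\mathcal{G}^k_{e(k,t)}]$ drawn from the family, so monotone convergence together with the conditional Jensen inequality $(\mathbb{E}[|D_{\phi_n}||\mathcal{G}^k_{e(k,t)}])^p\le\mathbb{E}[|D_{\phi_n}|^p|\mathcal{G}^k_{e(k,t)}]$ (valid for $p\ge1$) yields $\mathbb{E}|V^k(T^k_{e(k,t)},u^k)-S^k(t,u)|^p\le\sup_{\phi\in U_{e(k,t)}}\mathbb{E}|D_\phi|^p$. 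Finally, $\phi\mapsto\tilde{\phi}^k$ maps $U_{e(k,t)}$ onto the step controls $U^k_{e(k,t)}$, and, up to the harmless zero-extension of controls past the $e(k,T)$-th step (which affects neither $\xi_{X^k}(u^k\otimes_{e(k,t)}\cdot)$ nor the relevant comparison with $\xi_X(u\otimes_{e(k,t)}\cdot)$), this last supremum coincides with $\sup_{\phi\in U^{k,e(k,T)}_{e(k,t)}}\mathbb{E}|\xi_{X^k}(u^k\otimes_{e(k,t)}\phi)-\xi_X(u\otimes_{e(k,t)}\phi)|^p$, which is the assertion.

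The inequalities themselves are elementary; I expect the real work to be bookkeeping — identifying the image of $\phi\mapsto\tilde{\phi}^k$, reconciling the control index sets $U^{k,e(k,T)}_{e(k,t)}$, $U_{e(k,t)}$ and $U^k_{e(k,t)}$, and making precise the ``locality in the control'' of $X^k$, $X$ and $\xi$ that underlies both the reduction to a common index set and the directedness argument. Each of these is routine given the properties of admissible controls listed after (\ref{controlset}), so I would keep these verifications short.
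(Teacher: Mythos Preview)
Your argument is correct and follows essentially the same route as the paper: both proofs use Lemma \ref{fundlemmaconv1} to place $V^k(T^k_{e(k,t)},u^k)$ and $S^k(t,u)$ on the common index set $U_{e(k,t)}$ with the same conditioning $\sigma$-algebra, then invoke the $1$-Lipschitz property of the essential supremum, the lattice (upward-directed) structure of the family $\{\mathbb{E}[|D_\phi|\,|\,\mathcal{G}^k_{e(k,t)}]\}$, monotone convergence along a maximizing sequence, and conditional Jensen for the exponent $p$. The only cosmetic difference is that the paper passes directly from the $\limsup$ along the maximizing sequence $\tilde{\phi}^k_i$ to $\sup_{\phi\in U^{k,e(k,T)}_{e(k,t)}}$ without pausing to describe the image of $\phi\mapsto\tilde{\phi}^k$; your more explicit bookkeeping on this point is harmless.
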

\begin{proof}
We fix $u\in U_0, u^k\in U^{k,e(k,T)}_0, t\in [0,T]$ and $k\ge 1$. Clearly, if $\phi\in U_{e(k,t)}$, then

$$\Bigg|\esssup_{\phi\in U_{e(k,t)}}\mathbb{E}\Big[\xi_X\big(u\otimes_{e(k,t)}\tilde{\phi}^k\big)\big|\mathcal{F}^k_{T^k_{e(k,t)}}\Big]- \esssup_{\phi\in U_{e(k,t)}}\mathbb{E}\Big[\xi_{X^k}\big(k,u^k\otimes_{e(k,t)}\tilde{\phi}^k\big)\big|\mathcal{F}^k_{T^k_{e(k,t)}}\Big]\Bigg|
$$
$$\le\esssup_{\phi\in U_{e(k,t)}}\mathbb{E}\Big[|\xi_{X^k}\big(u^k\otimes_{e(k,t)}\tilde{\phi}^k\big)  - \xi_X\big(u\otimes_{e(k,t)}\tilde{\phi}^k\big)|\big|\mathcal{F}^k_{T^k_{e(k,t)}}\Big]~a.s.
$$
By applying Lemma \ref{fundlemmaconv1}, we then arrive at

$$\Big|S^k(u,t)- V^k(T^k_{e(k,t)},u^k)\Big|
$$
\begin{equation}\label{sub4}
\le\esssup_{\phi\in U_{e(k,t)}}\mathbb{E}\Big[|\xi_{X^k}\big(u^k\otimes_{e(k,t)}\tilde{\phi}^k\big)  - \xi_X\big(u\otimes_{e(k,t)}\tilde{\phi}^k\big)|\big|\mathcal{F}^k_{T^k_{e(k,t)}}\Big]=:J^k_t~a.s.
\end{equation}
The set $\big\{\mathbb{E}\big[|\xi_{X^k}\big(u^k\otimes_{e(k,t)}\tilde{\phi}^k\big)  - \xi_X\big(u\otimes_{e(k,t)}\tilde{\phi}^k\big)|\big|\mathcal{F}^k_{T^k_{e(k,t)}}\big]; \phi\in U_{e(k,t)}\Big\}
 $ has the lattice property and hence (see e.g Prop 1.1.3 in \cite{lamberton}), there exists a sequence $\{\phi_i; i\ge 1\}\subset U_{e(k,t)}$ such that

$$\mathbb{E}\Big[|\xi_{X^k}\big(u^k\otimes_{e(k,t)}\tilde{\phi_i}^k\big)  - \xi_X\big(u\otimes_{e(k,t)}\tilde{\phi_i}^k\big)|\big|\mathcal{F}^k_{T^k_{e(k,t)}}\Big]\uparrow J^k_t
~a.s $$
as $i\rightarrow +\infty$. The estimate (\ref{sub4}), Jensen's inequality and monotone convergence theorem yield

\begin{eqnarray*}
\mathbb{E}\Big|S^k(t,u)- V^k(T^k_{e(k,t)},u^k)\Big|^p &\le& \limsup_{i\rightarrow+\infty}\mathbb{E}\Big|\xi_{X^k}\big(u^k\otimes_{e(k,t)}\tilde{\phi_i}^k\big)  - \xi_X\big(u\otimes_{e(k,t)}\tilde{\phi_i}^k\big)\Big|^p\\
& &\\
&\le&\sup_{\phi\in U^{k,e(k,T)}_{e(k,t)}}\mathbb{E}\Big|\xi_{X^k}\big(u^k\otimes_{e(k,t)}\phi\big)  - \xi_X\big(u\otimes_{e(k,t)}\phi\big)\Big|^p
\end{eqnarray*}
for $k\ge 1$.
\end{proof}

\

\noindent \textbf{Proof of Theorem \ref{VALUEconv}:} Lemma \ref{lemmaskconv}, \textbf{(A1)}, (\ref{keyassepsilon}) and H\"{o}lder's inequality yield

$$
\mathbb{E}\Big|S^k(t,u)- V^k(T^k_{e(k,t)},u^k)\Big|^p \le\sup_{\phi\in U^{k,e(k,T)}_{e(k,t)}}\mathbb{E}\Big|\xi_{X^k}\big(u^k\otimes_{e(k,t)}\phi\big)  - \xi_X\big(u\otimes_{e(k,t)}\phi\big)\Big|^p$$
$$\le C\Big(\sup_{\phi\in U^{k,e(k,T)}_{e(k,t)}}\mathbb{E}\sup_{0\le s\le T}\Big|X^k\big(s,u^k\otimes_{e(k,t)}\phi\big)  - X\big(s,u\otimes_{e(k,t)}\phi\big)\Big|^p\Big)^\gamma \rightarrow 0
$$
as $k\rightarrow+\infty$. Lemma \ref{lemmaCV1} and triangle inequality allow us to conclude the proof.

\section{Applications}\label{APPLICATIONsection}

We now show that the abstract results obtained in this article can be applied to the
concrete examples mentioned in the Introduction. We will treat two cases: The state controlled process is a path-dependent SDE driven by Brownian motion and a SDE driven by fractional Brownian motion with additive noise. Section \ref{portfolioOPT} illustrates the method with a portfolio optimization problem based on a risky asset process driven by path-dependent coefficients.
\subsection{Path-dependent controlled SDEs}
In the sequel, we make use of the following notation

$$\omega_t: = \omega(t\wedge \cdot); \omega \in \mathbf{D}^n_T.$$

This notation is naturally extended to processes. We say that $F$ is a \textit{non-anticipative} functional if it is a Borel mapping and

$$F(t,\omega) = F(t,\omega_t); (t,\omega)\in[0,T]\times \mathbf{D}^n_T.$$

The underlying state process is the following $n$-dimensional controlled SDE

\begin{equation}\label{pdsdeBM}
dX^u(t) = \alpha(t,X^u_t,u(t))dt + \sigma(t,X^u_t,u(t))dB(t); 0\le t\le T,
\end{equation}
with a given initial condition $X^u(0)=x\in \mathbb{R}^n$. We define $\Lambda_T:=\{(t,\omega_t); t\in [0,T]; \omega\in \mathbf{D}^n_T\}$ and we endow this set with the metric

$$d_{1/2}((t,\omega); (t',\omega')): = \sup_{0\le u\le T}\|\omega(u\wedge t) - \omega'(u\wedge t')\|_{\mathbb{R}^n} + |t-t'|^{1/2}.$$
Then, $(\Lambda_T,d_{1/2})$ is a complete metric space equipped with the Borel $\sigma$-algebra. The coefficients of the SDE will satisfy the following regularity conditions:

\

\noindent \textbf{Assumption (C1)}: The non-anticipative mappings $\alpha: \Lambda_T\times \mathbb{A}\rightarrow \mathbb{R}^n$ and $\sigma:\Lambda_T\times\mathbb{A}\rightarrow \mathbb{R}^{n\times d}$ are Lipschitz continuous, i.e., there exists a pair of constants $K_{Lip}=(K_{1,Lip}, K_{2,Lip})$ such that

$$\|\alpha(t,\omega, a) - \alpha(t',\omega',b)\|_{\mathbb{R}^n} + \|\sigma(t,\omega,a) - \sigma(t',\omega',b)\|_{\mathbb{R}^{n\times d}}
$$
$$\le K_{1,Lip}d_{1/2} \big((t,\omega); (t',\omega')\big) + K_{2,Lip}\|a-b\|_{\mathbb{R}^m}$$
for every $t,t'\in [0,T]$ and $\omega,\omega'\in \mathbf{D}^n_T$ and $a,b\in \mathbb{A}$. One can easily check by routine arguments that the SDE (\ref{pdsdeBM}) admits a strong solution such that

\begin{equation}\label{integraPDSDE}
\sup_{u\in U^T_0}\mathbb{E}\sup_{0\le t\le T}\|X^u(t)\|^{2p}_{\mathbb{R}^n}\le C(1+\|x_0\|^{2p}_{\mathbb{R}^n})\exp(CT),
\end{equation}
where $X(0)=x_0$, $C$ is a constant depending on $T>0,p\ge 1$, $K_{Lip}$ and the compact set $\mathbb{A}$.

\begin{remark}
Due to Assumption \textbf{(C1)}, it is a routine exercise to check that the controlled SDE satisfies Assumption \textbf{(B1)}. Therefore, Lemma \ref{continuousV} implies that the associated value process
$$V(t,u) = \esssup_{\theta\in U_t^T}\mathbb{E}\big[\xi_X(u\otimes_t\phi)|\mathcal{F}_t\big]; 0\le t\le T$$
has continuous paths for each $u\in U_0$.
\end{remark}

\

\noindent \textbullet~\textbf{Definition of the} \textbf{controlled imbedded structure} for (\ref{pdsdeBM}): Let us now construct a controlled imbedded structure $\big((X^k)_{k\ge 1},\mathscr{D}\big)$ associated with (\ref{pdsdeBM}). In the sequel, in order to alleviate notation, we are going to write controlled processes as

$$X^{k,u^k},~X^u$$
rather than $X^k(\cdot,u^k)$ and $X(\cdot,u)$, respectively, as in previous sections. Let us fix a control $u^k= (u^k_0, \ldots, u^k_{n-1}, \ldots)$ based on a collection of universally measurable functions $g^k_\ell:\mathbb{S}^\ell_k\rightarrow\mathbb{A}$ realizing $u^k_{\ell} = g^k_\ell(\mathcal{A}^k_\ell)$ a.s. At first, we construct an Euler-Maruyama-type scheme based on the random partition $(T^k_n)_{n\ge 0}$ as follows: Let

$$S^{k,j}_n:=\max\{T^{k,j}_p; T^{k,j}_p\le T^{k}_{n-1}\}; 1\le j\le d, k,n\ge 1.$$
It is important to notice that $S^{k,j}_n$ is \textit{not} a stopping time, but it is $\mathcal{G}^k_{n-1}$-measurable for every $n\ge 1$. Moreover, $S^{k,j}_1=0~a.s, 1\le j\le d$. For a given information set $\textbf{b}^k_{\ell}\in \mathbb{S}^{\ell}_k$, we recall (see (\ref{pfunction}))

$$\wp_\lambda(\textbf{b}^k_\ell) =\max\Big\{1\leq j\leq \ell, \quad \tilde{i}^k_j=(\underbrace{0,\dots,0}_{\lambda-1},r,\underbrace{0,\dots,0}_{d-\lambda}),~ r \in \{-1,1\}\Big\},$$
for $\lambda\in \{1, \ldots, d\}$. We observe that $\wp_\lambda(\mathbf{b}^k_\ell)$ only depends on $(\tilde{i}^k_1, \ldots, \tilde{i}^k_\ell)$ for a given information set $\mathbf{b}^k_\ell\in \mathbb{S}^\ell_k$.

Let $\pi_\ell:\mathbb{S}_k^{\infty}\rightarrow\mathbb{S}_k^\ell$ be the standard projection onto the $\ell$-th coordinate. For each $1\le j\le d$, let us define $g^{k,j}_\ell:\mathbb{S}^{\ell}_k\rightarrow\mathbb{A}$ given by

$$g^{k,j}_\ell(\textbf{b}^k_\ell):= g^k_{\wp_j(\textbf{b}^k_{\ell})}\big( \pi_{\wp_j(\textbf{b}^k_\ell)}(\textbf{b}^k_\ell) \big); \textbf{b}^k_\ell\in \mathbb{S}^\ell_k; \ell\ge 1,$$
where we set $g^{k,j}_0 := g^k_0, 1\le j\le d$.
\begin{remark}
For every sequence of universally measurable functions $g^k_\ell:\mathbb{S}^\ell_k\rightarrow\mathbb{A}$, one can easily check that $g^{k,j}_\ell:\mathbb{S}^\ell_k\rightarrow\mathbb{A}$ is universally measurable for each $\ell\ge 1$ and $1\le j\le d$. In this case,

$$g^{k,j}_{n-1}(\mathcal{A}^k_{n-1})= g^k_{\wp_j(\eta^k_1,\ldots,\eta^k_{{n-1}})}\big(\mathcal{A}^k_{\wp_j(\eta^k_1,\ldots,\eta^k_{{n-1}})}\big)$$
is $\mathcal{G}^k_{n-1}$-measurable for each $n\ge 1$ and $1\le j\le d$.
\end{remark}

Let us define $\mathbb{X}^{k,u^k}(0):=x$ and $\mathbb{X}^{k,u^k}_0:=\mathbb{X}^{k,u^k}_{T^k_0} := x$ (hence $\mathbb{X}^{k,u^k}_{S^{k,j}_1}=\mathbb{X}^{k,u_k}_{T^k_0}$) is the constant function $x$ over $[0,T]$. Let us define

\begin{eqnarray*}
\mathbb{X}^{i,k,u^k}(T^k_q)&:=&\mathbb{X}^{i,k,u^k,}(T^k_{q-1}) + \alpha^i\big(T^k_{q-1},\mathbb{X}^{k,u^k}_{T^k_{q-1}},g^k_{q-1}(\mathcal{A}^k_{q-1})\big)\Delta T^k_{q}\\
& &\\
&+& \sum_{j=1}^d \sigma^{ij}(S^{k,j}_{q},\mathbb{X}^{k,u^k}_{S^{k,j}_{q}},g^{k,j}_{q-1}(\mathcal{A}^k_{q-1})\big)\Delta A^{k,j}(T^k_{q})
\end{eqnarray*}
for $q\ge 1$ and $1\le i\le n$, where

$$\mathbb{X}^{k,u^k}_{T^k_{q}}(t):= \sum_{\ell=0}^{q-1}\mathbb{X}^{k,u^k}(T^k_\ell) 1\!\!1_{\{T^k_\ell\le t < T^k_{\ell+1}\}} + \mathbb{X}^{k,u^k}(T^k_{q})1\!\!1_{\{T^k_{q}\le t \}}; 0\le t\le T.$$

By construction the following relation holds true

$$
\mathbb{X}^{k,u^k}_{S^{k,j}_q}=\left\{
\begin{array}{rl}
\mathbb{X}^{k,u^k}_{T^k_{q-1}}; & \hbox{if} \ S^{k,j}_q =T^k_{q-1} \\
\mathbb{X}^{k,u^k}_{S^{k,j}_{q-1}};& \hbox{if} \ S^{k,j}_q< T^k_{q-1},  \\
\end{array}
\right.
$$
for $q\ge2$. The controlled structure is then naturally defined by

\begin{equation}\label{xksde}
X^{k,u^k}(t):= \mathbb{X}^{k,u^k}(t\wedge T^k_{e(k,T)}),~\text{where}~\mathbb{X}^{k,u^k}(t) = \sum_{m=0}^\infty \mathbb{X}^{k,u^k}(T^k_m)1\!\!1_{\{T^k_m \le t < T^k_{m+1}\}}
\end{equation}
for $t\in [0,T]$ and $u^k\in U^{k,e(k,T)}_0$.

\


\noindent \textbullet~\textbf{Pathwise description:}
In the sequel, we make use of the information set described in (\ref{pfunction}), (\ref{tknfunction}) and (\ref{tkmodfunction}). Let us fix a final step $q\ge 1$ and an information set $\textbf{b}^k_{q} = \big( s^k_1, \tilde{i}^k_1,\ldots, s^k_q,\tilde{i}^k_q \big)$. For a given information set

$$\Big\{ t^{k,\lambda}_{\mathbb{j}_{\lambda}}(\pi_\ell(\textbf{b}^k_q)); 1\le \ell \le q,1\le \lambda\le d \Big\},$$
we observe

$$t^k_1 = \min_{\substack {1\le \lambda\le d}}\Big\{t^{k,\lambda}_{\mathbb{j}_{\lambda}}(\pi_1(\textbf{b}^k_{q})) \Big\},\quad t^k_r = \min_{\substack {1\le \ell\le r\\ 1\le \lambda\le d}}\Big\{t^{k,\lambda}_{\mathbb{j}_{\lambda}}(\pi_{\ell}(\textbf{b}^k_{q})); t^{k,\lambda}_{\mathbb{j}_{\lambda}}(\pi_\ell(\textbf{b}^k_{q})) > t^k_{r-1}\Big\}; 2\le r \le q.$$
Let us define

$$\vartheta^{k,\lambda}_r:=\max_{\substack {\ell\le r-1}}\Big\{t^{k,\lambda}_{\mathbb{j}_{\lambda}}(\pi_{\ell}(\textbf{b}^k_{q}));  t^{k,\lambda}_{\mathbb{j}_{\lambda}}(\pi_\ell(\textbf{b}^k_q))\le t^k_{r-1}\Big\}; 2\le r\le q,$$
where $\vartheta^{k,\lambda}_1:=0$ for $1\le\lambda\le d$.

In the sequel, for each subset $\{a^k_0, \ldots, a^k_{q}\}\subset \mathbb{A}$, we define

$$a^k_{\wp_j}(\textbf{b}^k_\ell):=a^k_{\wp_j(\textbf{b}^k_\ell)};~\mathbf{b}^k_\ell\in \mathbb{S}^k_\ell,\ell\ge 1.$$
For a given $\mathbf{o}^k_{q-1} = \big((a^k_0,s^k_1,\tilde{i}^k_1),\ldots, (a^k_{q-2},s^k_{q-1},\tilde{i}^k_{q-1}) \big)$, we denote $\mathbf{b}^k_{q-1} = (s^k_1, \tilde{i}^k_1,\ldots, s^k_{q-1},\tilde{i}^k_{q-1})$ and we define $h^k_q$ by forward induction as follows: We set $h^k_0:=x$ and $\bar{\gamma}^k_0:=x$ (and hence $\bar{\gamma}_{\vartheta^{k,j}_1}=x,1\le j\le d$) is the constant function over $[0,T]$. For $\mathbf{o}^k_q = (\mathbf{o}^k_{q-1}, a_{q-1},s^k_q,\tilde{i}^k_q)\in \mathbb{H}^{k,q}$, we set

\begin{eqnarray*}
h^{i,k}_{q}(\textbf{o}^k_{q})&:=&h^{i,k}_{q-1}(\mathbf{o}^k_{q-1}) + \alpha^i\big(t^k_{q-1},\bar{\gamma}^k_{q-1}(\textbf{o}^k_{q-1}),a^k_{q-1}\big)s^k_{q}\\
& &\\
&+& \sum_{j=1}^d \sigma^{ij}\Big(\vartheta^{k,j}_{q}(\textbf{b}^k_{q-1}),\bar{\gamma}^k_{q-1,\vartheta^{k,j}_{q}}(\textbf{o}^k_{q-1}),a^k_{\wp_j}(\textbf{b}^k_{q-1})\Big)
\epsilon_k\tilde{i}^{k,j}_{q},
\end{eqnarray*}
for $1\le i\le n$, where $\bar{\gamma}^k_{q-1,\vartheta^{k,j}_{q}}(\textbf{o}^k_{q-1}): = \bar{\gamma}^k_{q-1}(\textbf{o}^k_{q-1})(\cdot\wedge \vartheta^{k,j}_{q})$ and

$$\bar{\gamma}^k_{q-1}(\textbf{o}^k_{q-1})(t):= \sum_{\ell=0}^{q-2}h^k_{\ell}(\pi_{\ell}(\textbf{o}^k_{q-1})) 1\!\!1_{\{t^k_\ell\le t < t^k_{\ell+1}\}} + h^k_{q-1}
(\textbf{o}^k_{q-1})1\!\!1_{\{t^k_{q-1}\le t \}}; 0\le t\le T.$$

By construction the following relation holds true

$$
\bar{\gamma}^k_{q-1,\vartheta^{k,j}_{q}}(\textbf{o}^k_{q-1})=\left\{
\begin{array}{rl}
\bar{\gamma}^k_{q-1}(\textbf{o}^k_{q-1}); & \hbox{if} \ \vartheta^{k,j}_q =t^k_{q-1} \\
\bar{\gamma}^k_{q-2,\vartheta^{k,j}_{q-1}}(\pi_{q-2}(\textbf{o}^k_{q-1}));& \hbox{if} \ \vartheta^{k,j}_q < t^k_{q-1}  \\
\end{array}
\right.
$$
for $q\ge 2$ and $1\le j\le d$. We then define

$$
\bar{\gamma}^k(\textbf{o}^k_{\infty})(t)= \sum_{n=0}^\infty h^k_{n}(\textbf{o}^k_n)1\!\!1_{\{t^k_n \le t< t^k_{n+1}\}}
$$
for $\textbf{o}^k_{\infty}\in \mathbb{H}^{k,\infty}$ and we set
\begin{equation}\label{gammapath}
\gamma^k_{e(k,T)}(\textbf{o}^k_{e(k,T)})(t)= \bar{\gamma}^k(\textbf{o}^k_{\infty})(t\wedge t^k_{e(k,T)}); 0\le t\le T.
\end{equation}
By construction, $\gamma^k_{e(k,T)}\Big(\Xi^{k,g^k}_{e(k,T)}(\mathcal{A}^k_{e(k,T)}(\omega))\Big)(t) = X^{k,u^k}(t,\omega)$ for a.a $\omega$ and for each $t\in [0,T]$ where $(g^k_{\ell})_{\ell=0}^{e(k,T)}$ is the list of functions associated with $u^k$.

\

\textbullet~\textbf{Checking that} $\big((X^k)_{k\ge 1},\mathscr{D}\big)$ \textbf{is a controlled structure for the SDE}. Let us now check that $\big((X^k)_{k\ge1},\mathscr{D}\big)$ satisfies the assumptions given in Theorem \ref{VALUEconv}. In the sequel, we re going to fix a control $\eta\in U_0$ and a sequence $\eta^k\in U^{k,e(k,T)}_0; k\ge 1$. It is necessary to introduce the following objects:

$$S^{k,j,+}_n:=\min\{T^k_p; T^k_p > S^{k,j}_n\},~\bar{t}_k:=\sum_{n=0}^\infty T^k_n 1\!\!1_{\{T^{k}_n \le t <T^{k}_{n+1}\}}$$
$$\bar{t}^k_j:=\sum_{n=1}^\infty T^{k,j}_{n-1}1\!\!1_{\{T^{k,j}_{n-1}\le t < T^{k,j}_{n}\}},$$

$$\widetilde{\Sigma}^{ij,k,\eta^k}(t):=0 1\!\!1_{\{t=0\}} + \sum_{\ell=1}^\infty \sigma^{ij}\big(S^{k,j}_{\ell}, \mathbb{X}^{k,\eta^k}_{S^{k,j}_{\ell}},\eta^k(S^{k,j,+}_{\ell})\big)1\!\!1_{\{T^{k}_{\ell-1} < t \le T^{k}_\ell\}},
$$
and
$$
\Sigma^{ij,k,\eta^k}(t):= 0 1\!\!1_{\{t=0\}} + \sum_{n=1}^\infty \sigma^{ij}\big(T^{k,j}_{n-1}, \mathbb{X}^{k,\eta^k}_{T^{k,j}_{n-1}},\eta^{k}(T^{k,j}_{n-1}+)\big)1\!\!1_{\{T^{k,j}_{n-1} < t \le T^{k,j}_n\}},
$$
for $0\le t\le T, 1\le i\le n, 1\le j\le d$.
We define

\begin{equation}\label{Xhatmult}
\widehat{\mathbb{X}}^{i,k,\eta^k}(t):=x^i_0 + \int_0^t\alpha^i(\bar{s}_k, \mathbb{X}^{k,\eta^k}_{\bar{s}_k}, \eta^k(\bar{s}_k))ds + \sum_{j=1}^d\int_0^t\widetilde{\Sigma}^{ij,k,\eta^k}(s)dA^{k,j}(s)
\end{equation}
for $0\le t\le T, 1\le i\le n.$ The differential $dA^{k,j}$ in (\ref{Xhatmult}) is interpreted in the Lebesgue-Stieljtes sense. One should notice that

\begin{equation}\label{r1mult}
\mathbb{X}^{k,\eta^k}(t) =\mathbb{X}^{k,\eta^k}(\bar{t}_k) = \widehat{\mathbb{X}}^{k,\eta^k}(\bar{t}_k),~\mathbb{X}^{k,\eta^k}_t = \mathbb{X}^{k,\eta^k}_{\bar{t}_k},
\end{equation}
for every $t\in[0,T]$. To keep notation simple, we set $\|f\|_{\infty} = \sup_{0\le t\le T}|f(t)|$. For a given $1\le i\le n$, the idea is to analyse

$$\mathbb{E}\|\mathbb{X}^{i,k,\eta^k}_T- X^{i,\eta}_T\|^2_\infty\le 2\mathbb{E}\|\mathbb{X}^{i,k,\eta^k}_T- \widehat{\mathbb{X}}^{i,k,\eta^k}_T\|^2_\infty + 2\mathbb{E}\|\widehat{\mathbb{X}}^{i,k,\eta^k}_T- X^{i,\eta}_T\|^2_\infty. $$
The following remark is very simple but very useful to our argument. Recall that $N^{k,j}(t) = \max\{n; T^{k,j}_n\le t\};t\ge 0.$

\begin{lemma}\label{basic1mult}
For every $t\ge 0$ and $1\le i\le n, 1\le j\le d$,

$$
\int_0^t\Sigma^{ij,k,\eta^k}(s)dB^j(s) = \int_0^t\widetilde{\Sigma}^{ij,k,\eta^k}(s)dA^{k,j}(s)$$
$$+\sigma^{ij}\Big( T^{k,j}_{N^{k,j}(t)},\mathbb{X}^{k,\eta^k}_{T^{k,j}_{N^{k,j}(t)}}, \eta^{k}\big(T^{k,j}_{N^{k,j}(t)} + \big) \Big) \big( B^j(t)-B^j(T^{k,j}_{N^{k,j}(t)})\big)~a.s$$
where $dB^j$ is the It\^o integral and $dA^{k,j}$ is the Lebesgue Stieltjes integral.
\end{lemma}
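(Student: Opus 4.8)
The plan is to evaluate both sides as explicit sums over the hitting times $T^{k,j}_n$ of the $j$-th coordinate, exploiting the elementary identity $\Delta A^{k,j}(T^{k,j}_n)=B^j(T^{k,j}_n)-B^j(T^{k,j}_{n-1})$, which is immediate from the construction (\ref{stopping_times})--(\ref{sigmaknABS}) of the skeleton. First I would record that $\Sigma^{ij,k,\eta^k}$ is a simple $\mathbb{F}$-predictable process, constant and equal to $c_n:=\sigma^{ij}\big(T^{k,j}_{n-1},\mathbb{X}^{k,\eta^k}_{T^{k,j}_{n-1}},\eta^{k}(T^{k,j}_{n-1}+)\big)$ on each stochastic interval $]]T^{k,j}_{n-1},T^{k,j}_n]]$; here $c_n$ is $\mathcal{F}_{T^{k,j}_{n-1}}$-measurable because $T^{k,j}_{n-1}$ coincides with some point $T^k_q$ of $\mathcal{T}$, so that $\eta^k(T^{k,j}_{n-1}+)=\eta^k_q$ is $\mathcal{G}^k_q$-measurable and $\mathbb{X}^{k,\eta^k}_{T^{k,j}_{n-1}}$ is determined by the skeleton up to $T^{k,j}_{n-1}$.

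Consequently the It\^o integral of this simple process telescopes,
$$\int_0^t\Sigma^{ij,k,\eta^k}(s)dB^j(s)=\sum_{n\ge1}c_n\big(B^j(t\wedge T^{k,j}_n)-B^j(t\wedge T^{k,j}_{n-1})\big),$$
and I would split this sum according to $N^{k,j}(t)=\max\{n;T^{k,j}_n\le t\}$. For $n\le N^{k,j}(t)$ the increment equals $B^j(T^{k,j}_n)-B^j(T^{k,j}_{n-1})=\Delta A^{k,j}(T^{k,j}_n)$; for $n=N^{k,j}(t)+1$ it equals $B^j(t)-B^j(T^{k,j}_{N^{k,j}(t)})$; and for $n\ge N^{k,j}(t)+2$ it vanishes since both arguments of $B^j$ are $t$. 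Collecting the terms produces exactly the boundary term displayed in the statement plus $\sum_{n:\,T^{k,j}_n\le t}c_n\,\Delta A^{k,j}(T^{k,j}_n)$, and the latter is, by definition of the Lebesgue--Stieltjes integral against the pure-jump function $A^{k,j}$, equal to $\int_0^t\Sigma^{ij,k,\eta^k}(s)dA^{k,j}(s)$.

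It then remains to identify $\int_0^t\Sigma^{ij,k,\eta^k}\,dA^{k,j}$ with $\int_0^t\widetilde{\Sigma}^{ij,k,\eta^k}\,dA^{k,j}$. Since $A^{k,j}$ charges only the times $T^{k,j}_n$, it suffices to check that $\Sigma^{ij,k,\eta^k}(T^{k,j}_n)=\widetilde{\Sigma}^{ij,k,\eta^k}(T^{k,j}_n)$ for every $n$ with $T^{k,j}_n\le t$. Writing $T^{k,j}_n=T^k_\ell$, the value of $\widetilde{\Sigma}^{ij,k,\eta^k}$ on $]]T^k_{\ell-1},T^k_\ell]]$ is $\sigma^{ij}\big(S^{k,j}_\ell,\mathbb{X}^{k,\eta^k}_{S^{k,j}_\ell},\eta^k(S^{k,j,+}_\ell)\big)$, and I would then verify the two bookkeeping identities $S^{k,j}_\ell=T^{k,j}_{n-1}$ (because $T^{k,j}_{n-1}$ is a point of $\mathcal{T}$ with $T^{k,j}_{n-1}<T^k_\ell$, hence $T^{k,j}_{n-1}\le T^k_{\ell-1}<T^{k,j}_n$, and no $j$-hitting time lies strictly between $T^{k,j}_{n-1}$ and $T^{k,j}_n$) and $\eta^k(S^{k,j,+}_\ell)=\eta^k(T^{k,j}_{n-1}+)$ (writing $T^{k,j}_{n-1}=T^k_q$ one has $S^{k,j,+}_\ell=T^k_{q+1}$, so both sides equal $\eta^k_q$). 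Together with the trivial $\mathbb{X}^{k,\eta^k}_{S^{k,j}_\ell}=\mathbb{X}^{k,\eta^k}_{T^{k,j}_{n-1}}$, this gives the coincidence of the two integrands on $\cup_n\{T^{k,j}_n\}$ and finishes the proof.

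The only genuinely delicate point is this last one: relating the coarse partition $(T^k_\ell)$ that governs $\widetilde{\Sigma}^{ij,k,\eta^k}$ to the per-coordinate partition $(T^{k,j}_n)$ that governs $\Sigma^{ij,k,\eta^k}$. This is combinatorial rather than analytic and reduces to the two identities above, which hold almost surely since distinct coordinate clocks ring at distinct times.
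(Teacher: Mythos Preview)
Your proof is correct. The paper itself does not give a proof of this lemma, introducing it as a ``very simple but very useful'' remark; your argument spells out precisely the natural computation the authors have in mind, namely the telescoping of the It\^o integral of the simple process $\Sigma^{ij,k,\eta^k}$ along the $j$-partition $(T^{k,j}_n)$ together with the identity $\Delta A^{k,j}(T^{k,j}_n)=B^j(T^{k,j}_n)-B^j(T^{k,j}_{n-1})$, and then the bookkeeping $S^{k,j}_\ell=T^{k,j}_{n-1}$, $\eta^k(S^{k,j,+}_\ell)=\eta^k(T^{k,j}_{n-1}+)$ needed to match $\widetilde{\Sigma}^{ij,k,\eta^k}$ with $\Sigma^{ij,k,\eta^k}$ on the support of $dA^{k,j}$.
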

Let us now present a couple of lemmas towards the final estimate. In the remainder of this section, $C$ is a constant which may defer from line to line in the proofs of the lemmas.
\begin{lemma}\label{passo1mult}

$$\sup_{k\ge 1}\mathbb{E}\|\mathbb{X}^{k,\eta^k}_T\|^p_\infty < \infty \quad \forall p> 1.$$
\end{lemma}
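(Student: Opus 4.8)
The plan is to transfer the estimate from the piecewise-constant scheme $\mathbb{X}^{k,\eta^k}$ to the fixed-horizon process $\widehat{\mathbb{X}}^{k,\eta^k}$ of (\ref{Xhatmult}) and then close a Gr\"onwall inequality; the only genuinely delicate point is controlling the boundary error produced by Lemma \ref{basic1mult}.

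First I would record that, for each fixed $k$, $\mathbb{E}\|\mathbb{X}^{k,\eta^k}_T\|^p_\infty<\infty$ for every $p\ge1$ — this is routine. Indeed $\mathbb{X}^{k,\eta^k}$ is built by the finitely many recursions $n=1,\dots,e(k,T)$, and writing $M_n:=\max_{0\le\ell\le n}|\mathbb{X}^{k,\eta^k}(T^k_\ell)|$, Assumption \textbf{(C1)} together with the compactness of $\mathbb{A}$ and $t\le T$ bounds the Euler coefficients at step $n$ by $C(1+M_{n-1})$, while $|\Delta A^{k,j}(T^k_n)|\le\epsilon_k$ and $\mathbb{E}\big[(\Delta T^k_n)^q\mid\mathcal{G}^k_{n-1}\big]$ is a.s.\ finite, uniformly in $n$, for every $q$ (cf.\ Lemma \ref{taulemma} and Section~5.3 in \cite{milstein}). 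The recursion $M_n\le(1+M_{n-1})(1+C\Delta T^k_n+Cd\epsilon_k)$ and a conditional H\"older inequality give $\mathbb{E}\,M_{e(k,T)}^p<\infty$. It remains to make this bound \emph{uniform} in $k$, and here the crude pathwise estimate is useless — the increments $\epsilon_k\eta^{k,j}_r$ add up to $O(\epsilon_k^{-1})$ — so the martingale structure must be used.

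For the uniform bound I would invoke (\ref{r1mult}) to get $\|\mathbb{X}^{k,\eta^k}_T\|_\infty\le\Phi_k(T)$, where $\Phi_k(t):=\sup_{0\le s\le t}|\widehat{\mathbb{X}}^{k,\eta^k}(s)|$, and bound $\psi_k(t):=\mathbb{E}\,\Phi_k(t)^p$, which is finite by the previous paragraph. By Lemma \ref{basic1mult} each Stieltjes integral $\int_0^\cdot\widetilde\Sigma^{ij,k,\eta^k}\,dA^{k,j}$ in (\ref{Xhatmult}) equals an It\^o integral $\int_0^\cdot\Sigma^{ij,k,\eta^k}\,dB^j$ minus the boundary term $\sigma^{ij}\big(T^{k,j}_{N^{k,j}(\cdot)},\mathbb{X}^{k,\eta^k}_{T^{k,j}_{N^{k,j}(\cdot)}},\cdot\big)\big(B^j(\cdot)-B^j(T^{k,j}_{N^{k,j}(\cdot)})\big)$, and the latter is dominated pathwise by $\epsilon_k|\sigma^{ij}(\cdot)|$ since $|B^j(t)-B^j(T^{k,j}_{N^{k,j}(t)})|\le\epsilon_k$ for all $t$ by construction of the skeleton. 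Because every coefficient occurring in these expressions is evaluated at a time no later than the current one, \textbf{(C1)}, the compactness of $\mathbb{A}$ and $t\le T$ bound it by $C(1+\Phi_k(s))$ at ``time $s$'', with $C$ independent of $k$. Raising to the $p$-th power, taking running suprema and expectations, estimating the It\^o parts by Burkholder--Davis--Gundy and the drift by H\"older (the range $1<p<2$ being reduced to $p\ge2$ by Jensen), I obtain, with $C$ depending only on $p,T,n,d,K_{Lip},\bar a$,
\[
\psi_k(t)\le C\big(1+\|x_0\|^p\big)+C\int_0^t\big(1+\psi_k(s)\big)\,ds+C\,\epsilon_k^{\,p}\big(1+\psi_k(t)\big),\qquad 0\le t\le T.
\]

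Finally, since $\sum_k\epsilon_k^2<\infty$ forces $\epsilon_k\to0$, I would fix $k_0$ with $C\epsilon_k^{\,p}\le\tfrac12$ for $k\ge k_0$; for those $k$ the last term is absorbed into the left-hand side and Gr\"onwall's lemma gives $\psi_k(T)\le C'(1+\|x_0\|^p)e^{C'T}$ with $C'$ independent of $k\ge k_0$. Combining this with the finitely many finite values $\mathbb{E}\,M_{e(k,T)}^p$, $k<k_0$, yields $\sup_{k\ge1}\mathbb{E}\|\mathbb{X}^{k,\eta^k}_T\|^p_\infty<\infty$. The main obstacle is precisely the boundary term of Lemma \ref{basic1mult}: it cannot be discarded, and the key point is that it costs only a factor $O(\epsilon_k^{\,p})$ times the quantity being estimated, hence is absorbable once $\epsilon_k$ is small, the remaining finitely many $k$ being handled by the elementary per-step estimate; a secondary technical point is that $\Delta T^k_n$ is not independent of the past, so one must use conditional rather than plain moment bounds for the waiting times.
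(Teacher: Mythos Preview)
Your argument is correct, but the paper takes a shorter route that avoids the absorption step entirely. The key observation you are missing is that the boundary term in Lemma~\ref{basic1mult} \emph{vanishes} if one evaluates the identity at the time $\bar{t}^k_j=T^{k,j}_{N^{k,j}(t)}$ rather than at an arbitrary $t$: indeed $B^j(\bar{t}^k_j)-B^j\big(T^{k,j}_{N^{k,j}(\bar{t}^k_j)}\big)=0$. Combined with $\mathbb{X}^{k,\eta^k}(t)=\widehat{\mathbb{X}}^{k,\eta^k}(\bar t_k)$ and the fact that $A^{k,j}$ has no jumps on $(\bar t^k_j,\bar t_k]$, this gives the clean representation
\[
\mathbb{X}^{i,k,\eta^k}(t)=x_0+\int_0^{\bar t_k}\alpha^i(\bar s_k,\mathbb{X}^{k,\eta^k}_{\bar s_k},\eta^k(\bar s_k))\,ds+\sum_{j=1}^d\int_0^{\bar t^k_j}\Sigma^{ij,k,\eta^k}(s)\,dB^j(s),
\]
with no residual term at all. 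From here the paper applies BDG and Jensen exactly as you do, closes the Gr\"onwall inequality, and obtains the bound uniformly over \emph{all} $k$ in one stroke---no splitting into $k\ge k_0$ versus $k<k_0$, and no need for your preliminary crude recursion estimate (beyond noting, as the paper does in one line, that the integrands are $p$-integrable for each fixed $k$).

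What each approach buys: the paper's trick of evaluating at $\bar t^k_j$ is slicker and gives the uniform constant directly. Your approach is a little more robust---it does not exploit the special time and makes the cost of the boundary term explicit---but it pays for this with the extra absorption argument $C\epsilon_k^{\,p}(1+\psi_k(t))$ and the separate treatment of finitely many small $k$. Both are valid; the paper's is simply the more economical proof.
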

\begin{proof}
We fix $1\le i\le n$. At first, it is important to notice that

$$\int_0^T\mathbb{E}\|\mathbb{X}^{i,k,\eta^k}_{\bar{s}_k}\|^p_\infty ds < \infty$$
for every $k\ge 1$ and $p>1$. From Assumption \textbf{(C1)}, there exists a constant $C$ such that

\begin{equation}\label{r2mult}
|\alpha^i(t,\omega,a)| + |\sigma^{ij}(t,\omega,a)|\le C(1+ \|\omega_T\|_\infty)
\end{equation}
for every $(t,\omega,a)\in [0,T]\times \Omega\times \mathbb{A}$ and $1\le i\le n,1\le j\le d$, where $C$ only depends on $T, \alpha(0,0,0)$, $\sigma(0,0,0)$ and the compact set $\mathbb{A}$. Identity (\ref{r1mult}) and Lemma \ref{basic1mult} yield

$$\mathbb{X}^{i,k,\eta^k}(t) = x_0 + \int_0^{\bar{t}_k}\alpha^i(\bar{s}_k,\mathbb{X}^{k,\eta^k}_{\bar{s}_k},\eta^k(\bar{s}_k))ds + \sum_{j=1}^d\int_0^{\bar{t}^k_{j}}\Sigma^{ij,k,\eta^k}(s)dB^j(s).$$

By applying Jensen's ineguality and using (\ref{r2mult}), we get

\begin{equation}\label{r3mult}
\mathbb{E}\sup_{0\le t\le T}\Bigg|\int_0^{\bar{t}_k}\alpha^i(\bar{s}_k,\mathbb{X}^{k,\eta^k}_{\bar{s}_k},\eta^k(\bar{s}_k))ds\Bigg|^p\le TC\Bigg(1+\int_0^T\mathbb{E}\|\mathbb{X}^{k,\eta^k}_{\bar{s}_k}\|^p_\infty ds\Bigg).
\end{equation}
Burkholder-Davis-Gundy and Jensen inequalities jointly with (\ref{r2mult}) yield

$$
\mathbb{E}\sup_{0\le t\le T}\Bigg|\int_0^{t}\Sigma^{ij,k,\eta^k}(s)dB^j(s)\Bigg|^p\le\mathbb{E}\Bigg(\int_0^T|\Sigma^{ij,k,\eta^k}(s)|^2ds\Bigg)^{\frac{p}{2}}
$$
$$\le C\mathbb{E}\int_0^T|\Sigma^{ij,k,\eta^k}(s)|^pds\le C \Big(1+\int_0^T\mathbb{E}\|\mathbb{X}^{k,\eta^k}_{\bar{s}^k_{j}}\|^p_\infty ds\Big).$$

Summing up the above estimates, we have

$$\mathbb{E}\|\mathbb{X}^{k,\eta^k}_T\|^p_\infty\le C\Bigg(1+ \int_0^T\mathbb{E}\|\mathbb{X}^{k,\eta^k}_{s}\|^p_\infty ds \Bigg).$$
Grownall's inequality allows us to conclude the result.
\end{proof}

\begin{lemma}\label{passo2mult}
$$\mathbb{E}\|\widehat{\mathbb{X}}^{k,\eta^k}_T- X^\eta_T\|^2_\infty \le \Bigg\{ \mathbb{E}\vee_{n=1}^\infty|\Delta T^k_n|1\!\!1_{\{T^k_n\le T\}}$$
$$+\mathbb{E}\int_0^{T}\|\eta^k(s) - \eta(s)\|^2_{\mathbb{R}^m}ds + \sum_{j=1}^d\mathbb{E}\int_0^{T} \|\eta^k(\bar{\textbf{s}}^k_j+) - \eta(s)\|^2_{\mathbb{R}^m}ds + \epsilon^2_k + \int_0^T\mathbb{E}\|\mathbb{X}^{k,\eta^k}_{s}- X^{\eta}_s\|^2_\infty ds \Bigg\}.$$
\end{lemma}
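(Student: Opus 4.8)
The plan is to reduce the estimate, via Lemma \ref{basic1mult}, to a comparison between two It\^o processes and then run the usual Gr\"onwall-type bookkeeping, paying attention to the several time discretizations. First, fix $1\le i\le n$ and rewrite $\widehat{\mathbb{X}}^{i,k,\eta^k}$ in It\^o form: applying Lemma \ref{basic1mult} coordinate by coordinate turns each $\int_0^\cdot\widetilde{\Sigma}^{ij,k,\eta^k}dA^{k,j}$ into $\int_0^\cdot\Sigma^{ij,k,\eta^k}dB^j$ minus the boundary term $\sigma^{ij}\big(T^{k,j}_{N^{k,j}(\cdot)},\mathbb{X}^{k,\eta^k}_{T^{k,j}_{N^{k,j}(\cdot)}},\eta^{k}(T^{k,j}_{N^{k,j}(\cdot)}+)\big)\big(B^j(\cdot)-B^j(T^{k,j}_{N^{k,j}(\cdot)})\big)$. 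Subtracting the SDE satisfied by $X^{i,\eta}$ then gives $\widehat{\mathbb{X}}^{i,k,\eta^k}(t)-X^{i,\eta}(t)=I^{i,k}_1(t)+I^{i,k}_2(t)+I^{i,k}_3(t)$, where $I^{i,k}_1(t)=\int_0^t[\alpha^i(\bar s_k,\mathbb{X}^{k,\eta^k}_{\bar s_k},\eta^k(\bar s_k))-\alpha^i(s,X^\eta_s,\eta(s))]ds$, $I^{i,k}_2(t)=\sum_{j=1}^d\int_0^t[\Sigma^{ij,k,\eta^k}(s)-\sigma^{ij}(s,X^\eta_s,\eta(s))]dB^j(s)$, and $I^{i,k}_3$ collects the $d$ boundary terms. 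Since there are only finitely many coordinates, $\mathbb{E}\|\widehat{\mathbb{X}}^{k,\eta^k}_T-X^\eta_T\|_\infty^2\le C\sum_{i=1}^n\big(\mathbb{E}\|I^{i,k}_1\|_\infty^2+\mathbb{E}\|I^{i,k}_2\|_\infty^2+\mathbb{E}\|I^{i,k}_3\|_\infty^2\big)$.

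Next I estimate the three pieces. For $I^{i,k}_3$ I use that by the very definition of the stopping times one has $|B^j(t)-B^j(T^{k,j}_{N^{k,j}(t)})|\le\epsilon_k$ for every $t$ a.s., together with the linear growth bound $|\sigma^{ij}(t,\omega,a)|\le C(1+\|\omega_T\|_\infty)$ coming from Assumption \textbf{(C1)}; combined with Lemma \ref{passo1mult} this yields $\mathbb{E}\|I^{i,k}_3\|_\infty^2\le C\epsilon_k^2$. For $I^{i,k}_1$, Jensen's inequality gives $\mathbb{E}\|I^{i,k}_1\|_\infty^2\le T\,\mathbb{E}\int_0^T|\alpha^i(\bar s_k,\mathbb{X}^{k,\eta^k}_{\bar s_k},\eta^k(\bar s_k))-\alpha^i(s,X^\eta_s,\eta(s))|^2ds$; applying the $d_{1/2}$-Lipschitz estimate of \textbf{(C1)} and observing that $\bar s_k$ and $s$ lie in the same cell of the partition $(T^k_n)$, so that the stopped scheme paths coincide, $\mathbb{X}^{k,\eta^k}_{\bar s_k}=\mathbb{X}^{k,\eta^k}_s$, the path term becomes exactly $\|\mathbb{X}^{k,\eta^k}_s-X^\eta_s\|_\infty^2$, the time term is $|\bar s_k-s|\le\vee_{n=1}^{\infty}|\Delta T^k_n|1\!\!1_{\{T^k_n\le T\}}$, and the control term is $\le C\big(\|\eta^k(s)-\eta(s)\|_{\mathbb{R}^m}^2+\|\eta^k(\bar s_k)-\eta^k(s)\|_{\mathbb{R}^m}^2\big)$, the last summand being supported on the Lebesgue-null set $\{s\in\{T^k_n\}_n\}$ and hence negligible after integration. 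For $I^{i,k}_2$ I argue identically, replacing Jensen by Burkholder--Davis--Gundy and $\bar s_k$ by the single-coordinate sampling time $\bar{\mathbf{s}}^k_j=T^{k,j}_{N^{k,j}(s)}$: the control term produces $\sum_{j=1}^d\mathbb{E}\int_0^T\|\eta^k(\bar{\mathbf{s}}^k_j+)-\eta(s)\|_{\mathbb{R}^m}^2ds$, while the path/time mismatch between $\bar{\mathbf{s}}^k_j$ and $s$ is handled through $s-\bar{\mathbf{s}}^k_j=(s-\bar s_k)+(\bar s_k-\bar{\mathbf{s}}^k_j)$ and the pathwise moduli of continuity of $X^\eta$ and of the scheme over $[\bar{\mathbf{s}}^k_j,s]$, all dominated by $\mathbb{E}\vee_{n=1}^{\infty}|\Delta T^k_n|1\!\!1_{\{T^k_n\le T\}}$ plus a further contribution $\int_0^T\mathbb{E}\|\mathbb{X}^{k,\eta^k}_s-X^\eta_s\|_\infty^2ds$.

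Summing these bounds over $i$ gives precisely the asserted inequality (with the implicit generic constant absorbed). The routine ingredients are the a-priori $L^p$-bound of Lemma \ref{passo1mult}, the BDG/Jensen inequalities, the Gr\"onwall-ready structure of the right-hand side, and \textbf{(C1)}. The genuinely delicate point — and the only one needing real care — is the simultaneous bookkeeping of the time scales: the common grid used in the drift through $\bar s_k$, the single-coordinate grids $\bar{\mathbf{s}}^k_j$ together with the auxiliary times $S^{k,j}_n$, $S^{k,j,+}_n$ entering $\widetilde{\Sigma}$, and the hitting times $T^{k,j}_{N^{k,j}(\cdot)}$ produced by Lemma \ref{basic1mult}; for each of these one must verify that the corresponding mismatch either collapses inside a single grid cell (as with $\mathbb{X}^{k,\eta^k}_{\bar s_k}=\mathbb{X}^{k,\eta^k}_s$), is of exact order $\epsilon_k$ (the boundary corrections), or is dominated by $\mathbb{E}\vee_{n=1}^{\infty}|\Delta T^k_n|1\!\!1_{\{T^k_n\le T\}}$ after taking expectations.
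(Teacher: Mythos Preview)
Your approach is essentially identical to the paper's: the same decomposition $\widehat{\mathbb{X}}^{i,k,\eta^k}-X^{i,\eta}=I^{k,i}_1+I^{k,i}_2+I^{k,i}_3$ via Lemma~\ref{basic1mult}, the same treatment of $I^{k,i}_3$ through the bound $|B^j(t)-B^j(T^{k,j}_{N^{k,j}(t)})|\le\epsilon_k$ together with linear growth and Lemma~\ref{passo1mult}, and the same use of Jensen (for $I_1$) and Burkholder--Davis--Gundy (for $I_2$) followed by the Lipschitz estimate from Assumption~\textbf{(C1)}. The only cosmetic difference is that in the drift term you insert the extra step $\|\eta^k(\bar s_k)-\eta(s)\|^2\le C\big(\|\eta^k(s)-\eta(s)\|^2+\|\eta^k(\bar s_k)-\eta^k(s)\|^2\big)$ and argue the second piece vanishes a.e., whereas the paper's proof simply leaves this term as $\|\eta^k(\bar s_k)-\eta(s)\|^2$; your reduction is what is needed to match the \emph{statement} of the lemma, and it is correct once one reads $\eta^k(\bar s_k)$ as the drift control value used in the scheme (i.e.\ $\eta^k_{q-1}$ on $[T^k_{q-1},T^k_q)$), which is the only interpretation consistent with the identity~(\ref{r1mult}).
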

\begin{proof}
Let us fix $1\le i\le n$. By the very definition,

$$\widehat{\mathbb{X}}^{i,k,\eta^k}(t) - X^{i,\eta}(t) = \int_0^t\Big[\alpha^i(\bar{s}_k,\mathbb{X}^{k,\eta^k}_{\bar{s}_k},\eta^k(\bar{s}_k)) - \alpha^i(s,X^{\eta}_{s},\eta(s))\big]ds$$
$$+\sum_{j=1}^d\int_0^t\widetilde{\Sigma}^{ij,k,\eta^k}(s)dA^{k,j}(s) - \sum_{j=1}^d\int_0^t\sigma^{ij}(s,X^{\eta}_s,\eta(s))dB^j(s).$$
Lemma \ref{basic1mult} allows us to write

$$\widehat{\mathbb{X}}^{i,k,\eta^k}(t) - X^{\eta}(t) = \int_0^t\Big[\alpha^i(\bar{s}_k,\mathbb{X}^{k,\eta^k}_{\bar{s}_k},\eta^k(\bar{s}_k)) - \alpha^i(s,X^{\eta}_{s},\eta(s))\big]ds$$
$$+\sum_{j=1}^d\int_0^t\Big[\Sigma^{ij,k,\eta^k}(s) - \sigma^{ij}(s,X^{\eta}_s,\eta(s))\Big]dB^j(s) - \sum_{j=1}^d\sigma^{ij}\Big( T^{k,j}_{N^{k,j}(t)},\mathbb{X}^{k,\eta^k}_{T^{k,j}_{N^{k,j}(t)}}, \eta^k(T^{k,j}_{N^{k,j}(t)}+) \Big)$$
$$\times \big( B^j(t) - B^j(T^{k.j}_{N^{k,j}(t)})\big)= I^{k,i}_1(t) + I^{k,i}_2(t) + I^{k,i}_3(t).$$

\textbf{Analysis of}~$I^{k,i}_1$: Assumption \textbf{(C1)} yields

$$\mathbb{E}\sup_{0\le t\le T}|I^{k,i}_1(t)|^2\le C \Big\{\mathbb{E}\vee_{n=1}^\infty\Delta T^k_n1\!\!1_{\{T^k_n\le T\}} + \int_0^T\mathbb{E}\|\mathbb{X}^{k,\eta^k}_s - X^{\eta}_s\|^2_\infty ds$$
 $$+ \mathbb{E}\int_0^{T} \|\eta^k(\bar{s}_k)- \eta(s)\|^2_{\mathbb{R}^m}ds\Big\}.$$

\textbf{Analysis of}~$I^{k,i}_2$: By using Burkholder-Davis-Gundy's inequality, we have

$$\mathbb{E}\sup_{0\le t\le T}|I^{k,i}_2(t)|^2\le C\sum_{j=1}^d\mathbb{E}\int_0^T \big|\Sigma^{ij,k,\eta^k}(s) - \sigma^{ij}(s,X_s^{\eta},\eta(s))   \big|^2ds.$$
Assumption \textbf{(C1)} yields

$$
\Bigg|\Big[\Sigma^{ij,k,\eta^k}(s) - \sigma^{ij}(s,X^{\eta}_{s},\eta(s))\Big]\Bigg|=
\Bigg|\Big[\sigma^{ij}\Big(\bar{s}^k_j,\mathbb{X}^{k,\eta^k}_{\bar{s}^k_j},\eta^{k}(\bar{s}^k_j+)\Big) - \sigma^{ij}(s,X^{\eta}_{s},\eta(s))\Big]\Bigg|$$
$$\le C|\bar{s}^k_j - s|^{1/2} + C\sup_{0\le \ell\le T}\|\mathbb{X}^{k,\eta^k}(\ell\wedge \bar{s}^k_j) - X^{\eta}(\ell\wedge s)\|_{\mathbb{R}^n}+\|\eta^{k}(\bar{s}^k_j+) - \eta(s)\|_{\mathbb{R}^m}
$$
so that

$$\mathbb{E}\sup_{0\le t\le T}|I^{k,i}_2(t)|^2\le C\Big\{\mathbb{E}\vee_{n=1}^\infty\Delta T^k_n 1\!\!1_{\{T^k_n\le T\}}+ \int_0^T\mathbb{E}\|\mathbb{X}^{k,\eta^k}_s - X^{\eta}_s\|^2_\infty ds$$
 $$+ \sum_{j=1}^d\mathbb{E}\int_0^{T} \|\eta^k(\bar{\textbf{s}}^k_j+) - \eta(s)\|^2_{\mathbb{R}^n}ds\Big\}.$$

\textbf{Analysis of}~$I^{k,i}_3$: The estimate (\ref{r2mult}), Lemma~\ref{passo1mult} and the fact that $\big|B^j(t) - B^j(T^{k,j}_{N^{k,j}(t)})\big|\le \epsilon_k~a.s$ for every $t\ge 0$ yield

$$\mathbb{E}\sup_{0\le t\le T}|I^{k,i}_3(t)|^2\le C\epsilon_k^2.$$
Summing up the above estimates, we conclude the proof.
\end{proof}

\begin{lemma}\label{passo3mult}
There exists a constant $C$ which only depends $T,p$ and $\alpha$ such that
$$\mathbb{E}\|\mathbb{X}^{k,\eta^k}_T- \widehat{\mathbb{X}}^{k,\eta^k}_T\|^2_\infty \le C \big(\mathbb{E}\big|\vee_{n=1}^\infty \Delta T^k_n\big|^p1\!\!1_{\{T^k_n\le T\}}\big)^{\frac{1}{p}}$$
for every $p>1$.
\end{lemma}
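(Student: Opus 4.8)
The plan is to reduce the estimate to one increment of the drift part of $\widehat{\mathbb{X}}^{k,\eta^k}$ across a single mesh cell. Fix $1\le i\le n$ and $0\le t\le T$. Combining the identity $\mathbb{X}^{k,\eta^k}(t)=\widehat{\mathbb{X}}^{k,\eta^k}(\bar t_k)$ from (\ref{r1mult}) with the decomposition (\ref{Xhatmult}), one has
\begin{align*}
\mathbb{X}^{i,k,\eta^k}(t)-\widehat{\mathbb{X}}^{i,k,\eta^k}(t)&=\widehat{\mathbb{X}}^{i,k,\eta^k}(\bar t_k)-\widehat{\mathbb{X}}^{i,k,\eta^k}(t)\\
&=-\int_{\bar t_k}^{t}\alpha^i\big(\bar s_k,\mathbb{X}^{k,\eta^k}_{\bar s_k},\eta^k(\bar s_k)\big)\,ds-\sum_{j=1}^{d}\int_{\bar t_k}^{t}\widetilde{\Sigma}^{ij,k,\eta^k}(s)\,dA^{k,j}(s).
\end{align*}
The key observation is that the stochastic increments vanish: on $\{T^k_n\le t<T^k_{n+1}\}$ we have $\bar t_k=T^k_n$, and because $\mathcal{T}$ is the superposition of all the families $\{T^{k,j}_\ell\}_{\ell\ge 0}$, the interval $(T^k_n,t]\subseteq(T^k_n,T^k_{n+1})$ contains no jump time of any $A^{k,j}$; since each $A^{k,j}$ is pure jump, this forces $\int_{\bar t_k}^{t}\widetilde{\Sigma}^{ij,k,\eta^k}(s)\,dA^{k,j}(s)=0$. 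Hence only the drift term survives.

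Since the drift integrand is frozen at $s=\bar t_k=T^k_n$ on $(\bar t_k,t]$, the linear growth estimate (\ref{r2mult}) together with $\|\mathbb{X}^{k,\eta^k}_{T^k_n}\|_\infty\le\|\mathbb{X}^{k,\eta^k}_T\|_\infty$ yields
\[
\big|\mathbb{X}^{i,k,\eta^k}(t)-\widehat{\mathbb{X}}^{i,k,\eta^k}(t)\big|\le C\big(1+\|\mathbb{X}^{k,\eta^k}_T\|_\infty\big)\,(t-\bar t_k).
\]
Taking the supremum over $t\in[0,T]$ and bounding $\sup_{0\le t\le T}(t-\bar t_k)$ by the maximal mesh length $\vee_{n=1}^{\infty}\Delta T^k_n\mathds{1}_{\{T^k_n\le T\}}$ (the only delicate point is the last subinterval straddling $T$, where $T-\bar t_k$ is controlled by the corresponding mesh increment), one gets, after squaring,
\[
\big\|\mathbb{X}^{k,\eta^k}_T-\widehat{\mathbb{X}}^{k,\eta^k}_T\big\|_\infty^{2}\le C\big(1+\|\mathbb{X}^{k,\eta^k}_T\|_\infty\big)^{2}\Big(\vee_{n=1}^{\infty}\Delta T^k_n\mathds{1}_{\{T^k_n\le T\}}\Big)^{2}.
\]

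To recover the announced exponent $1/p$, I would then use that $\Delta T^k_n=T^k_n-T^k_{n-1}\le T$ on $\{T^k_n\le T\}$, so that $\big(\vee_{n=1}^{\infty}\Delta T^k_n\mathds{1}_{\{T^k_n\le T\}}\big)^{2}\le T\,\vee_{n=1}^{\infty}\Delta T^k_n\mathds{1}_{\{T^k_n\le T\}}$. Inserting this, taking expectations, and applying H\"older's inequality with conjugate exponents $\tfrac{p}{p-1}$ and $p$ together with Lemma \ref{passo1mult} (which gives $\sup_{k\ge 1}\mathbb{E}\|\mathbb{X}^{k,\eta^k}_T\|_\infty^{2p/(p-1)}<\infty$) produces exactly
\[
\mathbb{E}\big\|\mathbb{X}^{k,\eta^k}_T-\widehat{\mathbb{X}}^{k,\eta^k}_T\big\|_\infty^{2}\le C\Big(\mathbb{E}\big|\vee_{n=1}^{\infty}\Delta T^k_n\big|^{p}\mathds{1}_{\{T^k_n\le T\}}\Big)^{1/p},
\]
with $C$ depending only on $T$, $p$ and $\alpha$ (through (\ref{r2mult})). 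I expect the heart of the argument, and the only genuinely structural step, to be the cancellation of the $dA^{k,j}$ increments over a mesh cell; the remainder is routine bookkeeping — the linear growth bound, the endpoint adjustment near $T$, H\"older's inequality and the uniform moment bound of Lemma \ref{passo1mult}.
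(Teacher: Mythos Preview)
Your argument is correct and follows essentially the same route as the paper's proof: both use the identity $\mathbb{X}^{k,\eta^k}(t)=\widehat{\mathbb{X}}^{k,\eta^k}(\bar t_k)$, observe that the $dA^{k,j}$-integrals vanish on $(\bar t_k,t]$ since no jump of any $A^{k,j}$ occurs there, bound the surviving drift increment via (\ref{r2mult}), and finish with H\"older together with Lemma \ref{passo1mult}. The only cosmetic difference is that you square first and then use $\Delta T^k_n\le T$ to pass from $(\vee_n\Delta T^k_n)^2$ to a single power of the mesh, whereas the paper writes the integral as $(t-\bar t_k)$ times an average and invokes Jensen to arrive directly at $C\big(1+\|\mathbb{X}^{k,\eta^k}_T\|_\infty^2\big)\,\vee_n\Delta T^k_n\mathds{1}_{\{T^k_n\le T\}}$; the two manoeuvres are equivalent up to constants.
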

\begin{proof}
The idea is to use (\ref{r2mult}) and Lemma \ref{passo1mult}. We know that $\widehat{\mathbb{X}}^{k,\eta^k}(\bar{t}_k) = \mathbb{X}^{k,\eta^k}(t); t\ge 0$. We fix $1\le i\le n$ and we see that

$$| \widehat{\mathbb{X}}^{i,k,\eta^k}(t) - \mathbb{X}^{i,k,\eta^k}(t)| = |\widehat{\mathbb{X}}^{i,k,\eta^k}(t) - \widehat{\mathbb{X}}^{i,k,\eta^k}(\bar{t}_k)|$$
$$\le \int_{\bar{t}_k}^t |\alpha^i(\bar{s}_k, \mathbb{X}^{k,\eta^k}_{\bar{s}_k},\eta^k(\bar{s}_k))|ds = \frac{t-\bar{t}_k}{t-\bar{t}_k} \int_{\bar{t}_k}^t |\alpha^i(\bar{s}_k, \mathbb{X}^{k,\eta^k}_{\bar{s}_k},\eta^k(\bar{s}_k))|ds$$
because $t-\bar{t}_k >0$~a.s for every $t>0$ and $\sum_{j=1}^d\int_0^t\widetilde{\Sigma}^{ij,k,\eta^k}(s)dA^{k,j}(s) =\int_0^{\bar{t}_k}\widetilde{\Sigma}^{ij,k,\eta^k}(s)dA^{k,j}(s) ~a.s$ for every $t\ge 0$. Therefore, (\ref{r2mult}) and Jensen's inequality yield

$$\|\mathbb{X}^{i,k,\eta^k}_T- \widehat{\mathbb{X}}^{i,k,\eta^k}_T\|^2_\infty \le C\big(1+ \|\mathbb{X}^{k,\eta^k}_T\|^2_\infty\big)\times \vee_{n=1}^\infty|\Delta T^k_n|1\!\!1_{\{T^k_n\le T\}}~a.s.$$
By using Lemma \ref{passo1mult} and H\"{o}lder inequality, there exists a constant $C$ which only depends on $\alpha,p$ and $T$ such that

$$\mathbb{E}\|\mathbb{X}^{k,\eta^k}_T- \widehat{\mathbb{X}}^{k,\eta^k}_T\|^2_\infty \le C \Big(\mathbb{E}\Big|\vee_{n=1}^\infty|\Delta T^k_n|1\!\!1_{\{T^k_n\le T\}}\Big|^p\Big)^{\frac{1}{p}}$$
for every each $p>1$.
\end{proof}

Summing up Lemmas \ref{passo2mult} and \ref{passo3mult}, we arrive at the following result: There exists a constant $C$ which depends on $\alpha, \sigma, T,p$ such that
$$\mathbb{E}\|\mathbb{X}^{k,\eta^k}_T - X^{\eta}_T\|^2_\infty\le C\Bigg\{\big(\mathbb{E}\big|\vee_{n=1}^\infty \Delta T^k_n\big|^p1\!\!1_{\{T^k_n\le T\}}\big)^{\frac{1}{p}}$$
$$+ \mathbb{E}\int_0^{T}\|\eta^k(s) - \eta(s)\|^2_{\mathbb{R}^m}ds$$
$$\sum_{j=1}^d \mathbb{E}\int_0^{T}\|\eta^k(\bar{s}^k_j+) - \eta(s)\|^2_{\mathbb{R}^m}ds +\epsilon_k^2 + \int_0^T\mathbb{E}\|\mathbb{X}^{k,\eta^k}_s - X^{\eta}_s\|^2_\infty ds\Bigg\},$$
for every $k\ge 1$. By using Grownall's inequality, we then have

$$
\mathbb{E}\|\mathbb{X}^{k,\eta^k}_T - X^{\eta}_T\|^2_\infty\le (C+1)\Bigg\{\big(\mathbb{E}\big|\vee_{n=1}^\infty \Delta T^k_n\big|^p1\!\!1_{\{T^k_n\le T\}}\big)^{\frac{1}{p}}$$
$$+ \mathbb{E}\int_0^{T}\|\eta^k(s) - \eta(s)\|^2_{\mathbb{R}^m}ds + \sum_{j=1}^d \mathbb{E}\int_0^{T}\|\eta^k(\bar{s}^k_j+) - \eta(s)\|^2_{\mathbb{R}^m}ds  +\epsilon^2_k\Bigg\}.
$$
Lemma 2.2 in \cite{LEAO_OHASHI2017.1} yields the existence of a constant $C$ which depends on $T$, $\beta\in (0,1)$ and $p > 1$ such that

\begin{equation}\label{meshrate}
\mathbb{E}\big|\vee_{n=1}^\infty \Delta T^k_n|^p\mathds{1}_{\{T^k_n\le T\}}\le C\max_{1\le j\le d}\mathbb{E}\big|\vee_{n=1}^\infty\Delta T^{k,j}_n|^p 1\!\!1_{\{T^{k}_n \le T\}}\le C\epsilon_k^{2p} \lceil\epsilon^{-2}_kT\rceil^{1-\beta}
\end{equation}
for every $k\ge 1$.

Let us denote

$$\gamma_{k,p,\beta} := \Bigg\{\epsilon_k^2\lceil\epsilon^{-2}_kT\rceil^{\frac{1-\beta}{p}} +\epsilon^2_k\Bigg\}$$
and

\begin{equation}\label{etakj}
\eta^{k,j}(s):=\eta^k(\bar{s}^k_j); s\ge 0,
\end{equation}
for $k\ge 1$, $p>1$ and $\beta\in (0,1)$. We observe that $\eta^{k,j}(s) = \eta^{k}(\bar{s}^k_j+)$ a.s $\mathbb{P}\times Leb$. We then arrive at the following estimate:

\begin{lemma}\label{lemmaXmathk}
Assume the coefficients of the SDE (\ref{pdsdeBM}) satisfy Assumption \textbf{(C1)}. Let $(\eta^k,\eta)\in U^{k,e(k,T)}_0\times U_0$ be an arbitrary pair of controls. Then, there exists a constant $C>0$ which depends on $\alpha, \sigma, T$, $\beta\in (0,1), p > 1$ and $\bar{a}$ such that

\begin{equation}\label{caligXest}
\mathbb{E}\|\mathbb{X}^{k,\eta^k}_T - X^{\eta}_T\|^2_\infty\le (C+1)\Bigg\{\gamma_{k,p,\beta} + \|\eta^k - \eta\|^2_{L^2_a(\mathbb{P}\times Leb)} + \sum_{j=1}^d \|\eta^{k,j} - \eta\|^2_{L^2_a(\mathbb{P}\times Leb)} \Bigg\}
\end{equation}
for every $k\ge 1$.
\end{lemma}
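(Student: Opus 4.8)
The plan is to collect the intermediate estimates already assembled in the preceding paragraphs into a single inequality, converting the various pathwise/mesh terms into the two norm quantities appearing on the right-hand side of (\ref{caligXest}). Concretely, I would start from the estimate derived just before the statement, namely that there is a constant $C$ depending on $\alpha,\sigma,T,p$ such that
\begin{equation*}
\mathbb{E}\|\mathbb{X}^{k,\eta^k}_T - X^{\eta}_T\|^2_\infty\le (C+1)\Bigg\{\big(\mathbb{E}\big|\vee_{n=1}^\infty \Delta T^k_n\big|^p\mathds{1}_{\{T^k_n\le T\}}\big)^{\frac{1}{p}}+ \mathbb{E}\int_0^{T}\|\eta^k(s) - \eta(s)\|^2_{\mathbb{R}^m}ds + \sum_{j=1}^d \mathbb{E}\int_0^{T}\|\eta^k(\bar{s}^k_j+) - \eta(s)\|^2_{\mathbb{R}^m}ds  +\epsilon^2_k\Bigg\},
\end{equation*}
which itself follows from Lemmas \ref{passo2mult} and \ref{passo3mult} together with Gr\"onwall's inequality. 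This is the place where the bulk of the analytic work has already been done, so the remaining task is essentially bookkeeping.

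The first step is to bound the mesh term by $\epsilon_k$-powers via (\ref{meshrate}): since $\mathbb{E}\big|\vee_{n=1}^\infty \Delta T^k_n|^p\mathds{1}_{\{T^k_n\le T\}}\le C\epsilon_k^{2p} \lceil\epsilon^{-2}_kT\rceil^{1-\beta}$, raising to the power $1/p$ gives $\big(\mathbb{E}|\vee_n\Delta T^k_n|^p\mathds{1}_{\{T^k_n\le T\}}\big)^{1/p}\le C^{1/p}\epsilon_k^2 \lceil\epsilon^{-2}_kT\rceil^{(1-\beta)/p}$, and absorbing the additive $\epsilon_k^2$ this is exactly (a constant multiple of) $\gamma_{k,p,\beta}$. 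The second step is to rewrite the two integral terms: by definition $\|\eta^k-\eta\|^2_{L^2_a(\mathbb{P}\times Leb)} = \mathbb{E}\int_0^T\|\eta^k(s)-\eta(s)\|^2_{\mathbb{R}^m}ds$, and, recalling (\ref{etakj}) that $\eta^{k,j}(s):=\eta^k(\bar s^k_j)$ agrees with $\eta^k(\bar s^k_j+)$ for $\mathbb{P}\times Leb$-a.e. $(\omega,s)$, we likewise have $\mathbb{E}\int_0^T\|\eta^k(\bar s^k_j+)-\eta(s)\|^2_{\mathbb{R}^m}ds = \|\eta^{k,j}-\eta\|^2_{L^2_a(\mathbb{P}\times Leb)}$. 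Substituting these identities into the displayed bound and renaming the constant $(C+1)$ (now also depending on $\beta$ and $\bar a$, the latter entering only through the implicit bounds on $\|\eta^k\|_\infty,\|\eta\|_\infty\le\bar a$ used in Lemmas \ref{passo1mult}–\ref{passo3mult}) yields precisely (\ref{caligXest}).

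The only subtlety — and the step I would flag as requiring the most care rather than genuine difficulty — is the almost-everywhere identification $\eta^{k,j}(s)=\eta^k(\bar s^k_j+)$; this is where one must be careful that $\eta^k$, being a piecewise-constant $\mathbb{F}^k$-predictable control of the form (\ref{controlform}), takes its value on $]]T^k_{\ell},T^k_{\ell+1}]]$ equal to its left limit, so that evaluating at $\bar s^k_j$ (the left endpoint of the current $A^{k,j}$-interval) versus at $\bar s^k_j+$ changes the value only on the countable set of times $\{T^{k,j}_n\}$, which is Lebesgue-null. Given that identification, everything else is a direct substitution, so I would keep the written proof to a few lines, citing (\ref{meshrate}), (\ref{etakj}) and the two preceding lemmas.
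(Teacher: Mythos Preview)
Your proposal is correct and follows essentially the same route as the paper: the lemma is stated in the paper as the immediate conclusion of the discussion preceding it, namely combining Lemmas \ref{passo2mult} and \ref{passo3mult} with Gr\"onwall, then applying (\ref{meshrate}) to absorb the mesh term into $\gamma_{k,p,\beta}$ and invoking the a.e.\ identification $\eta^{k,j}(s)=\eta^k(\bar s^k_j+)$ from (\ref{etakj}) to rewrite the integral terms as $L^2_a(\mathbb{P}\times Leb)$-norms. Your flagged subtlety about the a.e.\ identification is exactly the observation the paper makes just before stating the lemma.
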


\begin{remark}
Unless the underlying filtration $\mathbb{F}$ is generated by a one-dimensional Brownian motion, we observe that, in general, $\eta^{k,j}$ defined in (\ref{etakj}) is not equal to $\eta^k$ for a given control $\eta^k\in U^{k,e(k,T)}_0$. This remainder term appears due to the fact that $\Delta A^{k,j}(T^k_n)\neq 0~a.s$ if, and only if, $T^k_n$ is realized by the $j$-th component of the Brownian motion. This particular feature makes the analysis trickier than the usual Euler-Maruyama scheme based on deterministic partitions if $d> 1$.
\end{remark}

\begin{proposition}\label{Xkfinalestprop}
Assume the coefficients of the SDE (\ref{pdsdeBM}) satisfy Assumption \textbf{(C1)}. Let $(\eta^k,\eta)\in U^{k,e(k,T)}_0\times U_0$ be an arbitrary pair of controls. Then, there exists a constant $C>0$ which depends on $\alpha, \sigma, T$, $\beta\in (0,1), p > 1$ and $\bar{a}$ such that

\begin{equation}\label{Xkfinalest}
\mathbb{E}\|X^{k,\eta^k}_T - X^{\eta}_T\|^2_\infty\le C\Bigg\{\gamma_{k,p,\beta} + \sum_{j=1}^d \|\eta^{k,j} - \eta\|^2_{L^2_a(\mathbb{P}\times Leb)} + \|\eta^k - \eta\|^2_{L^2_a(\mathbb{P}\times Leb)}
\end{equation}
$$
 +\|T-T^k_{e(k,T)}\|_{L^2(\mathbb{P})} \Bigg\}
$$
for every $k\ge 1$.
\end{proposition}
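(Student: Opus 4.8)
The plan is to split the error into the Euler--scheme error already controlled in Lemma \ref{lemmaXmathk} plus the error introduced by stopping the scheme at the random time $T^k_{e(k,T)}$. Recall from (\ref{xksde}) that $X^{k,\eta^k}(t)=\mathbb{X}^{k,\eta^k}(t\wedge T^k_{e(k,T)})$ for $0\le t\le T$. By the triangle inequality,
$$\mathbb{E}\|X^{k,\eta^k}_T - X^{\eta}_T\|^2_\infty\le 2\,\mathbb{E}\|X^{k,\eta^k}_T - \mathbb{X}^{k,\eta^k}_T\|^2_\infty + 2\,\mathbb{E}\|\mathbb{X}^{k,\eta^k}_T - X^{\eta}_T\|^2_\infty ,$$
and the second term on the right is bounded in Lemma \ref{lemmaXmathk} by $(C+1)\big\{\gamma_{k,p,\beta}+\|\eta^k-\eta\|^2_{L^2_a(\mathbb{P}\times Leb)}+\sum_{j=1}^d\|\eta^{k,j}-\eta\|^2_{L^2_a(\mathbb{P}\times Leb)}\big\}$. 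Hence it remains to show that
$$\mathbb{E}\|X^{k,\eta^k}_T - \mathbb{X}^{k,\eta^k}_T\|^2_\infty\le C\big(\gamma_{k,p,\beta}+\|T-T^k_{e(k,T)}\|_{L^2(\mathbb{P})}\big) .$$

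To this end, note that on $\{T^k_{e(k,T)}\ge T\}$ the two paths agree on $[0,T]$, while on $\{T^k_{e(k,T)}<T\}$,
$$\|X^{k,\eta^k}_T - \mathbb{X}^{k,\eta^k}_T\|_\infty=\sup_{T^k_{e(k,T)}\le t\le T}\big|\mathbb{X}^{k,\eta^k}(t)-\mathbb{X}^{k,\eta^k}(T^k_{e(k,T)})\big| .$$
I would exploit the continuous semimartingale representation $\mathbb{X}^{i,k,\eta^k}(t)=x^i_0+\int_0^{\bar{t}_k}\alpha^i(\bar{s}_k,\mathbb{X}^{k,\eta^k}_{\bar{s}_k},\eta^k(\bar{s}_k))ds+\sum_{j=1}^d\int_0^{\bar{t}^k_j}\Sigma^{ij,k,\eta^k}(s)dB^j(s)$ established in the proof of Lemma \ref{passo1mult} via Lemma \ref{basic1mult}, and subtract the value at $t=T^k_{e(k,T)}$ (which is a node $T^k_n$). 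The drift increment runs over $[T^k_{e(k,T)},\bar{t}_k]\subseteq[T^k_{e(k,T)},T]$ and, by the linear growth bound (\ref{r2mult}) together with Lemma \ref{passo1mult}, is dominated by $C(1+\|\mathbb{X}^{k,\eta^k}_T\|_\infty)(T-T^k_{e(k,T)})^+$. For the martingale part, since $\bar{t}^k_j=T^{k,j}_{N^{k,j}(t)}$ ranges in $[\tau_k,\rho_k]$ with the stopping times $\tau_k:=T^{k,j}_{N^{k,j}(T^k_{e(k,T)})}$ and $\rho_k:=T^{k,j}_{N^{k,j}(T)}$, the corresponding increment is dominated by $\sup_{\tau_k\le r\le\rho_k\vee\tau_k}\big|\int_{\tau_k}^r\Sigma^{ij,k,\eta^k}(s)dB^j(s)\big|$, and Burkholder--Davis--Gundy applied to this stopped stochastic integral gives an $L^2$-bound by $C\,\mathbb{E}\int_{\tau_k}^{\rho_k\vee\tau_k}|\Sigma^{ij,k,\eta^k}(s)|^2ds$.

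To conclude, I would bound $|\Sigma^{ij,k,\eta^k}(s)|^2\le C(1+\|\mathbb{X}^{k,\eta^k}_T\|^2_\infty)$ on the relevant time range (which lies in $[0,T]$), use $(\rho_k\vee\tau_k)-\tau_k=(\rho_k-\tau_k)^+\le(T-T^k_{e(k,T)})^+ + \vee_{n\ge1}\Delta T^{k,j}_n\mathds{1}_{\{T^{k,j}_{n-1}\le T\}}$, and then square, take expectations and apply: the elementary inequality $\big((T-T^k_{e(k,T)})^+\big)^2\le T(T-T^k_{e(k,T)})^+$; Cauchy--Schwarz together with the moment bounds $\sup_k\mathbb{E}\|\mathbb{X}^{k,\eta^k}_T\|^q_\infty<\infty$ from Lemma \ref{passo1mult}; the estimate $\mathbb{E}(T-T^k_{e(k,T)})^+\le\|T-T^k_{e(k,T)}\|_{L^2(\mathbb{P})}$; and the maximal-mesh estimate (\ref{meshrate}) (applied with $p$ enlarged to $p\vee2$ if necessary, so that the mesh term contributes at most $C\gamma^2_{k,p,\beta}$). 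Summing the resulting bounds over $i$ and $j$ yields the desired bound on the tail term, and combining with Lemma \ref{lemmaXmathk} produces (\ref{Xkfinalest}).

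The hard part is the martingale increment over the \emph{random} interval $[T^k_{e(k,T)},T]$: one has to reframe it as a genuinely stopped stochastic integral so that Burkholder--Davis--Gundy applies with honest stopping times, compare its quadratic variation to $T-T^k_{e(k,T)}$ modulo the maximal mesh, and cope with the fact that the integrand $\Sigma^{ij,k,\eta^k}$ depends on the whole \emph{unstopped} state path and therefore cannot be frozen --- this forces a Cauchy--Schwarz decoupling against the uniform $L^q$-bounds of Lemma \ref{passo1mult}, which is precisely why the un-squared quantity $\|T-T^k_{e(k,T)}\|_{L^2(\mathbb{P})}$, rather than its square, enters the final estimate.
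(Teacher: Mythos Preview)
Your overall strategy---reducing to Lemma \ref{lemmaXmathk} plus a tail term coming from the stopping at $T^k_{e(k,T)}$---is correct, but your decomposition differs from the paper's in a way that creates a real difficulty. The paper does not compare $X^{k,\eta^k}$ with the \emph{unstopped scheme} $\mathbb{X}^{k,\eta^k}$; instead it inserts the stopped \emph{limit} process and writes
\[
\|X^{k,\eta^k}_T-X^\eta_T\|^2_\infty \le C\|\mathbb{X}^{k,\eta^k}_T - X^\eta_T\|^2_\infty + C\sup_{0\le t\le T}\|X^\eta(t\wedge T^k_{e(k,T)})-X^\eta(t)\|^2_{\mathbb{R}^n},
\]
so that the tail term involves only the continuous SDE $X^\eta$. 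Since $T^k_{e(k,T)}$ is a genuine $\mathbb{F}$-stopping time, the process $t\mapsto\int_0^t\sigma^{ij}(s,X^\eta_s,\eta(s))dB^j(s)-\int_0^{t\wedge T^k_{e(k,T)}}\sigma^{ij}(s,X^\eta_s,\eta(s))dB^j(s)$ is an honest $\mathbb{F}$-martingale, and Burkholder--Davis--Gundy applies directly; Cauchy--Schwarz against the moment bound (\ref{integraPDSDE}) then delivers the $\|T-T^k_{e(k,T)}\|_{L^2}$ term in (\ref{Xkfinalest}).

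The gap in your argument lies precisely at the BDG step for the martingale part. The random times $\tau_k=T^{k,j}_{N^{k,j}(T^k_{e(k,T)})}$ and $\rho_k=T^{k,j}_{N^{k,j}(T)}$ that you call ``stopping times'' are \emph{not} $\mathbb{F}$-stopping times: the paper notes explicitly (just after introducing $S^{k,j}_n$) that quantities of the form $\max\{T^{k,j}_p:T^{k,j}_p\le T^k_{n-1}\}$ fail to be stopping times, and the same obstruction applies here---on $\{T^k_{e(k,T)}>s\}$ one cannot decide at time $s$ whether a $T^{k,j}$-jump will occur in $(s,T^k_{e(k,T)}]$. Consequently $r\mapsto\int_{\tau_k}^r\Sigma^{ij,k,\eta^k}dB^j$ is not a stopped martingale in the usual sense and BDG does not apply as written. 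Your route can be salvaged---for instance by reverting to the $dA^{k,j}$-representation of $\widehat{\mathbb{X}}^{k,\eta^k}$ in (\ref{Xhatmult}), where the upper limit of integration is $t$ itself rather than $\bar t^k_j$, and then stopping at the legitimate stopping time $T^k_{e(k,T)}$---but this requires controlling the $[A^{k,j}]$-increment over $(T^k_{e(k,T)},T]$ in terms of $(T-T^k_{e(k,T)})^+$, which is extra work you have not supplied. The paper's choice of decomposition sidesteps all of this by pushing the time-regularity question onto $X^\eta$ rather than onto the discrete scheme.
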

\begin{proof}
By definition,

$$X^{k,\eta^k}(t) = \mathbb{X}^{k,\eta^k}(t\wedge T^k_{e(k,T)}); 0\le t\le T,$$
where $t\wedge T^k_{e(k,T)}\le t \le T$, then triangle inequality yields

$$\Big \|X^{k,\eta^k}_T - X^{\eta}_T\Big\|^2_\infty \le C\big\|\mathbb{X}^{k,\eta^k}_T- X^{\eta}_T\big\|^2_{\infty} + C \sup_{0\le t\le T}\big\|X^{\eta}(t\wedge T^k_{e(k,T)}) - X^{\eta}(t)\big\|^2_{\mathbb{R}^n}.$$
Therefore, in view of the estimate (\ref{caligXest}), we only need to estimate

\begin{equation}\label{need}
\mathbb{E}\sup_{0\le t\le T}\big\|X^{\eta}(t\wedge T^k_{e(k,T)}) - X^{\eta}(t)\big\|^2_{\mathbb{R}^n}.
\end{equation}
For a given $1\le i\le n$,

$$
\big|X^{i,\eta}(t\wedge T^k_{e(k,T)}) - X^{i,\eta}(t)\big|^2\le C\Bigg|\int_{t\wedge T^k_{e(k,T)}}^t\alpha^i(s,X^{\eta}_s,\eta(s))ds\Bigg|^2$$
$$+ C\sum_{j=1}^d\Bigg|\int_{t\wedge T^k_{e(k,T)}}^t\sigma^{ij}(s,X^{\eta}_s,\eta(s))dB^j(s)\Bigg|^2.
$$


By using H\"{o}lder's inequality, (\ref{r2mult}) and (\ref{integraPDSDE}), we get the existence of a constant $C$ such that
\begin{eqnarray*}
\mathbb{E}\sup_{0\le t\le T}\Bigg|\int_{t\wedge T^k_{e(k,T)}}^t\alpha^i(s,X^{\eta}_s,\eta(s))ds\Bigg|^2&\le&\mathbb{E}\Big(C(1+\|X^{\eta}_T\|^2_\infty)\sup_{0\le t\le T}|t-t\wedge T^k_{e(k,T)}|\Big)\\
& &\\
&\le& C \big(\mathbb{E}|T-T^k_{e(k,T)}|^{2}\big)^{1/2}.
\end{eqnarray*}
By applying Burkholder-Davis-Gundy's inequality and the same argument as above, we get

\begin{eqnarray*}
\mathbb{E}\sup_{0\le t\le T}\Bigg|\int_{t\wedge T^k_{e(k,T)}}^t\sigma^{ij}(s,X^{\eta}_s,\eta(s))dB^j(s)\Bigg|^2&\le&\mathbb{E}\Big(C(1+\|X^{\eta}_T\|^2_\infty)\sup_{0\le t\le T}|t-t\wedge T^k_{e(k,T)}|\Big)\\
& &\\
&\le& C \big(\mathbb{E}|T-T^k_{e(k,T)}|^{2}\big)^{1/2}.
\end{eqnarray*}
This concludes the proof.
\end{proof}

We are now able to prove that $X^k$ satisfies the fundamental condition (\ref{keyassepsilon}) given in Theorem \ref{VALUEconv}. For this purpose, let us introduce some objects: $G^{k,j}_{n-1} := \min\{T^k_\ell; T^k_\ell> T^{k,j}_{n-1}\}; n\ge 1$,

$$D^k_j: = \bigcup_{n=1}^{\lceil\epsilon^{-2}_kT\rceil}\Big\{(\omega,s); T^{k,j}_{n-1}(\omega)\le s < G^{k,j}_{n-1}(\omega)\Big\},$$
for $j=1,\ldots, d, k\ge 1$.

\begin{lemma}\label{KEYLEMMAF}
Let $u^k\in U^{k,e(k,T)}_0$ be an arbitrary sequence of admissible controls. Then,

\begin{equation}\label{q1}
\mathbb{E}\int_0^T\|u^k(\bar{s}^k_j) - u^k(s)\|^2_{\mathbb{R}^m}ds\le \bar{a}^2\mathbb{E}|T^{k,j}_{\lceil\epsilon^{-2}_kT\rceil}\wedge T-T| + \bar{a}^2\mathbb{E}\int_0^T\big(\mathds{1}_{\Omega\times [0,T]} - \mathds{1}_{D^k_j}\big)ds
\end{equation}
\end{lemma}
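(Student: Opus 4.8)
The plan is to control the $L^2_a(\mathbb{P}\times Leb)$-distance between the control $u^k$ sampled along its own random mesh $(\bar{s}^k_j)$ and the control $u^k$ itself, by exploiting the piecewise-constant structure of admissible controls in $U^{k,e(k,T)}_0$. First I would recall that any $u^k\in U^{k,e(k,T)}_0$ has the form $u^k(t)=\sum_{j}u^k_{j-1}1\!\!1_{\{T^k_{j-1}<t\le T^k_j\}}$ with $\|u^k(t)\|_{\mathbb{R}^m}\le\bar{a}$ a.s. The key observation is that $\bar{s}^k_j=\sum_{n\ge 1}T^{k,j}_{n-1}1\!\!1_{\{T^{k,j}_{n-1}\le s<T^{k,j}_n\}}$ projects $s$ back to the last $j$-th coordinate hitting time before $s$. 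On the set $D^k_j=\bigcup_{n=1}^{\lceil\epsilon^{-2}_kT\rceil}\{(\omega,s); T^{k,j}_{n-1}\le s<G^{k,j}_{n-1}\}$, the projected time $\bar{s}^k_j$ and the actual time $s$ lie in the \emph{same} mesh interval $]]T^k_{\ell-1},T^k_\ell]]$ of the full partition $\mathcal{T}$, because $G^{k,j}_{n-1}$ is by definition the first $T^k_\ell$ strictly after $T^{k,j}_{n-1}$; hence $u^k(\bar{s}^k_j)=u^k(s)$ pointwise on $D^k_j$ (intersected with the relevant time range up to $T^{k,j}_{\lceil\epsilon^{-2}_kT\rceil}\wedge T$). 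So the integrand $\|u^k(\bar{s}^k_j)-u^k(s)\|^2_{\mathbb{R}^m}$ vanishes on that good set and is bounded by the crude estimate $(2\bar{a})^2$... but more carefully, since both values lie in $\mathbb{A}$, one has the cheap bound $\|u^k(\bar{s}^k_j)-u^k(s)\|^2_{\mathbb{R}^m}\le$ (something proportional to $\bar{a}^2$); I would track constants to land exactly on the stated $\bar{a}^2$ coefficients, which requires using $\|a\|_{\mathbb{R}^m}\le\bar{a}$ together with the fact that on the bad set at least one of the two terms is unchanged, giving the one-sided bound.

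Next I would split the integral $\mathbb{E}\int_0^T\|u^k(\bar{s}^k_j)-u^k(s)\|^2_{\mathbb{R}^m}ds$ into three regions: (i) $s>T^{k,j}_{\lceil\epsilon^{-2}_kT\rceil}\wedge T$, where the definitions of $\bar{s}^k_j$ degenerate or the sum is truncated, contributing at most $\bar{a}^2\mathbb{E}|T^{k,j}_{\lceil\epsilon^{-2}_kT\rceil}\wedge T-T|$; (ii) $s\le T^{k,j}_{\lceil\epsilon^{-2}_kT\rceil}\wedge T$ and $(\omega,s)\in D^k_j$, where the integrand is zero by the argument above; and (iii) $s\le T^{k,j}_{\lceil\epsilon^{-2}_kT\rceil}\wedge T$ and $(\omega,s)\notin D^k_j$, which is contained in the complement $\Omega\times[0,T]\setminus D^k_j$ and contributes at most $\bar{a}^2\mathbb{E}\int_0^T(\mathds{1}_{\Omega\times[0,T]}-\mathds{1}_{D^k_j})ds$. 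Summing the three pieces yields exactly inequality (\ref{q1}). The bookkeeping here amounts to verifying that $D^k_j$ (as defined) indeed captures all pairs $(\omega,s)$ with $s$ up to the $\lceil\epsilon^{-2}_kT\rceil$-th $j$-hitting time for which $\bar{s}^k_j$ and $s$ share a $\mathcal{T}$-interval, which is immediate from $G^{k,j}_{n-1}=\min\{T^k_\ell; T^k_\ell>T^{k,j}_{n-1}\}$ and the piecewise-constancy of $u^k$ on the $T^k_\ell$-grid.

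The main obstacle I anticipate is purely a matter of careful region-splitting and constant-chasing rather than any deep estimate: one has to be precise about what happens on $s\in[0,T]$ lying beyond the last indexed hitting time $T^{k,j}_{\lceil\epsilon^{-2}_kT\rceil}$ (where $\bar{s}^k_j$ may jump to the last available $T^{k,j}_{n-1}$, so $u^k(\bar{s}^k_j)$ and $u^k(s)$ can genuinely differ), and to confirm the claim ``$u^k(\bar{s}^k_j)=u^k(s)$ on $D^k_j$'' with the correct indexing, since $\bar{s}^k_j\le s$ always but they coincide as controls only when no $T^k_\ell$ separates them. Once the good set $D^k_j$ is correctly identified as exactly the locus where no such separation occurs (up to the truncation at index $\lceil\epsilon^{-2}_kT\rceil$), the bound (\ref{q1}) follows by monotonicity of the integral and the a.s.\ bound $\|u^k\|_{\mathbb{R}^m}\le\bar{a}$. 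I would present this as a short proof: state the two facts (vanishing on $D^k_j$ below the truncation time; crude bound $\bar{a}^2$ elsewhere), then conclude by adding the measures of the two exceptional regions.
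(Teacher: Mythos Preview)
Your approach is correct and essentially identical to the paper's: split the time domain at $T^{k,j}_{\lceil\epsilon_k^{-2}T\rceil}\wedge T$, observe that the integrand vanishes on $D^k_j$ via the inclusion $D^k_j\subset\{(\omega,s):u^k(\bar s^k_j)=u^k(s)\}$ inside the truncated region, and bound crudely by $\bar{a}^2$ on the two exceptional sets. One remark worth making: the paper's written proof spends most of its length on the additional claim $\mathds{1}_{D^k_j}\to\mathds{1}_{\Omega\times[0,T]}$ a.s.\ in $\mathbb{P}\times Leb$ (via a $\liminf/\limsup$ argument using Lemma~2.2 of \cite{koshnevisan}), which shows the right-hand side of (\ref{q1}) tends to zero as $k\to\infty$; this convergence is needed downstream for Theorem~\ref{MAINTHEOREMSDE} but is not part of the lemma as stated, so your shorter argument suffices for (\ref{q1}) itself.
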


\begin{proof}
In order to alleviate notation and without any loss of generality, we consider $T=1$, $\epsilon_k = 2^{-k}$ and $d=2$. Let us fix a coordinate $1\le j\le 2$. At first, we observe that

\begin{equation}\label{firOB}
D^k_j \subset D^k_j(u^k):=\Big\{(\omega,s)\in [[0,T^{k,j}_{2^{2k}}]]; u^k\big(\omega,\bar{s}^k_j(\omega)\big)=u^k\big(\omega,s\big)\Big\}\subset \big[\big[0,T^{k,j}_{2^{2k}}\big]\big]
\end{equation}
Recall that we use the notation $\big[\big[0,T^{k,j}_{2^{2k}}\big]\big] = \{(\omega,t); 0 \le t \le T^{k,j}_{2^{2k}}(\omega)\}$.
Then,

$$\mathds{1}_{D^k_j}\le \mathds{1}_{\big[\big[0,T^{k,j}_{2^{2k}}\big]\big]}~a.s~-\mathbb{P}\times Leb,$$
for every $k\ge 1$. Since $T^{k,j}_{2^{2k}}\rightarrow 1$ a.s, then

\begin{equation}\label{lims}
\limsup_{k\rightarrow+\infty}\mathds{1}_{D^k_j}\le \mathds{1}_{\Omega\times [0,1]}~a.s~-\mathbb{P}\times Leb.
\end{equation}
We claim that any $(\omega,t)\in \Omega\times (0,1)$ belongs to $D^k_j$ for infinitely many $k\ge 1$ with the possible exception of a finite number of them. In fact, it is known (see Lemma 2.2 in \cite{koshnevisan}) that

\begin{equation}\label{q3}
\max_{1\le \ell \le 2}\sup_{0\le s\le1}|T^{k,\ell}_{\lceil2^{2k}s\rceil}-s|\rightarrow 0
\end{equation}
holds true a.s say in a set $\Omega^*$ of full probability. Let us fix $(\omega,t)\in \Omega^*\times (0,1)$ where $\mathbb{P}(\Omega^*)=1$. Let us take a sequence of positive numbers $\{r_n; n\ge 1\}$ such that $r_n < t $ and $r_n\uparrow t$ as $n\rightarrow +\infty$. By writing

$$|T^{k,j}_{\lceil 2^{2k}r_n\rceil}(\omega) - t|\le|T^{k,j}_{\lceil 2^{2k}r_n\rceil}(\omega) - r_n| + |r_n-t|$$
and using the fundamental convergence (\ref{q3}), we observe that for $\epsilon,\eta>0$ with $\epsilon-\eta>0$, there exists $k_0(\omega,\epsilon,\eta)$ such that

$$t - (\epsilon-\eta) < T^{k,j}_{\lceil 2^{2k}r_n\rceil}(\omega)< t + (\epsilon-\eta) $$
for every $k\ge k_0(\omega,\epsilon,\eta)$ and for every $n\ge N$ sufficiently large where $N$ does not depend on $k$ and $\omega$. Now, we observe the important property:

$$\lim_{n\rightarrow+\infty}\lim_{k\rightarrow+\infty}T^{k,j}_{\lceil 2^{2k}(r_n+\epsilon)\rceil}(\omega)=t+\epsilon\Longrightarrow \lim_{n\rightarrow+\infty}\lim_{k\rightarrow+\infty}G^{k,j}_{\lceil 2^{2k}(r_n+\epsilon)\rceil}(\omega)=t+\epsilon$$
so that

$$(t+\epsilon) - \eta < G^{k,j}_{\lceil 2^{2k}(r_n+\epsilon)\rceil}(\omega) <(t + \epsilon)+\eta$$
for every $k\ge k_0(\omega,\epsilon,\eta)$ and for every $n\ge N$.
Therefore,

\begin{equation}\label{q5}
T^{k,j}_{\lceil 2^{2k}r_n\rceil}(\omega) < t+(\epsilon-\eta) <  G^{k,j}_{\lceil 2^{2k}(r_n+\epsilon)\rceil}(\omega)
\end{equation}
for every $k\ge k_0(\omega,\epsilon,\eta)$ and for every $n\ge N$. Since $\lim_{n\rightarrow +\infty}\lim_{k\rightarrow+\infty}T^{k,j}_{\lceil 2^{2k}r_n\rceil}(\omega)=t$, we may assume (take a subsequence if necessary) that

\begin{equation}\label{q6}
T^{k,j}_{\lceil 2^{2k}r_n\rceil}(\omega)< t
\end{equation}
for every $k\ge k_0(\omega,\epsilon,\eta)$ and for every $n\ge N$. From (\ref{q5}) and (\ref{q6}), we then have

\begin{equation}\label{q7}
T^{k,j}_{\lceil 2^{2k}r_n\rceil}(\omega)< t < G^{k,j}_{\lceil 2^{2k}(r_n+\epsilon)\rceil}(\omega)
\end{equation}
for every $k\ge k_0(\omega,\epsilon,\eta)$ and for every $n\ge N$. This shows that

\begin{equation}\label{q8}
\Omega\times [0,1] = \liminf_{k\rightarrow+\infty} D^k_j\quad a.s~-\mathbb{P}\times Leb
\end{equation}
and summing up with (\ref{lims}), we conclude that

\begin{equation}\label{q9}
\lim_{k\rightarrow \infty}\mathds{1}_{D^k_j} =\mathds{1}_{\Omega\times [0,1]}~a.s~-\mathbb{P}\times Leb.
\end{equation}
Now, we observe that

$$\mathbb{E}\int_0^1\|u^k(\bar{s}^k_j) - u^k(s)\|^2_{\mathbb{R}^m}ds =\mathbb{E}\int_0^{1\wedge T^{k,j}_{2^{2k}}}\|u^k(\bar{s}^k_j) - u^k(s)\|^2_{\mathbb{R}^m}ds$$
$$+\mathbb{E}\int_{T^{k,j}_{2^{2k}}\wedge 1}^1\|u^k(\bar{s}^k_j) - u^k(s)\|^2_{\mathbb{R}^m}ds.$$
Obviously, $ \mathbb{E}\int_{T^{k,j}_{2^{2k}}\wedge 1}^1\|u^k(\bar{s}^k_j) - u^k(s)\|^2_{\mathbb{R}^m}ds\le \bar{a}^2\mathbb{E}|T^{k,j}_{2^{2k}}\wedge 1 -1|\rightarrow 0$. Moreover, by (\ref{firOB}) and using (\ref{q9}) jointly with bounded convergence theorem, we conclude that

$$
\mathbb{E}\int_0^{1\wedge T^{k,j}_{2^{2k}}}\|u^k(\bar{s}^k_j) - u^k(s)\|^2_{\mathbb{R}^m}ds \le \mathbb{E}\int_0^{1\wedge T^{k,j}_{2^{2k}}}\|u^k(\bar{s}^k_j) - u^k(s)\|^2_{\mathbb{R}^m}\Big(\mathds{1}_{\Omega\times [0,1]} -\mathds{1}_{D^k_j(u^k)}\Big) ds
$$
$$+ \mathbb{E}\int_0^{1\wedge T^{k,j}_{2^{2k}}}\|u^k(\bar{s}^k_j) - u^k(s)\|^2_{\mathbb{R}^m}\mathds{1}_{D^k_j(u^k)}ds\le \mathbb{E}\int_0^{1\wedge T^{k,j}_{2^{2k}}}\|u^k(\bar{s}^k_j) - u^k(s)\|^2_{\mathbb{R}^m}\Big(\mathds{1}_{\Omega\times [0,1]} -\mathds{1}_{D^k_j}\Big) ds$$
$$
\le\bar{a}^2\mathbb{E}\int_0^1 \Big( \mathds{1}_{\Omega\times [0,1]} -\mathds{1}_{D^k_j} \Big)ds\rightarrow 0~\text{as}~k\rightarrow+\infty.
$$
\end{proof}
As a by product of Lemma \ref{KEYLEMMAF} and (\ref{Xkfinalest}) in Proposition \ref{Xkfinalestprop}, we arrive at the main result of this section: In the sequel, we set

$$\ell_k:=\sum_{j=1}^d \Bigg(\bar{a}^2\mathbb{E}|T^{k,j}_{\lceil\epsilon^{-2}_kT\rceil}\wedge T-T| + \bar{a}^2\mathbb{E}\int_0^T\big(\mathds{1}_{\Omega\times [0,T]} - \mathds{1}_{D^k_j}\big)ds\Bigg); k\ge 1.
$$

\begin{theorem}\label{MAINTHEOREMSDE}
Assume the coefficients of the controlled SDE (\ref{pdsdeBM}) satisfy Assumption \textbf{(C1)}. Then, there exists a constant $C>0$ which depends on $\alpha, \sigma, T,\beta\in (0,1), p > 1$ and $\bar{a}$ such that

$$
\sup_{\phi\in U^{k,e(k,T)}_{e(k,t)}}\mathbb{E}\|X^{k,u^k\otimes_{e(k,t)}\phi}_T - X^{u\otimes_t\phi}_T\|^2_\infty \le C\Bigg\{\gamma_{k,p,\beta} + \ell_k + \|u^k - u\|^2_{L^2_a(\mathbb{P}\times Leb)}$$
$$+ \|T^k_{e(k,t)}\wedge t - T^k_{e(k,t)}\vee t\|_{L^1} + \|T-T^k_{e(k,T)}\|_{L^2} \Bigg\},$$
for every $k\ge 1, t\in [0,T], u\in U^T_0$ and $u^k\in U^{k,e(k,T)}_0$. The controlled imbedded structure $\big((X^k)_{k\ge 1},\mathscr{D}\big)$ satisfies (\ref{keyassepsilon}) and hence, (\ref{STvalueconv}) in Theorem \ref{VALUEconv} and (\ref{nearOC}) in Theorem \ref{VALUEconvcontrol} hold true for the controlled SDE (\ref{pdsdeBM}).
\end{theorem}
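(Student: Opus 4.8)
The plan is to assemble Theorem~\ref{MAINTHEOREMSDE} from the pieces already established, the main new work being to bound the concatenated error uniformly over the inner controls $\phi\in U^{k,e(k,T)}_{e(k,t)}$. First I would observe that for fixed $t\in[0,T]$, $u\in U^T_0$, $u^k\in U^{k,e(k,T)}_0$ and any inner control $\phi$, the concatenated controls $u^k\otimes_{e(k,t)}\phi$ and $u\otimes_t\phi$ are themselves admissible controls in $U^{k,e(k,T)}_0$ and $U^T_0$, respectively, so Proposition~\ref{Xkfinalestprop} applies verbatim with $\eta^k := u^k\otimes_{e(k,t)}\phi$ and $\eta := u\otimes_t\phi$. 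This yields
$$
\mathbb{E}\|X^{k,u^k\otimes_{e(k,t)}\phi}_T - X^{u\otimes_t\phi}_T\|^2_\infty\le C\Bigg\{\gamma_{k,p,\beta} + \sum_{j=1}^d\|\eta^{k,j}-\eta\|^2_{L^2_a(\mathbb{P}\times Leb)} + \|\eta^k-\eta\|^2_{L^2_a(\mathbb{P}\times Leb)} + \|T-T^k_{e(k,T)}\|_{L^2(\mathbb{P})}\Bigg\},
$$
and the key point is that the constant $C$ there depends only on $\alpha,\sigma,T,\beta,p,\bar a$ and \emph{not} on the particular control, so the bound is already uniform in $\phi$ \emph{provided} the two $L^2_a$-norm terms can be controlled uniformly.

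Next I would estimate $\|\eta^k-\eta\|^2_{L^2_a(\mathbb{P}\times Leb)}$ where $\eta^k = u^k\otimes_{e(k,t)}\phi$ and $\eta=u\otimes_t\phi$. On the time interval after $\max(T^k_{e(k,t)},t)$ both controls equal $\phi$ (up to a boundary region), on the interval before $\min(T^k_{e(k,t)},t)$ they equal $u^k$ and $u$, and the mismatch on the stochastic interval between $T^k_{e(k,t)}$ and $t$ contributes at most $\bar a^2$ times the Lebesgue length of that interval; hence
$$
\|\eta^k-\eta\|^2_{L^2_a(\mathbb{P}\times Leb)}\le C\Big(\|u^k-u\|^2_{L^2_a(\mathbb{P}\times Leb)} + \|T^k_{e(k,t)}\wedge t - T^k_{e(k,t)}\vee t\|_{L^1(\mathbb{P})}\Big),
$$
again uniformly in $\phi$ since $\phi$ cancels on the common region. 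For the shifted term, $\eta^{k,j}(s) = \eta^k(\bar s^k_j)$; writing $\eta^{k,j}-\eta = (\eta^{k,j}-\eta^k) + (\eta^k-\eta)$ and invoking Lemma~\ref{KEYLEMMAF} applied to the admissible control $\eta^k = u^k\otimes_{e(k,t)}\phi$ gives $\|\eta^{k,j}-\eta^k\|^2_{L^2_a}\le \bar a^2\mathbb{E}|T^{k,j}_{\lceil\epsilon^{-2}_kT\rceil}\wedge T-T| + \bar a^2\mathbb{E}\int_0^T(\mathds{1}_{\Omega\times[0,T]}-\mathds{1}_{D^k_j})ds$, whose sum over $j$ is precisely $\ell_k$ and is independent of $\phi$. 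Combining these bounds and taking the supremum over $\phi$ yields the displayed inequality of the theorem.

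Finally, to deduce that $\big((X^k)_{k\ge 1},\mathscr{D}\big)$ satisfies (\ref{keyassepsilon}): by Theorem~\ref{density} choose, for a given $u\in U^T_0$, a sequence $u^k\to u$ in $L^2_a(\mathbb{P}\times Leb)$; then each term on the right side of the displayed bound tends to $0$ as $k\to\infty$ --- $\gamma_{k,p,\beta}\to 0$ by its definition together with (\ref{meshrate}), $\ell_k\to 0$ by (\ref{q9}) (i.e.\ $\mathds{1}_{D^k_j}\to\mathds{1}_{\Omega\times[0,T]}$ a.s.) and $\mathbb{E}|T^{k,j}_{\lceil\epsilon^{-2}_kT\rceil}\wedge T-T|\to 0$ by (\ref{uniforTk}), $\|u^k-u\|^2_{L^2_a}\to 0$ by choice, and $\|T^k_{e(k,t)}\wedge t - T^k_{e(k,t)}\vee t\|_{L^1}, \|T-T^k_{e(k,T)}\|_{L^2}\to 0$ since $T^k_{e(k,t)}\to t$ in $L^2(\mathbb{P})$ (from (\ref{uniforTk}) and Lemma 3.1 in \cite{LEAO_OHASHI2017.2}). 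Since this is exactly condition (\ref{keyassepsilon}) with $p=2$, Theorems~\ref{VALUEconv} and~\ref{VALUEconvcontrol} apply, giving (\ref{STvalueconv}) and (\ref{nearOC}). I expect the main obstacle to be the uniformity-in-$\phi$ bookkeeping: one must be careful that the constant in Proposition~\ref{Xkfinalestprop} genuinely does not absorb the control and that the time-mismatch region around $T^k_{e(k,t)}$ versus $t$ is handled cleanly (the inner control $\phi$ being defined on $U^{k,e(k,T)}_{e(k,t)}$ while the limiting concatenation happens at the deterministic time $t$), but since $\phi$ appears identically in both processes on the overlapping region this is a routine, if notation-heavy, verification rather than a genuine difficulty.
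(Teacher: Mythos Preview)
Your proposal is correct and follows exactly the approach the paper intends: the paper presents Theorem~\ref{MAINTHEOREMSDE} as a direct byproduct of Proposition~\ref{Xkfinalestprop} and Lemma~\ref{KEYLEMMAF}, and you have supplied precisely the bookkeeping that makes this explicit --- applying the proposition to the concatenated controls, splitting $\|\eta^k-\eta\|^2_{L^2_a}$ into the region before $T^k_{e(k,t)}\wedge t$, the mismatch interval, and the common region where $\phi$ cancels, and then invoking Lemma~\ref{KEYLEMMAF} for the $\eta^{k,j}$ term to produce $\ell_k$ uniformly in $\phi$. Your closing remarks about the convergence of each term to establish (\ref{keyassepsilon}) are also the intended ones.
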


\subsection{Optimal control of drifts for path-dependent SDEs driven by fractional Brownian motion}
In this section, we illustrate the abstract results of this paper to a stochastic control problem driven by a path-dependent SDE driven by fractional Brownian motion with $\frac{1}{2} < H < 1$,

$$B_H(t) = \int_0^t \rho_H(t,s)B(s)ds; 0\le t\le T,$$
where
$$\rho_H(t,s):= d'_H\Bigg[\big(H-\frac{1}{2}\big)s^{-H-\frac{1}{2}}\int_s^tu^{H-\frac{1}{2}} (u-s)^{H-\frac{3}{2}}du -s^{-H-\frac{1}{2}}t^{H+\frac{1}{2}}(t-s)^{H-\frac{3}{2}}\Bigg],$$
and $d'_H = (H-1/2)d_H$ for a constant $d_H$. See Hu \cite{Hu} for more details on this representation. We are going to analyze the stochastic control problem

\begin{equation}\label{limsdefbmC}
\sup_{\phi\in U^T_0}\mathbb{E}\big[\xi(X^\phi)\big]
\end{equation}
driven by
\begin{equation}\label{limsdefbm}
dX^u(t) = \alpha(t,X_t,u(t))dt + \sigma dB_H(t)
\end{equation}
where $X(0)=x_0\in \mathbb{R}$, $\sigma$ is a constant, $\alpha$ is a non-anticipative functional satisfying \textbf{(C1)} and $\frac{1}{2}< H < 1$. The path-dependence feature is much more sophisticated than previous example because the lack of Markov property comes from distorting the Brownian motion by the singular kernel $\rho_H$ and from the drift $\alpha$.

Under Assumption \textbf{(C1)}, by a standard fixed point argument, one can show there exists a unique strong solution for (\ref{limsdefbm}) for each $u\in U^T_0$. In the sequel, we denote

$$B^k_{H}(t): = \int_0^t\rho_H(t,s)A^k(s)ds; 0\le t\le T.$$
To get a piecewise constant process, we set

$$W^k_{H}(t) := \sum_{n=0}^\infty B^k_{H}(T^k_n)\mathds{1}_{\{T^k_n\le t < T^k_{n+1}\}}; 0\le t\le T.$$
By Theorem 5.1 in \cite{LEAO_OHASHI2017.2}, for every $0 < \gamma < 2H-2$, there exists a constant $C_{H,T,\gamma}$ independent from $k$ such that

\begin{equation}\label{estFBM}
\mathbb{E}\sup_{0\le t\le T}|W^k_{H}(t) - B_H(t)|\le C_{H,T,\gamma} \epsilon_k^{\gamma}; k\ge 1.
\end{equation}
The controlled structure $\big((X^k)_{k\ge 1},\mathscr{D}\big)$ associated with (\ref{limsdefbm}) is given as follows: Let us fix a control $u^k\in U^k_0$ defined by $g^k_n:\mathbb{S}^n_k\rightarrow\mathbb{R}; n\ge 0$. Let us define

\begin{eqnarray*}
\mathbb{X}^{k,u^k}(T^k_{m})&:=&\mathbb{X}^{k,u^k}(T^k_{m-1}) + \alpha\big(T^k_{m-1},\mathbb{X}^{k,u^k}_{T^k_{m-1}},g^k_{m-1}(\mathcal{A}^k_{m-1})\big)\Delta T^k_{m}\\
& &\\
&+& \sigma\Delta W^k_{H}(T^k_{m}); m\ge 1,
\end{eqnarray*}
and then set
$$X^{k,u^k}(t):=\sum_{\ell=0}^{\infty}\mathbb{X}^{k,u^k}(T^k_\ell) 1\!\!1_{\{T^k_\ell\le t \wedge T^k_{e(k,T)}< T^k_{\ell+1}\}}; 0\le t\le T.$$

\begin{proposition}\label{FBMconSDE}
Assume that $\frac{1}{2} < H < 1$, $\alpha$ satisfies assumption \textbf{(C1)} and take $0 < \gamma < 2H-2$. Let $(\eta^k,\eta)\in U^{k,e(k,T)}_0\times U_0^T$ be an arbitrary pair of controls. Then, there exists a constant $C$ which depends on $T,\alpha, \beta\in (0,1), p> 1$ and $0 < \gamma < 2H-2$ such that

\begin{equation}\label{FBMcon}
\mathbb{E}\|X^{k,\eta^k}_T - X^{\eta}_T\|_\infty\le C \Bigg\{ \epsilon_k^{2}\lceil\epsilon^{-2}_kT\rceil^{\frac{1-\beta}{p}} + \mathbb{E}\int_0^T|\eta^k(s) - \eta(s)|ds + \|T^k_{e(k,T)} - T\|_{L^2(\mathbb{P})} + \epsilon_k^{\gamma}\Bigg\}; k\ge 1.
\end{equation}
In particular, $\big(X^{k},\mathscr{D}\big)$ is a controlled structure for (\ref{limsdefbm}) and it also satisfies assumption (\ref{keyassepsilon}). Hence, Theorems \ref{VALUEconv} and \ref{VALUEconvcontrol} apply to the control problem (\ref{limsdefbmC}).
\end{proposition}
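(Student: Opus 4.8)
The plan is to reproduce, for the additive fractional noise, the Euler-type estimate carried out for the Brownian case in Lemmas \ref{passo1mult}--\ref{lemmaXmathk} and Proposition \ref{Xkfinalestprop}. The key simplification here is that the diffusion coefficient $\sigma$ is constant: the noise cancels in differences, the dimension $d$ plays no role, and the only genuinely new ingredient --- the fractional term --- is handled at once by the pre-established uniform bound (\ref{estFBM}).

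First I would pass to integral form. Telescoping $\sum_{\ell=1}^{m}\Delta W^k_H(T^k_\ell)=B^k_H(T^k_m)=W^k_H(T^k_m)$ and writing $\bar{t}_k=\sum_n T^k_n\mathds{1}_{\{T^k_n\le t< T^k_{n+1}\}}$, one gets
$$\mathbb{X}^{k,\eta^k}(t)=x_0+\int_0^{\bar{t}_k}\alpha\big(\bar{s}_k,\mathbb{X}^{k,\eta^k}_{\bar{s}_k},\eta^k(s)\big)\,ds+\sigma W^k_H(t),\qquad 0\le t\le T,$$
which, subtracted from $X^\eta(t)=x_0+\int_0^t\alpha(s,X^\eta_s,\eta(s))\,ds+\sigma B_H(t)$, splits the error into a drift part and the fractional part $\sigma\big(W^k_H(t)-B_H(t)\big)$. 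The fractional part is immediate: $\mathbb{E}\sup_{0\le t\le T}|\sigma||W^k_H(t)-B_H(t)|\le|\sigma|C_{H,T,\gamma}\epsilon_k^{\gamma}$ by (\ref{estFBM}), and no further work is needed. For the drift part I would use the Lipschitz bound \textbf{(C1)} with the metric $d_{1/2}$, splitting $|\alpha(\bar{s}_k,\mathbb{X}^{k,\eta^k}_{\bar{s}_k},\eta^k(s))-\alpha(s,X^\eta_s,\eta(s))|$ into (i) a time term $|\bar{s}_k-s|^{1/2}$, (ii) a state term dominated by $\|\mathbb{X}^{k,\eta^k}_s-X^\eta_s\|_\infty$ plus the oscillation of $X^\eta$ over $[\bar{s}_k,s]$, and (iii) a control term $\|\eta^k(s)-\eta(s)\|_{\mathbb{R}^m}$; the tail integral $\int_{\bar{t}_k}^t\alpha(s,X^\eta_s,\eta(s))\,ds$ and the oscillation of $X^\eta$ are controlled by the linear growth consequence of \textbf{(C1)}, the a priori bound (\ref{integraPDSDE}), and the $L^q$ mesh estimate (\ref{meshrate}), exactly as in Lemmas \ref{passo1mult}, \ref{passo2mult}, \ref{passo3mult}. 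Collecting terms and applying Gronwall's inequality yields the bound for $\mathbb{E}\|\mathbb{X}^{k,\eta^k}_T-X^\eta_T\|_\infty$.

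To pass to the stopped process $X^{k,\eta^k}(\cdot)=\mathbb{X}^{k,\eta^k}(\cdot\wedge T^k_{e(k,T)})$ I would add and subtract $X^\eta(\cdot\wedge T^k_{e(k,T)})$, so the extra contribution is $\sup_{0\le u\le T}\|X^\eta(u\wedge T^k_{e(k,T)})-X^\eta(u)\|_{\mathbb{R}^n}$; using the linear growth of $\alpha$, (\ref{integraPDSDE}) and the a.s.-finite H\"{o}lder modulus of continuity of $B_H$ (of any order $<H$), this is of order $\|T-T^k_{e(k,T)}\|_{L^2(\mathbb{P})}$ (recall $T^k_{e(k,T)}\to T$ a.s. and in $L^2$). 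This produces (\ref{FBMcon}). Applying it with $\eta^k=u^k\otimes_{e(k,t)}\phi$, $\eta=u\otimes_t\phi$, and noting that the $\phi$-dependent pieces of $\eta^k-\eta$ are uniformly bounded by $\bar{a}$, so that $\|\eta^k-\eta\|_{L^2_a(\mathbb{P}\times Leb)}$ exceeds $\|u^k-u\|_{L^2_a(\mathbb{P}\times Leb)}$ only by a term of order $\bar{a}\,\|T^k_{e(k,t)}\wedge t-T^k_{e(k,t)}\vee t\|_{L^1}^{1/2}$, the right-hand side of (\ref{FBMcon}) tends to $0$ uniformly over $\phi\in U^{k,e(k,T)}_{e(k,t)}$ whenever $u^k\to u$ in $L^2_a(\mathbb{P}\times Leb)$; together with Theorem \ref{density} this is precisely (\ref{keyassepsilon}). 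Since (\ref{FBMcon}) is an $L^1$-estimate it gives (\ref{keyassepsilon}) with $p=1$, which is all that is used in Theorems \ref{VALUEconv} and \ref{VALUEconvcontrol} thanks to the boundedness and H\"{o}lder property of $\xi$ in \textbf{(A1)}; the special case $t=T$, $\phi$ trivial, is (\ref{ucovprop}), so $\big((X^k)_{k\ge1},\mathscr{D}\big)$ is a controlled imbedded structure for (\ref{limsdefbm}). As in the Brownian case, \textbf{(C1)} implies \textbf{(B1)} (the fractional noise cancels in $X^u-X^\eta$), so Lemma \ref{continuousV} makes $V$ a continuous controlled Wiener functional and Theorems \ref{VALUEconv} and \ref{VALUEconvcontrol} apply to the control problem (\ref{limsdefbmC}).

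The main obstacle is not conceptual but bookkeeping: because the partition $\mathcal{T}$ is random, every discretization error must be measured in the appropriate $L^q$ via (\ref{meshrate}) rather than pathwise, and one must absorb the end-of-horizon mismatch $T^k_{e(k,T)}\neq T$ using the fractional modulus of continuity. This last estimate is the only place where the singular kernel $\rho_H$ genuinely enters, and it is precisely where $H>1/2$ is used (through (\ref{estFBM}) and the H\"{o}lder regularity of $B_H$).
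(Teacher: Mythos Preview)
Your proposal is correct and follows essentially the same route as the paper: split $\mathbb{X}^{k,\eta^k}-X^\eta$ into a drift part (handled by \textbf{(C1)}, the mesh estimate (\ref{meshrate}), and Gronwall, exactly as in the Brownian lemmas you cite, only at the $L^1$ level) and the additive fractional part (handled directly by (\ref{estFBM})), then absorb the stopping at $T^k_{e(k,T)}$ via the H\"{o}lder modulus of $B_H$ and the $L^2$ convergence $T^k_{e(k,T)}\to T$. Your additional remarks---that the noise cancels in $X^u-X^\eta$ so \textbf{(B1)} and hence Lemma \ref{continuousV} apply, that $p=1$ suffices for Theorems \ref{VALUEconv}--\ref{VALUEconvcontrol} under \textbf{(A1)}, and the explicit bound on the $\phi$-dependent part of $\|\eta^k-\eta\|$ via $|T^k_{e(k,t)}-t|$---spell out points the paper leaves implicit (it simply invokes the one-dimensional driving noise to bypass the $\eta^{k,j}$ terms of Theorem \ref{MAINTHEOREMSDE}), but the argument is the same.
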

\begin{proof}
By repeating exactly the same steps (with the obvious modification by replacing $A^k$ by $W^k_H$) as in the proof of Proposition 5.2 in \cite{LEAO_OHASHI2017.2} and Lemma \ref{lemmaXmathk}, we can find a constant $C$ (depending on $\alpha, T$ and $\beta$) such that

$$
\mathbb{E}\|\mathbb{X}^{k,\eta^k}_T - X^{\eta}_T\|_\infty\le C \Bigg\{ \epsilon_k^{2}\lceil\epsilon^{-2}_kT\rceil^{\frac{1-\beta}{p}} + \mathbb{E}\int_0^T|\eta^k(s) - \eta(s)|ds
$$
$$+\mathbb{E}\sup_{0\le t\le T}|W^k_{H}(t) - B_H(t)|\Bigg\}$$
for $\beta \in (0,1)$ and $p> 1$. In order to estimate $\|X^{k,\eta^k}_T - X^\eta_T\|_\infty$, we shall proceed in the same way as in (\ref{need}) (but with exponent equals one). The only thing one has to estimate is the quantity $\mathbb{E}\sup_{0\le t\le T}|B_H(t) - B_H(t\wedge T^k_{e(k,T)})|$. For this purpose, take $1-H < \beta < 1/2$, $0< \epsilon < \beta-1+ H$. It is well-known (see e.g the proof of Lemma 1.17.1 in \cite{mishura}) there exists $G_{T,\epsilon,\beta}\in \cap_{q\ge 1}L^q(\mathbb{P})$ and a deterministic constant $C$ such that

$$|B_H(t) - B_H(s)|\le C |t-s|^{(H-\epsilon)}G_{T,\epsilon,\beta}~a.s$$
for every $t,s\in [0,T]$. Therefore,
$$\sup_{0\le t\le T}|B_H(t) - B_H(t\wedge T^k_{e(k,T)})|\le C |T - T^k_{e(k,T)}|^{(H-\epsilon)}G_{T,\epsilon,\beta}~a.s.$$
This estimate together with (\ref{estFBM}) allow us to conclude the proof of (\ref{FBMcon}). Since the system is driven by the one-dimensional Brownian motion, then (\ref{FBMcon}) immediately shows that $\big((X^k)_{k\ge 1},\mathscr{D}\big)$ is a controlled imbedded structure w.r.t (\ref{limsdefbm}) satisfying (\ref{keyassepsilon}). In this case, Theorems \ref{VALUEconv} and \ref{VALUEconvcontrol} can be applied.
\end{proof}
\subsection{Example: Non-Markovian Portfolio Optimization}\label{portfolioOPT}
Let us now illustrate the theory of this article to a classical problem in Mathematical Finance. We present the construction of a near-optimal control for the problem
\begin{equation}\label{concreteportOPT}
\sup_{u\in U^T_0}\mathbb{E}\Big[ \frac{1}{\gamma}(X^u(T))^\gamma\Big]
\end{equation}
where $0 < \gamma < 1$, $X^u$ is the wealth process associated to a path-dependent one-dimensional SDE

$$dS(t) = \alpha(t)S(t)dt + \sigma(t)S(t)dB(t)$$
and the risk-free asset whose dynamics is given by
$$dS_0(t) = rS_0(t)dt$$
for a constant $r>0$. We observe the functional of interest $\xi(f) = \gamma^{-1} (f(T))^\gamma$ for $f\in \mathbf{D}_T^1$ satisfies (\ref{descA1}), i.e., it is $\gamma$-H\"{o}lder continuous on $[0,+\infty)$. The controlled wealth process is given by

\begin{equation}\label{port1}
dX^u(t) = \Big((1-u(t))r + u(t)\alpha(t)\Big)X^u(t) dt + u(t)\sigma(t)X^u(t)dB(t).
\end{equation}
Here, the control $u$ is interpreted as the fraction of the total wealth invested in $S$ and $1-u$ denotes the fraction
of total wealth invested in the risk free asset $S_0$. In order to avoid technicalities, we assume $u$, $\alpha$, $\sigma$ are progressively measurable w.r.t $\mathbb{F}$, (\ref{port1}) admits a strong solution in $\mathbf{B}^1(\mathbb{F})$, (\ref{concreteportOPT}) is finite and the action space equals to $\mathbb{A} = [-1,1]$. In addition, we assume the diffusion component is not degenerated, i.e., $|\sigma(t)| >0$ a.s for every $t
\in [0,T]$. Ito's formula yields

$$X^u(t) = x_0 \exp\Bigg\{\int_0^t \Big[u(s)(\alpha(s)-r) + r\Big]ds -\frac{1}{2}\int_0^t u^2(s)\sigma^2(s)ds + \int_0^t u(s)\sigma(s)dB(s)\Bigg\}$$
for $0\le t\le T$ and $X^u(0) = x_0>0$. Let $\alpha^k$ and $\sigma^k$ be approximation processes associated to $\alpha$ and $\sigma$, given respectively by

$$\sigma^k(t) = \sigma^k(0)\mathds{1}_{\{t=0\}} + \sum_{n=0}^\infty \sigma^k(T^k_n)\mathds{1}_{\{T^k_n < t \le T^k_{n+1}\}}$$
$$\alpha^k(t) = \sum_{n=0}^\infty \alpha^k(T^k_n)\mathds{1}_{\{T^k_n \le t < T^k_{n+1}\}}; 0\le t\le T.$$
We assume that $|\sigma^k(t)|>0$ a.s for every $t
\in [0,T]$ and for every $k
\ge 1$. Let us denote
$$X^{k,u^k}(T^k_n) = x_0\exp\Bigg\{\int_0^{T^k_n} \Big[u^k(s)\Big(r+\alpha^k(s)\Big) + r\Big]ds -\frac{1}{2}\int_0^{T^k_n} |u^k(s)\sigma^k(s)|^2ds +  \int_0^{T^k_n} u^k(s)\sigma^k(s)dA^k(s)\Bigg\}$$
for $n\ge 0$ and we set

$$X^{k,u^k}(t) = \sum_{n=0}^{\infty} X^{k,u^k}(T^k_n)\mathds{1}_{\{T^k_n \le t \wedge T^k_{e(k,T)} < T^k_{n+1}\}}; 0\le t\le T.$$
We assume $u^k\mapsto X^{k,u^k}$ satisfies (\ref{keyassepsilon}) in Theorem \ref{VALUEconv}. In the sequel, we fix $k\ge 1$ and to keep notation simple we set $T=1$. The pathwise description is given by the stepwise constant function

$$\bar{\gamma}^k(\mathbf{o}^k_n,t):=x_0 \exp \Bigg\{\int_0^{t^k_n} \Big[\mathbf{a}^k(s)\Big(r+\alpha^k(s)\Big) + r\Big]ds -\frac{1}{2}\int_0^{t^k_n} |\mathbf{a}^k(s)\sigma^k(s)|^2ds + \int_0^{t^k_n} \mathbf{a}^k(s)\sigma^k(s)dw^k(s)\Bigg\} $$
for $t^k_n \le t < t^k_{n+1}; n\ge 0$, where
$$\mathbf{a}^k(s):=\sum_{\ell=1}^\infty a^k_{\ell-1}\mathds{1}_{\{t^k_{\ell-1} < s\le t^k_\ell\}},\sigma^k(s)=\sum_{\ell=1}^\infty \sigma^k(\mathbf{b}^k_{\ell-1})\mathds{1}_{\{t^k_{\ell-1} < s\le t^k_\ell\}}, \alpha^k(s)=\sum_{\ell=0}^\infty \alpha^k(\mathbf{b}^k_{\ell})\mathds{1}_{\{t^k_{\ell} \le s <  t^k_{\ell+1}\}} $$
and

$$w^k(s) = \sum_{n=0}^\infty \epsilon_k \tilde{i}^k_n \mathds{1}_{\{ t^k_n \le s < t^k_{n+1}\}}.$$
Let us denote $m=e(k,T)$. The first step is to evaluate

$$\sup_{a^k_{m-1}\in \mathbb{A}} \int \mathbb{V}^k_m (\mathbf{o}^k_{m-1},a^k_{m-1},s^k_m,\tilde{i}^k_m)\nu^k_m(ds^k_m,d\tilde{i}^k_m|\mathbf{b}^k_{m-1}),$$
where

$$\mathbb{V}^k_m(\mathbf{o}^k_{m})=\xi\big( \gamma^k_m(\mathbf{o}^k_m)\big),$$
and
$$\gamma^k_m(\mathbf{o}^k_m)(\cdot)= \bar{\gamma}^k(\mathbf{o}^k_\infty)(\cdot\wedge t^k_m).$$
Let us fix $\mathbf{b}^k_{m-1}$. We may assume $s^k_m < T-t^k_{m-1}$. We observe that

$$\int_{\mathbb{S}_k}\mathbb{V}^k_m\big(\mathbf{o}^k_{m-1},a^k_{m-1},s^k_m,\tilde{i}^k_m \big)\nu^k_m(ds^k_m d\tilde{i}^k_m|\mathbf{b}^k_{m-1})$$
$$ = x_0^\gamma\exp\Bigg(\gamma\sum^{m-2}_{\ell=0} a^k_{\ell} \sigma(t^k_\ell)\Delta w_k(t^k_{\ell+1}) + \gamma\sum_{\ell=0}^{m-2} \Big[a^k_\ell\big(\alpha^k(t^k_\ell)-r\big)+r\Big]s^k_{\ell+1} -\frac{\gamma}{2}\sum_{\ell=0}^{m-2}|a^k_\ell\sigma^k(t^k_\ell)|^2s^k_{\ell+1}$$
$$g(a^k_{m-1},\mathbf{b}^k_{m-1})$$
where

$$g(a^k_{m-1},\mathbf{b}^k_{m-1}) = \frac{1}{\gamma}\int G^k(a^k_{m-1},\mathbf{b}^k_{m-1},s^k_m,\tilde{i}^k_m)\nu^k_m(ds^k_m d\tilde{i}^k_m|\mathbf{b}^k_{m-1}),$$
$$G^k(a^k_{m-1},\mathbf{b}^k_{m-1},s^k_m,\tilde{i}^k_m)=\exp\Bigg(\gamma a^k_{m-1} \sigma(t^k_{m-1})\Delta w_k(t^k_{m}) + \gamma \Big[a^k_{m-1}\big(\alpha^k(t^k_{m-1})-r\big)+r\Big]s^k_{m}$$
$$-\frac{\gamma}{2}|a^k_{m-1}\sigma^k(t^k_{m-1})|^2s^k_{m}\Bigg).$$

Therefore, it is sufficient to study the map

$$a\mapsto g(a,\mathbf{b}^k_{m-1})$$
for each $\mathbf{b}^k_{m-1}$. By definition,

$$g(a,\mathbf{b}^k_{m-1}) = \frac{1}{\gamma\epsilon^2_k}\text{cosh}\Big(\gamma\sigma^k(t^k_{m-1})a\epsilon_k\Big)\int_0^{T-t^k_{m-1}}e^{p(a,\mathbf{b}^k_{m-1})x}f_{\tau}(x\epsilon_k^{-2})dx$$
where
$$p(a,\mathbf{b}^k_{\ell})=\gamma \Big[a \big(\alpha^k(t^k_{\ell})-r\big)+r\Big]-\frac{\gamma}{2}|a\sigma^k(t^k_{\ell})|^2; (a,\mathbf{b}^k_\ell)\in \mathbb{A}\times \mathbb{S}^{\ell}_k$$
for $\ell=m-1, \ldots, 0$ and $f_{T^k_1}(x) = f_\tau(\epsilon_k^{-2}x)\epsilon_k^{-2}$, where $f_{T^k_1}$ and $f_\tau$ are the densities of $T^k_1$ and $\tau = \inf\{t>0; |W(t)|=1\}$ for a Brownian motion $W$, respectively. Following (Chapter 5~\cite{milstein}), we have

\begin{equation}\label{s1}
f_\tau(x) = \frac{2}{\sqrt{2\pi x^3}}\sum_{n=0}^\infty(-1)^n (2n+1)\exp\big(\frac{-1}{2x}(2n+1)^2\big).
\end{equation}
Changing the variables, we have

$$
g(a^k_{m-1},\mathbf{b}^k_{m-1}) =\frac{1}{\gamma}\text{cosh}\Big(\gamma\sigma^k(t^k_{m-1})\epsilon_ka^k_{m-1}\Big)\int_0^{(T-t^k_{m-1})\epsilon^{-2}_k}e^{p(a^k_{m-1},\mathbf{b}^k_{m-1})u
\epsilon^2_k}f_{\tau}(u)du.
$$
We observe (see Lemma 3.1 in \cite{milstein}) that (\ref{s1}) can also be written as

\begin{equation}\label{s2}
f_\tau(x) = \frac{\pi}{2}\sum_{n=0}^\infty (-1)^n(2n+1)\exp\big(\frac{-\pi^2x}{8}(2n+1)^2\big).
\end{equation}
The general term of the alternate series in (\ref{s1})

$$\frac{2}{\sqrt{2\pi x^3}}(2n+1)\exp\big(\frac{-1}{2x}(2n+1)^2\big)$$
is decreasing for each $0 < x < \frac{2}{\pi}$. The general term of the alternate series in (\ref{s2})

$$\frac{\pi}{2}(2n+1)\exp\big(\frac{-\pi^2x}{8}(2n+1)^2\big)$$
is decreasing for each $\frac{2}{\pi}< x < \infty$. As a result, if we set

$$f_{\tau,n-1}(x)=\left\{
\begin{array}{rl}
\frac{2}{\sqrt{2\pi x^3}}\sum_{\ell=0}^{n-1}(-1)^{\ell}(2\ell+1)e^{\frac{-1(2\ell+1)^2}{2x}}; & \hbox{if} \ 0<  x< \frac{2}{\pi} \\
\frac{\pi}{2}\sum_{\ell=0}^{n-1} (-1)^{\ell}(2\ell+1)e^{\frac{-\pi^2 x(2\ell+1)^2}{8}};& \hbox{if} \ x > \frac{2}{\pi}.
\end{array}
\right.
$$
We then have,
\begin{equation}\label{estftau}
|f_\tau(x) - f_{\tau,n-1}(x)|\le \left\{
\begin{array}{rl}
\frac{2(2n+1)}{\sqrt{2\pi x^3}}\exp\Big(-\frac{(2n+1)^2}{2x}\Big); & \hbox{if} \ 0 < x < \frac{2}{\pi} \\
\frac{\pi}{2}(2n+1)\exp\Big( -\frac{\pi^2x(2n+1)^2}{8} \Big);& \hbox{if} \ \frac{2}{\pi} < x < \infty. \\
\end{array}
\right.
\end{equation}
Let us define

$$g_n(a,\mathbf{b}^k_{\ell}) =\frac{1}{\gamma\epsilon_k}\text{cosh}\Big(\gamma\sigma^k(t^k_{\ell})\epsilon_ka\Big)\int_0^{(T-t^k_{\ell})}e^{p(a,\mathbf{b}^k_{\ell})x}f_{\tau,n-1}(x\epsilon^{-2}_k)dx$$
for $\ell=m-1, \ldots, 0$.
\begin{proposition}\label{trunc1}
For each $\mathbf{b}^k_{m-1}\in \mathbb{S}^{m-1}_{k}$ and $\epsilon^{-2}_k> \frac{2}{\pi}$, there exists a positive constant $C(\mathbf{b}^k_{m-1})$ which depends on $\alpha^k(t^k_{m-1})$ and $\sigma^k(t^k_{m-1})$ such that

\begin{equation}\label{unifEST}
\sup_{a\in \mathbb{R}}|g_n(a,\mathbf{b}^k_{m-1}) - g(a,\mathbf{b}^k_{m-1})|
\le C(\mathbf{b}^k_{m-1}) \exp\Big(-\frac{(2n+1)^2}{2}\Big); n\ge 1.
\end{equation}
\end{proposition}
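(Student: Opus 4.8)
The plan is to reduce the bound, uniformly in the action $a\in\mathbb{R}$, to the truncation estimate (\ref{estftau}). Unwinding the definitions and changing variables $u=x\epsilon_k^{-2}$ in the integral, one gets, with $L:=(T-t^k_{m-1})\epsilon_k^{-2}$,
\[
\big|g_n(a,\mathbf{b}^k_{m-1})-g(a,\mathbf{b}^k_{m-1})\big|\le\tfrac1\gamma\,\cosh\big(\gamma\sigma^k(t^k_{m-1})\epsilon_k a\big)\int_0^{L}e^{p(a,\mathbf{b}^k_{m-1})\epsilon_k^2 u}\,\big|f_{\tau,n-1}(u)-f_\tau(u)\big|\,du.
\]
The first key point, which produces the uniformity in $a$, is that $a\mapsto p(a,\mathbf{b}^k_{m-1})$ is a concave quadratic (its leading coefficient is $-\tfrac{\gamma}{2}\sigma^k(t^k_{m-1})^2<0$ because $|\sigma^k(t^k_{m-1})|>0$), hence bounded above by its vertex value $p^\star:=\gamma r+\tfrac{\gamma(\alpha^k(t^k_{m-1})-r)^2}{2\sigma^k(t^k_{m-1})^2}$. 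Bounding $\cosh(ca)\le e^{|c||a|}$ and completing the square in $a$ then gives, for every $u>0$,
\[
\sup_{a\in\mathbb{R}}\Big[\cosh\big(\gamma\sigma^k(t^k_{m-1})\epsilon_k a\big)e^{p(a,\mathbf{b}^k_{m-1})\epsilon_k^2 u}\Big]\le e^{C_b}\exp\Big(\tfrac{\gamma}{2u}+\epsilon_k^2 p^\star u\Big),
\]
with $C_b=\gamma|\alpha^k(t^k_{m-1})-r|\,\epsilon_k/|\sigma^k(t^k_{m-1})|$. Since the right side is independent of $a$, pulling this supremum through the $u$-integral reduces everything to proving $\int_0^{L}\exp\big(\tfrac{\gamma}{2u}+\epsilon_k^2 p^\star u\big)\,|f_{\tau,n-1}(u)-f_\tau(u)|\,du\le C(\mathbf{b}^k_{m-1})\,e^{-(2n+1)^2/2}$ for all $n\ge1$.

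I would prove this last inequality by splitting the integral at $u=2/\pi$ (the junction in the definition of $f_{\tau,n-1}$), the upper piece being absent when $L\le 2/\pi$ and otherwise present. On $(0,\min\{2/\pi,L\})$ one inserts the first branch of (\ref{estftau}), absorbs $e^{\epsilon_k^2 p^\star u}\le e^{2 p^\star\epsilon_k^2/\pi}\le e^{p^\star}$ (here the standing hypothesis $\epsilon_k^{-2}>2/\pi$ is used) into the constant, combines $e^{\gamma/(2u)}e^{-(2n+1)^2/(2u)}=e^{-((2n+1)^2-\gamma)/(2u)}$, and substitutes $v=1/u$ so that the $u^{-3/2}$ singularity becomes the integrable $v^{-1/2}$; since $\gamma\in(0,1)$ and $n\ge1$ force $(2n+1)^2-\gamma\ge 8>0$, the resulting bound is a constant times $\tfrac{2n+1}{(2n+1)^2-\gamma}\,e^{-((2n+1)^2-\gamma)\pi/4}$, where the residual $n$-prefactor is bounded on $n\ge1$ and $e^{-((2n+1)^2-\gamma)\pi/4}\le e^{\gamma\pi/4}e^{-(2n+1)^2/2}$ because $\pi/4>1/2$. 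On $(2/\pi,L)$ one inserts the second branch of (\ref{estftau}), uses $e^{\gamma/(2u)}\le e^{\gamma\pi/4}$, and notes that $u\mapsto e^{\epsilon_k^2 p^\star u-\pi^2 u(2n+1)^2/8}$ is monotone, hence bounded by its value at an endpoint, which at $u=2/\pi$ equals $e^{(2/\pi)\epsilon_k^2 p^\star}e^{-\pi(2n+1)^2/4}$ and at $u=L$ equals $e^{(T-t^k_{m-1})p^\star}e^{-\pi^2 L(2n+1)^2/8}$ with $\pi^2 L/8>\pi/4$ since $L>2/\pi$; the integration length $L-2/\pi$ is harmless, and the surviving prefactor $(2n+1)$ from (\ref{estftau}) is absorbed because the decay rate $\rho\ge\pi/4$ strictly exceeds $1/2$ and $(2n+1)\,e^{-\rho(2n+1)^2}\le\big(\sup_{x\ge3}x\,e^{-((\pi-2)/4)x^2}\big)\,e^{-(2n+1)^2/2}$. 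Adding the two pieces yields (\ref{unifEST}) with $C(\mathbf{b}^k_{m-1})$ depending only on $\alpha^k(t^k_{m-1})$, $\sigma^k(t^k_{m-1})$ and the fixed data $\gamma,r,T,\epsilon_k$.

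The single genuinely delicate point is precisely the uniformity over all actions $a\in\mathbb{R}$: in isolation $\cosh(\gamma\sigma^k(t^k_{m-1})\epsilon_k a)$ grows exponentially in $a$ and, on the half-line where $p(a,\mathbf{b}^k_{m-1})>0$, so does $e^{p(a,\mathbf{b}^k_{m-1})\epsilon_k^2 u}$. This is defused by the concavity of $p(\cdot,\mathbf{b}^k_{m-1})$ — equivalently, the nondegeneracy $|\sigma^k(t^k_{m-1})|>0$ — which forces the Gaussian factor $e^{-\tfrac{\gamma}{2}\sigma^k(t^k_{m-1})^2\epsilon_k^2 u\,a^2}$ to win; the only cost is the $e^{\gamma/(2u)}$ singularity at $u=0$, and that is overwhelmed by the $e^{-(2n+1)^2/(2u)}$ tail in (\ref{estftau}) exactly because $\gamma<1\le(2n+1)^2$ for $n\ge1$. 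In short, it is the interplay between the H\"{o}lder exponent $\gamma\in(0,1)$ and the super-exponential first-passage tail of $f_\tau$ that drives the estimate.
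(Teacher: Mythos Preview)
Your argument is correct and takes a genuinely different route from the paper's. The paper splits the real line into a bounded interval $[q_1(0),q_2(M)]$ determined by the roots of $a\mapsto p(a)+Ma$ (where $M=\gamma\epsilon_k\sigma^k(t^k_{m-1})$) and its complement $R(M)$; on the compact piece $\cosh$ is bounded and one integrates (\ref{estftau}) directly, while on $R(M)$ the paper computes the integrals $I^k_1,I^k_2$ explicitly in terms of the complementary error function, invokes the exponential $\mathrm{erfc}$ bound of Chiani--Dardari--Simon, and then checks by hand that the resulting exponentials $\exp(\bar M p(a)\pm aM)$ are $\le 1$ on $R(M)$. Your approach bypasses this case split entirely: you take the supremum over $a$ \emph{inside} the $u$-integral, complete the square using the concavity of $p(\cdot,\mathbf{b}^k_{m-1})$, and pay for it with the singular factor $e^{\gamma/(2u)}$, which is then absorbed by the $e^{-(2n+1)^2/(2u)}$ in the first branch of (\ref{estftau}) precisely because $\gamma<1<(2n+1)^2$. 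What this buys you is a cleaner, special-function-free argument that makes transparent the two structural ingredients --- nondegeneracy of $\sigma^k(t^k_{m-1})$ and $\gamma\in(0,1)$ --- whereas the paper's computation obtains the same bound but obscures this mechanism in the $\mathrm{erfc}$ algebra. Both arguments ultimately exploit the gap $\pi/4>1/2$ to kill the linear $(2n+1)$ prefactors.
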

\begin{proof}
Let us fix $\mathbf{b}^k_{m-1}$. To keep notation simple, we set $M = \gamma \epsilon_k \sigma^k(t^k_{m-1})$. For a given $M$, let $q_1(M)$ and $q_2(M)$ be the roots of the polynomial $p(a) + Ma$. If $M>0$, then $q_1(0) < q_1(M)$ and $q_2(0) < q_2(M)$ and if $M < 0$, then $q_1(M) < q_1(0)$ and $q_2(M) < q_2(0)$. By symmetry, we may assume $M>0$. Then,

$$\max_{q_1(0)\le a\le q_2(M)}|g_n(a,\mathbf{b}^k_{m-1}) - g(a,\mathbf{b}^k_{m-1})|\le \frac{1}{\gamma}e^{p(\bar{a},\mathbf{b}^k_{m-1})}\max_{q_1(0)\le a\le q_2(M)}\text{cosh}(aM)\int_0^{\epsilon^{-2}_k}|f_\tau(u) - f_{\tau,n-1}(u)|du$$
where $\bar{a} = \frac{\alpha^k(t^k_{m-1})-r}{|\sigma^k(t^k_{m-1})|^2}$. A simple integration together with the exponential bound on the complementary error function in \cite{chiani} yield

$$\int_0^{\epsilon^{-2}_k}|f_\tau(u) - f_{\tau,n-1}(u)|du\le \Big( 2\exp\big(-(2n+1)^2/2\big) + \frac{4}{\pi(2n+1)}\exp\big(-\pi n^2-\pi n -\frac{\pi}{4}\big)\Big)$$
If $a\in R(M)=(-\infty,q_1(0))\cup (q_2(M),+\infty)$, we proceed as follows. At first, we notice

$$|g_n(a,\mathbf{b}^k_{m-1}) - g(a,\mathbf{b}^k_{m-1})|\le \frac{1}{\gamma}\text{cosh}\big(aM\big)\int_0^{\epsilon^{-2}_k}e^{p(a,\mathbf{b}^k_{m-1})u\epsilon^{2}_k}|f_\tau(u) - f_{\tau,n-1}(u)|du.$$
By using the fact that $\epsilon^{-2}_k> 2/\pi$ and (\ref{estftau}), we have
$$\int_0^{\epsilon^{-2}_k}e^{p(a,\mathbf{b}^k_{m-1})u\epsilon^{2}_k}|f_\tau(u) - f_{\tau,n-1}(u)|du \le  I^k_1(a) + I^k_2(a)$$
where

$$I^k_1(a) = \int_0^{2/\pi}\frac{2(2n+1)}{\sqrt{2\pi u^3}}\exp\Big(p(a,\mathbf{b}^k_{m-1})u\epsilon^{2}_k -\frac{(2n+1)^2}{2u}\Big)du$$
$$I^k_2(a) = \int_{2/\pi}^\infty \frac{\pi}{2}(2n+1)\exp\Big( p(a,\mathbf{b}^k_{m-1})u\epsilon^{2}_k-\frac{\pi^2u(2n+1)^2}{8} \Big)du$$
A simple integration yields

$$I^k_1(a) = e^{(2n+1)\sqrt{2\epsilon_k d(a)}}\text{erfc} \Big(\frac{2n+1}{\sqrt{4/\pi}}+\sqrt{\epsilon_k d(a) 2/\pi} \Big)$$
$$+e^{-(2n+1)\sqrt{2\epsilon_k d(a)}}\text{erfc} \Big(\frac{2n+1}{\sqrt{4/\pi}}-\sqrt{\epsilon_k d(a) 2/\pi} \Big)$$
and
$$I^k_2(a) = \frac{4\pi(2n+1)}{2\pi^2(2n+1)^2-8\epsilon_kp(a)}e^{\frac{8\epsilon_kp(a)-\pi^2(2n+1)^2}{4\pi}}$$
where $d(a) = -p(a)$ and erfc denotes the complementary error function. Let us denote $\bar{M} = 2\epsilon_k\pi^{-1}$. Let $q_1(\bar{M}) < q_2(\bar{M})$ be the roots of the polynomial $\bar{M}p(a) + aM$. Since $\bar{M} < 1$, we observe $q_1(M) < q_1(\bar{M}) < q_2(\bar{M}) < q_2(M)$ and hence,

$$\text{cosh}(aM)I^k_2(a)\le \frac{1}{\pi (2n+1)}e^{\frac{-\pi(2n+1)^2}{4}}\exp(\bar{M}p(a) + Ma)$$
$$+ \frac{1}{\pi (2n+1)}e^{\frac{-\pi(2n+1)^2}{4}}\exp(\bar{M}p(a) - Ma)$$
$$\le \frac{2}{\pi (2n+1)}e^{\frac{-\pi(2n+1)^2}{4}} $$
for every $a\in R(M)$.

The exponential inequality on the complementary error function in \cite{chiani} yields


$$2\text{cosh}(aM)I^k_1(a)\le\exp\Big(-(J_+(n,a))^2 + (2n+1)\sqrt{2\epsilon_kd(a)}+ aM\Big)$$
$$+\exp\Big(-(J_+(n,a))^2 + (2n+1)\sqrt{2\epsilon_kd(a)}- aM\Big)$$
$$+\exp\Big(-(J_-(n,a))^2 + (2n+1)\sqrt{2\epsilon_kd(a)}+ aM\Big)$$
$$\exp\Big(-(J_-(n,a))^2 + (2n+1)\sqrt{2\epsilon_kd(a)}- aM\Big)$$
where $J_+(n,a)=\frac{2n+1}{\sqrt{4/\pi}}+\sqrt{\epsilon_k d(a) 2/\pi}$ and $J_-(n,a)=\frac{2n+1}{\sqrt{4/\pi}}-\sqrt{\epsilon_k d(a) 2/\pi} $. By symmetry, it is sufficient to estimate

$$\exp\Big(-(J_+(n,a))^2 + (2n+1)\sqrt{2\epsilon_kd(a)}+ aM\Big)$$
We observe that
$$-(J_+(n,a))^2 + (2n+1)\sqrt{2\epsilon_kd(a)}+ aM = -\frac{\pi}{4}(2n+1)^2 + \bar{M}p(a) + aM.$$
This shows that
$$\sup_{a\in R(M)}2\text{cosh}(aM)I^k_1(a)\le 4 \exp\big(-\frac{\pi}{4}(2n+1)^2\big).$$
Summing up the above estimates, we can find a positive constant $C(\mathbf{b}^k_{m-1})$ such that (\ref{unifEST}) holds.
\end{proof}

Proposition \ref{trunc1} allows us to construct a near optimal control. Let us fix $\mathbf{b}^k_{m-1}$ and a point $\bar{a}^{k,m-1}_n = \bar{a}^{k,m-1}_n(\mathbf{b}^k_{m-1})$ such that

$$\bar{a}^{k,m-1}_n\in \argmax_{a\in \mathbb{A}} g_n(a,\mathbf{b}^k_{m-1}).$$
By Proposition \ref{trunc1}, given $\epsilon>0$, for each $\mathbf{b}^k_{m-1}$, there exists $N(\mathbf{b}^k_{m-1},\epsilon)$ such that

\begin{equation}\label{gop}
g(\bar{a}^{k,m-1}_n,\mathbf{b}^k_{m-1}) + \epsilon \ge \sup_{a\in \mathbb{A}} g(a,\mathbf{b}^k_{m-1})
\end{equation}
for every $n\ge N(\mathbf{b}^k_{m-1},\epsilon)$.




The conclusion is then the following. For a given $\epsilon>0$ and $m=e(k,T)$, we define

$$C^\epsilon_{k,m-1}(\mathbf{o}^k_{m-1}):=\bar{a}^{k,m-1}_{n}(\mathbf{b}^k_{m-1});~n\ge N(\mathbf{b}^k_{m-1},\epsilon)$$
and this function realizes

$$\mathbb{V}^k_{m-1}(\mathbf{o}^k_{m-1})\le \int_{\mathbb{S}_{k}}\mathbb{V}^k_{m}(\mathbf{o}^k_{m-1},C^\epsilon_{k,m-1}(\mathbf{o}^k_{m-1}),s^k_m,\tilde{i}^k_m)
\nu^k_m(ds^k_m,d\tilde{i}^k_m|\mathbf{b}^k_{m-1})+ \epsilon$$
for each $\mathbf{o}^k_{m-1}\in \mathbb{H}^{k,m-1}$. Indeed, let $a^{*,k,m-1}$ be the maximum point of $a\mapsto g(a,\mathbf{b}^k_{m-1})$. Then,

\begin{eqnarray*}
\int_{\mathbb{S}_{k}}\mathbb{V}^k_{m}(\mathbf{o}^k_{m-1},a^{*,k,m-1},s^k_m,\tilde{i}^k_m)
\nu^k_m(ds^k_m,d\tilde{i}^k_m|\mathbf{b}^k_{m-1})&=&\sup_{a
\in \mathbb{A}}\int_{\mathbb{S}_{k}}\mathbb{V}^k_{m}(\mathbf{o}^k_{m-1},a,s^k_m,\tilde{i}^k_m)
\nu^k_m(ds^k_m,d\tilde{i}^k_m|\mathbf{b}^k_{m-1})\\
& &\\
&=&\mathbb{V}^k_{m-1}(\mathbf{o}^k_{m-1})
\end{eqnarray*}
From (\ref{gop}), for $\epsilon>0$, we have



$$\mathbb{V}^k_{m-1}(\mathbf{o}^k_{m-1}) \le \int_{\mathbb{S}_{k}}\mathbb{V}^k_{m}(\mathbf{o}^k_{m-1},a^{k,*,m-1}_n(\mathbf{b}^k_{m-1}),s^k_m,\tilde{i}^k_m)
\nu^k_m(ds^k_m,d\tilde{i}^k_m|\mathbf{b}^k_{m-1}) +
\epsilon$$
for each $\mathbf{o}^k_{m-1}\in \mathbb{H}^{k,m-1}$ and $n\ge N(\mathbf{b}^k_{m-1},\epsilon)$. We observe that Proposition \ref{trunc1} holds true for

$$a\mapsto g(a,\mathbf{b}^k_{\ell})=\frac{1}{\gamma}\text{cosh}~(\epsilon_k \gamma a\sigma^k(t^k_{\ell}))\int_0^{T-t^k_{\ell}}\exp\Big(p(a,\mathbf{b}^k_{\ell})x\Big)f_{T^k_1}(x)dx; \ell=m-1, \ldots, 1$$
so that for a given $\epsilon>0$, we get a sequence of controls defined by

$$\bar{a}^{k,\ell}_n\in \argmax_{a\in \mathbb{A}} g_n(a,\mathbf{b}^k_{\ell}),$$

$$C^\epsilon_{k,\ell}(\mathbf{o}^k_{\ell}):=\bar{a}^{k, \ell}_n(\mathbf{b}^k_{\ell}); \ell=m-1, \ldots,0, n\ge N(\mathbf{b}^k_{\ell},\epsilon)$$
and by construction $C^\epsilon_{k,\ell}$ realizes

$$\mathbb{V}^k_{\ell-1}(\mathbf{o}^k_{\ell-1})\le \int_{\mathbb{S}_{k}}\mathbb{V}^k_{\ell}(\mathbf{o}^k_{\ell-1},C^\epsilon_{k,\ell-1}(\mathbf{o}^k_{\ell-1}),s^k_{\ell},\tilde{i}^k_{\ell})
\nu^k_{\ell}(ds^k_{\ell},d\tilde{i}^k_{\ell}|\mathbf{b}^k_{\ell-1})+ \epsilon;~\mathbf{o}^k_{\ell-1}\in \mathbb{H}^{k,\ell-1}$$
for every $\ell=m, \ldots, 1$. By applying Proposition \ref{epsiloncTH}, Theorem \ref{VALUEconvcontrol} and (\ref{constructionoptimalcontrol}), we have then constructed a near-optimal control for the problem (\ref{concreteportOPT}).


\

\noindent \textbf{Acknowledgments.} The second author would like to thank UMA-ENSTA-ParisTech for the very kind hospitality during the last stage of this project. He also acknowledges the financial support from ENSTA ParisTech. The authors would like to thank Marcelo Fragoso for stimulating discussions.

\end{document}